\title{Iterated Spans and Classical Topological Field Theories}
\theoremstyle{theorem}
\newtheorem{thm}{Theorem}[section]
\newtheorem{lemma}[thm]{Lemma}
\newtheorem{propn}[thm]{Proposition}
\newtheorem{cor}[thm]{Corollary}
\newtheorem*{thm*}{Theorem}
\newtheorem*{conjecture*}{Conjecture}
\newtheorem*{goal*}{Goal}
\newtheorem*{question*}{Question}
\newtheorem*{prethm*}{Pretheorem}
\theoremstyle{definition}
\newtheorem{defn}[thm]{Definition}
\newtheorem{ex}[thm]{Example}
\newtheorem{exs}[thm]{Examples}
\newtheorem{remark}[thm]{Remark}
\theoremstyle{remark}
\author{Rune Haugseng}
\address{University of Copenhagen, Copenhagen, Denmark}
\email{haugseng@math.ku.dk}
\urladdr{http://sites.google.com/site/runehaugseng/}
\date{\today}
\def\amsbb{\use@mathgroup \M@U \symAMSb}
\renewcommand{\mathbb}[1]{\amsbb{#1}}
\newcommand{\blank}{\text{\textendash}}
\newcommand{\defterm}[1]{\emph{#1}}
\newcommand{\isoto}{\xrightarrow{\sim}}
\newcommand{\isofrom}{\xleftarrow{\sim}}
\newcommand{\IFF}{if and only if}
\newcommand{\catname}[1]{\ensuremath{\text{\textup{#1}}}}
\newcommand{\txt}[1]{\ensuremath{\text{\textup{#1}}}}
\newcommand{\Set}{\catname{Set}}
\newcommand{\sSet}{\Set_{\Delta}}
\newcommand{\Cat}{\catname{Cat}}
\newcommand{\CatI}{\catname{Cat}_\infty}
\newcommand{\LCatI}{\widehat{\catname{Cat}}_\infty}
\newcommand{\Fun}{\txt{Fun}}
\newcommand{\Map}{\txt{Map}}
\newcommand{\Hom}{\txt{Hom}}
\newcommand{\op}{\txt{op}}
\newcommand{\icat}{$\infty$-category}
\newcommand{\icats}{$\infty$-categories}
\newcommand{\icatl}{$\infty$-categorical}
\newcommand{\xto}[1]{\xrightarrow{#1}}
\newcommand{\from}{\leftarrow}
\newcommand{\xfrom}[1]{\xleftarrow{#1}}
\newcommand{\csquare}[8]{ %
\[ %
\begin{tikzpicture} %
\matrix (m) [matrix of math nodes,row sep=3em,column sep=2.5em,text height=1.5ex,text depth=0.25ex] %
{ #1 \pgfmatrixnextcell #2 \\ %
  #3 \pgfmatrixnextcell #4 \\ }; %
\path[->,font=\footnotesize] %
(m-1-1) edge node[auto] {$#5$} (m-1-2)%
(m-1-1) edge node[left] {$#6$} (m-2-1)%
(m-1-2) edge node[auto] {$#7$} (m-2-2)%
(m-2-1) edge node[below] {$#8$} (m-2-2);%
\end{tikzpicture}%
\]%
}
\newcommand{\smallcsquare}[8]{ %
\[ %
\begin{tikzpicture} %
\matrix (m) [matrix of math nodes,row sep=1.5em,column sep=1.25em,text height=1.5ex,text depth=0.25ex] %
{ #1 \pgfmatrixnextcell #2 \\ %
  #3 \pgfmatrixnextcell #4 \\ }; %
\path[->,font=\footnotesize] %
(m-1-1) edge node[auto] {$#5$} (m-1-2)%
(m-1-1) edge node[left] {$#6$} (m-2-1)%
(m-1-2) edge node[auto] {$#7$} (m-2-2)%
(m-2-1) edge node[below] {$#8$} (m-2-2);%
\end{tikzpicture}%
\]%
}
\newcommand{\nolabelcsquare}[4]{\csquare{#1}{#2}{#3}{#4}{}{}{}{}}
\newcommand{\nolabelsmallcsquare}[4]{\smallcsquare{#1}{#2}{#3}{#4}{}{}{}{}}
\newcommand{\opctriangle}[6]{ %
\[ %
\begin{tikzpicture} %
\matrix (m) [matrix of math nodes,row sep=3em,column sep=1.2em,text height=1.5ex,text depth=0.25ex] %
{  #1 \pgfmatrixnextcell \pgfmatrixnextcell #2 \\ %
  \pgfmatrixnextcell #3 \pgfmatrixnextcell \\ %
}; %
\path[->,font=\footnotesize] %
(m-1-1) edge node[above] {$#4$} (m-1-3)%
(m-1-1) edge node[below left] {$#5$} (m-2-2)%
(m-1-3) edge node[below right] {$#6$} (m-2-2);%
\end{tikzpicture}%
\]%
}
\newcommand{\nolabelopctriangle}[3]{\opctriangle{#1}{#2}{#3}{}{}{}}
\newcommand{\id}{\txt{id}}
\DeclareMathOperator{\colimP}{colim}
\newcommand{\colim}{\mathop{\colimP}}
\newcommand{\simp}{\bbDelta}
\newcommand{\Seg}{\txt{Seg}}
\newcommand{\Alg}{\catname{Alg}}
\newcommand{\Mon}{\catname{Mon}}
\def\@tocline#1#2#3#4#5#6#7{\relax
  \ifnum #1>\c@tocdepth 
  \else
    \par \addpenalty\@secpenalty\addvspace{#2}%
    \begingroup \hyphenpenalty\@M
    \@ifempty{#4}{%
      \@tempdima\csname r@tocindent\number#1\endcsname\relax
    }{%
      \@tempdima#4\relax
    }%
    \parindent\z@ \leftskip#3\relax \advance\leftskip\@tempdima\relax
    \rightskip\@pnumwidth plus4em \parfillskip-\@pnumwidth
    #5\leavevmode\hskip-\@tempdima
      \ifcase #1
       \or \hskip -1em \or \hskip 1em \or \hskip 3em \else \hskip 5em \fi%
      #6\nobreak\relax
    \hfill\hbox to\@pnumwidth{\@tocpagenum{#7}}
      \par
    \nobreak
    \endgroup
  \fi}
\newcommand{\Span}{\txt{Span}}
\newcommand{\SPAN}{\txt{SPAN}}
\newcommand{\oSPAN}{\overline{\txt{SPAN}}}
\newcommand{\Gpd}{\txt{Gpd}}
\newcommand{\Cart}{\txt{Cart}}
\newcommand{\dAff}{\txt{dAff}_{k}}
\newcommand{\dSt}{\txt{dSt}_{k}}
\newcommand{\dStA}{\dSt^{\txt{Art}}}
\newcommand{\Lag}{\txt{Lag}}
\newcommand{\bbS}{\bbSigma}
\newcommand{\bbSn}{\bbS^{n}}
\newcommand{\bbL}{\bbLambda}
\newcommand{\bbLn}{\bbL^{n}}
\newcommand{\Sympl}{\txt{Sympl}}
\newcommand{\Adj}{\txt{Adj}}
\newcommand{\Mor}{\txt{Mor}}
\newcommand{\MAP}{\txt{MAP}}
\newcommand{\Ob}{\txt{Ob}}
\newcommand{\Cell}{\txt{Cell}}
\newcommand{\itopos}{$\infty$-topos}
\newcommand{\itopoi}{$\infty$-topoi}
\newcommand{\CSS}{\txt{CSS}}
\newcommand{\spandiag}[5]{
  \[ %
\begin{tikzpicture} %
\matrix (m) [matrix of math nodes,row sep=1em,column sep=1em,text height=1.5ex,text depth=0.25ex] %
{      \pgfmatrixnextcell #1 \pgfmatrixnextcell       \\
  #2 \pgfmatrixnextcell   \pgfmatrixnextcell #3   \\};
\path[->,font=\footnotesize] %
(m-1-2) edge node[above left] {$#4$} (m-2-1)
(m-1-2) edge node[above right] {$#5$} (m-2-3);
\end{tikzpicture}%
\]%
}
\newcommand{\nolabelspandiag}[3]{\spandiag{#1}{#2}{#3}{}{}}
\renewcommand{\simp}{\bbDelta}
\begin{document}

\begin{abstract}
  We construct higher categories of iterated spans, possibly equipped with
  extra structure in the form of higher-categorical local systems, and classify
  their fully dualizable objects. By the Cobordism Hypothesis, these
  give rise to framed topological quantum field theories, which are
  the framed versions of the classical TQFTs considered in the
  quantization programme of Freed-Hopkins-Lurie-Teleman.

  Using this machinery, we also construct an $(\infty,1)$-category of
  symplectic derived algebraic stacks and Lagrangian correspondences
  and show that all its objects are dualizable.
\end{abstract}
\maketitle

\tableofcontents

\section{Introduction}
In this paper our main goal is to construct a fundamental family of
higher categories, namely the symmetric monoidal
$(\infty,n)$-categories $\Span_{n}(\mathcal{C})$ of iterated spans in
an $(\infty,1)$-category (or \icat{}) $\mathcal{C}$ with finite limits, and to classify the fully
dualizable objects in these $(\infty,n)$-categories. Via the Cobordism
Hypothesis, these objects correspond to the framed extended
topological quantum field theories (TQFTs) valued in the
$(\infty,n)$-categories $\Span_{n}(\mathcal{C})$, which can be
interpreted as a simple model for classical topological field
theories.

We also construct variants of these $(\infty,n)$-categories where the
spans are equipped with extra structure in the form of local
systems, and classify their fully dualizable objects. The
corresponding TQFTs are those proposed as classical field theories
in the quantization programme outlined by Freed, Hopkins, Lurie, and
Teleman~\cite{FreedHopkinsLurieTeleman}.

Finally, we apply our machinery in the context of
shifted symplectic structures on derived algebraic stacks, as
developed by Pantev, To\"{e}n,
Vaquié, and Vezzosi~\cite{PTVVShiftedSympl}, to construct $(\infty,1)$-categories
of $n$-shifted symplectic derived stacks with the morphisms given by
Lagrangian correspondences; we also show that here all objects are
dualizable (and thus determine framed 1-dimensional
TQFTs).

Before we describe our results in more detail and discuss how they
relate to extended TQFTs, we will first look at the analogous
classical field theories in the simpler setting of non-extended
TQFTs.

\subsection{Topological Quantum Field Theories and Spans}
\emph{Topological quantum field theories} (or TQFTs) originated in
physics as a particularly simple class of quantum field theories. They
were first formalized mathematically by Atiyah~\cite{AtiyahTQFT} in
the late 1980s, and have been the subject of much research over the
past two decades. A sketch of Atiyah's definition goes as
follows:
\begin{defn}
  Let $\txt{Cob}_{n}$ be the category with objects closed
  $(n-1)$-dimensional manifolds and morphisms diffeomorphism classes
  of $n$-dimensional cobordisms, i.e.\ a morphism from $M$ to $N$ is an
  $n$-manifold $X$ with an identification of $\partial X$ with $M
  \amalg N$; composition of morphisms is given by gluing along the
  boundary, and the disjoint union of manifolds makes this a symmetric
  monoidal category. If $\mathbf{C}$ is another symmetric monoidal
  category, a $\mathbf{C}$-valued $n$-dimensional \emph{topological
    quantum field theory} is a symmetric monoidal functor
  $\txt{Cob}_{n} \to \mathbf{C}$.
\end{defn}
Reflecting the linearity of quantum mechanics, in examples
$\mathbf{C}$ is typically a category of ``linear'' objects, for
example complex vector spaces or chain complexes of these.

Ideas from physics suggest that one should be able to produce
interesting examples of TQFTs as quantizations of classical
topological field theories --- this proposal goes back at least to
\cite{FreedHigherAlg}. In a very simple picture of a classical field
theory we assign to a manifold $M$ a collection $\mathcal{F}(M)$ of
\emph{fields} on $M$, which will typically be some form of stack. If $K$
is a cobordism from $M$ to $M'$, we can restrict the fields on $K$ to
the boundary, giving a \emph{span}
\spandiag{\mathcal{F}(K)}{\mathcal{F}(M)}{\mathcal{F}(M').}{s}{t}
Moreover, these fields should be \emph{local}: if $K$ is obtained from
cobordisms $K_{1}$ and $K_{2}$ by gluing them along a common boundary $N$, giving
a field on $K$ should be the same as giving fields on $K_{1}$ and
$K_{2}$ that agree on $N$. In other words, the stack $\mathcal{F}(K)$
should be the pullback $\mathcal{F}(K_{1}) \times_{\mathcal{F}(N)}
\mathcal{F}(K_{2})$. In this case the cobordism $K$ is the composite
of $K_{1}$ and $K_{2}$ in $\txt{Cob}_{n}$, so we want $\mathcal{F}$ to
be a functor from $\txt{Cob}_{n}$ to a category where the morphisms
are spans and the composition is given by taking pullbacks.

More precisely, if $\mathbf{C}$ is any category with finite limits, 
we would like to define a category $\Span(\mathbf{C})$ where the objects are the objects
of $\mathbf{C}$ and the morphisms from $X$ to $Y$ are spans
  \[ %
\begin{tikzpicture} %
\matrix (m) [matrix of math nodes,row sep=1em,column sep=0.5em,text height=1.5ex,text depth=0.25ex] %
{       & A &   \\
   X &       & Y   \\ };
\path[->,font=\footnotesize] %
(m-1-2) edge (m-2-1)
(m-1-2) edge (m-2-3);
\end{tikzpicture}%
\]
in $\mathbf{C}$. If $X \from A \to Y$ and $Y \from B
\to Z$ are two spans, their composite should be given by the pullback
  \[ %
\begin{tikzpicture} %
\matrix (m) [matrix of math nodes,row sep=1em,column sep=0.5em,text height=1.5ex,text depth=0.25ex] %
{  &       & A \times_{Y} B &       &  \\
   & A &       & B &   \\
X  &   & Y &       & Z.   \\ };
\path[->,font=\footnotesize] %
(m-1-3) edge (m-2-2)
(m-1-3) edge (m-2-4)
(m-2-2) edge (m-3-1)
(m-2-2) edge (m-3-3)
(m-2-4) edge (m-3-3)
(m-2-4) edge (m-3-5);
\end{tikzpicture}%
\]
However, taking pullbacks in this way is not strictly associative,
only associative up to canonical isomorphisms, so to get a category we
are forced to take morphisms in $\Span(\mathbf{C})$ to be isomorphism
classes of spans. The Cartesian product in $\mathbf{C}$ then gives
$\Span(\mathbf{C})$ a natural symmetric monoidal structure, and we can
think of a classical field theory as a TQFT valued in
$\Span(\mathbf{C})$ for some $\mathbf{C}$.

A quantization of such a classical field theory, which is supposed
to be analogous to the path integral of quantum field theory, would
then assign some algebraic object to the stack $\mathcal{F}(M)$, for
example the value of some cohomology theory $E^{*}\mathcal{F}(M)$, and
to the span \[\mathcal{F}(M) \xfrom{s} \mathcal{F}(K) \xto{t}
\mathcal{F}(M')\] a push-pull composite $t_{*}s^{*} \colon
E^{*}\mathcal{F}(M) \to E^{*}\mathcal{F}(K) \to E^{*}\mathcal{F}(M')$,
where the pushforward $t_{*}$ is thought of as integrating over
the fibres of $t$.

\subsection{Extended TQFTs and Iterated Spans}
Although relatively easy to define, Atiyah's notion of TQFTs suffers
from a number of defects, and recently much work has focused on the
more sophisticated notion of \emph{extended} topological quantum field theories.
This was first formulated in terms of \emph{$n$-categories} by Baez and
Dolan~\cite{BaezDolanTQFT}, building on earlier work by a number of
mathematicians, including Lawrence~\cite{LawrenceTQFT} and
Freed~\cite{FreedHigherAlg}. Roughly
speaking, an $n$-category is a structure that has objects, morphisms
between objects, 2-morphisms between morphisms, and so on up to
$n$-morphisms. For the definition of Baez and Dolan we want to
consider an $n$-category $\txt{Bord}_{n}$ whose objects are compact
$0$-manifolds, with morphisms given by 1-dimensional cobordisms
between 0-manifolds, and in general $i$-morphisms for $i = 1,\ldots,n$
given by $i$-dimensional cobordisms between manifolds with corners,
taking diffeomorphism classes of these for the $n$-morphisms. Given
such a symmetric monoidal $n$-category $\txt{Bord}_{n}$, with the
tensor product again given by taking disjoint unions, an
\emph{$n$-dimensional extended TQFT} valued in a symmetric monoidal
$n$-category $\mathcal{C}$ should be a symmetric monoidal functor
$\txt{Bord}_{n} \to \mathcal{C}$.

We would like to define an extended analogue of the classical
topological field theories we considered above. For example, if we
have an $n$-manifold $K$ whose boundary $\partial K$ is a manifold
with corners, i.e.\ we have a decomposition of $\partial K$ as $M
\cup_{A \amalg B} N$ where $M$ and $N$ are both $(n-1)$-manifolds with
boundary the $(n-2)$-manifold $A \amalg B$, then by restricting the
fields on $K$ we get a span of spans
\[
\begin{tikzcd}[row sep=scriptsize, column sep=tiny]
{}& & \mathcal{F}(K) \arrow{dl} \arrow{dr}  & &\\
& \mathcal{F}(M) \arrow{dl} \arrow{drrr}  && \mathcal{F}(N) \arrow[crossing over]{dlll} \arrow{dr}&\\
\mathcal{F}(A) & & & & \mathcal{F}(B),
\end{tikzcd}
\]
so we want to have such 2-fold spans as 2-morphisms in the
target. More precisely, if $\mathbf{C}$ is a category with finite
limits we'd like to construct a 2-category where the objects are the
objects of $\mathbf{C}$, the 1-morphisms are spans in $\mathbf{C}$,
and the 2-morphisms from $X \from A \to Y$ to $X \from B \to Y$ are
given by (isomorphism classes of) 2-fold spans, i.e.\ diagrams of the
form
\[
\begin{tikzcd}[row sep=scriptsize, column sep=tiny]
{}& & J \arrow{dl} \arrow{dr}  & &\\
& A \arrow{dl} \arrow{drrr}  && B \arrow[crossing over]{dlll} \arrow{dr}&\\
X & & & & Y.\end{tikzcd}
\]

We can think of this 2-fold span as a span in the slice category
$\mathbf{C}_{/X \times Y}$ (whose objects are themselves spans from
$X$ to $Y$), which suggests that the general target for a classical
extended TQFT should be an $n$-category $\Span_{n}(\mathbf{C})$ where
an $i$-morphism between objects $X$ and $Y$ is inductively defined to
be an $(i-1)$-morphism in $\Span_{n-1}(\mathbf{C}_{/X \times
  Y})$. This $n$-category should also have a symmetric monoidal
structure induced by the Cartesian product in $\mathbf{C}$.

To give precise definitions of both of the $n$-categories
$\txt{Bord}_{n}$ and $\Span_{n}(\mathbf{C})$ we need to consider
\emph{weak} $n$-categories: as is typically the case for interesting
structures that we want to organize as $n$-categories, the composition
of (higher) morphisms is not strictly associative, only associative up
to a coherent choice of (specified) invertible higher
morphisms. Unfortunately, although the notion of weak $n$-categories
intuitively makes sense, making it precise becomes increasingly
intractable as $n$ increases.  For our $n$-categories we also want
symmetric monoidal structures, which introduces additional
complications --- a complete definition of $\txt{Bord}_{2}$ as a
symmetric monoidal 2-category has been given by
Schommer-Pries~\cite{SchommerPries2TQFT}, but for larger $n$ it seems
that an appropriate notion of symmetric monoidal $n$-category has not
even been defined.

We will therefore instead work with \emph{$(\infty,n)$-categories}. In
the same way as $n$-categories these have $i$-morphisms for $i =
1,\ldots,n$, but in addition they have \emph{invertible} $i$-morphisms
for $i > n$. This might seem an even more complicated notion to
rigorously formalize than that of $n$-categories, but it turns out
that by using homotopy theory we can set up notions of
$(\infty,n)$-category that are quite easy to work with in practice,
such as the iterated complete Segal spaces of Barwick~\cite{BarwickThesis}
(which is the model we will make use of in this paper) and the complete
$\Theta_{n}$-spaces of Rezk~\cite{RezkThetaN}. Heuristically, the
advantage of working with $(\infty,n)$-categories is that we can often
avoid dealing with coherence issues by not making any choices, e.g.\ of
compositions, and instead only need to check that the space of
possible choices is contractible.

Our first main result gives a construction of
$(\infty,n)$-categories of iterated spans with the properties we discussed
above:
\begin{thm}
  Suppose $\mathcal{C}$ is an \icat{} (i.e.\ $(\infty,1)$-category) with
  finite limits. Then there exists an $(\infty,n)$-category
  $\Span_{n}(\mathcal{C})$ such that:
  \begin{enumerate}[(i)]
  \item The objects of $\Span_{n}(\mathcal{C})$ are the objects of $\mathcal{C}$.
  \item The 1-morphisms of $\Span_{n}(\mathcal{C})$ are spans in
    $\mathcal{C}$.
  \item For $X$ and $Y \in \mathcal{C}$, the $(\infty,n-1)$-category
    of maps from $X$ to $Y$ in $\Span_{n}(\mathcal{C})$ is
    $\Span_{n-1}(\mathcal{C}_{/X \times Y})$.
  \item The $(\infty,n)$-category $\Span_{n}(\mathcal{C})$ has a
    natural symmetric monoidal structure induced by the Cartesian product in
  $\mathcal{C}$.
\end{enumerate}
\end{thm}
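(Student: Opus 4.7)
The plan is to construct $\Span_{n}(\mathcal{C})$ directly as an $n$-fold simplicial space, verify the Segal and completeness conditions, read off properties (i)--(iii), and finally upgrade the construction to a symmetric monoidal one by functoriality.

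First I would introduce, for each $k \geq 0$, a poset $\Sigma^{k}$ encoding the shape of a length-$k$ chain of composable spans: its objects are pairs $(i,j)$ with $0 \leq i \leq j \leq k$, and a morphism $(i,j) \to (i',j')$ exists precisely when $i' \leq i$ and $j \leq j'$ (equivalently, $\Sigma^{k}$ is the twisted arrow category of $[k]$). A $k$-tuple of composable spans in $\mathcal{C}$ is then a functor $\Sigma^{k} \to \mathcal{C}$ sending each elementary square to a pullback. For the iterated case I would take the shape to be the product $\Sigma^{k_{1}} \times \cdots \times \Sigma^{k_{n}}$, and define $\Span_{n}(\mathcal{C})_{k_{1},\ldots,k_{n}}$ to be the underlying $\infty$-groupoid of the full subcategory of $\mathrm{Fun}(\Sigma^{k_{1}} \times \cdots \times \Sigma^{k_{n}}, \mathcal{C})$ spanned by functors satisfying the pullback condition in each of the $n$ coordinate directions. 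The simplicial structure maps come from the evident functoriality of $\Sigma^{(-)}$ in $[k]$, producing an $n$-fold simplicial space.

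Next I would verify the $n$-fold Segal conditions. The Segal condition in the $i$-th direction---that the value at $k_{i}$ is the limit of the values at the elementary intervals $\{0,1\},\{1,2\},\ldots$---is exactly the assertion that a $\Sigma^{k_{i}}$-diagram satisfying the Cartesian condition is determined by its restriction to length-one spans, with the higher data recoverable as iterated pullbacks; this holds by construction. The globularity (essential constancy at $k_{i}=0$) condition follows from the fact that $\Sigma^{0}$ is a point. Completeness is largely automatic, because the space of equivalences between (iterated) spans coincides with the underlying space of objects: invertible morphisms of spans are already encoded as invertible higher morphisms in the ambient $\infty$-groupoid. Properties (i)--(iii) can then be read off directly from the definition: the multi-index $(0,\ldots,0)$ recovers the underlying $\infty$-groupoid of $\mathcal{C}$, the multi-index $(1,0,\ldots,0)$ gives spans in $\mathcal{C}$, and fixing the two endpoints of the first coordinate to be $X$ and $Y$ identifies the remaining data with a $\Sigma^{k_{2}} \times \cdots \times \Sigma^{k_{n}}$-diagram valued in $\mathcal{C}_{/X \times Y}$, which is precisely $\Span_{n-1}(\mathcal{C}_{/X \times Y})$.

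For the symmetric monoidal structure (iv) I would observe that $\mathcal{C} \mapsto \Span_{n}(\mathcal{C})$ is functorial in finite-limit-preserving functors and, crucially, preserves finite products of $\infty$-categories with finite limits, since both the diagram $\infty$-category and the pullback condition are computed coordinatewise. The Cartesian product on $\mathcal{C}$ is encoded by a commutative monoid structure in the $\infty$-category of $\infty$-categories with finite limits; applying $\Span_{n}$ produces a commutative monoid in complete $n$-fold Segal spaces, i.e.\ a symmetric monoidal $(\infty,n)$-category.

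The main obstacle will be the careful verification of the $n$-fold Segal and completeness conditions: one must show that imposing the pullback condition in one coordinate direction is compatible with taking the Segal limits in the others. This is a commutation-of-limits argument, but it requires handling the combinatorics of the maps of posets $\Sigma^{k} \to \Sigma^{k'}$ and their products with some care, and checking that the Cartesian/pullback subcategory of $\mathrm{Fun}(\Sigma^{k_{1}} \times \cdots \times \Sigma^{k_{n}}, \mathcal{C})$ behaves well under restriction. Once this technical heart is in place, functoriality, the symmetric monoidal upgrade, and the explicit identifications (i)--(iii) all follow formally.
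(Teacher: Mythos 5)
Your construction is, at its heart, the paper's: the poset $\Sigma^{k}$ of subintervals of $[k]$, Cartesian (pullback-condition) functors out of products $\Sigma^{k_{1}}\times\cdots\times\Sigma^{k_{n}}$, and structure maps from the functoriality of $\Sigma^{(-)}$. But there is a genuine gap at the globularity step. The multisimplicial space you define is an $n$-uple Segal space, not an $n$-fold one: setting $k_{1}=0$ only deletes one factor of the product, so the resulting space still varies with $k_{2},\ldots,k_{n}$ — for $n=2$, the $(0,1)$-level is the whole space of spans in $\mathcal{C}$, not the space of objects $\iota\mathcal{C}$. The fact that $\Sigma^{0}$ is a point does not give the required constancy of the $(0,\bullet,\ldots,\bullet)$-part, and in fact constancy simply fails for your object. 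This is exactly why the paper first builds the $n$-uple Segal space $\SPAN_{n}(\mathcal{C})$ and then extracts its underlying $n$-fold Segal space via a right adjoint $U_{\Seg}$ to the inclusion of $n$-fold Segal objects into $n$-uple category objects (Proposition~\ref{propn:underfold}); you need either this step or an explicit degeneracy condition on the diagrams, and your identifications (ii) and (iii) must then be carried through that extraction (this is the content of Proposition~\ref{propn:spanmaps}, which is not literally ``read off'').

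Completeness is also not ``largely automatic.'' Completeness concerns 1-morphisms that are invertible up to homotopy under pullback-composition, i.e.\ spans admitting an inverse span — not merely invertible maps of span diagrams, which is what your remark about the ambient $\infty$-groupoid addresses. One genuinely has to prove that a span invertible in this sense has both legs equivalences (the paper's Lemma~\ref{lem:Span1eq}, an argument with composites and pasting of pullback squares), and then run an induction on $n$ using the criterion that an $n$-fold Segal space is complete if and only if its $(\bullet,0,\ldots,0)$-part and all its mapping $(n-1)$-fold Segal spaces are complete (Theorem~\ref{thm:CSScomplmapcond}), which itself requires proof. On point (iv) your route differs from the paper's: you propose product-preservation of $\Span_{n}(-)$ applied to the Cartesian commutative-monoid structure on $\mathcal{C}$, whereas the paper deloops, identifying $\Span_{n}(\mathcal{C})$ with iterated endomorphism categories inside $\Span_{n+m}(\mathcal{C})$ and invoking the delooping description of symmetric monoidal structures. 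Your route is viable (the paper cites Li-Bland for the limit-preservation of the span construction), but you would still have to check that product-preservation survives the passage to the underlying $n$-fold Segal space and completion, so it is not obviously shorter than the paper's argument.
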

We'll construct these $(\infty,n)$-categories (in the form of $n$-fold
Segal spaces) in \S\ref{sec:spancat} and
prove that they are complete in \S\ref{sec:complitspan}, where we also
identify their mapping $(\infty,n-1)$-categories, and we construct the
symmetric monoidal structure in \S\ref{sec:dual}. The definition we
consider generalizes that of Barwick~\cite{BarwickQ} in the case $n =
1$.

Note that even if $\mathbf{C}$ is an ordinary category, the \icat{}
$\Span_{1}(\mathbf{C})$ will still have invertible 2-morphisms, given
by isomorphisms of spans --- since composing spans by taking pullbacks is
well-defined up to a canonical choice of such an invertible
2-morphism, we do not have to take isomorphism classes at the
top level.

We would like to consider these $(\infty,n)$-categories as targets for
extended TQFTs, with these also reformulated using the language
of $(\infty,n)$-categories. Such a definition was introduced by Lurie, 
and a sketch of it goes as follows:
\begin{defn}
  Let $\txt{Bord}_{n}$ be the $(\infty,n)$-category whose objects are
  closed $0$-manifolds and whose $i$-morphisms for $i \leq n$ are
  $i$-dimensional cobordisms between $(i-1)$-manifolds with corners,
  with $(n+1)$-morphisms being diffeomorphisms of such cobordisms,
  $(n+2)$-morphisms smooth homotopies of such diffeomorphisms, and so
  on. The disjoint union of manifolds gives this a symmetric monoidal
  structure, and if $\mathcal{C}$ is a symmetric monoidal $(\infty,n)$-category,
  a $\mathcal{C}$-valued $n$-dimensional \emph{extended topological quantum
  field theory} is a symmetric monoidal functor $\txt{Bord}_{n} \to
  \mathcal{C}$.
\end{defn}
In \cite{LurieCob} Lurie sketches a definition of the
symmetric monoidal $(\infty,n)$-categories $\txt{Bord}_{n}$, and a
complete construction has recently been carried out in full detail by
Calaque and Scheimbauer~\cite{CalaqueScheimbauerCob}.

Baez and Dolan conjectured that \emph{framed} extended TQFTs (where we
consider cobordisms that are equipped with a framing of their tangent
bundle) valued in any symmetric monoidal $n$-category $\mathbf{C}$ are
classified by the \emph{fully dualizable} objects in
$\mathbf{C}$. (Being fully dualizable is an inductively defined
algebraic notion; in the case $n =1$ it reduces to the usual notion of
dualizability for an object in a symmetric monoidal \icat{}. We will
review the general definition below in \S\ref{sec:adj}.)  This
conjecture is known as the \emph{Cobordism Hypothesis}. In
\cite{LurieCob}, Lurie introduced the natural generalization of the
Cobordism Hypothesis to $(\infty,n)$-categorical framed extended
TQFTs, and gave a detailed sketch of a proof thereof.

Our second main result gives a description of the fully dualizable
objects of the $(\infty,n)$-categories $\Span_{n}(\mathcal{C})$, and
thus, via the Cobordism Hypothesis, a classification of the framed
extended TQFTs with this target:
\begin{thm}
  Suppose $\mathcal{C}$ is an \icat{} with finite limits. Then all
  objects of the $(\infty,n)$-category $\Span_{n}(\mathcal{C})$ are
  fully dualizable with respect to the natural symmetric monoidal
  structure induced by the Cartesian product in $\mathcal{C}$.
\end{thm}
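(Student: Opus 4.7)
The strategy is to produce enough adjoints in $\Span_n(\mathcal{C})$ that full dualizability falls out formally. I would first verify that every object $X$ is dualizable with $X^{\vee}=X$: the coevaluation is the span
\[\ast \leftarrow X \xrightarrow{\Delta} X\times X\]
(terminal map and diagonal), and the evaluation is the reversed span $X\times X \xleftarrow{\Delta} X \to \ast$. The composites appearing in the triangle identities are easily computed from pullback squares in $\mathcal{C}$, and the triangle identities hold up to a canonical invertible 2-morphism witnessed by an obvious 2-fold span.

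The heart of the argument would be a \emph{key lemma}: every 1-morphism in $\Span_n(\mathcal{C})$ admits both a left and a right adjoint, given in both cases by the reversed span. For $n=1$ this is a classical fact about the bicategory of spans; the unit and counit are built from the identity span $A\leftarrow A\to A$ and the evident map into pullbacks. For general $n$ the content is to promote this to a map $\Adj \to \Span_n(\mathcal{C})$ out of the free adjunction $(\infty,2)$-category reviewed in \S\ref{sec:adj}, which amounts to assembling a coherent collection of span diagrams indexed on the simplicial model of $\Adj$.

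Once this lemma is in hand, I would deduce full dualizability by induction on $n$. Recall that full dualizability requires the object to be dualizable, and the evaluation and coevaluation 1-morphisms (and their adjoints, and \emph{their} adjoints) to admit adjoints, iteratively, down to level $n-1$. By property (iii) of the preceding theorem, a $k$-morphism in $\Span_n(\mathcal{C})$ for $k\geq 2$ is a $(k-1)$-morphism in $\Span_{n-1}(\mathcal{C}_{/X\times Y})$, a category of the same form, so by induction it suffices to produce left and right adjoints for all 1-morphisms of each $\Span_m(\mathcal{D})$. This is precisely the key lemma. Iterating, every morphism in the adjunction tower starting from $\mathrm{ev}_X$ and $\mathrm{coev}_X$ has both adjoints, so $X$ is fully dualizable.

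The main obstacle is the key lemma itself: making precise, in the language of complete $n$-fold Segal spaces, the intuitively obvious assertion that reversing a span gives an adjoint. Concretely, one must specify a functor from the walking adjunction into $\Span_n(\mathcal{C})$, whose underlying data is a bisimplicial diagram of iterated pullbacks; the verification that this diagram is well-defined reduces to checking that certain squares are Cartesian in $\mathcal{C}$, but arranging the coherences at all simplicial levels simultaneously — and extracting the right adjoint as well by a formally dual construction — is the technically delicate part. Everything else in the argument is a formal consequence of this lemma together with the inductive structure of $\Span_n$.
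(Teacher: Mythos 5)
Your overall architecture coincides with the paper's: the candidate adjoint of a span is the reversed span, object-level duality data is the (co)evaluation span built from the diagonal, the induction on $n$ runs through the identification $\Span_{n}(\mathcal{C})(X,Y) \simeq \Span_{n-1}(\mathcal{C}_{/X\times Y})$ of Proposition~\ref{propn:spanmaps}, and full dualizability is deduced from the stronger statement that $\Span_{n}(\mathcal{C})$ has duals. So the skeleton is right.

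However, there is a genuine gap at exactly the point you flag as ``the main obstacle'': your key lemma (every 1-morphism has both adjoints) is not actually proved. You propose to establish it by directly constructing a functor $\Adj \to \Span_{n}(\mathcal{C})$ out of the walking adjunction, i.e.\ by assembling a homotopy-coherent bisimplicial diagram of iterated pullbacks, and you concede that arranging these coherences ``at all simplicial levels simultaneously'' is delicate and is left undone. This coherence problem is precisely what the paper never has to face: the missing idea is the Riehl--Verity theorem recalled in \S\ref{sec:adj} (\cite{RiehlVerityAdj}*{Theorem 5.3.9}), which says that any adjunction in the homotopy 2-category of an $(\infty,2)$-category extends to a homotopy-coherent adjunction. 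Granting that, it suffices to exhibit the unit span $A \leftarrow X \to X\times_{B}X$ and counit span $X\times_{A}X \leftarrow X \to B$ and to verify the two triangle identities up to homotopy by an explicit finite pullback computation, which is the content of Lemma~\ref{lem:adj1}; the left adjoint then comes from the same argument with the roles of the two legs exchanged, not from a separate ``formally dual construction.'' Without invoking this lifting theorem (or some equivalent device), your proposal does not contain a proof of the key lemma, and hence does not yet yield adjoints for 1-morphisms, which is the load-bearing step of the whole induction. Once that reduction is inserted, the rest of your argument goes through essentially as in the paper.
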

We'll show this in section \S\ref{sec:dual}. In fact, we'll show that
these $(\infty,n)$-categories \emph{have duals} in the sense of
\cite{LurieCob}, meaning that all the objects are dualizable and all
$i$-morphisms have left and right adjoints for all $i < n$.

\subsection{Iterated Spans with Local Systems}
In \cite{FreedHopkinsLurieTeleman}, Freed, Hopkins, Lurie, and Teleman
discuss extended TQFTs valued in $(\infty,n)$-categories where the
higher morphisms are iterated spans of spaces equipped with
\emph{local systems}. If $\mathcal{C}$ is an \icat{} (regarded as a
complete Segal space) and $X$ is a space, a \emph{$\mathcal{C}$-valued
  local system} on $X$ is just a functor $X \to \mathcal{C}$, or
equivalently a map of spaces $X \to \mathcal{C}_{0}$, where
$\mathcal{C}_{0}$ is the space of objects in $\mathcal{C}$. If we have
a span of spaces $X \from A \to Y$, we may consider a more elaborate
notion of local system on this span, namely a map of spans
\[
\begin{tikzcd}[row sep=tiny,column sep=tiny]
{}& A \arrow{rrr} \arrow{ddl} \arrow{dr} &   & &
\mathcal{C}_{1} \arrow{dr} & \\
  &   & Y \arrow{rrr} & & & \mathcal{C}_{0} \\
X \arrow{rrr}&   &   &\mathcal{C}_{0} \arrow[leftarrow,crossing over]{uur}& &
\end{tikzcd}
\]
where $\mathcal{C}_{1}$ is the space of morphisms in $\mathcal{C}$ and
the two maps $\mathcal{C}_{1} \to \mathcal{C}_{0}$ are the source and
target projections. Moreover, we can use the composition map in
$\mathcal{C}$ to compose such spans: given spans $X \from A \to Y$ and
$Y \from B \to Z$ over $\mathcal{C}_{0} \from \mathcal{C}_{1} \to
\mathcal{C}_{0}$, their composite is given by $X \from A \times_{Y} B
\to Z$ with the maps from $X$ and $Z$ to $\mathcal{C}_{0}$ as before,
but now equipped with the composite map
\[ A \times_{Y} B \to \mathcal{C}_{1} \times_{\mathcal{C}_{0}}
\mathcal{C}_{1} \to \mathcal{C}_{1},\] where the second map is the
composition in $\mathcal{C}$.  We can also use the map
$\mathcal{C}_{0}\to \mathcal{C}_{1}$ that assigns to objects their
identity maps to get identity maps for objects $X \to
\mathcal{C}_{0}$, so from the \icat{} $\mathcal{C}$ we should get a new
\icat{} where the objects are spaces with $\mathcal{C}$-valued local
systems and the morphisms are given by spans with local systems in
this sense. Increasing the category number, from an
$(\infty,n)$-category $\mathcal{C}$ we would expect to get an
$(\infty,n)$-category where the $k$-morphisms are $k$-fold spans of
spaces, equipped with a map to the $k$-fold span obtained from the
source and target maps from the space of $k$-morphisms in
$\mathcal{C}$ to the spaces of $i$-morphisms for all $i < k$. Our
third main result is a construction of such $(\infty,n)$-categories:
\begin{thm}
  Suppose $\mathcal{C}$ is an $(\infty,n)$-category. Then there is an $(\infty,n)$-category
  $\Span_{n}(\mathcal{S}; \mathcal{C})$ where the $k$-morphisms are $k$-fold spans of spaces
  with local systems in $\mathcal{C}$.
\end{thm}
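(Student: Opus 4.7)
The plan is to mimic the construction of $\Span_{n}(\mathcal{C})$ from \S\ref{sec:spancat}, but now working over the coefficient $(\infty,n)$-category $\mathcal{C}$. Since the \icat{} $\mathcal{S}$ of spaces has all finite limits, the earlier theorem already produces a complete $n$-fold Segal space $\Span_{n}(\mathcal{S})$ whose $k$-morphisms are $k$-fold spans of spaces. The additional datum of a local system on such a $k$-fold span should amount to a morphism of iterated spans from it into the tautological iterated span assembled from the spaces of $i$-morphisms of $\mathcal{C}$ together with all their source and target projections.

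First I would realize $\mathcal{C}$ as an $n$-fold simplicial space $\mathcal{C}_{\bullet,\ldots,\bullet}$ and, for each multi-index $\vec{k}=(k_{1},\ldots,k_{n})$, define $\Span_{n}(\mathcal{S};\mathcal{C})_{\vec{k}}$ to be the space of pairs consisting of a $\vec{k}$-shaped grid of pullback squares in $\mathcal{S}$ (that is, an element of $\Span_{n}(\mathcal{S})_{\vec{k}}$) together with a map from this grid, regarded as a diagram of the appropriate shape $P_{\vec{k}}$, into the corresponding piece of $\mathcal{C}$ that is compatible with all the simplicial structure maps. Equivalently, one can express this as a fibre product of $n$-fold simplicial spaces, amounting to a slice-type construction of $\Span_{n}(\mathcal{S})$ over the $n$-fold simplicial space underlying $\mathcal{C}$. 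Functoriality in $\vec{k}$ comes from functoriality of both $\Span_{n}(\mathcal{S})$ and $\mathcal{C}$, and there is an obvious forgetful map $\Span_{n}(\mathcal{S};\mathcal{C}) \to \Span_{n}(\mathcal{S})$.

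Next I would verify that this $n$-fold simplicial space is a complete $n$-fold Segal space. The Segal conditions assert that certain restriction maps are equivalences, and the relevant ones split into two compatible pieces: one controlled by the Segal condition for $\Span_{n}(\mathcal{S})$, already known, and one controlled by the Segal condition for $\mathcal{C}$, which holds by hypothesis. Completeness, which asks that the spaces of invertible $i$-morphisms coincide with the appropriate spaces of degeneracies, is handled in the same way using completeness of both factors. The identification of the $k$-morphisms is then immediate from the definition: discarding the map to $\mathcal{C}$ recovers a $k$-fold span in $\mathcal{S}$, while the $\mathcal{C}$-component assigns to each cell of the span an $i$-morphism of $\mathcal{C}$, compatibly with the source and target projections, which is exactly a local system in the informal sense above.

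The main obstacle will be pinning down the correct shape $P_{\vec{k}}$ and the correct notion of compatibility so that the composition of local systems is well-behaved. Concretely, when two spans are composed by pullback of spaces, one must produce a canonical composite local system; this forces us to apply precisely the Segal maps of $\mathcal{C}$ (they compose an $i$-morphism label with the next one) and to track coherence across all $n$ simplicial directions simultaneously. Once this is set up properly — essentially, once one checks that the fibre-product construction commutes with the Segal projections in each direction — the rest of the argument reduces to combining the $\Span_{n}(\mathcal{S})$ construction with the Segal and completeness structure of $\mathcal{C}$ level by level.
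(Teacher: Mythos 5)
Your construction is, in outline, the same route the paper takes: levelwise, the labelled object is the space of Cartesian grids of pullbacks in $\mathcal{S}$ equipped with a map of diagrams to a tautological diagram assembled from the spaces of $i$-morphisms of $\mathcal{C}$, i.e.\ a slice of the iterated span construction over that diagram (not literally a fibre product of $n$-fold simplicial spaces, but your primary description is the right one). However, the step you yourself call ``the main obstacle'' is exactly where the mathematical content sits, and you do not resolve it. One must (a) produce the tautological diagrams coherently in the multi-index --- the paper does this by encoding them as a section of the coCartesian fibration $\SPAN^{+}_{n}(\mathcal{S}) \to (\simp^{\op})^{\times n}$ via the functor $\Pi \colon \widehat{\bbS} \to \simp^{\op}$ and then slicing fibrewise --- and (b) verify that each tautological diagram $\mathcal{C}\circ\Pi_{\vec{k}}$ is itself Cartesian, i.e.\ a right Kan extension from $\bbL^{n_{1},\ldots,n_{k}}$. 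Point (b) is precisely where the Segal condition on $\mathcal{C}$ enters (via Lemma~\ref{lem:kupSegKanExt} and Lemma~\ref{lem:lKanslice}), and it is what makes the Segal maps of the labelled object equivalences: the space of labellings of a composite grid decomposes as a fibre product of labellings of its pieces only because the target diagram is such a Kan extension. So for the construction and the Segal condition you have the right picture, but the key verification is only named, not carried out; this is the heart of \S\ref{sec:spanlocsys}.

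The genuine gap is completeness. ``Handled in the same way using completeness of both factors'' is not an argument: completeness of an $n$-fold Segal space is not a levelwise condition and does not split along the forgetful map to $\Span_{n}(\mathcal{S})$. The paper needs three separate ingredients: first, an identification of the equivalences in the $1$-fold case --- an invertible labelled span has invertible, hence essentially trivial, underlying span (Lemma~\ref{lem:Span1eq}, Proposition~\ref{propn:Span1comp}), and the remaining label must be an equivalence in the functor Segal space $\Fun(X,\mathcal{C})$ (Lemma~\ref{lem:eqinSpanXC}, Proposition~\ref{propn:span1XCcompl}); second, the identification of the mapping $(\infty,k-1)$-categories of the labelled object as $\Span_{k-1}(\mathcal{S};\mathcal{C}_{\xi,\eta})$ (Proposition~\ref{propn:Famkmaps}); and third, the inductive criterion that an $n$-fold Segal space is complete \IFF{} its underlying Segal space $\mathcal{C}_{\bullet,0,\ldots,0}$ and all its mapping $(n-1)$-fold Segal spaces are complete (Theorem~\ref{thm:CSScomplmapcond}), a nontrivial result occupying much of \S\ref{sec:completeseg}. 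Your sketch contains none of these: nothing in it rules out an invertible labelled span whose underlying span is nontrivial, and nothing reduces completeness in the higher simplicial directions to completeness of $\mathcal{C}$. To repair the proposal you would need to supply these three steps (or an equivalent mechanism) as in \S\ref{sec:compllocsys}.
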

We'll construct these $(\infty,n)$-categories in the form of $n$-fold
Segal spaces in \S\ref{sec:spanlocsys} and prove that they are
complete in \S\ref{sec:compllocsys}.

These are the $(\infty,n)$-categories considered as targets for
classical topological field theories by Freed, Hopkins, Lurie, and
Teleman, who propose that for good choices of $\mathcal{C}$ there
should be a symmetric monoidal linearization functor from
$\Span_{n}(\mathcal{S}; \mathcal{C})$, or at least from the
subcategory of spans of $\pi$-finite spaces, to $\mathcal{C}$.  We
will not construct any such linearizations here, but in
\S\ref{sec:duallocsys} we do describe (via the Cobordism Hypothesis)
the framed extended TQFTs with values in these $(\infty,n)$-categories:
\begin{thm}
  Suppose $\mathcal{C}$ is a symmetric monoidal
  $(\infty,n)$-category. Then $\Span_{n}(\mathcal{S}; \mathcal{C})$
  inherits a natural symmetric monoidal structure. Moreover, if
  $\mathcal{C}$ has duals, then so does $\Span_{n}(\mathcal{S};
  \mathcal{C})$.
\end{thm}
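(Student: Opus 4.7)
The plan is to address the two claims in turn. For the symmetric monoidal structure, I would exhibit the construction $\mathcal{C} \mapsto \Span_{n}(\mathcal{S}; \mathcal{C})$ of \S\ref{sec:spanlocsys} as a lax symmetric monoidal functor from $(\infty, n)$-categories (with Cartesian product) to itself, hence as sending commutative monoids to commutative monoids, i.e., symmetric monoidal $(\infty, n)$-categories to symmetric monoidal $(\infty, n)$-categories. The lax structure morphism $\Span_{n}(\mathcal{S}; \mathcal{C}) \times \Span_{n}(\mathcal{S}; \mathcal{D}) \to \Span_{n}(\mathcal{S}; \mathcal{C} \times \mathcal{D})$ takes a pair of spans-with-local-systems on spaces $X$ and $Y$ to their external product on $X \times Y$, equipped with the product local system. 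Applying this to a symmetric monoidal $\mathcal{C}$ then gives a tensor product on $\Span_{n}(\mathcal{S}; \mathcal{C})$ with $(X, \phi) \otimes (Y, \psi) = (X \times Y, \phi \boxtimes \psi)$, where $\phi \boxtimes \psi \colon X \times Y \to \mathcal{C}_{0}$ is the composition of $\phi \times \psi$ with the multiplication of $\mathcal{C}$.

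For the existence of duals, I would adapt the proof from \S\ref{sec:dual} that $\Span_{n}(\mathcal{C}')$ has duals for $\mathcal{C}'$ with finite limits. An object of $\Span_{n}(\mathcal{S}; \mathcal{C})$ is a map $\phi \colon X \to \mathcal{C}_{0}$; its dual should be $\phi^{\vee} \colon X \to \mathcal{C}_{0}$ obtained by post-composing $\phi$ with the duality self-equivalence on $\mathcal{C}_{0}$ provided by the hypothesis. The evaluation and coevaluation 1-morphisms are lifts of the standard self-duality spans $X \times X \from X \to \ast$ and $\ast \from X \to X \times X$ in $\Span_{n}(\mathcal{S})$, enriched by the evaluation/coevaluation local system data coming from the duality in $\mathcal{C}$. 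Left and right adjoints of $k$-morphisms for $k < n$ are produced similarly by reversing the direction of the underlying $k$-fold span, as in the duals proof for $\Span_{n}(\mathcal{S})$, and post-composing each local system with the appropriate adjoint morphisms in $\mathcal{C}$.

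The principal obstacle is the coherent verification of the unit/counit conditions at every level of the $n$-fold Segal space. In the case without local systems, these reduce to pullback manipulations; with local systems one must additionally track compatibility of the adjunction data in $\mathcal{C}$ with composition of local systems along spans, which should follow from the zigzag identities in $\mathcal{C}$ together with the functoriality of local system composition. I expect this is best organized by induction on $n$, using a suitable identification of the mapping $(\infty, n-1)$-categories of $\Span_{n}(\mathcal{S}; \mathcal{C})$ as iterated-span categories of the same form over an $(\infty, n-1)$-category derived from $\mathcal{C}$, which would itself inherit duals from $\mathcal{C}$.
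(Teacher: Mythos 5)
Your treatment of dualizability is essentially the paper's: the right adjoint of a $1$-morphism is the reversed span with its local system transported through the adjunction data of $\mathcal{C}$ (in the paper this is done by composing $X \to \Mor_{1}(\mathcal{C})$ with $\mathfrak{g}^{*}(\mathfrak{f}^{*})^{-1}$, and the unit/counit local systems with $\mathfrak{u}^{*}(\mathfrak{f}^{*})^{-1}$, $\mathfrak{c}^{*}(\mathfrak{f}^{*})^{-1}$, so that the triangle identities come from Lemma~\ref{lem:adj1} together with those of the generic adjunction $\Adj$; see Proposition~\ref{propn:locsysoneadj}), and the higher adjoints are obtained by induction through the identification of mapping $(\infty,n-1)$-categories. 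One point you gloss over is exactly what that identification gives: by Proposition~\ref{propn:Famkmaps} the mapping object $\Span_{n}(\mathcal{S};\mathcal{C})(\xi,\eta)$ is $\Span_{n-1}(\mathcal{S};\mathcal{C}_{\xi,\eta})$, where $\mathcal{C}_{\xi,\eta}$ is not a mapping category of $\mathcal{C}$ but a \emph{family} of such, pulled back over $X\times Y$; so your phrase ``an $(\infty,n-1)$-category derived from $\mathcal{C}$, which would itself inherit duals'' needs the additional statement that having adjoints passes to such parametrized families. The paper supplies exactly this via the internal Definition~\ref{defn:hasadjitopos} (and, for the comparison with the naive definition, Lemma~\ref{lem:hasadjfamily}), which is why it works with Segal objects in an \itopos{} throughout; without that ingredient the induction in Theorem~\ref{thm:locsyshasadj} does not close. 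Also, the paper never verifies evaluation/coevaluation for objects directly: duals for objects are obtained by delooping, i.e.\ by applying the adjoints statement to $\Span_{n}(\mathcal{S};\mathcal{C}^{\otimes})$ and identifying $\Span_{n}(\mathcal{S};\mathcal{C})$ with its endomorphism category (cf.\ Example~\ref{ex:gplike} for the explicit shape of the dual object, which agrees with your $\phi^{\vee}$).

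For the symmetric monoidal structure your route is genuinely different, and this is where the real gap lies. You propose to make $\Span_{n}(\mathcal{S};-)$ a lax symmetric monoidal endofunctor via external products and then apply it to the commutative monoid $\mathcal{C}$. The binary external product is plausible, but the content of the statement is precisely the coherence of that operation: since $\Span_{n}(\mathcal{S};\mathcal{C}\times\mathcal{D})$ is \emph{not} the product of $\Span_{n}(\mathcal{S};\mathcal{C})$ and $\Span_{n}(\mathcal{S};\mathcal{D})$, the lax structure maps are not induced by any universal property and must be constructed together with all higher compatibilities at the level of $n$-fold Segal spaces --- which is a problem of the same order as the one being solved, and your proposal offers no mechanism for it. The paper avoids this entirely: it encodes a symmetric monoidal $\mathcal{C}$ as a sequence of pointed deloopings (Remarks~\ref{rmk:symmmonfromseq} and \ref{rmk:symmmontopoi}) and then uses Proposition~\ref{propn:Famkmaps} to identify $\Span_{k+i}(\mathcal{S};\mathcal{C}_{i})$ with the endomorphism $(\infty,k+i)$-category of the basepoint in $\Span_{k+i+1}(\mathcal{S};\mathcal{C}_{i+1})$, so the symmetric monoidal structure on $\Span_{n}(\mathcal{S};\mathcal{C})$ is produced with no new coherence data at all. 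If you want to pursue the lax-monoidal-functor route, you would need to build that structure by hand (or derive it from a limit-preservation statement for the span construction), and as written that step is missing.
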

We'll also prove analogous results when $\mathcal{S}$ is replaced by
an arbitrary \itopos{} $\mathcal{X}$, with $\mathcal{C}$ an
$(\infty,n)$-category internal to $\mathcal{X}$.

\subsection{Lagrangian Correspondences}
To get a bit closer to the notion of a (non-extended) classical field
theory as considered in physics within the framework of TQFTs, we
would like to assign to a closed manifold $M$ a stack of fields
$\mathcal{F}(M)$ equipped with a \emph{symplectic structure}. The span
$\mathcal{F}(M) \from \mathcal{F}(X) \to \mathcal{F}(N)$ assigned to a
cobordism $X$ from $M$ to $N$ should then be a \emph{Lagrangian
  correspondence}, i.e.\ a Lagrangian morphism from $\mathcal{F}(X)$ to
$\mathcal{F}(M) \times \overline{\mathcal{F}(N)}$, where
$\overline{\mathcal{F}(N)}$ is $\mathcal{F}(N)$ equipped with the
negative of its symplectic form.  In \cite{PTVVShiftedSympl} Pantev,
To\"{e}n, Vaqui\'{e} and Vezzosi introduce a theory of symplectic
structures on derived algebraic stacks and Lagrangian morphisms
between them. The final main contribution in this paper is to make use
of this to construct \icats{} of symplectic derived stacks and
Lagrangian correspondences:
\begin{thm}
  The $n$-symplectic derived Artin stacks locally of finite
  presentation and the Lagrangian correspondences between them
  determine a subcategory $\Lag^{n}_{(\infty,1)}$ of $\Span_{1}(\dSt;
  \mathcal{A}^{2}_{\txt{cl}}[n])$, where $\dSt$ is the \itopos{} of
  derived stacks over a base field $k$ and
  $\mathcal{A}^{2}_{\txt{cl}}[n]$ is the derived stack of $n$-shifted
  closed 2-forms. Moreover, the \icat{} $\Lag^{n}_{(\infty,1)}$
  inherits a symmetric monoidal structure from $\Span_{1}(\dSt;
  \mathcal{A}^{2}_{\txt{cl}}[n])$ with respect to which all
  $n$-symplectic derived Artin stacks are dualizable.
\end{thm}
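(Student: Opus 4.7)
The plan is to realize $\Lag^{n}_{(\infty,1)}$ as a non-full subcategory of $\Span_{1}(\dSt; \mathcal{A}^{2}_{\txt{cl}}[n])$, then transfer the symmetric monoidal structure and exhibit duals by restriction from the ambient category. An object of $\Span_{1}(\dSt; \mathcal{A}^{2}_{\txt{cl}}[n])$ is a pair $(X,\omega)$ with $X$ a derived stack and $\omega \colon X \to \mathcal{A}^{2}_{\txt{cl}}[n]$ an $n$-shifted closed 2-form, and a morphism $(X,\omega_{X}) \to (Y,\omega_{Y})$ is a span $X \xleftarrow{s} L \xrightarrow{t} Y$ equipped with a path from $s^{*}\omega_{X}$ to $t^{*}\omega_{Y}$. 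I would define the objects of $\Lag^{n}_{(\infty,1)}$ to be those $(X,\omega)$ with $X$ derived Artin and $\omega$ non-degenerate in the sense of \cite{PTVVShiftedSympl}, and the morphisms to be those spans with $L$ derived Artin and with the given path equipping $L \to X \times Y$ with a Lagrangian structure.

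The main technical step, and the principal obstacle, is closure of these data under composition in $\Span_{1}(\dSt; \mathcal{A}^{2}_{\txt{cl}}[n])$. Given composable Lagrangian correspondences $(L,\lambda_{L})$ and $(M,\lambda_{M})$ over $(Y,\omega_{Y})$, the composite is the derived pullback $L \times_{Y} M$, which inherits a canonical homotopy from $\lambda_{L}$ and $\lambda_{M}$ via the addition on the abelian group stack $\mathcal{A}^{2}_{\txt{cl}}[n]$. I must verify that this inherited homotopy equips $L \times_{Y} M \to X \times Z$ with a Lagrangian structure, i.e., that the induced morphism of tangent complexes is an equivalence. This is exactly the content of the composition theorem for Lagrangian correspondences in the PTVV framework from \cite{PTVVShiftedSympl}, which I would invoke directly; since derived Artin stacks are closed under finite limits, $L \times_{Y} M$ is again Artin, and so $\Lag^{n}_{(\infty,1)}$ is closed under composition.

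With the subcategory in hand, the symmetric monoidal structure is inherited by restriction: the symmetric monoidal structure already established on $\Span_{1}(\dSt; \mathcal{A}^{2}_{\txt{cl}}[n])$ arises from the Cartesian product on $\dSt$ combined with addition on $\mathcal{A}^{2}_{\txt{cl}}[n]$, and on objects $(X,\omega_{X})$ and $(Y,\omega_{Y})$ it produces the obvious $n$-shifted symplectic structure on $X \times Y$. Non-degeneracy of this product form and closure of Lagrangian correspondences under Cartesian products (a straightforward fiber-sequence argument) show that $\Lag^{n}_{(\infty,1)}$ is a symmetric monoidal subcategory. For dualizability, I take the dual of $(X,\omega)$ to be $(X,-\omega)$ and the unit and counit to be the diagonal spans $\txt{pt} \leftarrow X \xrightarrow{\Delta} X \times X$ and its reverse, equipped with the canonical nullhomotopy of $\omega + (-\omega)$. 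The triangle identities are inherited from the ambient span category, where every object is self-dual via its diagonal, so it suffices to check that the diagonal endowed with this specific nullhomotopy is Lagrangian; this reduces immediately to the self-duality $\mathbb{T}_{X} \simeq \mathbb{L}_{X}[n]$ supplied by non-degeneracy of $\omega$. Hence every object of $\Lag^{n}_{(\infty,1)}$ is dualizable.
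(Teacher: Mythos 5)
Your proposal is correct and follows essentially the same route as the paper: define $\Lag^{n}_{(\infty,1)}$ as the (non-full) subcategory of $\Span_{1}(\dSt; \mathcal{A}^{2}_{\txt{cl}}[n])$ on $n$-symplectic Artin stacks and Lagrangian correspondences, use closure of Artin stacks under finite limits plus the composition theorem for Lagrangian correspondences to see it is closed under composition, inherit the symmetric monoidal structure by checking the unit and products stay symplectic/Lagrangian, and exhibit $(X,-\omega)$ as the dual via the diagonal spans, with the Lagrangian condition on the (co)evaluation reducing to non-degeneracy of $\omega$ and the triangle identities inherited from the ambient span category. The one inaccuracy is your attribution of the composition theorem: it is not proved in \cite{PTVVShiftedSympl} (whose closely related result is the Lagrangian-intersection theorem) but is Calaque's Theorem 4.4 in \cite{CalaqueTFT}, and the paper in fact gives its own proof of it, via a $3\times 3$ diagram of tangent and cotangent complexes in which the symplectic identification $\mathbb{T}_{Y}\simeq \mathbb{L}_{Y}[n]$ exhibits the remaining square as a shifted dual of the Cartesian square defining the fibre product.
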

We will prove this in \S\ref{sec:lag}. This result partly generalizes
results of Calaque~\cite{CalaqueTFT} from the level of 1-categories to
\icats{}. Note that the \icat{} $\Lag^{n}_{(\infty,1)}$ can be viewed
as a derived algebro-geometric version of Weinstein's symplectic
``category'' \cite{WeinsteinSymplGeom,WeinsteinSymplCat}.

In a sequel to this paper, joint with Damien Calaque and Claudia
Scheimbauer, we will extend this by introducing a definition of
\emph{iterated} Lagrangian correspondences and using this to construct
$(\infty,n)$-categories of symplectic derived stacks.

\subsection{Related Work}
The classical TQFTs considered in this paper have previously been
discussed by a number of authors; particularly inspirational were the
accounts of Freed, Hopkins, Lurie, and
Teleman~\cite{FreedHopkinsLurieTeleman} and of
Calaque~\cite{CalaqueTFT}.

The construction of the \icat{} of spans in an \icat{} we use is due
to Barwick, who has made extensive use of this and variants of
it~\cite{BarwickQ,BarwickMackey,BarwickMackey2}. In unpublished work, Barwick has
also given an alternative definition of higher categories of iterated
spans, in the setting of Rezk's $\Theta_{n}$-spaces.

In their work \cite{DyckerhoffKapranovTwoSeg} on 2-Segal spaces,
Dyckerhoff and Kapranov introduce an alternative construction of an
$(\infty,2)$-category of spans.  I have also been informed that Lurie
has given a construction of the $(\infty,2)$-category of 2-fold spans
in the setting of scaled simplicial sets, though this is not currently
publicly available.

The idea that the $(\infty,n)$-category of iterated spans could most
easily be constructed as that underlying an $n$-uple \icat{} I gained
from the definition sketched by Schreiber in
\cite{SchreiberDiffCohlgyITopos}*{\S 3.9.14.2}. Schreiber and
collaborators have also extensively studied quantization by
linearizing iterated spans of stacks, for example in
\cite{SchreiberQuantLinHT,SchreiberCFTviaCohHT} and
\cite{SchreiberDiffCohlgyITopos}*{\S 3.9.14}; they consider not
necessarily topological quantum field theories valued in iterated
spans in a cohesive $\infty$-topos under the name \emph{local
  prequantum field theories}. Nuiten~\cite{NuitenMaTh} has also
recently studied the quantization of these.

Analogues of the higher categories we construct here have previously
been defined in low dimensions: a weak double category of 2-fold spans
in a category was constructed by  Morton~\cite{MortonDouble},
and a monoidal 3-category of spans in a 2-category was constructed by
Hoffnung~\cite{HoffnungSpans}, with the dualizability of its objects
subsequently proved by Stay~\cite{StayCompactClosed}. In the
1-categorical setting, a construction of weak $n$-fold categories
of iterated cospans in a category has been carried out by
Grandis~\cite{GrandisCospans}.

Morton has also studied extended TQFTs valued in 2-fold spans of
groupoids~\cite{MortonTVSpsGpds} and 2-fold spans of groupoids
equipped with $U(1)$-valued cocycles~\cite{MortonTwistTQFT}, and has
constructed linearization functors to linear categories (or \emph{2-vector
spaces}) in both cases.

Finally, the $(\infty,n)$-categories $\Span_{n}(\mathcal{S};
\mathcal{C})$ appear in Lurie's work on the cobordism hypothesis
\cite{LurieCob}, under the name $\txt{Fam}_{n}(\mathcal{C})$, but only
a sketch of a definition is given there.

\subsection{Overview}
We begin by reviewing some background on \icats{} in
\S\ref{sec:background}. We then briefly recall Rezk's theory of
(complete) Segal spaces in \S\ref{sec:Segsp} and its relationship to
that of \icats{}, before introducing the model of $(\infty,n)$-categories we
will use, namely iterated Segal spaces, in \S\ref{sec:segsp}. Then we
construct the $(\infty,n)$-category $\Span_{n}(\mathcal{C})$ of
iterated spans in an \icat{} $\mathcal{C}$ as an $n$-fold Segal space
in \S\ref{sec:spancat}, and the $(\infty,n)$-category
$\Span_{n}(\mathcal{X}; \mathcal{D})$ of iterated spans in an
\itopos{} $\mathcal{X}$ equipped with local systems in an
$(\infty,n)$-category $\mathcal{D}$ internal to $\mathcal{X}$ in
\S\ref{sec:spanlocsys}.

Next, we recall the definition of a \emph{complete} $n$-fold Segal space (and
its generalization to a general $\infty$-topos) and prove some
technical results about these in \S\ref{sec:completeseg}. We then show
that the $n$-fold Segal space $\Span_{n}(\mathcal{C})$ is complete in
\S\ref{sec:complitspan} and that $\Span_{n}(\mathcal{X}; \mathcal{D})$
is complete in \S\ref{sec:compllocsys}.

In \S\ref{sec:adj} we discuss the notion of (symmetric) monoidal
$(\infty,n)$-categories in the form they will appear later (and prove
these are equivalent to the definitions found in \cite{HA}), before we
review 
the notions of duals and adjoints in $(\infty,n)$-categories in
\S\ref{sec:adj}, where we also
generalize these to $(\infty,n)$-categories internal to an
\itopos{}. Then in \S\ref{sec:dual} we prove that
$\Span_{n}(\mathcal{C})$ is symmetric monoidal and that all its
objects are fully dualizable, and in \S\ref{sec:duallocsys} we show
the same holds for $\Span_{n}(\mathcal{X}; \mathcal{D})$ provided
$\mathcal{D}$ is a symmetric monoidal $(\infty,n)$-category in
$\mathcal{X}$ with duals.

Finally, in \S\ref{sec:lag} we construct an \icat{} of symplectic
derived algebraic stacks and Lagrangian correspondences, and prove
that all of its objects are dualizable.

\subsection{Acknowledgments}
I first learned about \icats{} of spans from conversations with Clark
Barwick back in 2010. The present work was inspired by a number of
discussions during my visit to the MSRI programme on algebraic
topology in the spring of 2014, in particular with Hiro Tanaka and
Owen Gwilliam. I also thank Oren Ben-Bassat, Damien Calaque, Theo
Johnson-Freyd, Gregor Schaumann, Claudia Scheimbauer, Chris
Schommer-Pries, and Peter Teichner for helpful comments.

\section{Background and Notation}\label{sec:background}
In this section we will describe our perspective on \icats{} and
recall some key constructions we'll make use of later on, in the hope
that this will make the paper easier to follow for readers who are not
intimately familiar with the literature on \icats{}. (For more
background on \icats{}, we recommend Groth's expository article
\cite{GrothInftyCourse} and Rezk's lecture notes \cite{RezkNotes}.)
At the end, we also describe some of our notational conventions.

\subsection{$\infty$-Categories}
As we mentioned above, the basic idea of an \emph{$\infty$-category}
is that this should be a structure that has objects and $i$-morphisms
between $(i-1)$-morphisms for $i = 1,2,\ldots$, where these are all
invertible for $i > 1$. Moreover, the composition of morphisms should
not be strictly associative, but only associative up to a compatible
choice of invertible higher morphisms. If we insist on making explicit
choices of composites and of the invertible higher morphisms specifying
the associativity data, we get a theory that is essentially
intractable, if we can make sense of it at all. A key idea in the
homotopical approaches to \icats{} is that it is better to not make
any choices, but instead consider the space of all possible choices,
which is contractible.

By far the best-developed implementation of this idea is that of
\emph{quasicategories}. A quasicategory is simply a simplicial set
satisfying the right lifting property for the \emph{inner horn
  inclusions} $\Lambda^{n}_{k} \hookrightarrow \Delta^{n}$ ($0 < k <
n$). The quasicategories are the fibrant objects in a model structure
on simplicial sets, due to Joyal; we will write $\sSet^{J}$ for this
model category. Quasicategories were first introduced by Boardman and
Vogt~\cite{BoardmanVogt} under the name \emph{restricted Kan complexes}, and
their theory has later been very extensively developed by
Joyal~\cite{JoyalUABNotes} and Lurie~\cite{HTT,HA}, to the extent that
most basic notions in category theory have analogues for
quasicategories, generally behaving ``as you would expect''.

Let us warn the reader that, when working in some quasicategory
$\mathfrak{C}$, we will use the same vocabulary for these
quasicategorical notions as we would use if $\mathfrak{C}$ were a
category. Thus if we speak of a commutative diagram in $\mathfrak{C}$
of shape $\mathfrak{I}$ we mean a functor of quasicategories (i.e.\ map
of simplicial sets) $\mathfrak{I} \to \mathfrak{C}$, even if
$\mathfrak{I}$ is (the nerve of) an ordinary category --- note that in
the latter case such a diagram includes the choice of homotopies in
all commuting triangles and so on for higher simplices. Thus if we say
we have a commutative square 
\nolabelcsquare{A}{B}{C}{D} in $\mathfrak{C}$, this implicitly
includes the data of a homotopy between the two composite maps $A \to
D$.

A key advantage of the quasicategorical model is that many important
constructions have simple combinatorial incarnations. For example, if
$\mathfrak{C}$ is a quasicategory then for any simplicial set the
internal Hom of simplicial sets $\mathfrak{C}^{K}$ is a quasicategory
(\cite{HTT}*{Corollary 2.3.2.5}) --- this represents the \icat{} of
functors from the \icat{} generated by $K$ (given by a fibrant
replacement in the Joyal model structure) to $\mathfrak{C}$. To
emphasize this, we will usually denote the internal Hom by $\Fun(K,
\mathfrak{C})$ when $\mathfrak{C}$ is a quasicategory.

It will be very convenient to apply the quasicategorical viewpoint
also to \icats{} themselves, i.e.\ we will think of them as living in a
quasicategory $\mathfrak{Cat}_{\infty}$ (which can be obtained as the
coherent nerve of a simplicial category of quasicategories) rather than in
the model category $\sSet^{J}$ of simplicial sets with the Joyal model
structure. This is a precise version of the somewhat vague idea of
working with \icats{} ``model-independently''. For most of the paper
this will allow us to avoid referring explicitly to the implementation
of \icats{} as simplicial sets; this typically lets us make
definitions and constructions that are more conceptual, thereby
hopefully making it easier for the reader to see what is actually
going on.

On a few occasions we will, however, need to make constructions in the
model category $\sSet^{J}$. To avoid confusion we will always be very
explicit about this: we will refer to objects of
$\mathfrak{Cat}_{\infty}$ as \emph{\icats{}} and fibrant objects of
$\sSet^{J}$ as quasicategories, and say things like ``let
$\mathfrak{C} \in \sSet$ be a quasicategory representing the \icat{}
$\mathcal{C}$''. Note also that, since ordinary categories form a full
subcategory of $\mathfrak{Cat}_{\infty}$, we will not distinguish notationally between a
category $\mathbf{C}$ and its image in $\mathfrak{Cat}_{\infty}$ --- on the other hand,
when we think of it as living in $\sSet^{J}$ via its nerve
$\mathrm{N}\mathbf{C}$ we will explicitly indicate this.

In fact, though it may seem slightly perverse at first sight, it is pleasant to
take this to the next level: we want to be able to work with the
\icat{} (rather than the quasicategory) of \icats{}, so we take
$\CatI$ to be the \emph{\icat{}} represented by the
\emph{quasicategory} $\mathfrak{Cat}_{\infty}$. (Here we are
implicitly passing to a larger Grothendieck universe of sets.)

If $\mathcal{C}$ is an \icat{}, we write $\Map_{\mathcal{C}}(x,y)$ for
the space of maps from $x$ to $y$ in $\mathcal{C}$. Using the
quasicategory model there are a
variety of (weakly equivalent) ways to describe these mapping spaces
as simplicial sets; see \cite{DuggerSpivakMap} for an extensive
discussion and comparisons.

We will now briefly review some key concepts from the theory of
\icats{} that we will make repeated use of in this paper. We will
mainly describe them from our \icatl{} perspective, but we will also
mention how they can be implemented via quasicategories.

\subsection{$\infty$-Groupoids}
Just as a groupoid is a category where all the morphisms are
invertible, an \emph{$\infty$-groupoid} is an \icat{} all of whose
morphisms are invertible. Grothendieck's \emph{Homotopy Hypothesis}
asserts that $\infty$-groupoids are equivalent to homotopy types. For
the homotopical approaches to higher categories that we are concerned
with here, this idea is taken as a starting point for the theory; in
the case of quasicategories, $\infty$-groupoids correspond to those
quasicategories that are \emph{Kan complexes},which are of course a
well-known model for homotopy types. (Moreover, the weak equivalences
in $\sSet^{J}$ restrict to the usual weak equivalences between Kan
complexes.) Given this equivalence, we will often refer to
$\infty$-groupoids as just \emph{spaces}.

We write $\mathcal{S}$ for the \icat{} of $\infty$-groupoids or
spaces. As a quasicategory, this is modelled by the coherent nerve
$\mathrm{N}(\sSet^{\circ})$ of the simplicial category $\sSet^{\circ}$
of Kan complexes (which is the full subcategory spanned by the Kan
complexes in the simplicial category $\sSet$ of simplicial sets).

\begin{defn}
  The fully faithful inclusion $\mathcal{S} \hookrightarrow \CatI$ has
  both a left and a right adjoint. We write $\iota$ for the right
  adjoint, which takes an \icat{} to its underlying $\infty$-groupoid
  (obtained by forgetting the non-invertible morphisms) and
  $\|\blank\|$ for the left adjoint, obtained by inverting all the
  morphisms in an \icat{}.
\end{defn}
\begin{remark}
  If $\mathfrak{C}$ is a quasicategory representing an \icat{}
  $\mathcal{C}$, then the $\infty$-groupoid $\|\mathcal{C}\|$ is
  represented by any Kan complex that is a fibrant replacement for the
  simplicial set $\mathfrak{C}$ in the usual Kan-Quillen model
  structure on $\sSet$.
\end{remark}

\begin{defn}
  If $\mathcal{C}$ is an \icat{}, we say that $\mathcal{C}$ is
  \emph{weakly contractible} if the space $\|\mathcal{C}\|$ is
  contractible.
\end{defn}

We call a functor $\mathcal{C}^{\op} \to \mathcal{S}$ a
\emph{presheaf} on $\mathcal{C}$, and write $\mathcal{P}(\mathcal{C})$
for the \icat{} $\Fun(\mathcal{C}^{\op}, \mathcal{S})$ of presheaves.

\subsection{Joins and Slice $\infty$-Categories}
We'll frequently make use of slice \icats{}, which can be defined as follows:
\begin{defn}
  If $\mathcal{C}$ is an \icat{} and $x$ is an object of
  $\mathcal{C}$, then the \emph{overcategory} $\mathcal{C}_{/x}$ is
  defined by the pullback square \csquare{\mathcal{C}_{/x}}{\Fun([1],
    \mathcal{C})}{\{x\}}{\mathcal{C},}{}{}{\txt{ev}_{1}}{} where
  $\txt{ev}_{1}$ is the functor given by evaluation at $1 \in
  [1]$.
  Similarly, if $p \colon \mathcal{I} \to \mathcal{C}$ is a functor,
  we define $\mathcal{C}_{/p}$ by the pullback square
  \csquare{\mathcal{C}_{/p}}{\Fun([1] \times \mathcal{I},
    \mathcal{C})}{\mathcal{C} \times \{p\}}{\Fun(\mathcal{I},
    \mathcal{C}) \times \Fun(\mathcal{I},
    \mathcal{C}).}{}{}{(\txt{ev}_{0},\txt{ev}_{1})}{\txt{const} \times
    *} Undercategories are of course also defined analogously.
\end{defn}
By the universal property of pullbacks, for any \icat{}
$\mathcal{D}$ we have a pullback square
\nolabelcsquare{\Fun(\mathcal{D}, \mathcal{C}_{/p})}{\Fun(\mathcal{D}
  \star \mathcal{I}, \mathcal{C})}{\{p\}}{\Fun(\mathcal{I},
  \mathcal{C}),}  where the \emph{join}
$\mathcal{D} \star \mathcal{I}$ is defined as follows:
\begin{defn}
If $\mathcal{C}$ and $\mathcal{D}$ are \icats{}, their \emph{join}
$\mathcal{C} \star \mathcal{D}$ is defined by the pushout
\[ \mathcal{C} \amalg_{\mathcal{C} \times \mathcal{D} \times \{0\}}
\mathcal{C} \times \mathcal{D} \times [1] \amalg_{\mathcal{C} \times
  \mathcal{D} \times \{1\}} \mathcal{D}.\]
in $\CatI$. The \emph{cones} $\mathcal{C}^{\triangleleft}$ and
$\mathcal{C}^{\triangleright}$ are then defined as $[0] \star
\mathcal{C}$ and $\mathcal{C} \star [0]$, respectively.
\end{defn}

If $\mathfrak{C}$ is a quasicategory representing $\mathcal{C}$, the
functor $\txt{ev}_{1} \colon \Fun([1], \mathcal{C}) \to \mathcal{C}$
can be represented by $\txt{ev}_{1} \colon \Fun(\Delta^{1},
\mathfrak{C}) \to \mathfrak{C}$. This is a fibration in the Joyal
model structure (combine \cite{HTT}*{Corollary 2.4.7.12} with
\cite{HTT}*{Corollary 2.4.6.5}) and so if we define (using the
notation of \cite{HTT}*{\S 4.2.1}) the simplicial set
$\mathfrak{C}^{/x}$ by the pullback
\csquare{\mathfrak{C}^{/x}}{\Fun(\Delta^{1},
  \mathfrak{C})}{\{x\}}{\mathfrak{C},}{}{}{\txt{ev}_1}{} then this is
a homotopy pullback square. Thus the quasicategory $\mathfrak{C}^{/x}$
represents $\mathcal{C}_{/x}$. Moreover, this has the universal
property that for any simplicial set $K$, we have a pullback square
\nolabelcsquare{\Fun(K, \mathfrak{C}^{/x})}{\Fun(K \diamond
  \Delta^{0}, \mathfrak{C})}{\{x\}}{\mathfrak{C},} 
where for simplicial sets $K$ and $L$, the simplicial set $K \diamond
L$ is defined as the pushout
\[ K \amalg_{K \times L \times \{0\}}
K \times L \times \Delta^{1}
\amalg_{K \times L \times \{1\}} L.\]
Since this is a homotopy pushout in the Joyal model structure, if
$\mathfrak{C}$ and $\mathfrak{D}$ are quasicategories representing
\icats{} $\mathcal{C}$ and $\mathcal{D}$, then the simplicial set
$\mathfrak{C} \diamond \mathfrak{D}$ represents $\mathcal{C} \star
\mathcal{D}$.

However, the simplicial set $\mathfrak{C} \diamond \mathfrak{D}$ is
generally not a quasicategory. It is therefore often convenient to use
instead an alternative model for the join by using the \emph{join of
  simplicial sets} (see \cite{HTT}*{Definition 1.2.8.1}), which we
also denote using $\star$: if $\mathfrak{C}$ and $\mathfrak{D}$ are
quasicategories, the join $\mathfrak{C} \star \mathfrak{D}$ is a
quasicategory by \cite{HTT}*{Proposition 1.2.8.3} and the natural
map
$\mathfrak{C} \diamond \mathfrak{D} \to \mathfrak{C} \star
\mathfrak{D}$ is a weak equivalence by \cite{HTT}*{Proposition
  4.2.1.2}. (In other words, $\mathfrak{C} \star
\mathfrak{D}$ is a fibrant replacement for $\mathfrak{C} \diamond
\mathfrak{D}$ in the Joyal model structure.)

We can use this model for the join to get an alternative quasicategory
representing $\mathcal{C}_{/x}$, by replacing
$K \diamond \Delta^{0}$ by $K \star \Delta^{0}$ in the pullback
square above. The resulting quasicategory is denoted
$\mathfrak{C}_{/x}$ in \cite{HTT}. There is
a natural map $\mathfrak{C}^{/x} \to \mathfrak{C}_{/x}$ (induced by
the natural inclusion $K \diamond \Delta^{0} \to K \star \Delta^{0}$)
and this is a weak equivalence in the Joyal model structure by
\cite{HTT}*{Proposition 4.2.1.5}. An analogous discussion applies, of
course, to quasicategories $\mathfrak{C}^{/p}$ and $\mathfrak{C}_{/p}$
as well as to the dual notions for undercategories.

\subsection{Cartesian and CoCartesian Fibrations}
Cartesian fibrations are the \icatl{} analogue of \emph{Grothendieck
  fibrations}; in both cases they are characterized by the existence
of \emph{Cartesian morphisms}:
\begin{defn}
  If $F \colon \mathcal{E} \to \mathcal{B}$ is a functor of \icats{},
  we say a morphism $\bar{f} \colon x \to y$ in $\mathcal{E}$ with
  image $f \colon a \to b$ in $\mathcal{B}$ is \emph{$F$-Cartesian} if
  for every $z \in \mathcal{E}$ over $c \in \mathcal{B}$ the
  commutative square \csquare{\Map_{\mathcal{E}}(z,
    x)}{\Map_{\mathcal{E}}(z, y)}{\Map_{\mathcal{B}}(c,
    a)}{\Map_{\mathcal{B}}(c, b)}{\bar{f}_*}{}{}{f_*} is Cartesian. 
  Equivalently (since the mapping space $\Map_{\mathcal{E}}(z, x)$ is
  the fibre at $z$ of the forgetful functor $\mathcal{C}_{/x} \to
  \mathcal{C}$) $f$ is $F$-Cartesian if the commutative square
  \nolabelcsquare{\mathcal{E}_{/x}}{\mathcal{E}_{/y}}{\mathcal{B}_{/a}}{\mathcal{B}_{/b}}
  is Cartesian.
  We say $F$ is a \emph{Cartesian fibration} if for every morphism $f
  \colon a \to b$ in $\mathcal{B}$ and every $y \in \mathcal{E}$ with
  $F(y) \simeq b$ there exists an $F$-Cartesian morphism $\bar{f}
  \colon y \to x$ with $F(\bar{f}) \simeq f$. The notions of
  \emph{co}Cartesian morphisms and \emph{co}Cartesian fibrations are
  defined dually, i.e.\ $F$ is a coCartesian fibration \IFF{} $F^{\op}$
  is a Cartesian fibration.
\end{defn}

\begin{remark}
  It can be shown that if $p \colon \mathfrak{E} \to \mathfrak{B}$ is
  an inner fibration (meaning $p$ has the right lifting property for
  the inner horn inclusions $\Lambda^{n}_{k} \hookrightarrow
  \Delta^{n}$) representing a functor of \icats{} $F \colon
  \mathcal{E} \to \mathcal{B}$, then $F$ is a Cartesian fibration in
  our sense \IFF{} $p$ is a Cartesian fibration in the sense of
  \cite{HTT}*{Definition 2.4.2.1}. This is not entirely obvious ---
  see \cite{MazelGeeCart} for a detailed proof. To avoid confusion we
  will use the term \emph{(co)Cartesian inner fibration} for a
  (co)Cartesian fibration of simplicial sets in Lurie's sense.
\end{remark}

Grothendieck proved (in \cite{SGA1}) that Grothendieck opfibrations
over a category $\mathbf{C}$ correspond to (pseudo)functors from
$\mathbf{C}$ to the category of categories. Lurie's
\emph{straightening equivalence} from \cite{HTT}*{\S 3.2} establishes
an analogous equivalence between Cartesian fibrations over an \icat{}
$\mathcal{C}$ and functors from $\mathcal{C}^{\op}$ to the \icat{}
$\CatI$ of \icats{}. If we let
$\Cat_{\infty/\mathcal{C}}^{\txt{Cart}}$ denote the subcategory of
$\Cat_{\infty/\mathcal{C}}$ whose objects are the Cartesian fibrations
and whose morphisms are the maps that preserve Cartesian morphisms,
then this gives the following statement in our language:
\begin{thm}[Lurie]
  There is an equivalence of \icats{}
  $\Cat_{\infty/\mathcal{C}}^{\txt{Cart}} \simeq
  \Fun(\mathcal{C}^{\op}, \CatI)$.
\end{thm}
This is extremely useful, as it is in practice impossible to ``write
down'' functors to $\CatI$, whereas we can much more easily describe
(co)Cartesian fibrations, e.g.\ by manipulating preexisting fibrations.

\begin{remark}
  In ordinary category theory, the Grothendieck fibration associated
  to a functor is given by the \emph{Grothendieck construction}, which
  can be identified with the lax colimit (a certain weighted colimit)
  of the functor. In the \icatl{} setting, the functor
  $\Fun(\mathcal{C}^{\op},\CatI) \to \CatI$ that takes a functor to
  the source of the associated Cartesian fibration can also be
  identified with the lax colimit, by \cite{freepres}*{Corollary 7.6}.
\end{remark}

\subsection{Left and Right Fibrations}
An important special case of Cartesian fibrations are those whose
fibres are spaces (as opposed to general \icats{}); these are called
\emph{right fibrations}. Dually, coCartesian fibrations whose fibres
are spaces are called \emph{left fibrations}. Right fibrations can
also be characterized as those functors $\mathcal{E} \to \mathcal{B}$
such that \emph{every} morphism in $\mathcal{E}$ is Cartesian.

\begin{remark}
  The terms \emph{left} and \emph{right} fibrations are motivated by
  the incarnations of these concepts on the level of quasicategories:
  If $p \colon \mathfrak{E} \to \mathfrak{B}$ is an inner fibration
  representing a functor of \icats{} $F \colon \mathcal{E} \to
  \mathcal{B}$, then $F$ is a right fibration \IFF{} $p$ satisfies a
  very simple condition: $p$ must have the right lifting property
  with respect to the horn inclusions $\Lambda^{n}_{k} \hookrightarrow
  \Delta^{n}$ for $0 < k \leq n$. Similarly, $F$ is a left fibration
  \IFF{} $p$ has the right lifting property with respect to the horn
  inclusions where $0 \leq k < n$.
\end{remark}

An easier version of Lurie's straightening equivalence for Cartesian
fibrations gives an equivalence of
\icats{} 
\[\Cat_{\infty/\mathcal{C}}^{\txt{RFib}} \simeq
\Fun(\mathcal{C}^{\op}, \mathcal{S})\] where
$\Cat_{\infty/\mathcal{C}}^{\txt{RFib}}$ is the full subcategory of
$\Cat_{\infty/\mathcal{C}}$ spanned by the right fibrations.

\subsection{Cofinal and Coinitial Functors}
We will often need to know that objects defined as (co)limits over
diagrams of different, but related, shapes are equivalent. Just as in
ordinary category theory, the notion of cofinal functors (and the dual
notion of coinitial functors) is a very useful tool for proving such
statements. 
\begin{defn}
  A functor $F \colon \mathcal{A} \to \mathcal{B}$ of \icats{} is
  \emph{cofinal} if for every diagram $p \colon
  \mathcal{B} \to \mathcal{C}$, the induced functor $\mathcal{C}_{p/}
  \to \mathcal{C}_{p\circ F/}$ is an equivalence. Dually, $F$ is
  \emph{coinitial} if $F^{\op} \colon \mathcal{A}^{\op} \to
  \mathcal{B}^{\op}$ is cofinal, i.e.\ the functor $\mathcal{C}_{/p}
  \to \mathcal{C}_{/p \circ F}$ is an equivalence for every
  $p$. 
\end{defn}
Since a colimit of $p$ is the same thing as a final object in
$\mathcal{C}_{p/}$, we see that if $F$ is cofinal then $p$ has a
colimit \IFF{} $p \circ F$ has a colimit, and these colimits are
necessarily given by the same object in $\mathcal{C}$. 

The key criterion for cofinality is \cite{HTT}*{Theorem 4.1.3.1}:
\begin{thm}[\cite{HTT}*{Theorem 4.1.3.1}]
  A functor $F \colon \mathcal{A} \to \mathcal{B}$ is cofinal \IFF{}
  for every $b \in \mathcal{B}$ the slice \icat{} $\mathcal{A}_{b/} :=
  \mathcal{A} \times_{\mathcal{B}} \mathcal{B}_{b/}$ is weakly
  contractible.
\end{thm}

\subsection{$\infty$-Topoi}
Just as a (Grothendieck) topos is a category that abstracts some key
properties of the category of sets, an \emph{$\infty$-topos} is an
\icat{} with key properties of the \icat{} $\mathcal{S}$ of spaces. A
terse definition is that $\infty$-topoi are the \icats{} that arise as
left exact accessible localizations (meaning the localization functor
preserves finite limits and sufficiently compact objects) of presheaf
\icats{} $\Fun(\mathcal{C}^{\op}, \mathcal{S})$ where $\mathcal{C}$ is
a small \icat{} with finite limits; see \cite{HTT}*{Theorem 6.1.0.6,
  Proposition 6.1.5.3} for several equivalent characterizations.

The key examples of $\infty$-topoi are \icats{} of presheaves
$\mathcal{P}(\mathcal{C})$ (where $\mathcal{C}$ has finite limits) and
\icats{} of sheaves of spaces on topological spaces, or more generally
on sites. The latter are important in the context of derived algebraic
geometry.

\begin{remark}
  At a number of points below we will prove results for a general
  $\infty$-topos $\mathcal{X}$, as the proofs are no more work once
  certain definitions have been set up. This is motivated by the
  possibility of applications in derived algebraic geometry, but apart
  from in \S\ref{sec:lag} we do not make use of this generality in
  this paper. The reader should therefore feel free to assume that
  $\mathcal{X}$ is just the \icat{} $\mathcal{S}$ of spaces, and to
  omit the parts of \S\ref{sec:completeseg} and \S\ref{sec:adj} where
  certain results and constructions are generalized from spaces to an
  arbitrary $\infty$-topos.
\end{remark}

\subsection{Notation}
We generally reuse the notation and terminology used by
Lurie in \cite{HTT,LurieCob,HA}. We note the following
conventions, some of which differ slightly from those of Lurie:
\begin{itemize}
\item $\simp$ is the simplicial indexing category, with objects the
  non-empty finite totally ordered sets $[n] := \{0, 1, \ldots, n\}$
  and morphisms order-preserving functions between them.
\item For $[n] \in \simp$ we will abbreviate $(\simp_{/[n]})^{\op}$ to
  $\simp_{/[n]}^{\op}$, and for $I \in \simp^{k}$ we will abbreviate
  $(\simp^{k}_{/I})^{\op}$ to $\simp^{k,\op}_{/I}$.
\item If $\mathcal{C}$ is an \icat{}, we write $\iota \mathcal{C}$ for
  the \emph{interior} or \emph{underlying space} of $\mathcal{C}$,
  i.e.\ the largest subspace of $\mathcal{C}$ that is a Kan complex.
\item If $f \colon \mathcal{C} \to \mathcal{D}$ is left adjoint to a
  functor $g \colon \mathcal{D} \to \mathcal{C}$, we will refer to the
  adjunction as $f \dashv g$.
\item We make use of Grothendieck universes to avoid having to deal
  with set-theoretical size issues: we fix three nested universes and
  refer to their elements as small, large, and very large sets,
  respectively. If we have \icats{} of small and large versions of the
  same objects, we will distinguish the large version with a
  circumflex: thus $\mathcal{S}$ is the (large) \icat{} of small
  spaces and $\widehat{\mathcal{S}}$ is the (very large) \icat{} of
  large spaces; similarly $\CatI$ is the \icat{} of small \icats{} and
  $\LCatI$ that of large \icats{}.
\end{itemize}

\section{Segal Spaces}\label{sec:Segsp}
In this section we review the description of \icats{} as complete
Segal spaces. This was introduced by Rezk in \cite{RezkCSS}, though we
will discuss it from an \icatl{} perspective rather than the
model-categorical one used by Rezk.

\begin{defn}
  Suppose $\mathcal{C}$ is an \icat{} with finite limits. A
  \emph{category object} in $\mathcal{C}$ is a simplicial object $C_{\bullet}
  \colon \simp^{\op} \to \mathcal{C}$ such that the natural maps
  \[ C_{n} \to C_{1} \times_{C_{0}} \cdots \times_{C_{0}} C_{1},\]
  induced by the maps $\sigma_{i}\colon [0] \to [n]$ sending $0$ to
  $i$ and $\rho_{i} \colon [1] \to [n]$ sending $0$ to $i-1$ and $1$
  to $i$, are equivalences in $\mathcal{C}$ for all $n$. We
  write $\Cat(\mathcal{C})$ for the full subcategory of
  $\Fun(\simp^{\op}, \mathcal{C})$ spanned by the category objects.
\end{defn}

\begin{defn}
  A \emph{Segal space} is a category object in the \icat{}
  $\mathcal{S}$ of spaces. We write $\Seg(\mathcal{S})$ for the
  \icat{} of Segal spaces.
\end{defn}

A category object in $\Set$ is the same thing as an ordinary category.
Similarly, a Segal space $X$ describes the algebraic structure of an
\icat{}: We think of the space $X_{0}$ as the space of objects and
$X_{1}$ as the space of morphisms; the two face maps
$X_{1} \rightrightarrows X_{0}$ assign the source and target object to
each morphism, and the degeneracy $s_{0} \colon X_{0} \to X_{1}$
assigns an identity morphism to every object. Then
$X_{n}\simeq X_{1} \times_{X_{0}} \cdots \times_{X_{0}} X_{1}$ is the
space of composable sequences of $n$ morphisms, and the face map
$d_{1} \colon [1] \to [2]$ gives a composition
\[ X_{1}\times_{X_{0}} X_{1} \isofrom X_{2} \xto{d_{1}} X_{1}.\] The
remaining data in $X_{\bullet}$ gives the homotopy-coherent
associativity data for this composition and its compatibility with the
identity maps.

However, although category objects in $\Set$ are categories,
isomorphisms in the category $\Cat(\Set)$ do not give the right notion
of equivalence of categories: to describe the correct homotopy theory
of categories we must invert the fully faithful and essentially
surjective functors. If done in an \icatl{} (or at least
2-categorical) setting, this produces the (2,1)-category of
categories, functors, and natural isomorphisms. An equivalence here,
in the 2-categorical sense, is precisely an equivalence of categories.
The same phenomenon occurs for \icats{}: Segal spaces encode the
algebraic structure of composition and units in \icats{}, but
the right notion of equivalence between \icats{} corresponds to the
(non-algebraic) notion of fully faithful and essentially surjective
morphisms of Segal spaces, in the following sense:
\begin{defn}
  Let $E^{n}$ denote the contractible groupoid with $n$ objects and a unique
  morphism between any pair of objects; we can regard this as a Segal
  space by thinking of it as a category object in sets and applying
  the inclusion $\Set \hookrightarrow \mathcal{S}$. For $X \in
  \Seg(\mathcal{S})$ we define a simplicial space by $\iota_{\bullet}X
  := \Map_{\Seg(\mathcal{S})}(E^{\bullet}, X)$; we write $\iota X$ for
  the colimit of this simplicial diagram --- this is the
  \emph{classifying space of equivalences} in $X$. We say a morphism
  $f \colon X \to Y$ is \emph{fully faithful and essentially
    surjective} if
  \begin{enumerate}[(1)]
  \item The map $\iota X \to \iota Y$ is an
    equivalence of spaces.
  \item The diagram \nolabelcsquare{X_{1}}{Y_{1}}{X_{0}\times
      X_{0}}{Y_{0}\times Y_{0},} with the vertical maps coming from
    the two maps $[0] \to [1]$, is a pullback square.
  \end{enumerate}
\end{defn}

To get the correct \icat{} of \icats{} we need to localize the \icat{}
$\Seg(\mathcal{S})$ at the fully faithful and essentially surjective
morphisms. The main result of \cite{RezkCSS} is that this localization
is given by the full subcategory of the \emph{complete} Segal spaces:
\begin{defn}
  A Segal space $X$ is \emph{complete} if the natural map $X_{0} \to
  \iota X$ is an equivalence.
\end{defn}

\begin{thm}[Rezk, \cite{RezkCSS}*{Theorem 7.7}]
  Let $\txt{CSS}(\mathcal{S})$ denote the full subcategory of
  $\Seg(\mathcal{S})$ spanned by the complete Segal spaces. Then the
  inclusion $\txt{CSS}(\mathcal{S}) \hookrightarrow \Seg(\mathcal{S})$
  has a left adjoint, which exhibits $\txt{CSS}(\mathcal{S})$ as the
  localization of $\Seg(\mathcal{S})$ at the fully faithful and
  essentially surjective morphisms.
\end{thm}

Rezk also proves some alternative characterizations of complete
objects, which we recall for use later on:
\begin{thm}[Rezk]\label{thm:rezkcompl}
  Let $X$ be a Segal space. There are two obvious inclusions $[1] \to
  E^{1}$ which induce maps $\Map(E^{1}, X) \to X_{1}$. We write
  $X_{\txt{eq}}$ for the subspace of $\mathcal{C}_{1}$ consisting of
  the components in the image of either of these maps (they
  are the same since $E^{1}$ has an autoequivalence that swaps the
  two). Then the map $\Map(E^{1}, X) \to X_{\txt{eq}}$ is an equivalence,
  and the following are equivalent:
  \begin{enumerate}[(i)]
  \item $X$ is complete.
  \item The simplicial space $\iota_{\bullet}X$ is constant.
  \item The map $X_{0} \to X_{\txt{eq}}$ induced
    by the degeneracy map $s_{0}$ is an equivalence.
  \item The map $X_{0} \to \Map(E^{1}, X)$ induced
    by composition with either of the maps $[0] \to E^{1}$ is an
    equivalence.
  \end{enumerate}
\end{thm}
\begin{proof}
  This is Theorem 6.2 and Proposition 6.4 of \cite{RezkCSS}.
\end{proof}

By analogy with the case of ordinary categories, we would expect that
the \icat{} $\txt{CSS}(\mathcal{S})$ is equivalent to $\CatI$. This is
indeed true, as was proved by Joyal and Tierney:
\begin{thm}[Joyal-Tierney \cite{JoyalTierney}]
  The functor $\simp \to \CatI$ given by the usual inclusion of
  ordered sets into categories induces, via the Yoneda embedding, a
  functor $\CatI \to \mathcal{P}(\simp) := \Fun(\simp^{\op},
  \mathcal{S})$. This is fully faithful and its essential image
  consists precisely of the complete Segal spaces. In other words it
  restricts to an equivalence $\CatI \isoto \txt{CSS}(\mathcal{S})$.
\end{thm}

It will be useful to introduce a reformulation of the definition of a
category object. To give this we must first introduce some notation:
\begin{defn}\label{defn:inert}
  A map $\phi \colon [n] \to [m]$ is \emph{inert} if $\phi$ is the
  inclusion of a subinterval, i.e.\ we have $\phi(i) = \phi(0)+i$ for
  all $i$. We write $\simp_{\txt{int}}$ for the subcategory of $\simp$
  containing only the inert maps. Let $\Cell^{1}$ denote the full
  subcategory of $\simp_{\txt{int}}$ spanned by the objects
  $[0]$ and $[1]$, i.e.\ the category
  \[ [0] \rightrightarrows [1].\] 
  For $[n] \in \simp$ we write $\Cell^{1}_{/[n]}$ for the category 
  $\Cell^{1} \times_{\simp_{\txt{int}}} (\simp_{\txt{int}})_{/[n]}$ of
  inert maps from $[0]$ and $[1]$ to $[n]$.
\end{defn}
\begin{remark}
  This is a special case of the general notion of an inert map defined
  by Barwick~\cite{BarwickOpCat} in the context of \emph{operator
    categories}, which can be viewed as settings for different kinds
  of algebraic structures.
\end{remark}

\begin{remark}
  The category $\Cell^{1}_{/[n]}$ can be depicted as
\[  \begin{tikzcd}[column sep=small]
\{0\} \arrow{dr} & & \{1\} \arrow{dl} \arrow{dr} & & \cdots \arrow{dl}& \{n-1\} \arrow{dl} \arrow{dr} & & \{n\}\arrow{dl}\\
 & \{0,1\} & & \{1,2\} & \cdots & &\{n-1,n\} &
  \end{tikzcd}
\]
\end{remark}

\begin{lemma}\label{lem:segspkanext}
  Let $\mathcal{C}$ be an \icat{} with finite limits. A simplicial
  object $X \colon \simp^{\op} \to \mathcal{C}$ is a category object
  \IFF{} its restriction $X|_{\simp^{\op}_{\txt{int}}}$ is the right
  Kan extension of its restriction to $\Cell^{1,\op}$, or in other words,
  if $j$ denotes the inclusion $\Cell^{1,\op} \to
  \simp^{\op}_{\txt{int}}$, the unit map
  \[ X|_{\simp^{\op}_{\txt{int}}} \to
  j_{*}j^{*}X|_{\simp^{\op}_{\txt{int}}}\] of the right Kan extension
  adjunction $j^{*}\dashv j_{*}$ is an equivalence.
\end{lemma}
\begin{proof}
  The functor $X|_{\simp^{\op}_{\txt{int}}}$ is the right Kan
  extension of its restriction to $\Cell^{1,\op}$ \IFF{} for every
  object $[n] \in \simp^{\op}_{\txt{int}}$ the natural map
  \[ X([n]) \to \lim_{([i] \to [n]) \in (\Cell^{1}_{/[n]})^{\op}}
  X([i]) \] is an equivalence. But from the definition of $\Cell^{1}$
  we see that this is precisely the limit that appears in the
  definition of a Segal space.
\end{proof}

\section{Iterated Segal Spaces}\label{sec:segsp}
In this section we briefly review the model for
$(\infty,n)$-categories we use in this paper: the iterated Segal
spaces of Barwick~\cite{BarwickThesis}. Following
\cite{LurieGoodwillie} we'll state the basic definitions using the
language of \icats{}.

\begin{defn}
  An \emph{$n$-fold category object} in an \icat{} $\mathcal{C}$ is
  inductively defined to be a category object in the \icat{} of
  $(n-1)$-fold category objects. We write $\Cat^{n}(\mathcal{C}) :=
  \Cat(\Cat^{n-1}(\mathcal{C}))$ for the \icat{} of $n$-fold category
  objects in $\mathcal{C}$. We refer to an $n$-fold category object in
  $\mathcal{S}$ as an \emph{$n$-uple Segal space}.
\end{defn}

\begin{remark}
  The term $n$-uple Segal space is motivated by the observation that
  2-uple (or double) Segal spaces encode the algebraic structure of
  double \icats{}, i.e.\ category objects in $\CatI$. More generally,
  $n$-uple Segal spaces can be considered as a model for $n$-uple
  \icats{}, i.e.\ internal \icats{} in internal \icats{} in \ldots\ in
  \icats{}. 
\end{remark}

\begin{remark}
  Unwinding the definition, we see that an $n$-uple Segal space
  $\mathcal{D} \colon (\simp^{\op})^{\times n}\to \mathcal{S}$
  consists of the data of:
  \begin{itemize}
  \item a space $\mathcal{D}_{0,\ldots,0}$ of objects
  \item spaces $\mathcal{D}_{1,0,\ldots,0}$, \ldots,
    $\mathcal{D}_{0,\ldots,0,1}$ of $n$ different kinds of 1-morphism,
        each with a source and target in $\mathcal{D}_{0,\ldots,0}$,
  \item spaces $\mathcal{D}_{1,1,0,\ldots,0}$, etc., of ``commutative
    squares'' between any two kinds of 1-morphism,
  \item spaces $\mathcal{D}_{1,1,1,0,\ldots,0}$, etc., of
    ``commutative cubes'' between any three kinds of 1-morphism,
  \item \ldots
  \item a space $\mathcal{D}_{1,1,\ldots,1}$ of ``commutative
    $n$-cubes'',
\end{itemize}
together with units (from the degeneracies in $(\simp^{\op})^{\times
  n}$) and coherently homotopy-associative composition laws (from the
face maps) for all these different types of maps.
\end{remark}
We can view the algebraic structure of an $(\infty,n)$-category as given by the same kind of data,
except that there is only one type of 1-morphism, etc., so we require
certain spaces to be ``degenerate'', i.e.\ equivalent to the space
$\mathcal{D}_{0,\ldots,0}$ via a degeneracy. This leads to Barwick's definition
of an $n$-fold Segal object in an \icat{}:
\begin{defn}
  Suppose $\mathcal{C}$ is an \icat{} with finite limits. A
  \emph{1-fold Segal object} in $\mathcal{C}$ is just a category object in $\mathcal{C}$. For $n > 1$ we
  inductively define an \emph{$n$-fold Segal object} in $\mathcal{C}$ to be an $n$-fold
  category object $\mathcal{D}$ such that
  \begin{enumerate}[(i)]
  \item the $(n-1)$-fold category object
    $\mathcal{D}_{0,\bullet,\ldots,\bullet}$ is constant,
  \item the $(n-1)$-fold category object
    $\mathcal{D}_{k,\bullet,\ldots,\bullet}$ is an $(n-1)$-fold Segal
    object for all $k$.
  \end{enumerate}
  We write $\Seg_{n}(\mathcal{C})$ for the full subcategory of
  $\Cat^{n}(\mathcal{C})$ spanned by the $n$-fold Segal objects. When
  $\mathcal{C}$ is the \icat{} $\mathcal{S}$ of spaces, we refer to
  $n$-fold Segal objects in $\mathcal{S}$ as \emph{$n$-fold Segal spaces}.
\end{defn}
\begin{remark}
  Unwinding the definition, we see that an $n$-fold Segal space $\mathcal{D}$ consists of
\begin{itemize}
\item a space $\mathcal{D}_{0,\ldots,0}$ of objects,
\item a space $\mathcal{D}_{1,0,\ldots,0}$ of 1-morphisms,
\item a space $\mathcal{D}_{1,1,0,\ldots,0}$ of 2-morphisms,
\item \ldots
\item a space $\mathcal{D}_{1,\ldots,1}$ of $n$-morphisms,
\end{itemize}
together with units and coherently homotopy-associative composition
laws for these morphisms.
\end{remark}

\begin{remark}
  The notion of $n$-fold Segal spaces describes precisely the
  \emph{algebraic} structure we expect from
  $(\infty,n)$-categories. Just as in the case of Segal spaces, to get
  the right homotopy theory of $(\infty,n)$-categories we must
  supplement this algebraic structure with the (``non-algebraic'')
  notion of fully faithful and essentially surjective functors. Thus
  the \icat{} of $(\infty,n)$-categories is obtained from
  $\Seg_{n}(\mathcal{S})$ by inverting the fully faithful and
  essentially surjective morphisms. As in the case $n = 1$, this
  localization can be obtained by restricting to a full subcategory of
  \emph{complete} objects; we will discuss this below in
  \S\ref{sec:completeseg}.
\end{remark}

It will be useful to restate the definition of an $n$-fold category object
non-inductively, via the analogue of Lemma~\ref{lem:segspkanext}. To
state this we first need some notation:
\begin{defn}
  We say an object $I = ([i_{1}],\ldots,[i_{k}]) \in \simp^{k}$ is a
  \emph{cell} if $i_{j} = 0$ or $1$ for all $i$. We write $\Cell^{k}$
  for the full subcategory of $\simp^{k}_{\txt{int}} :=
  (\simp_{\txt{int}})^{\times k}$ spanned by the cells, i.e.\ $(\Cell^{1})^{\times
    k}$. For $I \in \simp^{k}$ we write $\Cell^{k}_{/I}$ for the
  pullback $\Cell^{k} \times_{\simp^{k}_{\txt{int}}}
  (\simp_{\txt{int}}^{k})_{/I}$; if $I = ([i_{1}], \ldots, [i_{k}])$
  then this category is equivalent to $\Cell^{1}_{/[i_{1}]} \times
  \cdots \times \Cell^{1}_{/[i_{k}]}$.
\end{defn}

\begin{lemma}\label{lem:kupSegKanExt}
  A functor $\Phi \colon \simp^{k,\op} \to \mathcal{C}$ is
  a $k$-fold category object \IFF{} the restriction
  $\Phi|_{\simp^{k,\op}_{\txt{int}}}$ is the right Kan extension
  of its restriction to $\Cell^{k,\op}$, or in other words, if $j^{k}$
  denotes the inclusion $j^{k} \colon \Cell^{k,\op}
  \hookrightarrow \simp^{k,\op}_{\txt{int}}$, the unit
  map \[\Phi|_{\simp^{k,\op}_{\txt{int}}} \to
  j^{k}_{*}j^{k,*}\Phi|_{\simp^{k,\op}_{\txt{int}}}\] is an
  equivalence.
\end{lemma}
\begin{proof}
  We prove this by induction on $n$ --- the case $n = 1$ being
  Lemma~\ref{lem:segspkanext}. The functor
  $\Phi|_{\simp^{k,\op}_{\txt{int}}}$ is the right Kan extension of
  its restriction to $\Cell^{k,\op}$ \IFF{} for every object $I =
  ([i_{1}],\ldots,[i_{k}])\in \simp^{k,\op}_{\txt{int}}$ the natural
  map
  \[ \Phi(I) \to \lim_{(C \to I) \in (\Cell^{k}_{/I})^{\op}}
  \Phi(C) \] is an equivalence. Since $\Cell^{k}_{/I}$ is the product
  $\Cell^{1}_{/[i_{1}]} \times \Cell^{k-1}_{/I'}$, where $I' =
  ([i_{2}],\ldots,[i_{k}])$, this limit can (e.g.\ using
  \cite{enrbimod}*{Corollary 5.7}) be rewritten as the iterated limit
  \[ \lim_{([j] \to [i_{1}]) \in (\Cell^{1}_{/[i_{1}]})^{\op}}
  \lim_{(J' \to I') \in (\Cell^{k-1}_{/I'})^{\op}} \Phi([j], J')).\]
  Taking $i_{1} = 0,1$ (where $(\Cell^{1}_{/[i_{1}]})^{\op}$ has an
  initial object) we see by the inductive hypothesis that the
  condition holds in these cases
  \IFF{} $\Phi([0], \blank)$ and $\Phi([1], \blank)$ are $(k-1)$-fold
  category objects. Moreover, if the condition holds in these cases we
  can rewrite the limit for a general $I$ as \[ \lim_{([j] \to
    [i_{1}]) \in (\Cell^{1}_{/[i_{1}]})^{\op}} \Phi([j], I').\] Thus
  we have that $\Phi|_{\simp^{k,\op}_{\txt{int}}}$ is the right Kan
  extension of its restriction to $\Cell^{k,\op}$ \IFF{} $\Phi([0],
  \blank)$ and $\Phi([1], \blank)$ are $(k-1)$-fold category objects, and
  $\Phi$ is a category object in $\Fun(\simp^{k-1,\op},
  \mathcal{C})$. Since $(k-1)$-fold category objects are closed under
  limits, this is equivalent to $\Phi$ being a $k$-fold category
  object in $\mathcal{C}$.
\end{proof}

\begin{defn}
  We write $C_{k}$ for the \emph{k}-cell, i.e.\ the generic
  $k$-morphism, thought of as an $n$-fold Segal space for any $n >
  k$. Concretely, it is the representable $n$-fold simplicial object
  represented by $([1],\ldots,[1],[0],\ldots,[0])$ where $[1]$ occurs
  $k$ times. If $\mathcal{D}$ is an $n$-fold Segal space, we
  write $\Mor_{k}(\mathcal{D})$ for the space
  $\Map(C_{k},\mathcal{D})$ of $k$-morphisms in $\mathcal{D}$,
  i.e.\ $\mathcal{D}_{1,\ldots,1,0,\ldots,0}$. For $k = 0$ we also
  write $\Ob(\mathcal{D})$ for $\mathcal{D}_{0,\ldots,0}$.
\end{defn}

\begin{defn}
  Suppose $\mathcal{C}$ is an $n$-fold Segal space. The two face maps
  $[1] \to [0]$ induce a map of $(n-1)$-fold Segal spaces from
  $\mathcal{C}_{1}$ to the constant $(n-1)$-fold Segal space
  $\mathcal{C}_{0} \times \mathcal{C}_{0}$. Given two objects $X, Y$
  of $\mathcal{C}$, i.e.\ a point of $\mathcal{C}_{0,\ldots,0} \times
  \mathcal{C}_{0,\ldots,0}$, we define the \emph{mapping
    $(\infty,n-1)$-category} $\mathcal{C}(X,Y)$ to be the pullback
  \nolabelcsquare{\mathcal{C}(X,Y)}{\mathcal{C}_{1}}{\{(X,Y)\}}{\mathcal{C}_{0}^{\times
      2}.} 
  Since $(n-1)$-fold Segal objects are closed under limits
  in $(n-1)$-fold simplicial spaces, this is again an $(n-1)$-fold
  Segal space.
\end{defn}

\begin{defn}
  Suppose $\mathcal{C}$ is an $n$-fold Segal object in
  $\mathcal{X}$. The \emph{underlying $k$-fold Segal object}
  $u_{(\infty,k)}\mathcal{C}$ of $\mathcal{C}$ is the $k$-fold
  simplicial object obtained by restricting $\mathcal{C}$ along the
  inclusion $\simp^{k,\op} \to \simp^{n,\op}$ that is $[0]$ in the
  last $n-k$ components.
\end{defn}

Our next goal is to prove that there is a canonical way to extract an
$n$-fold Segal space from an $n$-uple Segal space; in the next section
we will apply this to construct an $n$-fold Segal space of iterated
spans from an $n$-uple Segal space.

\begin{propn}\label{propn:underfold}
  Let $\mathcal{C}$ be an \icat{} with finite limits. The inclusion $\Seg_{n}(\mathcal{C}) \hookrightarrow
  \Cat^{n}(\mathcal{C})$ has a right adjoint $U^{n}_{\Seg} \colon
  \Cat^{n}(\mathcal{C}) \to \Seg_{n}(\mathcal{C})$. (We will usually
  abbreviate $U^{n}_{\Seg}$ to just $U_{\Seg}$ and let the integer $n$
  be implicitly determined by the context.)
\end{propn}

\begin{remark}
  The basic idea, in the case $n = 2$, is that for $X$ a double
  category object we form $U_{\Seg}X$ by taking pullbacks of
  simplicial objects
  \nolabelcsquare{(U_{\Seg}X)_{m,\bullet}}{X_{m,\bullet}}{X_{0,0}^{\times
      (m+1)}}{X_{0,\bullet}^{\times (m+1)},} where $X_{0,0}$ denotes
  the constant simplicial space with this value, the right vertical
  map is induced by the inert maps $\rho_{i} \colon [1] \to [m]$, and
  the bottom horizontal map is given by the degeneracies
  $X_{0,0} \to X_{0,k}$. This pullback extracts, for example,
  precisely the part of $X_{1,1}$ that is ``constant'' or degenerate
  in the second coordinate, i.e.\ lies over the image of the degeneracy
  $X_{0,0}^{\times 2} \to X_{0,1}^{\times 2}$. We can also think of
  this as given by a single pullback of bisimplicial objects : if,
  given a simplicial object $C_{\bullet}$, we write $i_{*}C$ for the
  bisimplicial object given by
  $(i_{*}C)_{n,\bullet} \simeq C_{\bullet}^{\times (n+1)}$, with the
  face maps in the first coordinate given by projections and the
  degeneracies by diagonal maps, then $U_{\Seg}X$ is given by the
  pullback of bisimplicial objects
  \nolabelcsquare{U_{\Seg}X}{X}{i_{*}X_{0,0}}{i_{*}X_{0,\bullet},}
  where again $X_{0,0}$ denotes the constant simplicial object with
  this value. For $n$-fold category objects we then want to iterate
  this procedure.
\end{remark}

To make this idea rigorous we first prove a sequence of easy lemmas:
\begin{lemma}\label{lem:fullsubradj}
  Suppose $\pi \colon \mathcal{E} \to \mathcal{C}$ is a Cartesian
  fibration and $j \colon \mathcal{C}_{0} \to \mathcal{C}$
  is a functor with a right adjoint $r \colon
  \mathcal{C} \to \mathcal{C}_{0}$. Let
  \csquare{\mathcal{E}_{0}}{\mathcal{E}}{\mathcal{C}_{0}}{\mathcal{C}}{J}{\pi_{0}}{\pi}{j}
  be a pullback square. Then the functor $J$ has a right adjoint $R
  \colon \mathcal{E} \to \mathcal{E}_{0}$ such that the counit map
  $JR(X) \to X$ is a $\pi$-Cartesian morphism over the counit map
  $jr\pi(X) \to \pi(X)$.
\end{lemma}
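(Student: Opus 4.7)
The plan is to construct the right adjoint $R$ pointwise by taking $\pi$-Cartesian lifts of the counits of $j \dashv r$, and then to verify the adjunction through a direct mapping-space calculation. For each $X \in \mathcal{E}$, let $\epsilon_{\pi(X)} \colon jr\pi(X) \to \pi(X)$ be the counit of $j \dashv r$ and choose a $\pi$-Cartesian morphism $\tilde{\epsilon}_X \colon \widetilde{X} \to X$ lifting it. Since $\pi(\widetilde{X}) = jr\pi(X)$ lies in the essential image of $j$, the pullback description $\mathcal{E}_0 \simeq \mathcal{E} \times_{\mathcal{C}} \mathcal{C}_0$ exhibits $\widetilde{X}$ canonically as $J$ applied to an object of $\mathcal{E}_0$; take this object as $R(X)$, with prospective counit $\epsilon^{\mathcal{E}}_X := \tilde{\epsilon}_X \colon JR(X) \to X$. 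This is $\pi$-Cartesian over $\epsilon_{\pi(X)}$ by construction, which will give the second assertion of the lemma.

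To show $(R, \epsilon^{\mathcal{E}})$ exhibits $R$ as right adjoint to $J$, it suffices (by \cite{HTT}*{Prop.~5.2.2.8}) to check that for each $Y \in \mathcal{E}_0$ with $B := \pi_0(Y)$, post-composition with $\epsilon^{\mathcal{E}}_X$ yields an equivalence $\Map_{\mathcal{E}_0}(Y, R(X)) \xto{\sim} \Map_{\mathcal{E}}(JY, X)$. I would factor this as
\[
\Map_{\mathcal{E}_0}(Y, R(X)) \xto{J} \Map_{\mathcal{E}}(JY, JR(X)) \xto{(\epsilon^{\mathcal{E}}_X)_*} \Map_{\mathcal{E}}(JY, X).
\]
The first map is an equivalence because $J$ is fully faithful: the standard formula
\[
\Map_{\mathcal{E}_0}(Y, R(X)) \simeq \Map_{\mathcal{E}}(JY, JR(X)) \times_{\Map_{\mathcal{C}}(jB, jr\pi X)} \Map_{\mathcal{C}_0}(B, r\pi X)
\]
for mapping spaces in the pullback $\mathcal{E}_0$ identifies the domain with the first factor, since the right-hand factor maps equivalently to the base by full faithfulness of $j$. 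The second map is an equivalence by the universal property of the $\pi$-Cartesian morphism $\tilde{\epsilon}_X$: it sits as the top edge of a pullback square in $\mathcal{S}$ whose bottom edge is the post-composition map $\Map_{\mathcal{C}}(jB, jr\pi X) \to \Map_{\mathcal{C}}(jB, \pi X)$, which is an equivalence because both sides identify with $\Map_{\mathcal{C}_0}(B, r\pi X)$ via $j \dashv r$ and full faithfulness of $j$.

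The one genuine obstacle is checking that the pointwise assignment $X \mapsto (R(X), \epsilon^{\mathcal{E}}_X)$ is sufficiently natural in $X$ to assemble into a functor, rather than just an object-level recipe; this is handled by the standard \icatl{} adjoint-functor criterion, which requires only a universal counit at each object. A slicker alternative is to argue via straightening: $\pi$ corresponds to a functor $F \colon \mathcal{C}^{\op} \to \mathrm{Cat}_\infty$, and unstraightening the natural transformation $F \Rightarrow F \circ (jr)^{\op}$ obtained by whiskering $F$ with the counit $\epsilon \colon jr \Rightarrow \mathrm{id}_{\mathcal{C}}$ produces a functor over $r$ which one identifies with the $R$ just constructed; the pointwise argument above is however more direct.
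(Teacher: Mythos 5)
Your verification of the adjunction is essentially the paper's: both arguments rest on the observation that, because the chosen lift of the counit is $\pi$-Cartesian, the relevant mapping spaces sit in a pullback square whose base map is an equivalence by the adjunction $j \dashv r$ together with full faithfulness of $j$; you simply separate the identification $\Map_{\mathcal{E}_0}(Y,R(X)) \simeq \Map_{\mathcal{E}}(JY,JR(X))$ from the Cartesianness step, where the paper packages both into a single Cartesian square of mapping spaces. The genuine difference is how $R$ is produced as a functor. The paper avoids the pointwise-to-functorial issue you flag by working globally: composition with $\pi$ gives a Cartesian fibration $\Fun(\mathcal{E},\mathcal{E}) \to \Fun(\mathcal{E},\mathcal{C})$ (\cite{HTT}*{Corollary 3.2.2.12}), and choosing a Cartesian lift of the whiskered counit $\epsilon \circ (\pi \times \id)$ starting at the identity of $\mathcal{E}$ yields a natural transformation $\bar{\epsilon} \colon \mathcal{E} \times \Delta^{1} \to \mathcal{E}$ whose components are $\pi$-Cartesian; restricting to $\mathcal{E} \times \{0\}$ gives $R$ and its counit as one piece of functorial data, after which \cite{HTT}*{Proposition 5.2.2.8} applies exactly as in your mapping-space computation. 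Your route instead constructs $R(X)$ and the universal morphism $\tilde{\epsilon}_X$ object by object and appeals to the standard criterion that a right adjoint to $J$ exists as soon as $\Map_{\mathcal{E}}(J(-),X)$ is representable for each $X$; this is legitimate, and since that criterion identifies the counit at $X$ with the chosen universal arrow, it also delivers the second assertion of the lemma (the counit is $\pi$-Cartesian over $jr\pi(X) \to \pi(X)$). The trade-off is that you lean on the representability criterion where the paper's global lift makes the functoriality, and the Cartesianness of the entire counit transformation, visible in a single construction; your straightening sketch is a plausible third route but is not needed.
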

\begin{proof}
  By \cite{HTT}*{Proposition 3.1.2.1}, the functor of \icats{} $\Fun(\mathcal{E}, \mathcal{E}) \to \Fun(\mathcal{E},
  \mathcal{C})$ induced by composition with $\pi$ is a Cartesian
  fibration, and a morphism $\alpha \colon F \to G$ in
  $\Fun(\mathcal{E}, \mathcal{E})$ is Cartesian \IFF{} the morphism
  $\alpha_{x} \colon F(x) \to G(x)$ is $\pi$-Cartesian for every $x
  \in \mathcal{E}$.

  Let $\epsilon \colon \mathcal{C} \times [1] \to \mathcal{C}$ be the
  counit natural transformation $jr \to \id_{\mathcal{C}}$. Then we
  may choose a Cartesian morphism $\bar{\epsilon} \colon \mathcal{E}
  \times \Delta^{1}\to \mathcal{E}$ in $\Fun(\mathcal{E},
  \mathcal{E})$ over $\epsilon \circ \pi \colon jr\pi \to \pi$ with
  target $\id_{\mathcal{E}}$.  The morphism $\bar{\epsilon}_{x}$ is 
  $\pi$-Cartesian for every $x \in \mathcal{E}$. We let $R' :=
  \bar{\epsilon}|_{\mathcal{E} \times \{0\}}$. By construction we then
  have a commutative diagram
  \[
  \begin{tikzcd}
    \mathcal{E} \arrow[bend left=20]{drr}{R'} \arrow[bend right=20]{ddr}{r\pi} \arrow[dashed]{dr}{R} \\
     & \mathcal{E}_{0} \arrow{r}{J} \arrow{d}{\pi_{0}}
     \arrow[phantom]{dr}[very near start]{\ulcorner} & \mathcal{E} \arrow{d}{\pi}
     \\
     & \mathcal{C}_{0} \arrow{r}{j} & \mathcal{C},
  \end{tikzcd}
  \]
  and by the universal property of the pullback $\mathcal{E}_{0}$
  there exists a dashed arrow
  $R \colon \mathcal{E} \to \mathcal{E}_{0}$. By (the dual of)
  \cite{HTT}*{Proposition 5.2.2.8}, to show that $R$ is right adjoint
  to $J$ it suffices to show that for all $X \in \mathcal{E}_{0}$ and
  $Y \in \mathcal{E}$, the map
  \[ \Map_{\mathcal{E}_{0}}(X, RY) \to \Map_{\mathcal{E}}(JX,JRY)
  \to \Map_{\mathcal{E}}(JX, Y) \] arising from composition with
  $\bar{\epsilon}_{Y}$ is an equivalence. Consider the commutative
  diagram
  \[
  \begin{tikzcd}
    \Map_{\mathcal{E}_{0}}(X, RY) \arrow{d} \arrow{r} &
    \Map_{\mathcal{E}}(JX, JRY) \arrow{d} \arrow{r} &
    \Map_{\mathcal{E}}(JX, Y) \arrow{d} \\
    \Map_{\mathcal{C}_{0}}(x, ry) \arrow{r} & \Map_{\mathcal{C}}(jx,
    jry) \arrow{r} & \Map_{\mathcal{C}}(jx, y).
  \end{tikzcd}
\]
where we write $x := \pi_{0}X$, $y := \pi Y$, and use the
identifications $\pi_{0}RY \simeq ry$, $\pi J X \simeq jx$, and
$\pi JRY \simeq jry$. Here the left-hand square is Cartesian since
$\mathcal{E}_{0}$ is a pullback, and the right-hand square is
Cartesian since the map $\bar{\epsilon}_{Y}$ is a $\pi$-Cartesian
morphism; this means the composite square is also Cartesian.  The
composite in the bottom row is an equivalence as $\epsilon$ is the
counit for the adjunction $j \dashv r$, so this means the composite
upper horizontal map is also an equivalence, as required.
\end{proof}

\begin{lemma}\label{lem:ev0catcart}
  Let $\mathcal{C}$ be an \icat{} with finite limits, and write $i$
  for the inclusion $\{[0]\} \hookrightarrow \simp^{\op}$. Then the
  functor $i^{*} \colon \Cat(\mathcal{C}) \to \mathcal{C}$ given by
  composition with $i$ is a Cartesian fibration.
\end{lemma}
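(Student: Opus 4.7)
The plan is to construct Cartesian lifts explicitly using right Kan extension along $i$. Since $\mathcal{C}$ has finite products, the functor $i^{*}$ admits a right adjoint $i_{*} \colon \mathcal{C} \to \Fun(\simp^{\op}, \mathcal{C})$, whose value at $[n]$ is the limit over the slice category $\{[0]\} \times_{\simp^{\op}} \simp^{\op}_{/[n]}$. I would first identify this slice as the discrete category with $n+1$ objects indexed by the vertices $[0] \to [n]$, so that $(i_{*}B)_{n} \simeq B^{n+1}$ with the evident simplicial structure. A direct check shows that $i_{*}B$ is a category object: the Segal map $B^{n+1} \to (B \times B) \times_{B} \cdots \times_{B} (B \times B)$ is tautologically an equivalence.

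Given $X \in \Cat(\mathcal{C})$ and a morphism $f \colon B \to X_{0} = i^{*}X$ in $\mathcal{C}$, I would then set
\[
Y := X \times_{i_{*}X_{0}} i_{*}B,
\]
with the pullback taken in $\Fun(\simp^{\op}, \mathcal{C})$ along the unit $X \to i_{*}i^{*}X = i_{*}X_{0}$ of the adjunction $i^{*} \dashv i_{*}$ and the map $i_{*}(f) \colon i_{*}B \to i_{*}X_{0}$. Concretely $Y_{n} \simeq X_{n} \times_{X_{0}^{n+1}} B^{n+1}$ using the $n+1$ vertex projections of $X$. Since $\Cat(\mathcal{C})$ is closed under finite limits in $\Fun(\simp^{\op}, \mathcal{C})$ (the Segal equivalences being preserved by pullback), $Y$ is again a category object. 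Moreover, $i^{*}$ preserves all limits pointwise, and the counit $i^{*}i_{*} \simeq \id$ is an equivalence since $i$ is the fully faithful inclusion of a point, so $i^{*}Y \simeq X_{0} \times_{X_{0}} B \simeq B$.

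To conclude, it remains to verify that the projection $\alpha \colon Y \to X$ is $i^{*}$-Cartesian over $f$. By the universal property of the pullback, for any $Z \in \Cat(\mathcal{C})$ we have
\[
\Map(Z, Y) \simeq \Map(Z, X) \times_{\Map(Z, i_{*}X_{0})} \Map(Z, i_{*}B),
\]
and the adjunction $i^{*} \dashv i_{*}$ rewrites the latter two mapping spaces as $\Map(i^{*}Z, X_{0})$ and $\Map(i^{*}Z, B)$. This is exactly the defining mapping-space characterization of $\alpha$ being $i^{*}$-Cartesian over $f$. The main bookkeeping obstacle is checking that this adjunction-induced identification is compatible with the projection maps so that the resulting square actually commutes, but this follows from naturality of the unit and counit; no deeper ingredient is needed beyond the existence of $i_{*}$ and the closure of $\Cat(\mathcal{C})$ under finite limits.
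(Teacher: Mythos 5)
Your proposal is correct and takes essentially the same route as the paper: both construct the right adjoint $i_{*}$, identify $(i_{*}B)_{n} \simeq B^{\times (n+1)}$ (the relevant indexing category is the coslice $\{[0]\} \times_{\simp^{\op}} (\simp^{\op})_{[n]/}$, i.e.\ the $n+1$ maps $[0] \to [n]$ in $\simp$, rather than the over-slice as you wrote, though your resulting formula is the correct one), note that $i_{*}$ therefore lands in $\Cat(\mathcal{C})$, and take the Cartesian lift of $f \colon B \to X_{0}$ to be the pullback $X \times_{i_{*}X_{0}} i_{*}B$ along the unit. The only divergence is that the paper invokes a general criterion cited from the author's Morita paper, reducing the fibration claim to checking that $Y_{0} \to B$ is an equivalence, whereas you verify the mapping-space characterization of $i^{*}$-Cartesian morphisms directly from the universal property of the pullback and the adjunction $i^{*} \dashv i_{*}$ --- an equally valid, self-contained substitute.
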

\begin{proof}
  We first observe that the functor $i^{*}$ has a right adjoint.  The
  right Kan extension functor $i_{*}$ gives a right adjoint to
  $i^{*}$, considered as a functor from $\Fun(\simp^{\op},
  \mathcal{C})$ to $\mathcal{C}$. For $C \in \mathcal{C}$ the
  simplicial object $i_{*}C$ is given by $(i_{*}C)_{n} \simeq
  C^{\times (n+1)}$, with face maps given by projections and
  degeneracies by diagonal maps, and so lies in the full subcategory
  $\Cat(\mathcal{C})$. Thus $i_{*}$ restricts to a right adjoint to
  $i^{*} \colon \Cat(\mathcal{C}) \to \mathcal{C}$.

  We now apply the criterion of \cite{nmorita}*{Corollary 5.28}: Since
  $i^{*}$ has a right adjoint, to see that $i^{*}$ is a Cartesian
  fibration it is enough to check that, given $X_{\bullet} \in
  \Cat(\mathcal{C})$, a map $C \to X_{0}$, and a pullback square
  \nolabelcsquare{Y_{\bullet}}{X_{\bullet}}{i_{*}C}{i_{*}X_{0},} the
  induced map $Y_{0} \to (i_{*}C)_{0} \to C$ is an equivalence.
  
  Limits in $\Cat(\mathcal{C})$ are computed in $\Fun(\simp^{\op},
  \mathcal{C})$, and limits in functor \icats{} are computed
  objectwise, so this pullback square gives a pullback square when
  evaluated at every object of $\simp^{\op}$. Thus we in particular
  have a pullback square
  \nolabelcsquare{Y_{0}}{X_{0}}{(i_{*}C)_{0}}{(i_{*}X_{0})_{0}} in
  $\mathcal{C}$. Here the right vertical map is an equivalence, so the
  left vertical map is also an equivalence.
\end{proof}

\begin{lemma}\label{lem:funadj}\ 
  \begin{enumerate}[(i)]
  \item Suppose $f \colon \mathcal{C} \rightleftarrows \mathcal{D} :
    g$ is an adjunction. Then there is an
    induced adjunction $g^{*} \colon \Fun(\mathcal{C}, \mathcal{E})
    \rightleftarrows \Fun(\mathcal{D}, \mathcal{E}) : f^{*}$ on
    functor \icats{}.
  \item Suppose $\mathcal{C}$ is an \icat{} with an initial object
    $\emptyset$. Then the functor $\Fun(\mathcal{C}, \mathcal{E}) \to
    \mathcal{E}$ given by evaluation at $\emptyset$ is left adjoint to
    the constant diagram functor $\mathcal{E} \to \Fun(\mathcal{C},
    \mathcal{E})$.
  \end{enumerate}
\end{lemma}
\begin{proof}
  To prove (i), we just choose unit and counit transformations for $f
  \dashv g$; composing with these gives unit and counit
  transformations for an adjunction $g^{*} \dashv f^{*}$, since the
  adjunction identities can be deduced from the adjunction identities
  for $f \dashv g$. Now (ii) is a special case of (i), applied to the
  adjunction  $\{\emptyset\} \rightleftarrows \mathcal{C}$.
\end{proof}

\begin{lemma}\label{lem:segncatadj}
  Let $\mathcal{C}$ be an \icat{} with finite limits. The inclusion
  $\Seg_{n}(\mathcal{C}) \hookrightarrow
  \Cat(\Seg_{n-1}(\mathcal{C}))$ admits a right adjoint for all $n$.
\end{lemma}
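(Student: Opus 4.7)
The plan is to realize $\Seg_n(\mathcal{C})$ as a pullback inside $\Cat(\Seg_{n-1}(\mathcal{C}))$ that satisfies the hypotheses of Lemma \ref{lem:fullsubradj}, and then simply read off the right adjoint from that lemma. Unwinding the inductive definition of an $n$-fold Segal object, a category object $\mathcal{D}_{\bullet} \colon \simp^{\op} \to \Seg_{n-1}(\mathcal{C})$ is an $n$-fold Segal object exactly when its value $\mathcal{D}_{0}$ at $[0]$ is a \emph{constant} $(n-1)$-fold Segal object, since the other condition (that each $\mathcal{D}_{k}$ is itself $(n-1)$-fold Segal) is already built into the target of the functor.

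With this picture in mind, I would take $\mathcal{E} := \Cat(\Seg_{n-1}(\mathcal{C}))$ together with the evaluation-at-$[0]$ functor $\pi \colon \mathcal{E} \to \Seg_{n-1}(\mathcal{C})$, which is a Cartesian fibration by Lemma \ref{lem:ev0catcart} applied to the \icat{} $\Seg_{n-1}(\mathcal{C})$ (which inherits finite limits from $\Fun((\simp^{\op})^{n-1}, \mathcal{C})$ because the Segal conditions are conditions on certain maps being equivalences, and these are preserved under limits). For the reflective inclusion I would use the constant-diagram functor $j \colon \mathcal{C} \hookrightarrow \Seg_{n-1}(\mathcal{C})$; this is fully faithful, and its right adjoint is evaluation at $([0], \ldots, [0])$, since $[0]$ is terminal in $\simp^{\op}$ and so $\mathcal{D}_{0,\ldots,0}$ computes the limit of any $(n-1)$-fold Segal object $\mathcal{D}$.

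It then remains to identify the pullback $\mathcal{C} \times_{\Seg_{n-1}(\mathcal{C})} \Cat(\Seg_{n-1}(\mathcal{C}))$ along $\pi$ and $j$: its objects are category objects $\mathcal{D}_{\bullet}$ in $\Seg_{n-1}(\mathcal{C})$ with $\mathcal{D}_{0}$ constant, i.e.\ precisely the $n$-fold Segal objects in $\mathcal{C}$, so the pullback is $\Seg_{n}(\mathcal{C})$ and the top horizontal map is the inclusion $\Seg_{n}(\mathcal{C}) \hookrightarrow \Cat(\Seg_{n-1}(\mathcal{C}))$. Lemma \ref{lem:fullsubradj} then directly yields the desired right adjoint $U_{\Seg}$.

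The only nontrivial input is really the Cartesian-fibration property of Lemma \ref{lem:ev0catcart}; once one allows that, the rest is formal. A minor technical point to verify carefully is the identification of the pullback, where one has to keep straight that ``constant as an $(n-1)$-uple category object'' and ``constant as an $(n-1)$-fold Segal object'' agree (both mean lying in the essential image of $j$), so there is no ambiguity between the two slightly different constancy conditions one could impose. As a bonus, Lemma \ref{lem:fullsubradj} also tells us that the counit $j U_{\Seg}(\mathcal{D}) \to \mathcal{D}$ is $\pi$-Cartesian over the counit $j\mathcal{D}_{0,\ldots,0} \to \mathcal{D}_{0}$, which gives an explicit description of $U_{\Seg}$ as the appropriate fibre product and will be useful in the proof of Proposition \ref{propn:underfold}.
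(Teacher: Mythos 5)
Your proposal is correct and follows essentially the same route as the paper: it exhibits $\Seg_{n}(\mathcal{C})$ as the pullback of the evaluation-at-$[0]$ Cartesian fibration (Lemma~\ref{lem:ev0catcart}) along the fully faithful constant-diagram functor $\mathcal{C} \to \Seg_{n-1}(\mathcal{C})$ with right adjoint given by evaluation at $([0],\ldots,[0])$, and then applies Lemma~\ref{lem:fullsubradj}. The points you flag to check (finite limits in $\Seg_{n-1}(\mathcal{C})$, identification of the pullback with the constancy condition) are exactly the ones the paper treats as clear, so nothing is missing.
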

\begin{proof}
  We prove this by applying Lemma~\ref{lem:fullsubradj} to the
  pullback square
  \csquare{\Seg_{n}(\mathcal{C})}{\Cat(\Seg_{n-1}(\mathcal{C}))}{\mathcal{C}}{\Seg_{n-1}(\mathcal{C}),}{}{}{i^{*}}{c}
  where $c$ is the constant-diagram functor and $i$ denotes the
  inclusion $\{[0]\} \times (\simp^{\op})^{\times (n-1)}
  \hookrightarrow (\simp^{\op})^{\times n}$.

  To see that $c$ has a right adjoint, let $e$ denote the
  inclusion $\{([0],\ldots,[0])\} \hookrightarrow
  (\simp^{\op})^{\times (n-1)}$. By Lemma~\ref{lem:funadj} the functor
  $e^{*} \colon \Fun(\simp^{n-1,\op}, \mathcal{C}) \to \mathcal{C}$
  given by evaluation at $([0],\ldots,[0])$ is right adjoint to the
  constant-diagram functor $c \colon \mathcal{C} \to
  \Fun(\simp^{n-1,\op}, \mathcal{C})$. Since $c$ factors through
  $\Seg_{n-1}(\mathcal{C})$, this restricts to an adjunction
  \[ c : \mathcal{C} \rightleftarrows \Seg_{n-1}(\mathcal{C}) :
  e^{*}.\] Moreover, since $e^{*}c$ is the identity functor on
  $\mathcal{C}$, the functor $c$ is fully faithful. 

  It follows from Lemma~\ref{lem:ev0catcart} that $i^{*}$ is a Cartesian
  fibration, so the hypotheses of  Lemma~\ref{lem:fullsubradj} are
  satisfied, which implies the existence of the required right adjoint
  $\Cat(\Seg_{n-1}(\mathcal{C})) \to \Seg_{n}(\mathcal{C})$.
\end{proof}

\begin{lemma}\label{lem:adjoncat}
  Suppose $\mathcal{C}$ and $\mathcal{D}$ are \icats{} with finite
  limits, and
  \[ F : \mathcal{C} \rightleftarrows \mathcal{D} : G \]
  is an adjunction such that the left adjoint $F$ preserves finite
  limits. Then composition with $F$ and $G$ gives an adjunction
  \[ F_{*} : \Cat(\mathcal{C}) \rightleftarrows \Cat(\mathcal{D}) : G_{*}.\]
\end{lemma}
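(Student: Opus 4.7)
The plan is to get the adjunction on the underlying functor categories for free, then check that both functors restrict to the full subcategories of category objects.

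First I would observe that postcomposition with $F$ and $G$ gives an adjunction
\[ F_{*} : \Fun(\simp^{\op}, \mathcal{C}) \rightleftarrows \Fun(\simp^{\op}, \mathcal{D}) : G_{*} \]
on the underlying functor \icats{}; this is a standard consequence of the fact that adjunctions can be applied pointwise (e.g.\ from the description of adjunctions by unit/counit transformations, or by identifying $\Fun(\simp^{\op},-)$ with a cotensor).

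Next I would verify that $F_{*}$ and $G_{*}$ both preserve the full subcategory of category objects. For $G_{*}$ this is automatic: $G$ is a right adjoint and thus preserves all limits, so if $C_{\bullet} \in \Cat(\mathcal{C})$ then
\[ G(C_{n}) \to G(C_{1} \times_{C_{0}} \cdots \times_{C_{0}} C_{1}) \simeq G(C_{1}) \times_{G(C_{0})} \cdots \times_{G(C_{0})} G(C_{1}) \]
is an equivalence for all $n$, so $G_{*}C_{\bullet} \in \Cat(\mathcal{D})$. For $F_{*}$ the same computation works, this time using the explicit hypothesis that $F$ preserves finite limits to commute $F$ with the iterated pullback.

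Finally, since both $F_{*}$ and $G_{*}$ restrict to the full subcategories $\Cat(\mathcal{C}) \hookrightarrow \Fun(\simp^{\op}, \mathcal{C})$ and $\Cat(\mathcal{D}) \hookrightarrow \Fun(\simp^{\op}, \mathcal{D})$, the unit and counit of the ambient adjunction restrict to natural transformations between these restricted functors, yielding the desired adjunction $F_{*} \dashv G_{*}$ on category objects. There is no real obstacle here; the only substantive point is the use of finite-limit preservation of $F$ to show that $F_{*}$ lands in $\Cat(\mathcal{D})$, which is exactly why that hypothesis appears in the statement.
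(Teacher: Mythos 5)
Your proposal is correct and follows essentially the same route as the paper: pass to the pointwise adjunction on $\Fun(\simp^{\op},-)$ and then note that $F_{*}$ and $G_{*}$ preserve category objects (using finite-limit preservation of $F$, and limit preservation of $G$ as a right adjoint), so the adjunction restricts. Your write-up simply makes explicit the Segal-condition check that the paper leaves implicit.
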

\begin{proof}
  Composition with $F$ and $G$ induces an adjunction
  \[ F_{*} : \Fun(\simp^{\op}, \mathcal{C}) \rightleftarrows
  \Fun(\simp^{\op}, \mathcal{D}) : G_{*}\]
  by the same argument as in the proof of Lemma~\ref{lem:funadj}, and since $F$ preserves finite limits both $F_{*}$ and $G_{*}$
  preserve the full subcategories of category objects, giving the
  desired restricted adjunction between these.
\end{proof}

\begin{proof}[Proof of Proposition~\ref{propn:underfold}]
  Since the full subcategory $\Seg_{k}(\mathcal{C})$ of
  $\Cat(\Seg_{k-1}(\mathcal{C}))$ is closed under finite
  limits, by combining Lemma~\ref{lem:segncatadj} with
  Lemma~\ref{lem:adjoncat} it follows that the inclusion
  \[\Cat^{i}(\Seg_{k}(\mathcal{C})) \hookrightarrow
  \Cat^{i+1}(\Seg_{k-1}(\mathcal{C}))\] has a right adjoint for all
  $i$. Composing these adjoints, we see that the composite inclusion
  \[ \Seg_{n}(\mathcal{C}) \hookrightarrow
  \Cat(\Seg_{n-1}(\mathcal{C}))  \hookrightarrow \cdots
  \hookrightarrow \Cat^{n}(\mathcal{C}) \]
  also has a right adjoint.
\end{proof}

\begin{defn}
  Let $\mathcal{C}$ be an $n$-uple Segal space. We refer to
  $U_{\Seg}\mathcal{C}$ as the \emph{underlying $n$-fold Segal space}
  of $\mathcal{C}$.
\end{defn}

\begin{remark}
  In the definition of $n$-fold Segal space, we privileged one of the
  spaces of 1-morphisms, etc. By making different choices (i.e.\ by
  permuting the coordinates in $\simp^{n,\op}$) we get $n!$
  different $n$-fold Segal spaces from an $n$-uple Segal space.
\end{remark}

\section{The $(\infty,k)$-Category of Iterated
  Spans}\label{sec:spancat} 
In this section we construct the $(\infty,k)$-category
$\Span_{k}(\mathcal{C})$ of iterated spans in an \icat{} $\mathcal{C}$
with finite limits, in the form of a $k$-fold Segal space. As we noted
in the introduction, we expect the $(\infty,k-1)$-category of
morphisms from $X$ to $Y$ in $\Span_{k}(\mathcal{C})$ to be
$\Span_{k-1}(\mathcal{C}_{/X \times Y})$.  To avoid dealing with
coherence issues our construction will not be immediately recognizable
as being of this form, but we'll see below in
Proposition~\ref{propn:spanmaps} that the mapping
$(\infty,k-1)$-categories do indeed admit this inductive
description. Moreover, as the diagrams describing an $n$-fold span
become increasingly complicated as $n$ increases, it turns out to be
much easier to define $\Span_{k}(\mathcal{C})$ as the underlying
$k$-fold Segal space of a $k$-uple Segal space
$\SPAN_{k}(\mathcal{C})$. This will have all of its $k$ types of
$1$-morphisms given by spans in $\mathcal{C}$, and the ``commutative
squares'' are given by diagrams in $\mathcal{C}$ of the
form
\[
\begin{tikzcd}[row sep=scriptsize, column sep=scriptsize]
X & A\arrow{l} \arrow{r} & Y \\
C \arrow{u} \arrow{d}& J \arrow{u} \arrow{d} \arrow{l} \arrow{r}& D \arrow{u} \arrow{d} \\
Z & B \arrow{l} \arrow{r}& W,
\end{tikzcd}
\]
whose shape is given by the \emph{product} of the diagram shape describing
spans; similarly, the higher ``commutative $i$-cubes'' are described
by the $i$-fold product of this shape. Notice that when we restrict to
the underlying $k$-fold Segal space we consider only those diagrams of
the above form where the maps $X \from C \to Z$ and $Y \from D \to W$
are all identities, in which case this diagram is equivalent to a
2-fold span. In fact, since it is no more work and will be useful in
the next section, we'll construct an $n$-fold category object
$\SPAN^{+}_{k}(\mathcal{C})$ in $\CatI$ --- this may be regarded as a
$(k+1)$-uple \icat{}, from which we may extract an
$(\infty,k+1)$-category $\Span^{+}_{k}(\mathcal{C})$ extending
$\Span_{k}(\mathcal{C})$, with $(k+1)$-morphisms given by natural
transformations of $k$-fold span diagrams.

In the case $k = 1$, the construction we use is due to
Barwick~\cite{BarwickQ}, and in general we consider a simple inductive
generalization of Barwick's definition. We begin by introducing some
notation:
\begin{defn}
  Let $\bbS^{n}$ be the partially ordered set with objects pairs
  $(i,j)$ with $0 \leq i \leq j \leq n$, where $(i,j) \leq (i',j')$ if
  $i \leq i'$ and $j' \leq j$. A map of totally ordered sets $\phi \colon [n] \to [m]$ induces a
  functor $\bbSigma^{n} \to \bbSigma^{m}$ by sending $(i,j)$ to
  $(\phi(i), \phi(j))$; we thus get a functor $\bbSigma^{\bullet} \colon \simp
  \to \Cat$. We will also write $\bbS^{m_{1},\ldots, m_{k}}$ for
  the product $\bbS^{m_{1}} \times \cdots \times \bbS^{m_{k}}$, which
  defines a functor $\bbS^{\bullet,\ldots,\bullet} \colon
  \simp^{k} \to \Cat$.
\end{defn}

\begin{defn}
  Let $\bbLn$ denote the full subcategory of $\bbSn$ spanned by those
  pairs $(i,j)$ such that $j-i \leq 1$. These subcategories are not in
  general preserved by the functors $\bbSigma(\phi)$ for $\phi$ in
  $\simp$, but they are preserved by the functors arising from
  inert maps. We thus get a functor $\bbL^{\bullet}
  \colon \simp_{\txt{int}} \to \Cat$ with a natural transformation $i
  \colon \bbL^{\bullet} \to \bbS^{\bullet}|_{\simp_{\txt{int}}}$. We
  will also write $\bbL^{m_{1},\ldots, m_{k}}$ for the product
  $\bbL^{m_{1}} \times \cdots \times \bbL^{m_{k}}$, which defines a
  functor $\bbL^{\bullet,\ldots,\bullet} \colon \simp^{k}_{\txt{int}} \to \Cat$, with a natural transformation $i \colon
  \bbL^{\bullet,\ldots,\bullet} \to
  \bbS^{\bullet,\ldots,\bullet}|_{\simp^{k}_{\txt{int}}}$.
\end{defn}

\begin{exs}\ 
  \begin{enumerate}[(i)]
  \item The category $\bbS^{0} = \bbL^{0}$ is the trivial
    one-object category.
  \item The category $\bbS^{1} = \bbL^{1}$ can be depicted as
    \[ (0,0) \leftarrow (0,1) \to (1,1). \]
  \item The category $\bbS^{2}$ can be depicted as
  \[ %
\begin{tikzpicture} %
\matrix (m) [matrix of math nodes,row sep=1em,column sep=1em,text height=1.5ex,text depth=0.25ex] %
{  &       & (0,2) &       &  \\
   & (0,1) &       & (1,2) &   \\
(0,0)  &   & (1,1) &       & (2,2),   \\ };
\path[->,font=\footnotesize] %
(m-1-3) edge (m-2-2)
(m-1-3) edge (m-2-4)
(m-2-2) edge (m-3-1)
(m-2-2) edge (m-3-3)
(m-2-4) edge (m-3-3)
(m-2-4) edge (m-3-5);
\end{tikzpicture}%
\]%
and the subcategory $\bbL^{2}$ as
  \[ %
\begin{tikzpicture} %
\matrix (m) [matrix of math nodes,row sep=1em,column sep=1em,text height=1.5ex,text depth=0.25ex] %
{     & (0,1) &       & (1,2) &   \\
(0,0)  &   & (1,1) &       & (2,2).   \\ };
\path[->,font=\footnotesize] %
(m-1-2) edge (m-2-1)
(m-1-2) edge (m-2-3)
(m-1-4) edge (m-2-3)
(m-1-4) edge (m-2-5);
\end{tikzpicture}%
\]%
\item The category $\bbS^{3}$ can be depicted as
  \[ %
\begin{tikzpicture} %
\matrix (m) [matrix of math nodes,row sep=1em,column sep=1em,text height=1.5ex,text depth=0.25ex] %
{
&  &       & (0,3) &       &  &\\
&  & (0,2) &       & (1,3) &  &\\
&(0,1)   &  & (1,2) &  & (2,3) & \\
(0,0)  &   & (1,1) &  & (2,2) & & (3,3),   \\ };
\path[->,font=\footnotesize] %
(m-1-4) edge (m-2-3)
(m-1-4) edge (m-2-5)
(m-2-3) edge (m-3-2)
(m-2-3) edge (m-3-4)
(m-2-5) edge (m-3-4)
(m-2-5) edge (m-3-6)
(m-3-2) edge (m-4-1)
(m-3-2) edge (m-4-3)
(m-3-4) edge (m-4-3)
(m-3-4) edge (m-4-5)
(m-3-6) edge (m-4-5)
(m-3-6) edge (m-4-7);
\end{tikzpicture}%
\]%
and the subcategory $\bbL^{3}$ as
  \[ %
\begin{tikzpicture} %
\matrix (m) [matrix of math nodes,row sep=1em,column sep=1em,text height=1.5ex,text depth=0.25ex] %
{     & (0,1) &       & (1,2) &    &  (2,3) \\
(0,0)  &   & (1,1) &       & (2,2) &  & (3,3).   \\ };
\path[->,font=\footnotesize] %
(m-1-2) edge (m-2-1)
(m-1-2) edge (m-2-3)
(m-1-4) edge (m-2-3)
(m-1-4) edge (m-2-5)
(m-1-6) edge (m-2-5)
(m-1-6) edge (m-2-7)
;
\end{tikzpicture}%
\]%

  \end{enumerate}
\end{exs}

\begin{remark}\label{rmk:bbSvsInt}
  We can think of the object $(i,j) \in \bbS^{n}$ as a subinterval
  $(i, i+1, \ldots, j)$ of the ordered set $[n] \in \simp$. This gives
  a functor $\bbS^{n} \to \simp^{\op}_{/[n]}$ that takes $(i,j)$ to
  the inert map $[j-i] \to [n]$ that sends $t \in [j-i]$ to $i+t$,
  with a map $(i,j) \to (i',j')$ sent to the unique inert map $[j'-i']
  \to [j-i]$ such that the diagram
  \nolabelopctriangle{{[j'-i']}}{{[j-i]}}{{[n]}} commutes. This
  functor identifies $\bbS^{n}$ with the subcategory
  $\simp_{\txt{int},/[n]}^{\op}$ of inert maps, which will be
  useful later. Similarly, restricting the functor to $\bbL^{n}$, this
  is identified with the full subcategory $\txt{Cell}^{1,\op}_{/[n]}$
  as defined in Definition~\ref{defn:inert}.
\end{remark}

\begin{defn}
  Suppose $\mathcal{C}$ is an \icat{} with finite limits. A functor $f
  \colon \bbS^{n_{1},\ldots,n_{k}} \to \mathcal{C}$ is
  \defterm{Cartesian} if $f$ is a right Kan extension of its
  restriction to $\bbL^{n_{1},\ldots,n_{k}}$ along the inclusion $i =
  i_{n_{1},\ldots,n_{k}}$. Equivalently, $f$ is Cartesian \IFF{} the
  unit map \[f \to i_{*}i^{*}f\] is an equivalence, where $i^{*}
  \colon \Fun(\bbS^{n_{1},\ldots,n_{k}}, \mathcal{C}) \to
  \Fun(\bbL^{n_{1},\ldots,n_{k}}, \mathcal{C})$ is the functor given
  by composition with $i$, and $i_{*}$ denotes its right adjoint,
  given by right Kan extension. We denote the full subcategory of
  the \icat{}  $\Fun(\bbS^{n_{1},\ldots,n_{k}}, \mathcal{C})$ spanned by the
  Cartesian functors by $\Fun^{\Cart}(\bbS^{n_{1},\ldots,n_{k}},
  \mathcal{C})$, and its underlying space by
  $\Map^{\Cart}(\bbS^{n_{1},\ldots,n_{k}}, \mathcal{C})$.
\end{defn}

We can also formulate this condition inductively:
\begin{lemma}\label{lem:cartindcond}
  Suppose $\mathcal{C}$ is an \icat{} with finite limits. Then the
  following are equivalent, for a functor $F \colon
  \bbS^{n_{1},\ldots,n_{k}} \to \mathcal{C}$:
  \begin{enumerate}[(1)]
  \item $F$ is Cartesian.
  \item $F$ is a right Kan extension of its restriction to
    $\bbL^{n_{1}} \times \bbS^{n_{2},\ldots,n_{k}}$, and for every $I
    \in \bbL^{n_{1}}$ the restriction $F_{I}$ of $F$ to $\{I\} \times
    \bbS^{n_{2},\ldots, n_{k}}$ is Cartesian.
  \item The functor $\tilde{F} \colon \bbS^{n_{1}} \to
    \Fun(\bbS^{n_{2},\ldots,n_{k}}, \mathcal{C})$ corresponding to $F$
    is Cartesian, and for every $X \in \bbL^{n_{1}}$ the image $\tilde{F}(X)
    \in \Fun(\bbS^{n_{2},\ldots,n_{k}}, \mathcal{C})$ is Cartesian.
  \end{enumerate}
\end{lemma}
\begin{proof}
  Let $F'$ denote the restriction of $F$ to $\bbL^{n_{1}} \times
  \bbS^{n_{2},\ldots,n_{k}}$. We first prove that the following pair of
  conditions are equivalent:
  \begin{enumerate}[(a)]
  \item $F'$ is a right Kan extension of its restriction to
    $\bbL^{n_{1}} \times \bbL^{n_{2},\ldots,n_{k}}$.
  \item For every $I \in \bbL^{n_{1}}$, the restriction $F'_{I}$ of
    $F'$ to $\{I\} \times \bbS^{n_{2},\ldots,n_{k}}$ is Cartesian.
  \end{enumerate}
  Condition (a) says that for every $I \in \bbL^{n_{1}}$, $J \in
  \bbS^{n_{2},\ldots,n_{k}}$ the natural map
  \[ F(I,J) \to \lim_{X \in (\bbL^{n_{1}} \times
    \bbL^{n_{2},\ldots,n_{k}})_{(I,J)/}} F(X)\]
  is an equivalence, whereas (b) says that for every $I \in \bbL^{n_{1}}$, $J \in
  \bbS^{n_{2},\ldots,n_{k}}$ the natural map
  \[ F(I,J) \to \lim_{Y \in \bbL^{n_{2},\ldots,n_{k}}_{J/}} F(I,Y)\]
  is an equivalence. But the inclusion $\{I\} \times
  \bbL^{n_{2},\ldots,n_{k}}_{J/} \to \bbL^{n_{1}}_{I/} \times
  \bbL^{n_{2},\ldots,n_{k}}_{J/}$ is coinitial (as coinitial maps are
  closed under products by the dual of \cite{HTT}*{Corollary
    4.1.1.13}), so these two limits are canonically equivalent, hence
  (a) and (b) are indeed equivalent.

  Thus condition (2) holds \IFF{} $F$ is the right Kan extension
  of its restriction $F'$ and $F'$ is the right Kan extension of its
  restriction to $\bbL^{n_{1},\ldots,n_{k}}$. The transitivity
  of Kan extensions implies this is the same as condition (1).

  To see that (2) is equivalent to (3), we are reduced to showing that
  the following two conditions are equivalent:
  \begin{enumerate}[(a')]
  \item $F$ is a right Kan extension of its restriction to
    $\bbL^{n_{1}} \times \bbS^{n_{2},\ldots,n_{k}}$.
  \item $\tilde{F} \colon \bbS^{n_{1}} \to
    \Fun(\bbS^{n_{2},\ldots,n_{k}}, \mathcal{C})$ is Cartesian.
  \end{enumerate}
  Here (a') says that for every $I \in \bbS^{n_{1}}$ and $J \in
  \bbS^{n_{2},\ldots,n_{k}}$ the natural map
  \[ F(I,J) \to \lim_{X \in (\bbL^{n_{1}} \times
    \bbS^{n_{2},\ldots,n_{k}})_{(I,J)/}} F(X) \] is an equivalence. By
  the same argument as above, the inclusion $\bbL^{n_{1}}_{I/} \times
  \{J\} \hookrightarrow (\bbL^{n_{1}} \times
  \bbS^{n_{2},\ldots,n_{k}})_{(I,J)/}$ is coinitial, so this is
  equivalent to $F(I,J)$ being $\lim_{Y \in \bbL^{n_{1}}_{I/}}
  F(Y,J)$. As limits in functor \icats{} are computed objectwise, this
  is equivalent to (b'), which completes the proof.
\end{proof}

\begin{remark}
  Expanding the definition, in the case $k = 1$ we see that a functor
  $f \colon \bbSn \to \mathcal{C}$ is Cartesian \IFF{} it is obtained
  by taking iterated pullbacks of the $n$ spans $f_{1},\ldots,f_{n}$
  given by restricting $f$ along the inclusions $\bbS(\rho_{i}) \colon
  \bbS^{1} \to \bbSn$ coming from the maps $\rho_{i}\colon [1] \to
  [n]$ in $\simp$ that send $0$ to $i-1$ and $1$ to $i$. In other
  words, $f$ is Cartesian \IFF{} it presents the $n$-fold composite of
  these spans as 1-morphisms in our desired \icat{} of spans. The
  preceding Lemma then says that, similarly, a functor
  $\bbS^{n_{1},\ldots,n_{k}} \to \mathcal{C}$ is Cartesian if it
  presents the appropriate composite of spans in $k$ different
  directions.
\end{remark}

\begin{defn}
  Suppose $\mathcal{C}$ is an \icat{} with finite limits. Let
  $\overline{\SPAN}^{+}_{k}(\mathcal{C}) \to \simp^{k,\op}$ be a
  coCartesian fibration associated to the functor $\simp^{k,\op}
  \to \CatI$ given by $\Fun(\bbS^{\bullet,\ldots,\bullet},
  \mathcal{C})$. Then we define $\SPAN^{+}_{k}(\mathcal{C})$ to be the
  full subcategory of $\overline{\SPAN}^{+}_{k}(\mathcal{C})$ spanned by
  the Cartesian functors $\bbS^{n_{1},\ldots,n_{k}} \to \mathcal{C}$
  for all $n_{1},\ldots,n_{k}$.
\end{defn}

Our goal is now to prove that $\SPAN^{+}_{k}(\mathcal{C}) \to
\simp^{k,\op}$ is the coCartesian fibration associated to a $k$-fold
category object in $\CatI$. We will first show that this is indeed a
coCartesian fibration, which is a consequence of the following result:
\begin{propn}\label{propn:cartcomp}
  Suppose $\mathcal{C}$ is an \icat{} with finite limits and  $F
  \colon \bbS^{m} \to \mathcal{C}$ is a Cartesian functor. Then for
  any map $\phi \colon [n] \to [m]$ in $\simp$, the composite functor
  \[ \phi^{*}F \colon \bbS^{n} \to \bbS^{m} \to \mathcal{C}\]
  is again Cartesian.
\end{propn}

Before we give the proof, we first prove the key technical observation
we need, which requires a bit of notation:
\begin{defn}
  Suppose $\phi \colon [n] \to [m]$ is an injective map in
  $\simp$. Let $\bbL^{m}[\phi]$ denote the full subcategory of
  $\bbS^{m}$ spanned by the objects $I = (i,j)$ such that either $I
  \in \bbL^{m}$ or $I \geq \phi(J)$ for some $J \in \bbL^{n}$.  
\end{defn}

\begin{lemma}\label{lem:bbLcoinitial}\ 
  \begin{enumerate}[(i)]
  \item Suppose $\phi \colon [n] \to [m]$ is a surjective map in
    $\simp$. Then for every $I \in \bbS^{n}$, the induced functor
    $\bbL^{n}_{I/} \to \bbL^{m}_{\phi(I)/}$ is coinitial.
  \item Suppose $\phi \colon [n] \to [m]$ is an injective map in
    $\simp$. Then for every $I \in \bbS^{n}$, the induced functor
    $\bbL^{n}_{I/} \to \bbL^{m}[\phi]_{\phi(I)/}$ is coinitial.
  \end{enumerate}
\end{lemma}
\begin{proof}
  If $I = (i,j)$ then the category $\bbL^{n}_{I/}$ is equivalent to
  $\bbL^{n'}$ where $n' = j-i$. If we let $\phi'$ denote the
  restriction of $\phi$ to a map $\phi' \colon [n'] \cong \{i, i+1,
  \ldots, j\} \to \{\phi(i), \phi(i)+1, \ldots, \phi(j)\} \cong [m']$,
  where $m' := \phi(j)-\phi(i)$, then we also have
  $\bbL^{m}_{\phi(I)/} \simeq \bbL^{m'}$ in case (i) and
  $\bbL^{m}[\phi]_{\phi(I)/} \simeq \bbL^{m'}[\phi']$ in case
  (ii). Replacing $\phi$ by $\phi'$ we may therefore 
  without loss of generality assume
  that $I = (0,n)$ and $\phi(I) = (0,m)$. Note that for (ii) this
  implies that $\bbL^{m}[\phi]$ consists precisely of those objects
  $I$ such that $I \geq \phi(J)$ for some $J \in \bbL^{n}$.

  Using (the dual of) \cite{HTT}*{Theorem 4.1.3.1}, to prove (i) we
  must show that for every $J \in \bbL^{m}$ the category
  $\bbL^{n}_{/J}$ is weakly contractible. Since the category
  $\bbL^{m}$ is a 
  partially ordered set, we can identify this slice category with the full
  subcategory of $\bbL^{n}$ spanned by the objects $X$ such that $J
  \geq \phi(X)$. If $J = (j,k)$, set $j' := \max \{x : \phi(x) \leq
  j\}$ and $k' := \min \{x : \phi(x) \geq k\}$; then $X = (x,y)$
  satisfies $J \geq \phi(X)$ \IFF{} $x \leq j'$ and $k' \leq y$.  The
  integer $k-j$ is either $0$ or $1$. If $k -j = 1$, then (as $\phi$
  is surjective) $k' = j'+1$ and so $\bbL^{n}_{/J}$ consists of the
  single object $(j',j'+1)$. On the other hand, if $k = j$, then $k'
  \leq j'$ and $\bbL^{n}_{/J}$ contains the objects $(k'-1,k')$,
  $(k',k')$, $(k',k'+1)$, \ldots, $(j',j')$, $(j',j'+1)$. Thus
  $\bbL^{n}_{/J}$ is an iterated pushout $[1] \amalg_{[0]} [1]
  \amalg_{[0]}\cdots \amalg_{[0]} [1]$ and hence weakly contractible.

  To prove (ii), appealing to \cite{HTT}*{Theorem 4.1.3.1} again we
  need to show that for every $J \in \bbL^{m}[\phi]$ the category
  $\bbL^{n}_{/J}$ is weakly contractible. We can again identify this
  with the full subcategory of $\bbL^{n}$ spanned by the objects $X$
  such that $\phi(X) \leq J$, i.e.\ those $X = (x,y)$ such that
  $\phi(x) \leq j \leq k \leq \phi(y)$. Let $j'$ and $k'$ be defined
  as before, then again this holds \IFF{} $x \leq j'$ and $y \geq
  k'$. Since $\phi$ is injective we have $j' \leq k'$, and by
  definition of $\bbL^{m}[\phi]$ there exists some $Y = (a,b) \in
  \bbL^{n}$ such that $\phi(Y) \leq J$, which forces $k' \leq
  j'+1$. Thus $(j',k')$ is an object of $\bbL^{n}$, and by
  construction it is then terminal in $\bbL^{n}_{/J}$. Since any
  \icat{} with a terminal object is weakly contractible, this
  completes the proof.
\end{proof}

\begin{proof}[Proof of Proposition~\ref{propn:cartcomp}]
  We must show that for all $I \in \bbS^{n}$ the natural map
  \[ (\phi^{*}F)(I) \to \lim_{X \in \bbL^{n}_{/I}} (\phi^{*}F)(X) \]
  is an equivalence.

  It suffices to check this in the cases where $\phi$ is either
  injective or surjective, since these classes of maps form a
  factorization system on $\simp$. In the surjective case this map factors as
  \[ F(\phi(I)) \to \lim_{Y \in \bbL^{m}_{/\phi(I)}} F(Y) \to \lim_{X
    \in \bbL^{n}_{/I}} F(\phi(X)).\] Here the first map is an
  equivalence since $F$ is Cartesian, and the second map is an
  equivalence since the functor $\bbL^{n}_{/I} \to
  \bbL^{m}_{/\phi(I)}$ is coinitial by
  Lemma~\ref{lem:bbLcoinitial}(i).

  In the injective case, the map factors as
  \[ F(\phi(I)) \to \lim_{Y \in \bbL^{m}[\phi]_{/\phi(I)}} F(Y) \to
  \lim_{X \in \bbL^{n}_{/I}} F(\phi(X)).\] Since $F$ is Cartesian, it
  is also the right Kan extension of its restriction to
  $\bbL^{m}[\phi]$, since this full subcategory contains
  $\bbL^{m}$. Thus the first map is an equivalence, and the second is
  an equivalence by Lemma~\ref{lem:bbLcoinitial}(ii).
\end{proof}

\begin{cor}\label{cor:SPANleft}
  The restricted projection $\SPAN_{k}^{+}(\mathcal{C}) \to
  \simp^{k,\op}$ is a coCartesian fibration.
\end{cor}

\begin{proof}
  Let $\pi$ denote the projection $\oSPAN_{k}^{+}(\mathcal{C}) \to
  \simp^{k,\op}$ and write $\pi'$ for the restricted projection
  $\SPAN_{k}^{+}(\mathcal{C}) \to \simp^{k,\op}$. Since
  $\SPAN_{k}^{+}(\mathcal{C})$ is a full subcategory of
  $\oSPAN_{k}^{+}(\mathcal{C})$, a $\pi$-coCartesian morphism whose
  source and target are in $\SPAN_{k}^{+}(\mathcal{C})$ is necessarily
  $\pi'$-coCartesian. As $\pi$ is a coCartesian fibration, to see that
  $\pi'$ is one it therefore suffices to check that if $\alpha \to
  \beta$ is a coCartesian morphism in $\oSPAN^{+}_{k}(\mathcal{C})$
  such that $\alpha$ is a Cartesian functor, then $\beta$ is a
  Cartesian functor. In other words, given a Cartesian functor $\alpha
  \colon \bbS^{n_{1},\ldots,n_{k}} \to \mathcal{C}$ and morphisms
  $\phi_{i} \colon [n_{i}] \to [m_{i}]$ in $\simp$ for $i =
  1,\ldots,k$, we must show that the composite functor
  $(\phi_{1},\ldots,\phi_{k})^{*}\alpha \colon
  \bbS^{m_{1},\ldots,m_{k}} \to \mathcal{C}$ is also Cartesian.
  Using Lemma~\ref{lem:cartindcond} we can check this iteratively,
  which means we only need to prove the case $k = 1$. But this case is
  precisely Proposition~\ref{propn:cartcomp}.
\end{proof}

\begin{propn}\label{propn:Lncolim}
  The compatible maps $\bbL^{1},\bbL^{0} \to \bbL^{n}$ induced by the
  inert maps $[0],[1] \to [n]$ give a functor $\bbL^{n}_{\Seg} := \bbL^{1}
  \amalg_{\bbL^{0}} \cdots \amalg_{\bbL^{0}} \bbL^{1} \to \bbL^{n}$,
  where the colimit is formed in $\CatI$.  This is an equivalence of
  \icats{}.
\end{propn}
\begin{proof}
  We will describe the \icatl{} colimit $\bbL^{n}_{\Seg}$ as a
  homotopy colimit in the Joyal model structure. The object $\bbL^{n}$
  in $\CatI$ is represented by the nerve $\mathrm{N}\bbL^{n}$ in
  $\sSet^{J}$, which is a quasicategory. The morphisms
  $\mathrm{N}\bbL^{0} \to \mathrm{N}\bbL^{1}$ induced by the
  inclusions $[0] \to [1]$ are levelwise injective, i.e.\ cofibrations
  in the Joyal model structure. Therefore the iterated (1-categorical)
  pushout $\mathrm{N}\bbL^{1} \amalg_{\mathrm{N}\bbL^{0}} \cdots
  \amalg_{\mathrm{N}\bbL^{0}} \mathrm{N}\bbL^{1}$ is a homotopy
  colimit and a fibrant replacement of it represents
  $\bbL^{n}_{\Seg}$; it is therefore sufficient to show that the
  natural map $\mathrm{N}\bbL^{1} \amalg_{\mathrm{N}\bbL^{0}} \cdots
  \amalg_{\mathrm{N}\bbL^{0}} \mathrm{N}\bbL^{1} \to
  \mathrm{N}\bbL^{n}$ is a weak equivalence in $\sSet^{J}$. But this
  is in fact an \emph{isomorphism} of simplicial sets.
\end{proof}

\begin{propn}\label{propn:SPANkuple}
  The functor associated to the coCartesian fibration $\SPAN^{+}_{k}(\mathcal{C})
  \to \simp^{k,\op}$ is a $k$-fold category object.
\end{propn}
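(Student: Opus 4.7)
The plan is to verify the Segal conditions in each variable via Lemma~\ref{lem:kupSegKanExt}: it suffices to show that the functor $F \colon (\simp^{\op})^{\times k} \to \CatI$ associated to the coCartesian fibration $\SPAN^{+}_{k}(\mathcal{C})$, which sends $(n_{1},\ldots,n_{k})$ to $\Fun^{\Cart}(\bbS^{n_{1},\ldots,n_{k}}, \mathcal{C})$, has the property that its restriction to $(\simp_{\txt{int}}^{\op})^{\times k}$ is the right Kan extension of its further restriction to $(\mathcal{G}^{\simp})^{\times k}$.

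First I would reduce the problem from $\bbS$ to $\bbL$. By the very definition of Cartesian functor, restriction along $i \colon \bbL^{n_{1},\ldots,n_{k}} \hookrightarrow \bbS^{n_{1},\ldots,n_{k}}$ and right Kan extension $i_{*}$ give mutually inverse equivalences between $\Fun^{\Cart}(\bbS^{n_{1},\ldots,n_{k}}, \mathcal{C})$ and $\Fun(\bbL^{n_{1},\ldots,n_{k}}, \mathcal{C})$. Since inert morphisms preserve the subcategories $\bbL^{n} \subseteq \bbS^{n}$, these equivalences are natural over $(\simp_{\txt{int}}^{\op})^{\times k}$, reducing the claim to the statement that $(n_{1},\ldots,n_{k}) \mapsto \Fun(\bbL^{n_{1},\ldots,n_{k}}, \mathcal{C})$ is the right Kan extension from $(\mathcal{G}^{\simp})^{\times k}$.

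For the final step I would decompose $\bbL^{n_{1},\ldots,n_{k}}$ as an iterated colimit. Applying Proposition~\ref{propn:Lncolim} in each coordinate, and using that the Cartesian product in $\CatI$ commutes with colimits in each variable, we obtain $\bbL^{n_{1},\ldots,n_{k}}$ as a colimit in $\CatI$ of the $\bbL^{e_{1},\ldots,e_{k}}$ with $e_{i} \in \{0,1\}$. Via Remark~\ref{rmk:bbSvsInt}, the indexing shape of this colimit is naturally identified with $\prod_{i} \mathcal{G}^{\simp}_{/[n_{i}]}$, which is exactly the indexing category for the right Kan extension $j^{k}_{*}$ at $([n_{1}],\ldots,[n_{k}])$. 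Applying $\Fun(-,\mathcal{C})$ then converts this colimit into the corresponding limit, which is precisely the desired Kan extension formula. The main technical point is matching these two indexing diagrams, which rests on the identification $\bbL^{n} \simeq \mathcal{G}^{\simp}_{/[n]}$ from Remark~\ref{rmk:bbSvsInt}; once that is in place the rest of the argument is essentially formal.
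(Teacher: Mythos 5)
Your proposal is correct and follows essentially the same route as the paper: both arguments verify the Segal condition by combining the identification $\Fun^{\Cart}(\bbS^{n_{1},\ldots,n_{k}},\mathcal{C})\simeq \Fun(\bbL^{n_{1},\ldots,n_{k}},\mathcal{C})$ (restriction and right Kan extension along the fully faithful inclusion, i.e.\ \cite{HTT}*{Proposition 4.3.2.15}) with the colimit decomposition of $\bbL^{n_{1},\ldots,n_{k}}$ supplied by Proposition~\ref{propn:Lncolim} and the fact that products in $\CatI$ commute with colimits. The only difference is the order in which these two steps are performed, which is cosmetic.
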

\begin{proof}
  Unwinding the definitions, we must show that for each
  $([n_{1}],\ldots,[n_{k}])$ in $\simp^{k}$, the natural map
  \[ \Fun^{\Cart}(\bbS^{n_{1},\ldots,n_{k}}, \mathcal{C}) \to
  \lim_{([i_{1}], \ldots, [i_{k}]) \in \txt{Cell}^{k,\op}_{/([n_{1}],\ldots,[n_{k}])}} \Fun(\bbS^{i_{1}},
  \mathcal{C}) \times \cdots \times \Fun(\bbS^{i_{k}}, \mathcal{C})\]
  is an equivalence.  Now using Proposition~\ref{propn:Lncolim} and
  the fact that products in $\CatI$ commute with colimits, it follows
  that the target of this map is equivalent to
  $\Fun(\bbL^{n_{1},\ldots,n_{k}}, \mathcal{C})$, and under this
  equivalence the Segal map corresponds to the map given by composing
  with the inclusion $\bbL^{n_{1},\ldots,n_{k}} \hookrightarrow
  \bbS^{n_{1},\ldots,n_{k}}$. Since this is fully faithful, and
  $\Fun^{\Cart}(\bbS^{n_{1},\ldots,n_{k}}, \mathcal{C})$ is
  precisely the space of functors that are right Kan extensions along
  this inclusion, it follows from \cite{HTT}*{Proposition 4.3.2.15} that our map is an equivalence.
\end{proof}

\begin{defn}
  Let $\oSPAN_{k}(\mathcal{C}) \to \simp^{k,\op}$ and
  $\SPAN_{k}(\mathcal{C}) \to \simp^{k,\op}$ be the
  underlying left fibrations of the coCartesian fibrations
  $\oSPAN^{+}_{k}(\mathcal{C})$ and $\SPAN^{+}_{k}(\mathcal{C})$,
  respectively. These correspond to the multisimplicial spaces
  $\Map(\bbS^{\bullet,\ldots,\bullet}, \mathcal{C})$ and
  $\Map^{\Cart}(\bbS^{\bullet,\ldots,\bullet}, \mathcal{C})$, which
  are obtained by composing $\Fun(\bbS^{\bullet,\ldots,\bullet},
  \mathcal{C})$ and $\Fun^{\Cart}(\bbS^{\bullet,\ldots,\bullet},
  \mathcal{C})$ with the underlying $\infty$-groupoid functor
  $\iota$. Since $\iota$ preserves limits, being a right adjoint, the
  latter is a $k$-uple Segal space, which we also denote
  $\SPAN_{k}(\mathcal{C})$.
\end{defn}

\begin{defn}
  We define the \emph{$(\infty,k)$-category $\Span_{k}(\mathcal{C})$
    of iterated spans} in $\mathcal{C}$ to be the $k$-fold Segal space
  $U_{\Seg}\SPAN_{k}(\mathcal{C})$ underlying the $k$-uple Segal space
  $\SPAN_{k}(\mathcal{C})$.
\end{defn}

\begin{remark}
  Using the complete Segal space model for \icats{}, we may regard $\CatI$
  as a full subcategory of $\Seg(\mathcal{S})$. We may then regard
  $\SPAN^{+}_{k}(\mathcal{C})$ as a $k$-uple Segal object in
  $\Seg(\mathcal{S})$, i.e.\ a $(k+1)$-uple Segal space. We let
  $\Span^{+}_{k}(\mathcal{C})$ be the underlying $(k+1)$-fold Segal
  space $U_{\Seg}\SPAN^{+}(\mathcal{C})$; this is an
  $(\infty,k+1)$-category that extends $\Span_{k}(\mathcal{C})$ by
  taking morphisms of $k$-fold spans as $(k+1)$-morphisms.
\end{remark}

\begin{remark}
  We may regard $\SPAN^{+}_{k}(\blank)$ as a functor from \icats{}
  with finite limits to $k$-fold category objects in \icats{} with
  finite limits. Moreover, as has been proved by David
  Li-Bland~\cite{LiBlandSpan}, this functor preserves limits, which
  means that we can apply $\SPAN^{+}_{k}$ levelwise to get a functor
  from $m$-fold category objects in \icats{} with finite limits to
  $(k+m)$-fold category objects. We can also iterate the construction
  $\SPAN^{+}_{1}$ to recover $\SPAN^{+}_{k}$ as $(\SPAN^{+}_{1})^{k}$
  for $k > 1$.
\end{remark}

\section{The $(\infty,k)$-Category of Iterated Spans with Local
  Systems}\label{sec:spanlocsys}
Our goal in this section is to use the $(\infty,k)$-category
$\Span_{k}(\mathcal{S})$ to make, for every $k$-fold Segal space
$\mathcal{C}$, an $(\infty,k)$-category $\Span_{k}(\mathcal{S};
\mathcal{C})$ of $k$-fold spans with $\mathcal{C}$-valued local
systems. In fact, our construction works more generally: if
$\mathcal{X}$ is an \icat{} with finite limits and $\mathcal{C}$ is a
$k$-fold Segal object in $\mathcal{X}$, we will define an
$(\infty,k)$-category $\Span_{k}(\mathcal{X}; \mathcal{C})$ of
$k$-fold iterated spans in $\mathcal{X}$ equipped with
$\mathcal{C}$-valued local systems.  To define this we will first show
that any $k$-fold Segal object in $\mathcal{X}$ determines a section
of the projection $\SPAN_{k}^{+}(\mathcal{X}) \to
\simp^{k,\op}$, and then use results of Lurie to construct
a fibrewise slice category for this section.

\begin{defn}
  Let $\widehat{\bbS} \to \simp^{\op}$ be the Grothendieck fibration
  associated to the functor $\bbS^{\bullet} \colon \simp \to
  \Cat$. Explicitly, $\widehat{\bbS}$ is the category with objects
  pairs $([n], (i,j))$ with $[n] \in \simp$ and $0 \leq i \leq j \leq
  n$, and morphisms $([n], (i,j)) \to ([m], (i',j'))$ given by a
  morphism $\phi \colon [m] \to [n]$ in $\simp$ and a morphism $(i,j)
  \to (\phi(i'), \phi(j'))$ in $\bbS^{n}$. Then the product
  $\widehat{\bbS}^{k} \to \simp^{k,\op}$ is the
  Grothendieck fibration associated to the functor
  $\bbS^{\bullet,\ldots,\bullet} \colon \simp^{k} \to \Cat$.
\end{defn}

By \cite{freepres}*{Proposition 7.3}, the \icat{}
$\overline{\SPAN}^{+}_{k}(\mathcal{X})$ has a universal property: for
every functor of \icats{} $\mathcal{C} \to \simp^{k,\op}$, the
\icat{} $\Fun_{\simp^{k,\op}}(\mathcal{C},
\overline{\SPAN}^{+}_{k}(\mathcal{X}))$ is naturally equivalent to $\Fun(\mathcal{C}
\times_{\simp^{k,\op}} \widehat{\bbS}^{k},
\mathcal{X})$. In particular, giving a section $\simp^{k,\op} \to
\overline{\SPAN}^{+}_{k}(\mathcal{X})$ is equivalent to giving a 
functor $\widehat{\bbS}^{k} \to \mathcal{X}$.

\begin{defn}
  Let $\Pi \colon \widehat{\bbS} \to \simp^{\op}$ denote the functor
  that sends $([n], (i,j))$ to $[j-i]$ and a map $(\phi \colon [m] \to
  [n], (i,j) \to (\phi(i'), \phi(j')))$ to the map $[j-i] \to [j'-i']$
  in $\simp^{\op}$ corresponding to the map of ordered sets taking $s
  \in [j'-i']$ to $\phi(i'+s) - i \in [j-i]$. We write $\Pi_{n} \colon
  \bbS^{n} \to \simp^{\op}$ for the restriction of $\Pi$ to the fibre
  $\bbS^{n}$ --- this takes $(i,j) \in \bbS^{n}$ to $[j-i]$ and a map
  $(i,j) \to (i',j')$ to the (inert) inclusion $[j-i] \to [j'-i']$ taking $s
  \in [j-i]$ to $s+i'-i$.
\end{defn}

Thus any map $\Phi \colon \simp^{k,\op} \to \mathcal{X}$
determines a section $s_{\Phi} \colon \simp^{k,\op} \to
\overline{\SPAN}^{+}_{k}(\mathcal{X})$ via the functor $\Phi \circ
\Pi^{k} \colon \widehat{\bbS}^{k} \to \simp^{k,\op} \to \mathcal{X}$.

\begin{lemma}\label{lem:lKanslice}
  Suppose we have a fully faithful inclusion $\phi \colon \mathcal{I}
  \to \mathcal{J}$ and a commutative triangle
  \opctriangle{\mathcal{I}}{\mathcal{J}}{\mathcal{C}}{\phi}{f}{g}
  such that $g$ is the left Kan extension of $f$ along $\phi$. Given
  $X \in \mathcal{J}$, let $\mathcal{I}_{/X}$ denote the fibre product
  $\mathcal{I} \times_{\mathcal{J}} \mathcal{J}_{/X}$, let $i
  \colon \mathcal{I}_{/X} \to \mathcal{I}$ and $j \colon
  \mathcal{J}_{/X} \to \mathcal{J}$ denote the forgetful
  functors, and let $\phi_{/X}$ denote the induced map
  $\mathcal{I}_{/X} \to \mathcal{J}_{/X}$. Then $g \circ j$ is the
  left Kan extension of $f \circ i$ along $\phi_{/X}$. 
\end{lemma}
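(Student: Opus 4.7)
The plan is to use the pointwise criterion for (left) Kan extensions: $g \circ j$ is the left Kan extension of $f \circ i$ along $\phi_{/X}$ iff for every $Y \in \mathcal{J}_{/X}$, the canonical map
\[
\colim_{(\mathcal{I}_{/X})_{/Y}} (f \circ i) \circ \pi \longrightarrow g(j(Y))
\]
is an equivalence, where $\pi \colon (\mathcal{I}_{/X})_{/Y} \to \mathcal{I}_{/X}$ is the forgetful functor and $(\mathcal{I}_{/X})_{/Y} := \mathcal{I}_{/X} \times_{\mathcal{J}_{/X}} (\mathcal{J}_{/X})_{/Y}$ is the comma category of $\phi_{/X}$ over $Y$. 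So the goal reduces to identifying this colimit with the one computing $g(j(Y))$.

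First I would invoke the standard slice-of-slice equivalence: for any $Y = (Y_{0} \to X) \in \mathcal{J}_{/X}$, the forgetful functor $(\mathcal{J}_{/X})_{/Y} \to \mathcal{J}_{/Y_{0}}$ is a trivial fibration (at the level of quasicategories this is \cite{HTT}*{Proposition 1.2.9.3}, or equivalently it is an equivalence of \icats{}). Pulling back along $\phi$ and using that the comma construction $\mathcal{I} \times_{\mathcal{J}} (-)$ preserves equivalences over $\mathcal{J}$, this gives an equivalence
\[
(\mathcal{I}_{/X})_{/Y} \;=\; \mathcal{I} \times_{\mathcal{J}} (\mathcal{J}_{/X})_{/Y} \;\xrightarrow{\sim}\; \mathcal{I} \times_{\mathcal{J}} \mathcal{J}_{/Y_{0}} \;=\; \mathcal{I}_{/Y_{0}}.
\]
Moreover, this equivalence is compatible with the projections to $\mathcal{I}$, so the diagram $(f \circ i) \circ \pi$ is carried to $f \circ i_{Y_{0}}$, where $i_{Y_{0}} \colon \mathcal{I}_{/Y_{0}} \to \mathcal{I}$ is the forgetful functor.

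Next I would apply the hypothesis that $g$ is the left Kan extension of $f$ along $\phi$: by the pointwise formula this gives
\[
g(Y_{0}) \;\simeq\; \colim_{\mathcal{I}_{/Y_{0}}} f \circ i_{Y_{0}}.
\]
Combining this with the equivalence of the previous paragraph yields
\[
\colim_{(\mathcal{I}_{/X})_{/Y}} (f \circ i) \circ \pi \;\simeq\; \colim_{\mathcal{I}_{/Y_{0}}} f \circ i_{Y_{0}} \;\simeq\; g(Y_{0}) \;=\; g(j(Y)),
\]
which verifies the pointwise criterion and completes the proof.

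No step looks genuinely difficult: the only thing to watch out for is being precise about the slice-of-slice equivalence in the \icatl{} setting and checking that the structural map between comma categories indeed intertwines the two diagrams. The full faithfulness of $\phi$ is not strictly required for the argument, but it guarantees that $\phi_{/X}$ is also fully faithful, which is what will be used in the intended application.
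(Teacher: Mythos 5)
Your proposal is correct and follows essentially the same route as the paper's proof: both verify the pointwise colimit criterion by identifying the comma category $\mathcal{I}_{/X} \times_{\mathcal{J}_{/X}} (\mathcal{J}_{/X})_{/Y}$ with $\mathcal{I}_{/Y_{0}}$ via the slice-of-slice equivalence and then invoking that $g$ is the left Kan extension of $f$. The only quibble is the citation: the trivial-fibration statement for iterated slices is not \cite{HTT}*{Proposition 1.2.9.3}, but this does not affect the argument.
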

\begin{proof}
  We must show that for any object $\alpha \colon Y \to X$ in
  $\mathcal{J}$, the object $gj(\alpha)$ is the colimit of
  the diagram \[\mathcal{I}_{/X}
  \times_{\mathcal{J}_{/X}} (\mathcal{J}_{/X})_{/\alpha} \to
  \mathcal{I}_{/X} \xto{i} \mathcal{I} \xto{f} \mathcal{C}.\]
  But $(\mathcal{J}_{/X})_{/\alpha}$ is equivalent to
  $\mathcal{J}_{/Y}$, and so $\mathcal{I}_{/X}
  \times_{\mathcal{J}_{/X}} (\mathcal{J}_{/X})_{/\alpha}$ is
  equivalent to $\mathcal{I}_{/Y}$, and the diagram is equivalent to
  the composite $\mathcal{I}_{/Y} \to \mathcal{I} \xto{f}
  \mathcal{C}$. Since $g$ is the left Kan extension of $f$, we know
  that $gj(\alpha) \simeq g(Y)$ is the colimit of this diagram, which
  completes the proof.
\end{proof}

\begin{lemma}
  Suppose $\Phi \colon \simp^{k,\op} \to \mathcal{X}$ is a
  $k$-fold category object in $\mathcal{X}$.
  Then the section $s_{\Phi} \colon
  \simp^{k,\op} \to \overline{\SPAN}^{+}_{k}(\mathcal{X})$
  factors through $\SPAN^{+}_{k}(\mathcal{X})$.
\end{lemma}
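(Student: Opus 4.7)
The plan is to unwind $s_\Phi$ fibrewise and then reduce, using Lemma~\ref{lem:kupSegKanExt}, to a coslice version of Lemma~\ref{lem:lKanslice} for right Kan extensions.

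First, I would use Remark~\ref{rmk:bbSvsInt} (in its evident product form) to identify $\bbS^{n_{1},\ldots,n_{k}}$ with the coslice $\mathcal{J}_{X/}$, where $\mathcal{J} := (\simp^{\op}_{\txt{int}})^{\times k}$ and $X := ([n_{1}],\ldots,[n_{k}])$, and to identify $\bbL^{n_{1},\ldots,n_{k}}$ with the subcategory $\mathcal{I}_{X/} := \mathcal{I} \times_{\mathcal{J}} \mathcal{J}_{X/}$, where $\mathcal{I} := (\mathcal{G}^{\simp})^{\times k}$. Under these identifications, a direct inspection of $\Pi$ shows that the restriction of $\Pi^{\times k}$ to the fibre of $\widehat{\bbS}^{\times k}$ over $X$ is the forgetful functor $\mathcal{J}_{X/} \to \mathcal{J}$, so by the universal property of $\overline{\SPAN}^{+}_{k}(\mathcal{X})$ the functor $\bbS^{n_{1},\ldots,n_{k}} \to \mathcal{X}$ classified by the value of $s_\Phi$ at $X$ is the composite $\mathcal{J}_{X/} \to \mathcal{J} \xto{\Phi|_{\mathcal{J}}} \mathcal{X}$.

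By Lemma~\ref{lem:kupSegKanExt}, the assumption that $\Phi$ is a $k$-uple category object is precisely that $\Phi|_{\mathcal{J}}$ is the right Kan extension of $\Phi|_{\mathcal{I}}$ along the fully faithful inclusion $\mathcal{I} \hookrightarrow \mathcal{J}$. Proving that $s_\Phi$ lands in $\SPAN^{+}_{k}(\mathcal{X})$ is therefore reduced to the following coslice dual of Lemma~\ref{lem:lKanslice}: for any fully faithful $\phi \colon \mathcal{I} \hookrightarrow \mathcal{J}$, if $g \colon \mathcal{J} \to \mathcal{C}$ is the right Kan extension of $f \colon \mathcal{I} \to \mathcal{C}$ along $\phi$, then for any $X \in \mathcal{J}$ the restriction $g|_{\mathcal{J}_{X/}}$ is the right Kan extension of $f|_{\mathcal{I}_{X/}}$ along $\phi_{X/}$. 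I would prove this by repeating the argument of Lemma~\ref{lem:lKanslice} with coslices in place of slices, using the identification $(\mathcal{J}_{X/})_{\alpha/} \simeq \mathcal{J}_{Y/}$ for $\alpha \colon X \to Y$ in place of its slice counterpart.

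The main thing to be careful about is variance, so as to line up the ``inert maps out of $[n]$'' description of $\bbS^{n}$ from Remark~\ref{rmk:bbSvsInt} with the ``right Kan extension from $(\mathcal{G}^{\simp})^{\times k}$'' formulation of Lemma~\ref{lem:kupSegKanExt}; once the correct coslice interpretation is in place, the rest is a straightforward application of standard facts about Kan extensions.
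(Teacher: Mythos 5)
Your proposal is correct and follows essentially the same route as the paper: identify $\bbS^{n_{1},\ldots,n_{k}}$ with the coslice $(\simp^{\op}_{\txt{int}})^{\times k}_{([n_{1}],\ldots,[n_{k}])/}$ via Remark~\ref{rmk:bbSvsInt}, note that the fibrewise restriction of $s_{\Phi}$ is $\Phi$ composed with the forgetful functor, and conclude via Lemma~\ref{lem:kupSegKanExt} together with the (co)dual of Lemma~\ref{lem:lKanslice}. The only cosmetic difference is that you spell out proving the coslice/right Kan extension dual by redoing the argument, where the paper simply cites the dual of Lemma~\ref{lem:lKanslice}.
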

\begin{proof}
  From the proof of \cite{freepres}*{Proposition 7.3} it follows that
  the value of the section $s_{\Phi}$ at $I = ([n_{1}],\ldots,[n_{k}])
  \in \simp^{k,\op}$ is the composite
  \[ \bbS^{n_{1},\ldots,n_{k}} \xto{\Pi_{n_{1},\ldots,n_{k}}}
  \simp^{k,\op} \xto{\Phi} \mathcal{X},\] where
  $\Pi_{n_{1},\ldots,n_{k}}$ denotes the product $\Pi_{n_{1}} \times
  \cdots \times \Pi_{n_{k}}$. We thus need to check that each of these
  functors is Cartesian. But under the identification of
  $\bbS^{n_{1},\ldots,n_{k}}$ with $\simp^{k,\op}_{\txt{int},/([n_{1}],\ldots,[n_{k}])}$ of Remark~\ref{rmk:bbSvsInt}, the
  functor $\Pi_{n_{1},\ldots,n_{k}}$ corresponds to the forgetful
  functor $\simp^{k,\op}_{\txt{int},/([n_{1}],\ldots,[n_{k}])} \to
\simp^{k,\op}$. By
  Lemma~\ref{lem:kupSegKanExt}, the restriction of $\Phi$ to
  $\simp^{k,\op}_{\txt{int}}$ is the right Kan extension of
  its restriction to $\Cell^{k,\op}$, from which it follows by (the
  dual of) Lemma~\ref{lem:lKanslice} that $\Pi_{n_{1},\ldots,n_{k}}$
  is the right Kan extension of its restriction to
  $\bbL^{n_{1},\ldots,n_{k}}$, since this corresponds to
  $\Cell^{k,\op}_{/([n_{1}],\ldots,[n_{k}])}$ under this
  identification.
\end{proof}

\begin{lemma}\label{lem:parslicecoC}
  Suppose $f \colon \mathcal{E} \to \mathcal{B}$ is a coCartesian
  fibration and $s \colon \mathcal{B} \to \mathcal{E}$ is a
  section. Then there exists an \icat{} $\mathcal{E}_{//s}$ and a
  coCartesian fibration $\mathcal{E}_{//s} \to \mathcal{B}$ with the
  universal property that
  $\Fun_{/\mathcal{B}}(\mathcal{C}, \mathcal{E}_{//s})$ is naturally
  given by a pullback square
  \nolabelcsquare{\Fun_{/\mathcal{B}}(\mathcal{C},
    \mathcal{E}_{//s})}{\Fun_{/\mathcal{B}}(\mathcal{C} \times [1]
    \amalg_{\mathcal{C} \times \{1\}} \mathcal{B},
    \mathcal{E})}{\{s\}}{\Fun_{/\mathcal{B}}(\mathcal{B},
    \mathcal{E}).}
\end{lemma}
\begin{proof}
  Let $\mathfrak{E} \to \mathfrak{B}$ be a coCartesian inner fibration
  representing $f$ such that there is a section
  $s' \colon \mathfrak{B} \to \mathfrak{E}$ representing $s$. Then,
  following \cite{HTT}*{Definition 4.2.2.1}, we define a simplicial
  set $\mathfrak{E}_{//s'}$ over $\mathfrak{B}$ by taking
  $\Hom_{/\mathfrak{B}}(K, \mathfrak{E}_{//s'})$ to be defined by the
  pullback square \nolabelcsquare{\Hom_{/\mathfrak{B}}(K,
    \mathfrak{E}_{//s'})}{\Hom_{/\mathfrak{B}}(K \times \Delta^{1}
    \amalg_{K \times \{1\}} \mathfrak{B},
    \mathfrak{E})}{\{s'\}}{\Hom_{/\mathfrak{B}}(\mathfrak{B},
    \mathfrak{E}).}  Then the projection
  $\mathfrak{E}_{//s'} \to \mathfrak{B}$ is a coCartesian inner
  fibration by \cite{HTT}*{Proposition 4.2.2.4}. We define
  $\mathcal{E}_{//s} \to \mathcal{B}$ to be the morphism in $\CatI$ it
  represents. The defining property of the simplicial set
  $\Hom_{/\mathfrak{B}}(K, \mathfrak{E}_{//s'})$ implies that the
  internal Hom of simplicial sets
  $\Fun_{/\mathfrak{B}}(K, \mathfrak{E}_{//s'})$ is given by an
  analogous pullback square \nolabelcsquare{\Fun_{/\mathfrak{B}}(K,
    \mathfrak{E}_{//s'})}{\Fun_{/\mathfrak{B}}(K \times \Delta^{1}
    \amalg_{K \times \{1\}} \mathfrak{B},
    \mathfrak{E})}{\{s'\}}{\Fun_{/\mathfrak{B}}(\mathfrak{B},
    \mathfrak{E})} of simplicial sets.  Since the inclusion
  $\mathfrak{B} \hookrightarrow K \times \Delta^{1} \amalg_{K \times
    \{1\}} \mathfrak{B}$
  is a cofibration and the Joyal model structure is enriched in itself
  (see \cite{JoyalUABNotes}*{Theorem 5.13}) the right vertical map
  here is a fibration in the Joyal model structure, hence this is a
  homotopy pullback square; this gives the pullback squares we want in
  $\CatI$.
\end{proof}

\begin{defn}
  Suppose $\Phi \colon \simp^{k,\op} \to \mathcal{X}$ is a
  $k$-fold category object in $\mathcal{X}$. We then define
  \[\SPAN_{k}^{+}(\mathcal{X}; \Phi) \to \simp^{k,\op}\] to be the
  coCartesian fibration $\SPAN_{k}^{+}(\mathcal{X})_{//s_{\Phi}} \to
  \simp^{k,\op}$ of Lemma~\ref{lem:parslicecoC}.
\end{defn}

\begin{propn}
  Suppose $\Phi \colon \simp^{k,\op} \to \mathcal{X}$ is a $k$-fold
  category object in $\mathcal{X}$. Then the functor
  $\simp^{k,\op} \to \CatI$ associated to the coCartesian
  fibration $\SPAN_{k}^{+}(\mathcal{X}; \Phi) \to
  \simp^{k,\op}$ is a $k$-fold category object in $\CatI$.
\end{propn}
\begin{proof}
  Using the defining property of
  $\SPAN^{+}_{k}(\mathcal{X}; \Phi)$ we see that a map from an \icat{}
  $\mathcal{C}$ to the fibre
  $\SPAN_{k}^{+}(\mathcal{X};
  \Phi)_{([n_{1}],\ldots,[n_{k}])}$ is naturally equivalent
  to a map \[\mathcal{C}^{\triangleright} \to
  \SPAN_{k}^{+}(\mathcal{X})_{([n_{1}],\ldots,[n_{k}])}\] that restricts to
  $s_{\Phi}([n_{1}],\ldots,[n_{k}])$ at the cone point. In other
  words, we have a natural equivalence 
  \[\begin{split} \SPAN_{k}^{+}(\mathcal{X};
    \Phi)_{([n_{1}],\ldots,[n_{k}])} & \simeq
  (\SPAN_{k}^{+}(\mathcal{X})_{([n_{1}],\ldots,[n_{k}])})_{/s_{\Phi}([n_{1}],\ldots,[n_{k}])}
  \\ & \simeq \Fun^{\txt{Cart}}(\bbS^{n_{1},\ldots,n_{k}},
  \mathcal{X})_{/\Phi \circ \Pi_{n_{1},\ldots,n_{k}}}.
\end{split}
\] Now on the
  one hand we have a pullback square
  \nolabelcsquare{\Fun^{\txt{Cart}}(\bbS^{n_{1},\ldots,n_{k}},
    \mathcal{X})_{/\Phi \circ \Pi_{n_{1},\ldots,n_{k}}}}{\Fun([1],
    \Fun^{\txt{Cart}}(\bbS^{n_{1},\ldots,n_{k}}, \mathcal{X}))}{\{\Phi
    \circ
    \Pi_{n_{1},\ldots,n_{k}}\}}{\Fun^{\txt{Cart}}(\bbS^{n_{1},\ldots,n_{k}},
    \mathcal{X}).}  On the other hand, since limits commute, we have a
  pullback square \nolabelcsquare{\lim
    \Fun^{\txt{Cart}}(\bbS^{i_{1},\ldots,i_{k}}, \mathcal{X})_{/\Phi
      \circ \Pi_{i_{1},\ldots,i_{k}}}}{\lim \Fun([1],
    \Fun^{\txt{Cart}}(\bbS^{i_{1},\ldots,i_{k}},
    \mathcal{X}))}{\lim \{\Phi \circ
    \Pi_{i_{1},\ldots,i_{k}}\}}{\lim
    \Fun^{\txt{Cart}}(\bbS^{i_{1},\ldots,i_{k}}, \mathcal{X}),}  
  where the limit runs over $([i_1],\ldots,[i_k]) \in
      \Cell^{k,\op}_{/([n_{1}],\ldots,[n_{k}])}$.
  Since $\Phi \circ \Pi_{n_{1},\ldots,n_{k}}$ restricts to $\Phi \circ
  \Pi_{i_{1},\ldots,i_{k}}$ under the appropriate inclusion, these two
  squares are equivalent. But then we get that the natural map
  \[\Fun^{\txt{Cart}}(\bbS^{n_{1},\ldots,n_{k}},
  \mathcal{X})_{/\Phi \circ \Pi_{n_{1},\ldots,n_{k}}} \to \lim_{([i_1],\ldots,[i_k]) \in
    \Cell^{k,\op}_{/([n_{1}],\ldots,[n_{k}])}}
  \Fun^{\txt{Cart}}(\bbS^{i_{1},\ldots,i_{k}}, \mathcal{X})_{/\Phi
    \circ \Pi_{i_{1},\ldots,i_{k}}}\]
  is an equivalence, and so $\SPAN^{+}_{k}(\mathcal{X}; \Phi)$ is a
  $k$-fold category object.
\end{proof}

\begin{defn}
  Suppose $\mathcal{C}$ is a $k$-fold Segal object in
  $\mathcal{X}$. We let $\SPAN_{k}(\mathcal{X}; \mathcal{C}) \to
  \simp^{k,\op}$ denote the left fibration obtained from
  the coCartesian fibration $\SPAN^{+}_{k}(\mathcal{X}; \mathcal{C})
  \to \simp^{k,\op}$ by discarding the non-coCartesian
  morphisms; this left fibration classifies a $k$-uple Segal
  space. The \emph{$(\infty,k)$-category $\Span_{k}(\mathcal{X};
    \mathcal{C})$ of iterated spans in $\mathcal{X}$ with local
    systems valued in $\mathcal{C}$} is the underlying $k$-fold Segal
  space $U_{\Seg}\SPAN_{k}(\mathcal{X}; \mathcal{C})$ associated to
  the $k$-uple Segal space $\SPAN_{k}(\mathcal{X}; \mathcal{C})$.
\end{defn}

\section{Complete Segal Objects in an
  $\infty$-Topos}\label{sec:completeseg}
In \S\ref{sec:Segsp} we recalled Rezk's result that the localization
of the \icat{} of Segal spaces at the fully faithful and essentially
surjective morphisms is given by the full subcategory of
\emph{complete} Segal spaces. In this section we begin by reviewing
the generalization of this to \emph{$n$-fold} Segal spaces, originally
proved by Barwick~\cite{BarwickThesis}. We will review the
reformulation of the theory due to Lurie~\cite{LurieGoodwillie}, which
allows for an inductive construction of complete $n$-fold Segal
spaces. Lurie's version of the theorem also works for $n$-fold Segal
objects in an arbitary $\infty$-topos, which describe \emph{internal
  $(\infty,n)$-categories} or more concretely \emph{sheaves of
  $(\infty,n)$-categories} on an $\infty$-topos. We will then prove two
completeness criteria we'll make use of below: first, we show that
completeness of an $n$-fold Segal object in an \itopos{} $\mathcal{X}$
can be checked on the $n$-fold Segal spaces of maps from objects of
$\mathcal{X}$, and second, we give an inductive criterion for the
completeness of an $n$-fold Segal space using the $(n-1)$-fold Segal
spaces of maps.

We begin by reviewing Lurie's notion of a \emph{distributor}. This is
a technical definition that encapsulates the properties needed to make
sense of complete Segal objects, which hold for both an $\infty$-topos
and the \icat{} of complete Segal objects in an $\infty$-topos
(Theorem~\ref{thm:iterdistr}). Using distributors thus allows us to
give a single definition of complete Segal objects that can be
iterated to give a convenient inductive definition of $n$-fold
complete Segal objects in an $\infty$-topos.
\begin{defn}
  A \emph{distributor} consists of an \icat{} $\mathcal{Y}$ together
  with a full subcategory $\mathcal{X}$ such that:
  \begin{enumerate}[(1)]
  \item The \icats{} $\mathcal{X}$ and $\mathcal{Y}$ are presentable.
  \item The full subcategory $\mathcal{X}$ is closed under small
    limits and colimits in $\mathcal{Y}$.
  \item If $Y \to X$ is a morphism in $\mathcal{Y}$ such that $X \in
    \mathcal{X}$, then the pullback functor $\mathcal{X}_{/X} \to
    \mathcal{Y}_{/Y}$ preserves colimits.
  \item The functor $\mathcal{X}^{\op} \to \LCatI$ that sends $X \in
    \mathcal{X}$ to $\mathcal{Y}_{/X}$ (and a morphism $X \to X'$ to
    the pullback functor $\mathcal{Y}_{/X'} \to \mathcal{Y}_{/X}$)
    preserves small limits.
  \end{enumerate}
  If $\mathcal{X} \subseteq \mathcal{Y}$ is a distributor, an
  \emph{$\mathcal{X}$-Segal object} in $\mathcal{Y}$ is a Segal object
  $\mathcal{C} \colon \simp^{\op} \to \mathcal{Y}$ such that
  $\mathcal{C}_{0} \in \mathcal{X}$. We write
  $\Seg_{\mathcal{X}}(\mathcal{Y})$ for the full subcategory of
  $\Seg(\mathcal{Y})$ spanned by the $\mathcal{X}$-Segal objects.
\end{defn}

\begin{remark}
  It follows from \cite{HTT}*{Theorem 6.1.3.9} that if $\mathcal{X}$
  is an $\infty$-topos, then the tautological inclusion $\mathcal{X}
  \subseteq \mathcal{X}$ is a distributor.
\end{remark}

\begin{defn}\label{defn:gpdob}
  Write $\txt{Gpd}(\mathcal{X})$ for the full subcategory of
  $\Seg(\mathcal{X})$ spanned by the \emph{groupoid objects}, i.e.\ the
  simplicial objects $X$ such that for every partition $[n] = S \cup
  S'$ where $S \cap S'$ consists of a single element, the diagram
  \nolabelsmallcsquare{X([n])}{X(S)}{X(S')}{X(S \cap S')} is a
  pullback square. Let $\mathcal{X}\subseteq\mathcal{Y}$ be a distributor, and
  let $\Lambda \colon \mathcal{Y} \to \mathcal{X}$ denote the right
  adjoint to the inclusion $\mathcal{X} \hookrightarrow
  \mathcal{Y}$. The inclusion $\txt{Gpd}(\mathcal{X}) \hookrightarrow
  \Seg(\mathcal{X}) \hookrightarrow \Seg(\mathcal{Y})$ admits a right
  adjoint $\iota \colon \Seg(\mathcal{Y}) \to \txt{Gpd}(\mathcal{X})$,
  which is the composite of the functor $\Lambda \colon
  \Seg(\mathcal{Y}) \to \Seg(\mathcal{X})$ induced by $\Lambda$, and
  the functor $\iota \colon \Seg(\mathcal{X}) \to
  \txt{Gpd}(\mathcal{X})$ right adjoint to the inclusion, which exists
  by \cite{LurieGoodwillie}*{Proposition 1.1.14}. 
\end{defn}

\begin{defn}
  We say an $\mathcal{X}$-Segal object $F \colon \simp^{\op} \to
  \mathcal{Y}$ is \defterm{complete} if the groupoid object $\iota F$
  is constant, and write $\CSS_{\mathcal{X}}(\mathcal{Y})$ for the
  full subcategory of $\Seg_{\mathcal{X}}(\mathcal{Y})$ spanned by the
  complete $\mathcal{X}$-Segal objects. The inclusion
  $\CSS_{\mathcal{X}}(\mathcal{Y}) \hookrightarrow
  \Seg_{\mathcal{X}}(\mathcal{Y})$ admits a left adjoint by
  \cite{HTT}*{Lemma 5.5.4.17}.
\end{defn}

\begin{defn}
  Let $\mathcal{X} \subseteq \mathcal{Y}$ be a distributor, and suppose $f
  \colon \mathcal{C} \to \mathcal{D}$ is a morphism in
  $\Seg_{\mathcal{X}}(\mathcal{Y})$. We say that $f$ is \emph{fully
    faithful and essentially surjective} if:
  \begin{enumerate}[(1)]
  \item The map $|\Gpd(\mathcal{C})| \to |\Gpd(\mathcal{D})|$ is an
    equivalence in the
    \itopos{} $\mathcal{X}$.
  \item The diagram
    \nolabelcsquare{\mathcal{C}_{1}}{\mathcal{D}_{1}}{\mathcal{C}_{0}\times
      \mathcal{C}_{0}}{\mathcal{D}_{0}\times \mathcal{D}_{0}}
    is a pullback square in $\mathcal{Y}$.
  \end{enumerate}
\end{defn}

\begin{thm}[\cite{LurieGoodwillie}*{Theorem 1.2.13}]\label{thm:distcssloc}
  Let $\mathcal{X} \subseteq \mathcal{Y}$ be a distributor. Then the
  left adjoint \[L_{\mathcal{X}\subseteq\mathcal{Y}} \colon \Seg_{\mathcal{X}}(\mathcal{Y}) \to
  \CSS_{\mathcal{X}}(\mathcal{Y})\] exhibits
  $\CSS_{\mathcal{X}}(\mathcal{Y})$ as the localization of
  $\Seg_{\mathcal{X}}(\mathcal{Y})$ with respect to the fully faithful
  and essentially surjective morphisms.
\end{thm}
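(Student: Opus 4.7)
The plan is to verify the two standard conditions that identify $\CSS_{\mathcal{X}}(\mathcal{Y})$ as the localization of $\Seg_{\mathcal{X}}(\mathcal{Y})$ at the class $W$ of fully faithful and essentially surjective morphisms (henceforth ``ff/es''): (a) every unit $\eta_{\mathcal{C}} \colon \mathcal{C} \to L_{\mathcal{X}\subseteq\mathcal{Y}}\mathcal{C}$ of the reflection lies in $W$, and (b) every ff/es map between complete $\mathcal{X}$-Segal objects is already an equivalence. Combined with a 2-out-of-3 property for $W$, these imply that $L_{\mathcal{X}\subseteq\mathcal{Y}}$ inverts precisely $W$ and that any functor out of $\Seg_{\mathcal{X}}(\mathcal{Y})$ that inverts $W$ factors uniquely through it.

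Assertion (b) is the easier half. If $f \colon \mathcal{C} \to \mathcal{D}$ is ff/es with both $\mathcal{C}$ and $\mathcal{D}$ complete, then $\iota\mathcal{C}$ and $\iota\mathcal{D}$ are constant groupoid objects, so their realizations in $\mathcal{X}$ reduce to $\mathcal{C}_{0}$ and $\mathcal{D}_{0}$, and essential surjectivity gives that $\mathcal{C}_{0} \to \mathcal{D}_{0}$ is an equivalence. The pullback condition defining full faithfulness then forces $\mathcal{C}_{1} \to \mathcal{D}_{1}$ to be an equivalence, and the Segal condition propagates this to $\mathcal{C}_{n} \to \mathcal{D}_{n}$ for all $n$.

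Assertion (a) is the technical core and requires an explicit enough handle on $L_{\mathcal{X}\subseteq\mathcal{Y}}$. One constructs it by a transfinite small-object argument that iteratively adjoins ``cells of invertibility'': take pushouts along a set of generating maps of the form $X \otimes \Delta^{\bullet} \to X \otimes E$, where $E$ is an $\mathcal{X}$-Segal object modelling the free-living invertible arrow (i.e.\ the Rezk nerve of the walking isomorphism, base-changed to $\mathcal{X}$) and $X$ ranges over a set of generators of $\mathcal{X}$. Each such pushout step is ff/es: essential surjectivity survives because the attached cell contributes a contractible groupoid over its boundary and so leaves $|\Gpd(-)|$ unchanged, while full faithfulness survives because adjoining inverses does not alter the mapping objects between fixed endpoints. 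One then verifies that $W$ is closed under transfinite composition and cobase change in $\Seg_{\mathcal{X}}(\mathcal{Y})$, using that both defining conditions involve only finite limits and the colimit functor $|{-}|$, all of which commute with filtered colimits in the presentable \itopos{} $\mathcal{X}$.

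Finally, 2-out-of-3 for $W$ follows from 2-out-of-3 for equivalences in $\mathcal{X}$ (for essential surjectivity, via the functor $\mathcal{C} \mapsto |\Gpd(\mathcal{C})|$) and from pullback pasting (for full faithfulness). Given $f \in W$, the naturality square for $\eta$ has three sides in $W$ by (a) and by assumption, so the common diagonal $\eta_{\mathcal{D}}\circ f = L_{\mathcal{X}\subseteq\mathcal{Y}}f \circ \eta_{\mathcal{C}}$ is in $W$, whence $L_{\mathcal{X}\subseteq\mathcal{Y}}f$ is in $W$ by 2-out-of-3; by (b) it is an equivalence. Conversely, any functor out of $\Seg_{\mathcal{X}}(\mathcal{Y})$ that inverts $W$ sends each $\eta_{\mathcal{C}}$ to an equivalence by (a), and so factors uniquely through $L_{\mathcal{X}\subseteq\mathcal{Y}}$ by standard reflective localization theory. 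The main obstacle I anticipate is assertion (a): one needs a sufficiently explicit model of the completion in the relative topos-theoretic setting to confirm that each cellular attachment remains in $W$ and that $W$ is closed under the transfinite colimits used to present $L_{\mathcal{X}\subseteq\mathcal{Y}}$.
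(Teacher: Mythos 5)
The paper does not prove this statement at all: it is quoted verbatim from Lurie's ``$(\infty,2)$-Categories and the Goodwillie Calculus I'' (Theorem 1.2.13), so there is no internal argument to compare yours against, and any proposal here is really a proposed proof of Lurie's theorem. Your overall skeleton is the right one -- show (b) that a fully faithful and essentially surjective map between complete objects is an equivalence, (a) that the unit $\mathcal{C} \to L_{\mathcal{X}\subseteq\mathcal{Y}}\mathcal{C}$ lies in the class $W$, and combine with closure properties of $W$ -- and your argument for (b) is essentially correct (completeness gives $|\Gpd(\mathcal{C})| \simeq \mathcal{C}_{0}$, then the pullback condition and the Segal condition propagate the equivalence to all levels).

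However, the two places you lean on most are genuine gaps, and they are exactly where the content of the theorem lives. First, the cell-attachment construction of $L_{\mathcal{X}\subseteq\mathcal{Y}}$ does not work as described: pushouts along maps of the form $X \otimes \Delta^{0} \to X \otimes E$ do not stay inside the full subcategory of $\mathcal{X}$-Segal objects (Segal objects are not closed under pushout in simplicial objects of $\mathcal{Y}$), so the intermediate stages are not objects for which ``fully faithful and essentially surjective'' is even defined, and the heuristic that ``adjoining inverses does not alter mapping objects'' mischaracterizes completion, which does not freely adjoin anything but rather replaces $\mathcal{C}_{0}$ by the object of equivalences. Moreover, your closure argument for $W$ under transfinite composition invokes commutation of finite limits with filtered colimits, but these colimits live in $\mathcal{Y}$ (or in simplicial objects of $\mathcal{Y}$), and a general distributor ambient $\mathcal{Y}$ -- e.g. $\CSS_{\mathcal{X}}(\mathcal{Y}')$ itself, which is how the theorem gets iterated -- is presentable but not an $\infty$-topos, so this exactness is not available; the distributor axioms supply only carefully limited exactness, which is why Lurie's proof is delicate. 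Second, two-out-of-three for $W$ is not just ``pullback pasting'': the cancellation direction you need (deducing that $L f$ is in $W$ from $\eta_{\mathcal{C}}$ and $\eta_{\mathcal{D}} \circ f$ being in $W$) requires showing that a map which is fully faithful and surjective onto equivalence classes induces equivalences on mapping objects over the larger base, which uses homotopy invariance of mapping objects under equivalences in a Segal object -- a nontrivial fact that must itself be established in this generality. In Rezk's and Lurie's treatments, proving that the completion map is fully faithful and essentially surjective and establishing these invariance/two-out-of-three properties is the bulk of the work; as written, your proposal assumes rather than proves them, as you partly acknowledge in your final sentence.
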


\begin{thm}[\cite{LurieGoodwillie}*{Proposition 1.3.2}]\label{thm:iterdistr}
  Suppose $\mathcal{X} \subseteq \mathcal{Y}$ is a distributor. Then
  so is $\mathcal{X} \subseteq \CSS_{\mathcal{X}}(\mathcal{Y})$, where
  we regard $\mathcal{X}$ as a full subcategory of
  $\CSS_{\mathcal{X}}(\mathcal{Y})$ via the diagonal embedding $c^{*}
  \colon \mathcal{X} \to \Fun(\simp^{\op}, \mathcal{X})$.
\end{thm}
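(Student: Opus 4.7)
The plan is to verify the four axioms in the definition of a distributor for the pair $\mathcal{X} \subseteq \CSS_{\mathcal{X}}(\mathcal{Y})$ (where $\mathcal{X}$ sits via the constant-diagram embedding $c^{*}$), using the corresponding axioms for $\mathcal{X} \subseteq \mathcal{Y}$. Presentability of $\CSS_{\mathcal{X}}(\mathcal{Y})$ (axiom (1)) is the easiest: since $\mathcal{Y}$ is presentable, so is $\Fun(\simp^{\op}, \mathcal{Y})$, and $\CSS_{\mathcal{X}}(\mathcal{Y})$ is the full subcategory cut out by the Segal condition, the requirement that $\mathcal{C}_{0} \in \mathcal{X}$, and the completeness condition; each of these is an accessibility-preserving local condition (they are local with respect to a small set of morphisms), so $\CSS_{\mathcal{X}}(\mathcal{Y})$ is an accessible localization of a presentable \icat{}, hence presentable.

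For axiom (2), I would observe that $c^{*} \colon \mathcal{X} \to \Fun(\simp^{\op}, \mathcal{X}) \hookrightarrow \Fun(\simp^{\op}, \mathcal{Y})$ has both a left adjoint (colimit) and a right adjoint (limit), so it preserves all limits and colimits in $\Fun(\simp^{\op}, \mathcal{Y})$. A constant simplicial object $c^{*}X$ with $X \in \mathcal{X}$ is trivially an $\mathcal{X}$-Segal object, and it is complete because its groupoid object is constant by inspection. Limits in $\CSS_{\mathcal{X}}(\mathcal{Y})$ are computed in $\Fun(\simp^{\op}, \mathcal{Y})$ (all three defining conditions — Segal, $\mathcal{C}_{0} \in \mathcal{X}$, and completeness — are limit-preservation statements), so a limit of constants in $\mathcal{X}$ is the constant with value the limit in $\mathcal{X}$; this lies in $c^{*}\mathcal{X}$. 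Colimits of constants in $\Fun(\simp^{\op}, \mathcal{Y})$ are again constants with value in $\mathcal{X}$ (since $\mathcal{X}$ is closed under colimits in $\mathcal{Y}$), and such objects are already complete Segal, so the localization is the identity on them, and closure under colimits follows.

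For axioms (3) and (4), the key technical input is a compatibility between slicing and passage to $\CSS$. Specifically, for $X \in \mathcal{X}$, I expect an equivalence
\[
\CSS_{\mathcal{X}}(\mathcal{Y})_{/c^{*}X} \simeq \CSS_{\mathcal{X}_{/X}}(\mathcal{Y}_{/X}),
\]
obtained by noting that a map $\mathcal{C} \to c^{*}X$ is the same as a lift of $\mathcal{C}$ to a simplicial object in $\mathcal{Y}_{/X}$ (since mapping into a constant is levelwise data), and that the Segal/completeness/$\mathcal{X}$-value conditions transport across this identification (using that pullback along $X \to X$ is the identity). Given the assumption that $\mathcal{X} \subseteq \mathcal{Y}$ is a distributor, the same is true for $\mathcal{X}_{/X} \subseteq \mathcal{Y}_{/X}$, so in particular $\CSS_{\mathcal{X}_{/X}}(\mathcal{Y}_{/X})$ is presentable and colimits in $\mathcal{X}_{/X}$ are preserved by the inclusion into this category; this will supply axiom (3). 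For axiom (4), the assignment $X \mapsto \mathcal{Y}_{/X}$ preserves limits in $\LCatI$ by the distributor assumption on $\mathcal{X} \subseteq \mathcal{Y}$; the further passage to $\CSS_{\mathcal{X}_{/X}}(\mathcal{Y}_{/X})$ preserves limits because Segal objects, completeness, and the condition $\mathcal{C}_{0}\in\mathcal{X}_{/X}$ are cut out by limit conditions that commute with base change.

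The main obstacle is the slice identification used for (3) and (4): one has to check that the equivalence $\CSS_{\mathcal{X}}(\mathcal{Y})_{/c^{*}X} \simeq \CSS_{\mathcal{X}_{/X}}(\mathcal{Y}_{/X})$ is natural in $X$ and compatible with pullback functors, so that the limit-preservation of axiom (4) really reduces to the analogous statement for $\mathcal{Y}_{/X}$ already available from axiom (4) of the original distributor. Once this bookkeeping is in place, axioms (3) and (4) are transported directly from $\mathcal{X} \subseteq \mathcal{Y}$ to $\mathcal{X} \subseteq \CSS_{\mathcal{X}}(\mathcal{Y})$, and the result follows.
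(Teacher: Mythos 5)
The paper does not prove this statement (it is imported from \cite{LurieGoodwillie}), so your proposal has to stand on its own. Your handling of axioms (1) and (2) is essentially right: presentability needs a little more care than ``accessible localization'' (the condition $\mathcal{C}_{0}\in\mathcal{X}$ is most easily handled by writing $\Seg_{\mathcal{X}}(\mathcal{Y})$ as a pullback of presentable \icats{} along accessible limit-preserving functors), and for (2) the cleanest route is to note that $c^{*}\colon \mathcal{X}\to\CSS_{\mathcal{X}}(\mathcal{Y})$ has the right adjoint $\Lambda\circ\lim_{\simp^{\op}}$, or, as you in effect argue, that a pointwise colimit of constants is again a constant complete $\mathcal{X}$-Segal object and hence is the colimit in the full subcategory. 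The slice identification $\CSS_{\mathcal{X}}(\mathcal{Y})_{/c^{*}X}\simeq\CSS_{\mathcal{X}_{/X}}(\mathcal{Y}_{/X})$ is correct and is the right tool for axiom (4).

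The genuine gap is axiom (3). What you offer for it --- that $\mathcal{X}_{/X}\subseteq\mathcal{Y}_{/X}$ is again a distributor and that ``colimits in $\mathcal{X}_{/X}$ are preserved by the inclusion into $\CSS_{\mathcal{X}_{/X}}(\mathcal{Y}_{/X})$'' --- is an axiom-(2)-type statement, not axiom (3). Axiom (3) for the new pair concerns an arbitrary morphism $\mathcal{D}\to c^{*}X$ in $\CSS_{\mathcal{X}}(\mathcal{Y})$ and asks that the functor $X'\mapsto c^{*}X'\times_{c^{*}X}\mathcal{D}$, valued in $\CSS_{\mathcal{X}}(\mathcal{Y})_{/\mathcal{D}}$, preserve colimits. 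Your slice identification only reduces this to the case $X=*$, and you cannot hope to dispose of it by factoring through a colimit-preserving base change $\CSS_{\mathcal{X}}(\mathcal{Y})_{/c^{*}X}\to\CSS_{\mathcal{X}}(\mathcal{Y})_{/\mathcal{D}}$: already for $\mathcal{X}=\mathcal{S}$, $\CSS_{\mathcal{S}}(\mathcal{S})\simeq\CatI$, pullback along an arbitrary functor does not preserve colimits. The missing argument is levelwise: the pullback $c^{*}X'\times_{c^{*}X}\mathcal{D}$ is computed degreewise, in degree $n$ the original axiom (3) applied to $\mathcal{D}_{n}\to X$ identifies $\colim_{i}(X'_{i}\times_{X}\mathcal{D}_{n})$ with $(\colim_{i}X'_{i})\times_{X}\mathcal{D}_{n}$, and the resulting pointwise colimit is again a complete $\mathcal{X}$-Segal object (a finite limit of such), hence is also the colimit in $\CSS_{\mathcal{X}}(\mathcal{Y})$; this last observation is essential because colimits in $\CSS_{\mathcal{X}}(\mathcal{Y})$ are not computed degreewise in general, a point your write-up never confronts. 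A related softness affects your axiom (4): saying that the Segal, completeness and $\mathcal{C}_{0}\in\mathcal{X}$ conditions ``are cut out by limit conditions that commute with base change'' hides the actual work, namely that the pullback functors preserve these conditions (for completeness this uses closure of $\CSS_{\mathcal{X}}(\mathcal{Y})$ under limits, or a base-change compatibility for $\Lambda$ and the groupoid core), that they can be \emph{checked} componentwise in a limit $\mathcal{Y}_{/X}\simeq\lim_{i}\mathcal{Y}_{/X_{i}}$, and that the slice identification is natural in $X$; you correctly flag the naturality, but it is more than bookkeeping and is where the distributor axioms for $\mathcal{X}\subseteq\mathcal{Y}$ actually enter.
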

We can therefore inductively define distributors $\mathcal{X}
\subseteq \CSS^{n}_{\mathcal{X}}(\mathcal{Y}) :=
\CSS_{\mathcal{X}}(\CSS^{n-1}_{\mathcal{X}}(\mathcal{Y}))$; we refer
to the objects of $\CSS^{n}_{\mathcal{X}}(\mathcal{Y})$ as
\emph{complete} $n$-fold $\mathcal{X}$-Segal objects in
  $\mathcal{Y}$.

\begin{defn}
  Let $\mathcal{X}$ be an \itopos{}. We write $\CSS^{n}(\mathcal{X})$
  for $\CSS^{n}_{\mathcal{X}}(\mathcal{X})$, which we may regard as a
  full subcategory of $\Seg_{n}(\mathcal{X})$. The inclusion
  $\CSS^{n}(\mathcal{X}) \hookrightarrow \Seg_{n}(\mathcal{X})$ has a
  left adjoint $L_{n,\mathcal{X}} \colon \Seg_{n}(\mathcal{X}) \to
  \CSS^{n}(\mathcal{X})$, obtained inductively as the composite
  \[ \Seg_{n}(\mathcal{X})
  \xto{\Seg_{\mathcal{X}}(L_{n-1,\mathcal{X}})}
  \Seg_{\mathcal{X}}(\CSS^{n-1}(\mathcal{X})) \xto{L_{\mathcal{X}
      \subseteq \CSS^{n-1}(\mathcal{X})}} \CSS^{n}(\mathcal{X}).\]
\end{defn}

\begin{remark}
  The \icat{} $\CSS^{n}(\mathcal{X})$ can be identified with the
  \icat{} of sheaves on $\mathcal{X}$ valued in the \icat{}
  $\CSS^{n}(\mathcal{S})$ of complete $n$-fold Segal spaces (in other
  words, sheaves of $(\infty,n)$-categories).
\end{remark}

We now prove the useful fact that the completion functor preserves
certain fibre products:
\begin{lemma}\label{lem:CSSlocprod}\ 
  \begin{enumerate}[(i)]
  \item Let $\mathcal{X} \subseteq \mathcal{Y}$ be a distributor. For
    $X \in \mathcal{X}$, write $c^{*}X$ for the constant diagram
    $\simp^{\op} \to \mathcal{Y}$ with value $X$; this is an
    $\mathcal{X}$-Segal object. The localization
    $L_{\mathcal{X}\subseteq\mathcal{Y}} \colon
    \Seg_{\mathcal{X}}(\mathcal{Y}) \to
    \CSS_{\mathcal{X}}(\mathcal{Y})$ preserves fibre products over
    $c^{*}X$ where $X \in \mathcal{X}$; in
    particular, $L_{\mathcal{X} \subseteq \mathcal{Y}}$ preserves
    products.
  \item  Let $\mathcal{X}$ be an \itopos{}. Then the localization $L_{n,\mathcal{X}} \colon
    \Seg_{n}(\mathcal{X}) \to \CSS^{n}(\mathcal{X})$ preserves products.
  \end{enumerate}
\end{lemma}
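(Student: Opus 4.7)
The plan is to deduce (ii) from (i) by induction on $n$, and to prove (i) by analysing the class of morphisms inverted by $L_{\mathcal{X} \subseteq \mathcal{Y}}$.

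For (i), I would first invoke Theorem~\ref{thm:distcssloc} to identify $L_{\mathcal{X} \subseteq \mathcal{Y}}$ as the localization at the class $W$ of fully faithful and essentially surjective morphisms. A standard property of reflective localizations (with fully faithful right adjoint) then gives: $L$ preserves fibre products over an already-local object $Z$ provided $W$ is stable under pullback along arbitrary maps into $Z$. It is easy to check that $c^{*}X$ is already complete, since $\iota(c^{*}X)$ is the constant groupoid with value $X$, so $|\iota(c^{*}X)| \simeq X$ and hence $L c^{*}X \simeq c^{*}X$. The problem therefore reduces to showing: for $f \colon \mathcal{A} \to \mathcal{A}'$ in $W$ and maps $\mathcal{A}', \mathcal{B} \to c^{*}X$ in $\Seg_{\mathcal{X}}(\mathcal{Y})$, the pulled-back morphism $f \times_{c^{*}X} \mathrm{id}_{\mathcal{B}}$ lies in $W$.

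For the fully faithful half of this, I would use that fibre products in $\Seg_{\mathcal{X}}(\mathcal{Y})$ are computed levelwise in $\mathcal{Y}$, together with the fact that $(c^{*}X)_{1} \to (c^{*}X)_{0} \times (c^{*}X)_{0}$ is the diagonal $X \to X \times X$. A direct diagram chase using universality of pullbacks then promotes the defining pullback square for full-faithfulness of $f$ to the analogous square for $f \times_{c^{*}X} \mathrm{id}_{\mathcal{B}}$. For essential surjectivity, I would exploit that $\iota$ is a right adjoint and hence commutes with the fibre product, reducing the claim to showing that the realization functor $|-| \colon \Gpd(\mathcal{X}) \to \mathcal{X}$ preserves fibre products over constant groupoids on objects of $\mathcal{X}$. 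Under the correspondence in the $\infty$-topos $\mathcal{X}$ between groupoid objects and effective epimorphisms, this in turn reduces to checking that for groupoids $G, H$ over $c^{*}X$, the morphism $G_{0} \times_{X} H_{0} \twoheadrightarrow |G| \times_{X} |H|$ is an effective epimorphism with kernel pair $G_{1} \times_{X} H_{1} = (G \times_{c^{*}X} H)_{1}$, and this follows from the stability of effective epimorphisms under pullback and the universality of colimits in $\mathcal{X}$.

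For (ii), I would induct on $n$, with base case $n = 1$ being (i) applied with $\mathcal{Y} = \mathcal{X}$. For the inductive step, $L_{n, \mathcal{X}}$ factors as
\[ \Seg_{n}(\mathcal{X}) \xto{\Seg_{\mathcal{X}}(L_{n-1, \mathcal{X}})} \Seg_{\mathcal{X}}(\CSS^{n-1}(\mathcal{X})) \xto{L_{\mathcal{X} \subseteq \CSS^{n-1}(\mathcal{X})}} \CSS^{n}(\mathcal{X}). \]
The second functor preserves finite products by (i) applied to the distributor $\mathcal{X} \subseteq \CSS^{n-1}(\mathcal{X})$. The first functor applies $L_{n-1, \mathcal{X}}$ levelwise in $\simp^{\op}$, and since products in $\Seg_{\mathcal{X}}$ are computed levelwise and $L_{n-1, \mathcal{X}}$ preserves products by the inductive hypothesis, this functor also preserves products; hence so does the composite. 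The main obstacle I expect is verifying that essential surjectivity is preserved under pullback along maps into $c^{*}X$: this is where the $\infty$-topos structure on $\mathcal{X}$ is essential, entering through effectivity of groupoid objects and the distributor axiom that pullback along maps with target in $\mathcal{X}$ preserves colimits.
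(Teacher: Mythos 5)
Your proposal is correct and follows essentially the same route as the paper: identify $L_{\mathcal{X}\subseteq\mathcal{Y}}$ via Theorem~\ref{thm:distcssloc} as the localization at the fully faithful and essentially surjective maps, check that these are stable under base change over $c^{*}X$ (full faithfulness because limits commute, essential surjectivity using the \itopos{} structure of $\mathcal{X}$), and deduce (ii) by induction from the factorization of $L_{n,\mathcal{X}}$. The only difference is cosmetic: where you verify the essential-surjectivity condition via effective epimorphisms and Čech nerves, the paper argues directly that the realization colimit is sifted and that pullback over $X$ preserves colimits by universality of colimits in $\mathcal{X}$ --- two equivalent invocations of the topos axioms.
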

\begin{proof}
  Since the inclusion $\CSS_{\mathcal{X}}(\mathcal{Y}) \hookrightarrow
  \Seg_{\mathcal{X}}(\mathcal{Y})$ is a right adjoint, it preserves
  limits. Thus we must show that if $\mathcal{C}$ and $\mathcal{D}$
  are $\mathcal{X}$-Segal objects of $\mathcal{Y}$ over $c^{*}X$, then
  the natural map $L_{\mathcal{X}\subseteq\mathcal{Y}}(\mathcal{C}
  \times_{c^{*}X} \mathcal{D}) \to
  L_{\mathcal{X}\subseteq\mathcal{Y}}(\mathcal{C}) \times_{c^{*}X}
  L_{\mathcal{X}\subseteq\mathcal{Y}}(\mathcal{D})$ in
  $\Seg_{\mathcal{X}}(\mathcal{Y})$ is an equivalence. By
  Theorem~\ref{thm:distcssloc}, this is equivalent to proving that the
  map $\mathcal{C} \times_{c^{*}X} \mathcal{D} \to
  L_{\mathcal{X}\subseteq\mathcal{Y}}(\mathcal{C}) \times_{c^{*}X}
  L_{\mathcal{X}\subseteq\mathcal{Y}}(\mathcal{D})$ is fully faithful
  and essentially surjective. Condition (1) in the definition holds
  since pullbacks over $X$ preserve colimits in the \itopos{}
  $\mathcal{X}$, and the colimit in question is sifted, and condition
  (2) holds since limits commute. This proves (i); then (ii) follows
  inductively as the functor $L_{n,\mathcal{X}}$ is a composite of
  functors constructed from the functors in (i).
\end{proof}

As a consequence, we have:
\begin{lemma}
  The Cartesian product in $\CSS^{n}(\mathcal{X})$ preserves colimits
  separately in each variable.
\end{lemma}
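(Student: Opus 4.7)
The plan is to lift the Cartesian product on $\CSS^{n}(\mathcal{X})$ to the pointwise product on the ambient functor category $\Fun((\simp^{\op})^{\times n}, \mathcal{X})$, where distributivity over colimits is automatic because $\mathcal{X}$ is an $\infty$-topos. The key input is that both reflectors in the composite
\[
\widehat{L} \colon \Fun((\simp^{\op})^{\times n}, \mathcal{X}) \xrightarrow{L_{\Seg}} \Seg_{n}(\mathcal{X}) \xrightarrow{L_{n,\mathcal{X}}} \CSS^{n}(\mathcal{X})
\]
preserve finite products.

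First I would verify that the inclusion $i \colon \CSS^{n}(\mathcal{X}) \hookrightarrow \Fun((\simp^{\op})^{\times n}, \mathcal{X})$ preserves finite products. The completion $L_{n,\mathcal{X}}$ does so by Lemma~\ref{lem:CSSlocprod}(ii), and the Segal localization $L_{\Seg}$ does so as well, either by replaying the argument of Lemma~\ref{lem:CSSlocprod} or by directly observing that $n$-fold Segal objects are closed under pointwise products (the Segal conditions being limits that commute with products). A formal Yoneda argument then shows that the right adjoint of a product-preserving reflector between presentable $\infty$-categories itself preserves products: for any $Y \in \Fun((\simp^{\op})^{\times n}, \mathcal{X})$ and any $A, B \in \CSS^{n}(\mathcal{X})$,
\[
\Map(Y, i(A \times B)) \simeq \Map(\widehat{L}Y, A \times B) \simeq \Map(\widehat{L}Y, A) \times \Map(\widehat{L}Y, B) \simeq \Map(Y, iA \times iB).
\]

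Given this, the main computation is formal. For $A \in \CSS^{n}(\mathcal{X})$ and a small diagram $B_{\bullet} \colon I \to \CSS^{n}(\mathcal{X})$ with colimit $B$, we have $B \simeq \widehat{L}(\operatorname{colim}_{I} B_{\bullet})$, where the inner colimit is taken pointwise in the functor category. Identifying products in $\CSS^{n}(\mathcal{X})$ with their pointwise counterparts, one then obtains
\[
\operatorname{colim}_{I}^{\CSS^{n}}(A \times B_{\bullet}) \simeq \widehat{L}\bigl(\operatorname{colim}_{I}(A \times B_{\bullet})\bigr) \simeq \widehat{L}\bigl(A \times \operatorname{colim}_{I} B_{\bullet}\bigr) \simeq A \times \widehat{L}\bigl(\operatorname{colim}_{I} B_{\bullet}\bigr) \simeq A \times B,
\]
where the second equivalence uses distributivity of the pointwise Cartesian product over colimits (a consequence of $\mathcal{X}$ being an $\infty$-topos) and the third uses that $\widehat{L}$ preserves products together with $\widehat{L}(iA) \simeq A$.

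The main obstacle is the auxiliary claim that the Segal localization preserves finite products, which is not explicitly established in the excerpt. This can be proved by replaying Lemma~\ref{lem:CSSlocprod}(ii) in the simpler setting of Segal objects, or sidestepped by running the argument inside $\Seg_{n}(\mathcal{X})$ rather than the functor category: products in $\Seg_{n}(\mathcal{X})$ are already pointwise by closure of the Segal conditions under products, distributivity in $\Seg_{n}(\mathcal{X})$ then follows from distributivity in $\mathcal{X}$ by the same chain of equivalences, and only Lemma~\ref{lem:CSSlocprod}(ii) is then needed to transfer the result from $\Seg_{n}(\mathcal{X})$ to $\CSS^{n}(\mathcal{X})$.
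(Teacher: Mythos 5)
Your proposal is correct and is, at bottom, the paper's own argument: compute colimits in $\CSS^{n}(\mathcal{X})$ by reflecting colimits formed in an ambient category, use distributivity of the product over colimits there, and commute the reflection past products via Lemma~\ref{lem:CSSlocprod}. The difference is where you stand: the paper stays inside $\Seg_{n}(\mathcal{X})$ (colimits in $\CSS^{n}(\mathcal{X})$ are obtained by applying $L_{n,\mathcal{X}}$ to colimits in $\Seg_{n}(\mathcal{X})$, where the product is ``observed'' to preserve colimits in each variable), whereas your primary route descends to $\Fun((\simp^{\op})^{\times n},\mathcal{X})$, where distributivity is genuinely pointwise and automatic, at the cost of the extra input that the Segal reflection preserves finite products --- which you rightly flag as the missing ingredient. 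One caution about your proposed sidestep: running the argument inside $\Seg_{n}(\mathcal{X})$ does not reduce the needs to ``only Lemma~\ref{lem:CSSlocprod}(ii)'', because colimits in $\Seg_{n}(\mathcal{X})$ are \emph{not} computed pointwise (Segal objects are not closed under colimits in the functor category); they are again obtained by applying the Segal reflection to a pointwise colimit, so distributivity in $\Seg_{n}(\mathcal{X})$ amounts to exactly the product-compatibility of the Segal localization you were trying to avoid. That fact is true and standard (Segal equivalences are stable under products, as in Rezk's Cartesianness of the Segal model structure), and the paper itself invokes it as an unproved observation, so your write-up meets the paper's standard of rigor; just be aware the issue is relocated rather than removed. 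Finally, your Yoneda argument that the inclusion preserves finite products is unnecessary: the inclusion of a reflective subcategory is a right adjoint and so preserves all limits.
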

\begin{proof}
  Colimits in $\CSS^{n}(\mathcal{X})$ are computed by applying the
  localization $L$ to the colimit of the same diagram in
  $\Seg_{n}(\mathcal{X})$. Thus the result follows by combining
  Lemma~\ref{lem:CSSlocprod} with the observation that the product
  preserves colimits in each variable in $\Seg_{n}(\mathcal{X})$.
\end{proof}
 
This lets us define internal Homs in complete Segal objects:
\begin{defn}\label{defn:MAP}
  We denote the internal Hom in $\CSS^{n}(\mathcal{X})$ of morphisms
  from $\mathcal{C}$ to $\mathcal{D}$ by
  $\mathcal{D}^{\mathcal{C}}$. If $X \in \mathcal{X}$ we abbreviate
  $\mathcal{D}^{c^{*}X}$ by $\mathcal{D}^{X}$. We also write
  $\MAP(\mathcal{C}, \mathcal{D})$ for the object of $\mathcal{X}$
  that represents the functor
  $\Map_{\CSS^{n}(\mathcal{X})}(\mathcal{C} \times c^{*}(\blank),
  \mathcal{D}) \colon \mathcal{X} \to \mathcal{S}$. Equivalently, this
  is just $(\mathcal{D}^{\mathcal{C}})_{0,\ldots,0}$.
\end{defn}

We now wish to prove a useful criterion for completeness of $n$-fold
Segal objects in an $\infty$-topos:
\begin{propn}\label{propn:complsegtoposcond} 
  Suppose $\mathcal{X}$ is an \itopos{} and $\mathcal{C}_{\bullet}$ is a Segal
  object in $\mathcal{X}$. Then $\mathcal{C}_{\bullet}$ is complete \IFF{} the
  Segal spaces $\Map_{\mathcal{X}}(X, \mathcal{C}_{\bullet})$ are complete for all
  $X \in \mathcal{X}$.
\end{propn}

For the proof it is convenient to first consider functoriality of
complete Segal objects in maps of distributors. The usual
notion of a map between \itopoi{} is that of a \emph{geometric
  morphism}: an adjunction where the left adjoint preserves finite
limits. The \icats{} of complete Segal objects are functorial with
respect to a slightly more general class of maps:
\begin{defn}
  Let $\mathcal{X}$ and $\mathcal{Y}$ be \itopoi{}. A
  \emph{pseudo-geometric morphism} from $\mathcal{X}$ to $\mathcal{Y}$
  is a functor $f_{*} \colon \mathcal{X} \to \mathcal{Y}$ such that
  $f_{*}$ admits a left adjoint $f^{*}$ which preserves pullbacks.
\end{defn}

\begin{defn}
  Let $\mathcal{X} \subseteq \mathcal{Y}$ and $\mathcal{X}' \subseteq
  \mathcal{Y}'$ be distributors. A \defterm{pseudo-geometric morphism} from
  $\mathcal{Y}$ to $\mathcal{Y}'$ is a functor $G \colon
  \mathcal{Y} \to \mathcal{Y}'$ such that:
  \begin{enumerate}[(1)]
  \item $G$ takes $\mathcal{X}$ to $\mathcal{X}'$.
  \item $G$ has a left adjoint $F \colon \mathcal{Y}' \to \mathcal{Y}$.
  \item $F$ takes $\mathcal{X}'$ to $\mathcal{X}$.
  \item If $\phi \colon \Delta^{1}\times \Delta^{1} \to \mathcal{Y}'$
    is a pullback diagram such that $\phi(1,1) \in \mathcal{X}'$, then
    $F(\phi)$ is a pullback diagram in $\mathcal{Y}$.
  \end{enumerate}
\end{defn}

\begin{remark}
  It is clear that a pseudo-geometric morphism of \itopoi{} as above
  is also a pseudo-geometric morphism of distributors.
\end{remark}

\begin{propn}\label{propn:CSSfunc}\ 
  \begin{enumerate}[(i)]
  \item Let $\mathcal{X} \subseteq \mathcal{Y}$ and $\mathcal{X}' \subseteq
  \mathcal{Y}'$ be distributors. Suppose $G \colon \mathcal{Y} \to
  \mathcal{Y}'$ is a pseudo-geometric morphism of distributors with left
  adjoint $F$. Then composition with $F$ and $G$ induces an adjunction
  \[  L F_{*} : \CSS_{\mathcal{X}'}(\mathcal{Y}')
  \rightleftarrows \CSS_{\mathcal{X}}(\mathcal{Y}) : G_{*}, \]
  and this is also a pseudo-geometric morphism.
\item Suppose $f^{*} \colon \mathcal{X}' \rightleftarrows \mathcal{X}
  : f_{*}$ is a
  pseudo-geometric morphism of \itopoi{}. Then the functors given by
  composition with $f^{*}$ and $f_{*}$ induce an adjunction
  \[ L_{n,\mathcal{X}}(f^{*})_{*} : \CSS^{n}(\mathcal{X}')
  \rightleftarrows \CSS^{n}(\mathcal{X}) : (f_{*})_{*}.\]
  \end{enumerate}
\end{propn}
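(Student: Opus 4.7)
The plan is to prove part (i) directly and then deduce part (ii) by induction on $n$, with (i) providing the inductive step starting from the trivial distributor structures $\mathcal{X} \subseteq \mathcal{X}$ and $\mathcal{X}' \subseteq \mathcal{X}'$ (which are distributors because $\mathcal{X}$ and $\mathcal{X}'$ are $\infty$-topoi).

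For part (i), the first task is to verify that $G_{*}$ restricts to a functor $\CSS_{\mathcal{X}}(\mathcal{Y}) \to \CSS_{\mathcal{X}'}(\mathcal{Y}')$. Since $G$ is a right adjoint it preserves all small limits, so the Segal pullbacks $\mathcal{C}_{n} \simeq \mathcal{C}_{1} \times_{\mathcal{C}_{0}} \cdots \times_{\mathcal{C}_{0}} \mathcal{C}_{1}$ are preserved, and by condition (1) of a pseudo-geometric morphism of distributors $G(\mathcal{C}_{0}) \in \mathcal{X}'$; thus $G_{*}\mathcal{C}$ is an $\mathcal{X}'$-Segal object. Preservation of completeness is the delicate point, and I would handle it by establishing the natural equivalence $\iota G_{*} \simeq G_{*}\iota$ of functors $\Seg_{\mathcal{X}}(\mathcal{Y}) \to \Gpd(\mathcal{X}')$: using $F_{*} \dashv G_{*}$ on functor categories, the commutation $\Lambda G \simeq G \Lambda$ (which follows from $F(\mathcal{X}') \subseteq \mathcal{X}$ and $G(\mathcal{X}) \subseteq \mathcal{X}'$), and the observation that $F$ carries $\Gpd(\mathcal{X}')$ into $\Gpd(\mathcal{X})$ (since the groupoid pullbacks are over the object-of-objects, which lies in $\mathcal{X}'$, so condition (4) applies). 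Then constancy of $\iota\mathcal{C}$ forces constancy of $\iota G_{*}\mathcal{C}$.

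The second task is to construct $LF_{*}$. Condition (4) of a pseudo-geometric morphism of distributors is exactly what ensures that $F_{*}$ preserves each iterated Segal pullback (which is over $\mathcal{C}_{0} \in \mathcal{X}'$), while condition (3) gives $F(\mathcal{C}_{0}) \in \mathcal{X}$; thus $F_{*}$ restricts to $\Seg_{\mathcal{X}'}(\mathcal{Y}') \to \Seg_{\mathcal{X}}(\mathcal{Y})$, and post-composition with the localization of Theorem~\ref{thm:distcssloc} yields $LF_{*}$. The adjunction $LF_{*} \dashv G_{*}$ is then formal, since $G_{*}$ already lands in complete objects. To confirm that this new adjunction is a pseudo-geometric morphism of the distributors $\mathcal{X} \subseteq \CSS_{\mathcal{X}}(\mathcal{Y})$ and $\mathcal{X}' \subseteq \CSS_{\mathcal{X}'}(\mathcal{Y}')$, I would use that $\mathcal{X}$ and $\mathcal{X}'$ are embedded as constant (already complete) diagrams, which reduces conditions (1)--(3) to the corresponding properties of $G$ and $F$; for condition (4), limits in $\CSS_{\mathcal{X}'}(\mathcal{Y}')$ agree with pointwise limits in $\mathcal{Y}'$ (levelwise limits of complete objects remain complete), $F_{*}$ preserves such pointwise pullbacks over constant $\mathcal{X}'$-valued diagrams by the original condition (4), and $L$ then acts trivially on the result.

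Part (ii) follows by induction on $n$: the inductive step is part (i) applied to $\mathcal{X} \subseteq \CSS^{n-1}(\mathcal{X})$ and $\mathcal{X}' \subseteq \CSS^{n-1}(\mathcal{X}')$, and the resulting left adjoint matches the inductive composite defining $L_{n,\mathcal{X}}$. The main obstacle I anticipate is the completeness-preservation step for $G_{*}$: unlike the Segal condition it is not purely a limit condition, and establishing $\iota G_{*} \simeq G_{*}\iota$ requires carefully interleaving the right-adjoint-ness of $G$ with the fact that both reflections $\Lambda \colon \mathcal{Y} \to \mathcal{X}$ and $\iota_{\mathcal{X}} \colon \Seg(\mathcal{X}) \to \Gpd(\mathcal{X})$ commute with $G$ thanks to conditions (1) and (3)--(4). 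Once that identification is in hand, the remaining verifications are formal manipulations of adjunctions and reflective subcategories.
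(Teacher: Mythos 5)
Your overall strategy coincides with the paper's: restrict the adjunction $F_{*} \dashv G_{*}$ to Segal objects using conditions (1), (3) and (4); prove that $G_{*}$ preserves completeness by passing to right adjoints in the commuting square whose left adjoints are $F_{*}$ on groupoid objects and on Segal objects (your identification $\iota G_{*} \simeq G_{*}\iota$ is exactly the paper's equivalence $G_{*}\Gpd(\mathcal{C}) \simeq \Gpd(G_{*}\mathcal{C})$); take $LF_{*}$ as the left adjoint, the adjunction being formal once $G_{*}$ lands in complete objects; and deduce (ii) from (i) by induction on $n$ via the distributors $\mathcal{X} \subseteq \CSS^{n-1}(\mathcal{X})$. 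All of that matches the paper and is correct.

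The gap is in your verification of condition (4) for the induced pseudo-geometric morphism. After applying $F_{*}$ levelwise you have a pullback square in $\Seg_{\mathcal{X}}(\mathcal{Y})$ whose corner is the constant diagram $c^{*}FX$, but its other vertices $F_{*}\mathcal{C}$, $F_{*}\mathcal{D}$ and $F_{*}(\mathcal{C}\times_{c^{*}X}\mathcal{D})$ are in general \emph{not} complete --- if $F_{*}$ preserved completeness there would be no need for the localization in the left adjoint at all --- so $L$ does not ``act trivially on the result''; it genuinely changes these objects, and the question is precisely whether it carries this square to a pullback in $\CSS_{\mathcal{X}}(\mathcal{Y})$. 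Since $L$ is a localization it has no general right to preserve finite limits, so this step needs an argument: one must show that $L_{\mathcal{X}\subseteq\mathcal{Y}}$ preserves fibre products over constant diagrams $c^{*}X$ with $X \in \mathcal{X}$. This is Lemma~\ref{lem:CSSlocprod} in the paper, and it is the one place where the $\infty$-topos structure of $\mathcal{X}$ is really used: by Theorem~\ref{thm:distcssloc} it suffices to check that the comparison map $\mathcal{C}\times_{c^{*}X}\mathcal{D} \to L\mathcal{C}\times_{c^{*}X}L\mathcal{D}$ is fully faithful and essentially surjective, which relies on pullback over $X$ preserving the (sifted) colimits involved, together with commutation of limits. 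Supplying this lemma (or an equivalent argument) closes the gap; the rest of your proposal goes through as written.
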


\begin{proof}
  By Lemma~\ref{lem:funadj} we have an adjunction
  \[ F_{*} : \Fun(\simp^{\op}, \mathcal{Y}') \rightleftarrows
  \Fun(\simp^{\op}, \mathcal{Y}) : G_{*}. \]
  It is clear from the definition of a pseudo-geometric morphism that
  $F_{*}$ and $G_{*}$ preserve $\mathcal{X}'$- and $\mathcal{X}$-Segal
  objects, respectively, so there is an induced adjunction
    \[ F_{*} : \Seg_{\mathcal{X}'}(\mathcal{Y}') \rightleftarrows
  \Seg_{\mathcal{X}}(\mathcal{Y}) : G_{*}.\]
  We have a commutative diagram of left adjoints
  \csquare{\Gpd(\mathcal{X}')}{\Gpd(\mathcal{X})}{\Seg_{\mathcal{X}'}(\mathcal{Y}')}{\Seg_{\mathcal{X}}(\mathcal{Y}),}{F_{*}}{}{}{F_{*}}
  where the vertical morphisms denote the obvious inclusions, hence
  the corresponding diagram of right adjoints also commutes, giving an
  equivalence $G_{*}(\Gpd(\mathcal{C})) \simeq
  \Gpd(G_{*}\mathcal{C})$. It follows that $G_{*}$ preserves complete
  Segal objects, hence there is an induced adjunction 
  \[ L_{\mathcal{X} \subseteq\mathcal{Y}}F_{*} :
  \CSS_{\mathcal{X}'}(\mathcal{Y}') \rightleftarrows
  \CSS_{\mathcal{X}}(\mathcal{Y}) : G_{*}.\] To complete the proof of
  (i), we must show that this is a pseudo-geometric morphism. The
  functors $L_{\mathcal{X} \subseteq\mathcal{Y}}F_{*}$ and $G_{*}$
  preserve constant simplicial objects valued in $\mathcal{X}$ and
  $\mathcal{X}'$, so it remains to show that, given a pullback diagram
  \nolabelcsquare{\mathcal{C} \times_{X}
    \mathcal{D}}{\mathcal{C}}{\mathcal{D}}{c^{*}X} in
  $\CSS_{\mathcal{X}'}(\mathcal{Y}')$, where $c^{*}X$ is the constant
  simplicial object with value $X \in \mathcal{X}'$, its image under
  $L_{\mathcal{X}\subseteq\mathcal{Y}}F_{*}$ is also a pullback. Since
  limits in $\CSS_{\mathcal{X}'}(\mathcal{Y}')$ are computed in
  $\Seg_{\mathcal{X}'}(\mathcal{Y}')$, and these in turn are computed
  objectwise, it follows that $F_{*}$ takes this to a pullback diagram
  in $\Seg_{\mathcal{X}}(\mathcal{Y})$. Now applying
  Lemma~\ref{lem:CSSlocprod} we conclude that the image of this under
  $L_{\mathcal{X}\subseteq\mathcal{Y}}$ is also a pullback. This
  completes the proof of (i), and (ii) is just a special case of (i)
  obtained by induction.
\end{proof}

\begin{proof}[Proof of Proposition~\ref{propn:complsegtoposcond}]
  Let $r^{*} \colon \mathcal{S} \to \mathcal{X}$ denote the unique
  colimit-preserving functor such that $r^{*}(*)$ is a terminal object
  of $\mathcal{X}$, and let $r_{*} := \Map_{\mathcal{X}}(*, \blank)$
  be its right adjoint. By \cite{HTT}*{Proposition 6.3.4.1}, the
  adjunction $r^{*}\dashv r_{*}$ is a geometric morphism. It is clear
  that for any $X \in \mathcal{X}$ the functor $\Map_{\mathcal{X}}(X,
  \blank)$ has a left adjoint given by $X \times r^{*}(\blank)$, and
  this preserves pullbacks since $r^{*}$ preserves finite limits. Thus
  the adjunction $X \times r^{*}(\blank) \dashv \Map_{\mathcal{X}}(X, \blank)$ is a
  pseudo-geometric morphism of \itopoi{}, and so by the proof of
  Proposition~\ref{propn:CSSfunc} we have an equivalence
  $\Map_{\mathcal{X}}(X, \Gpd(\mathcal{C}_{\bullet})) \simeq
  \Gpd(\Map_{\mathcal{X}}(X, \mathcal{C}_{\bullet}))$. By the Yoneda
  Lemma the simplicial object
  $\Gpd(\mathcal{C}_{\bullet})$ is constant \IFF{}
  $\Map_{\mathcal{X}}(X, \Gpd(\mathcal{C}_{\bullet}))$ is constant for
  all $X \in \mathcal{X}$, so $\mathcal{C}_{\bullet}$ is complete
  \IFF{} $\Gpd(\Map_{\mathcal{X}}(X, \mathcal{C}_{\bullet}))$ is
  constant for all $X \in \mathcal{X}$, i.e.\ \IFF{}
  $\Map_{\mathcal{X}}(X, \mathcal{C}_{\bullet})$ is complete for all
  $X \in \mathcal{X}$.
\end{proof}

The remainder of this section is devoted to proving the following
inductive characterization of completeness for $n$-fold Segal spaces,
which we will make use of in the next section:
\begin{thm}\label{thm:CSScomplmapcond}
  Suppose $\mathcal{C}$ is an $n$-fold Segal space. Then the following
  are equivalent:
  \begin{enumerate}[(i)]
  \item $\mathcal{C}$ is complete.
  \item The Segal space $\mathcal{C}_{\bullet,0,\ldots,0}$ is
    complete, and the $(n-1)$-fold Segal spaces $\mathcal{C}(x,y)$ are
    complete for all objects $x, y$ in $\mathcal{C}$.
  \end{enumerate}
\end{thm}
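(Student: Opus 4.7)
The plan is to induct on $n$. The base case $n = 1$ is vacuous, so assume the result for $n-1$. Unwinding the distributor definition of completeness for $\mathcal{C}$ as a Segal object in $\CSS^{n-1}(\mathcal{S})$, we see that $\mathcal{C}$ is complete \IFF{} (a) each $\mathcal{C}_{k, \bullet, \ldots, \bullet}$ is a complete $(n-1)$-fold Segal space, and (b) the groupoid object $\iota\,\Lambda\,\mathcal{C}$ is constant, where $\Lambda \colon \CSS^{n-1}(\mathcal{S}) \to \mathcal{S}$ is the right adjoint to the constant-diagram inclusion, namely evaluation at $(0,\ldots,0)$. Since $\Lambda\,\mathcal{C} \simeq \mathcal{C}_{\bullet, 0, \ldots, 0}$, condition (b) is precisely that $\mathcal{C}_{\bullet, 0, \ldots, 0}$ is a complete Segal space. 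Moreover, because the Segal condition writes $\mathcal{C}_k$ as an iterated pullback of $\mathcal{C}_1$ over the constant $(n-1)$-fold Segal space $\mathcal{C}_0$, and the inclusion $\CSS^{n-1}(\mathcal{S}) \hookrightarrow \Seg_{n-1}(\mathcal{S})$ preserves limits, condition (a) reduces to the single requirement that $\mathcal{C}_1$ is a complete $(n-1)$-fold Segal space.

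It thus remains to show that $\mathcal{C}_1$ is complete \IFF{} $\mathcal{C}(X, Y)$ is complete for every pair of objects $X, Y$. The direction $(\Rightarrow)$ is immediate since $\mathcal{C}(X, Y)$ is the pullback of $\mathcal{C}_1$ over the constant $\mathcal{C}_0 \times \mathcal{C}_0$. For the converse, apply the inductive hypothesis to $\mathcal{C}_1$: it is complete \IFF{} (I) $(\mathcal{C}_1)_{\bullet, 0, \ldots, 0} = \mathcal{C}_{1, \bullet, 0, \ldots, 0}$ is a complete Segal space, and (II) every mapping $(n-2)$-fold Segal space $\mathcal{C}_1(A, B)$ is complete, for $A, B \in \mathcal{C}_{1, 0, \ldots, 0}$.

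Condition (I) can be checked fibrewise over $\mathcal{C}_0 \times \mathcal{C}_0$: the Segal space $\mathcal{C}_{1, \bullet, 0, \ldots, 0}$ sits over the constant Segal space on $\mathcal{C}_0 \times \mathcal{C}_0$ with fibre $\mathcal{C}(X, Y)_{\bullet, 0, \ldots, 0}$ at $(X, Y)$, and by Rezk's characterization (Theorem~\ref{thm:rezkcompl}) completeness of a Segal space $Z$ is the condition that $Z_0 \to Z_{\txt{eq}}$ is an equivalence. As both $Z_0$ and $Z_{\txt{eq}}$ form $\mathcal{C}_0 \times \mathcal{C}_0$-indexed families of spaces whose fibre over $(X, Y)$ recovers the analogous data for $\mathcal{C}(X, Y)_{\bullet, 0, \ldots, 0}$ (using that $E^1$ is a compact object, so $\Map(E^1, \blank)$ commutes with pullbacks over a space), the map is an equivalence \IFF{} it is on every fibre. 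For condition (II), the constancy of $\mathcal{C}_{0, \bullet, \ldots, \bullet}$ forces the image of $\mathcal{C}_{1, 1, 0, \ldots, 0}$ in $\mathcal{C}_{1, 0, \ldots, 0}^2$ to lie in the subspace of pairs with matching source and target in $\mathcal{C}_0$; consequently $\mathcal{C}_1(A, B)$ is empty (hence trivially complete) when the source-targets of $A$ and $B$ disagree, and agrees with $\mathcal{C}(X, Y)(A, B)$ when $A, B \in \mathcal{C}(X, Y)_0$. Thus (II) is equivalent to the completeness of $\mathcal{C}(X, Y)(A, B)$ for all $X, Y$ and all $A, B \in \mathcal{C}(X, Y)_0$. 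A second application of the inductive hypothesis, this time to each $\mathcal{C}(X, Y)$, then shows (I) together with (II) is equivalent to the completeness of every $\mathcal{C}(X, Y)$, completing the proof. The main delicacy is the fibrewise completeness criterion for Segal spaces over a constant base.
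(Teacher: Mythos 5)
Your reduction of completeness of $\mathcal{C}$ to ``$\mathcal{C}_{\bullet,0,\ldots,0}$ is complete and $\mathcal{C}_{1}$ is complete'' is correct and is exactly the paper's Lemmas~\ref{lem:seginCSScomplcond} and~\ref{lem:segcompl1cond}, and the easy direction (completeness passes to the fibres $\mathcal{C}(X,Y)$, being pullbacks over constant objects) also matches the paper. The fibrewise check of condition (I) is fine as well (though compactness of $E^{1}$ is irrelevant: $\Map(E^{1},\blank)$ preserves the pullbacks you need simply because mapping spaces preserve limits in the target variable); this is essentially the first half of the paper's Lemma~\ref{lem:pseudocomplfibres}.

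The gap is in your treatment of condition (II). It is not true that $\mathcal{C}_{1}(A,B)$ is either empty or agrees with $\mathcal{C}(X,Y)(A,B)$: constancy of $\mathcal{C}_{0,\bullet,\ldots,\bullet}$ gives, for a point of $\mathcal{C}_{1,1,0,\ldots,0}$, a \emph{path} (not an equality) between the sources and targets of its two boundary $1$-morphisms, so what one actually gets is a map $\mathcal{C}_{1}(A,B) \to \Omega_{\pi(A),\pi(B)}(\mathcal{C}_{0,\ldots,0}^{\times 2})$ whose fibres are mapping objects $\mathcal{C}(X,Y)(A,B')$ for $B'$ a twist of $B$ by the chosen loop --- this is the paper's Lemma~\ref{lem:CtoXmaps}. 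Since $\mathcal{C}_{0,\ldots,0}$ is an arbitrary space (e.g.\ take a complete $2$-fold Segal space whose space of objects is $BG$; then $\mathcal{C}_{1}(A,B)$ decomposes over $G\times G$ into mapping objects between $A$ and conjugates of $B$), your identification only holds when $\mathcal{C}_{0,\ldots,0}$ is discrete, and your claimed equivalence of (II) with completeness of the $\mathcal{C}(X,Y)(A,B)$ fails in general. Repairing this amounts to proving that an $(n-2)$-fold (indeed $(n-1)$-fold) Segal space fibred over a constant space with complete fibres is complete --- but that is not a formal closure property; it is precisely the nontrivial statement the paper establishes by introducing the auxiliary inductive notion of pseudo-completeness and proving Lemma~\ref{lem:pseudocomplfibres}, whose proof again invokes Lemma~\ref{lem:CtoXmaps} one categorical level down and is intertwined with the induction proving the theorem itself. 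Your argument as written is missing this ingredient, so the converse direction does not go through.
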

\begin{remark}
  As a consequence of this, we get the expected inductive
  characterization of the fully faithful and essentially surjective
  morphisms in $\Seg_{n}(\mathcal{S})$, i.e.\ the morphisms that are
  inverted by the localization to $\CSS_{n}(\mathcal{S})$: They are
  precisely the morphisms $f \colon \mathcal{C} \to \mathcal{C}'$ that
  are \emph{essentially surjective}, in the sense that the underlying
  morphism of 1-fold Segal spaces is essentially surjective, and
  \emph{locally} fully faithful and essentially surjective, in the
  sense that for each pair of objects $x,y \in \mathcal{C}$ the
  induced morphism $\mathcal{C}(x,y) \to \mathcal{C}'(fx,fy)$ is a fully
  faithful and essentially surjective functor of $(n-1)$-fold Segal
  spaces.
\end{remark}

For convenience, we make the following inductive definition:
\begin{defn}
  Let $\mathcal{C}$ be an $n$-fold Segal space. We say that
  $\mathcal{C}$ is \emph{pseudo-complete} if
  \begin{enumerate}[(1)]
  \item the Segal space $\mathcal{C}_{\bullet,0,\ldots,0}$ is complete,
  \item the $(n-1)$-fold Segal spaces $\mathcal{C}(X,Y)$ are
    pseudo-complete for all objects $X, Y$ in $\mathcal{C}$.
  \end{enumerate}
\end{defn}
Our goal is then to show that an $n$-fold Segal space is complete
\IFF{} it is pseudo-complete. Before we give the proof we need to make
a number of observations:
\begin{lemma}\label{lem:seginCSScomplcond}
  Let $\mathcal{X} \subseteq \mathcal{Y}$ be a distributor, and
  suppose $\mathcal{C} \in
  \Seg_{\mathcal{X}}(\CSS^{n-1}_{\mathcal{X}}(\mathcal{Y}))$. Then
  $\mathcal{C}$ is in $\CSS^{n}_{\mathcal{X}}(\mathcal{Y})$ \IFF{} the
  Segal object $\mathcal{C}_{\bullet,0,\ldots,0}$ in
  $\Seg_{\mathcal{X}}(\mathcal{Y})$ is complete.
\end{lemma}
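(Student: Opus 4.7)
My plan is to reduce the claim to the definition of $\iota$ for the distributor $\mathcal{X} \subseteq \CSS^{n-1}_{\mathcal{X}}(\mathcal{Y})$, by showing that the functor $\Lambda$ (right adjoint to the inclusion $c^{*}$ of $\mathcal{X}$ in $\CSS^{n-1}_{\mathcal{X}}(\mathcal{Y})$) is given by evaluation at $(0,\ldots,0)$.

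First, I would identify $\Lambda$ explicitly. Since $[0]$ is initial in $\simp$, the multi-index $([0],\ldots,[0])$ is terminal in $(\simp^{\op})^{\times (n-1)}$, so the limit functor on $\Fun((\simp^{\op})^{\times(n-1)}, \mathcal{Y})$ coincides with evaluation at $(0,\ldots,0)$. In particular, the constant-diagram functor $c^{*} \colon \mathcal{Y} \to \Fun((\simp^{\op})^{\times(n-1)}, \mathcal{Y})$ has right adjoint given by evaluation at $(0,\ldots,0)$. This restricts to an adjunction $c^{*} \colon \mathcal{X} \rightleftarrows \CSS^{n-1}_{\mathcal{X}}(\mathcal{Y}) : \mathrm{ev}_{(0,\ldots,0)}$, because the constant diagram on an object of $\mathcal{X}$ is automatically a complete $\mathcal{X}$-Segal object, and conversely any $\mathcal{D} \in \CSS^{n-1}_{\mathcal{X}}(\mathcal{Y})$ has $\mathcal{D}_{0,\ldots,0} \in \mathcal{X}$ (by the $\mathcal{X}$-Segal condition, iterated). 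Hence $\Lambda(\mathcal{D}) = \mathcal{D}_{0,\ldots,0}$.

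Next, by the definition of $\iota$ for a distributor, the functor
\[
  \iota_{n} \colon \Seg_{\mathcal{X}}(\CSS^{n-1}_{\mathcal{X}}(\mathcal{Y})) \longrightarrow \Gpd(\mathcal{X})
\]
factors as post-composition with $\Lambda$, giving a Segal object $\Lambda_{*}\mathcal{C}$ in $\mathcal{X}$, followed by the ordinary groupoid-completion $\iota \colon \Seg(\mathcal{X}) \to \Gpd(\mathcal{X})$. Applying $\Lambda$ levelwise we see that $\Lambda_{*}\mathcal{C}$ is precisely the Segal object $\mathcal{C}_{\bullet,0,\ldots,0}$ in $\mathcal{X}$ (which, viewed as living in $\mathcal{Y}$, is the same Segal object whose completeness is under discussion, since $\iota$ applied to an $\mathcal{X}$-valued diagram agrees whether computed in $\mathcal{X}$ or in the distributor $\mathcal{X} \subseteq \mathcal{Y}$).

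Combining these, $\mathcal{C}$ is complete if and only if $\iota_{n}\mathcal{C} = \iota(\mathcal{C}_{\bullet,0,\ldots,0})$ is a constant groupoid object, which is precisely the definition of $\mathcal{C}_{\bullet,0,\ldots,0}$ being complete as a Segal object. The main work is the verification in the first paragraph that evaluation at $(0,\ldots,0)$ genuinely yields the right adjoint to $c^{*}$ at the level of complete $\mathcal{X}$-Segal objects; once that is in place, the rest is bookkeeping against the definitions.
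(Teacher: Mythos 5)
Your proof is correct and is essentially the paper's argument: both identify the right adjoint $\Lambda$ of the constant embedding $\mathcal{X} \hookrightarrow \CSS^{n-1}_{\mathcal{X}}(\mathcal{Y})$ with evaluation at $([0],\ldots,[0])$, so that applying it levelwise gives $\Gpd(\mathcal{C}) \simeq \Gpd(\mathcal{C}_{\bullet,0,\ldots,0})$ and the two completeness conditions coincide. One small slip in your first paragraph: $[0]$ is \emph{terminal} in $\simp$, hence $([0],\ldots,[0])$ is \emph{initial} (not terminal) in $(\simp^{\op})^{\times (n-1)}$, and it is precisely because the limit of a diagram is its value at an initial object of the indexing category that the limit functor, i.e.\ the right adjoint to the constant-diagram functor, is evaluation there; with the variance as you stated it you would be computing the colimit instead.
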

\begin{proof}
  The inclusion functor $\Seg \mathcal{X} \hookrightarrow
  \Seg_{\mathcal{X}}(\CSS^{n-1}_{\mathcal{X}}(\mathcal{Y}))$
  factors through the inclusion \[\Seg_{\mathcal{X}}(\mathcal{Y})
  \hookrightarrow
  \Seg_{\mathcal{X}}(\CSS^{n-1}_{\mathcal{X}}(\mathcal{Y}))\] induced by
  the functor $\mathcal{Y} \to \CSS^{n-1}_{\mathcal{X}}(\mathcal{Y})$
  that sends an object of $\mathcal{Y}$ to the constant
  $(n-1)$-simplicial object with that value. Thus the right adjoint
  $\Seg_{\mathcal{X}}\CSS^{n-1}_{\mathcal{X}}(\mathcal{Y}) \to
  \Seg(\mathcal{X})$ is the composite of the right adjoint
  $\Seg_{\mathcal{X}}\CSS^{n-1}_{\mathcal{X}}(\mathcal{Y}) \to
  \Seg_{\mathcal{X}}(\mathcal{Y})$, which is induced by evaluation at the initial
  object $([0],\ldots,[0]) \in \simp^{n-1,\op}$, and the
  localization $\Seg_{\mathcal{X}}(\mathcal{Y}) \to
  \Seg(\mathcal{X})$. In particular, the groupoid object
  $\Gpd(\mathcal{C})$ is equivalent to
  $\Gpd(\mathcal{C}_{\bullet,0,\ldots,0})$ and so $\mathcal{C}$ is
  complete \IFF{} $\mathcal{C}_{\bullet,0,\ldots,0}$ is.
\end{proof}

\begin{lemma}\label{lem:segcompl1cond}
  Let $\mathcal{C}$ be an $n$-fold Segal space. Then the following are
  equivalent:
  \begin{enumerate}[(i)]
  \item $\mathcal{C}$ is complete.
  \item The Segal space $\mathcal{C}_{\bullet,0,\ldots,0}$ is
    complete, and the $(n-1)$-fold Segal space
    $\mathcal{C}_{1,\bullet,\ldots,\bullet}$ is complete.
  \end{enumerate}
\end{lemma}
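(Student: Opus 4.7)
The plan is to reduce to Lemma~\ref{lem:seginCSScomplcond}, which already handles exactly this completeness criterion once one knows that $\mathcal{C}$ lies in $\Seg_{\mathcal{S}}(\CSS^{n-1}(\mathcal{S}))$ --- i.e.\ that every level $\mathcal{C}_{k,\bullet,\ldots,\bullet}$ is a complete $(n-1)$-fold Segal space. Recall that by construction $\CSS^{n}(\mathcal{S}) = \CSS_{\mathcal{S}}(\CSS^{n-1}(\mathcal{S}))$, so $\mathcal{C}$ being complete amounts to two conditions: (a) $\mathcal{C}$ is an $\mathcal{S}$-Segal object in $\CSS^{n-1}(\mathcal{S})$, meaning every $\mathcal{C}_{k,\bullet,\ldots,\bullet}$ is a complete $(n-1)$-fold Segal space; and (b) the Rezk completeness condition in the first direction. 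Lemma~\ref{lem:seginCSScomplcond} says that, assuming (a), condition (b) is equivalent to completeness of $\mathcal{C}_{\bullet,0,\ldots,0}$ as an ordinary Segal space.

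The direction (i) $\Rightarrow$ (ii) is then immediate: completeness of $\mathcal{C}$ gives (a) by definition, so $\mathcal{C}_{1,\bullet,\ldots,\bullet}$ is a complete $(n-1)$-fold Segal space, and Lemma~\ref{lem:seginCSScomplcond} supplies the completeness of $\mathcal{C}_{\bullet,0,\ldots,0}$.

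For (ii) $\Rightarrow$ (i), the point is to derive condition (a) from the completeness of $\mathcal{C}_{1,\bullet,\ldots,\bullet}$ alone. I would use the Segal equivalences
\[
\mathcal{C}_{k,\bullet,\ldots,\bullet} \simeq \mathcal{C}_{1,\bullet,\ldots,\bullet} \times_{\mathcal{C}_{0,\bullet,\ldots,\bullet}} \cdots \times_{\mathcal{C}_{0,\bullet,\ldots,\bullet}} \mathcal{C}_{1,\bullet,\ldots,\bullet}.
\]
Because $\mathcal{C}$ is an $n$-fold Segal space, $\mathcal{C}_{0,\bullet,\ldots,\bullet}$ is constant; regarded as an object of $\mathcal{S} \subseteq \CSS^{n-1}(\mathcal{S})$ it is in particular a complete $(n-1)$-fold Segal space. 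Since $\CSS^{n-1}(\mathcal{S}) \hookrightarrow \Seg_{n-1}(\mathcal{S})$ is a reflective subcategory its inclusion preserves (and reflects) limits, so the iterated fibre product above, computed in $\Seg_{n-1}(\mathcal{S})$, remains complete. Thus every $\mathcal{C}_{k,\bullet,\ldots,\bullet}$ is complete, verifying (a), and a final application of Lemma~\ref{lem:seginCSScomplcond} converts the assumed completeness of $\mathcal{C}_{\bullet,0,\ldots,0}$ into completeness of $\mathcal{C}$.

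The proof is essentially a bookkeeping argument; the only substantive step is the stability of completeness of $(n-1)$-fold Segal spaces under the Segal fibre products, which follows from reflectivity. The mild conceptual obstacle is keeping straight that ``complete $n$-fold Segal space'' unwinds inductively as a Segal object in complete $(n-1)$-fold Segal spaces plus a single Rezk condition in the outermost simplicial direction, so that Lemma~\ref{lem:seginCSScomplcond} applies on the nose.
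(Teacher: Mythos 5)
Your proposal is correct and follows essentially the same route as the paper's proof: both reduce to Lemma~\ref{lem:seginCSScomplcond}, note that $\mathcal{C}_{0,\bullet,\ldots,\bullet}$ is constant and hence complete, and use the Segal equivalences $\mathcal{C}_{k,\bullet,\ldots,\bullet} \simeq \mathcal{C}_{1,\bullet,\ldots,\bullet} \times_{\mathcal{C}_{0,\bullet,\ldots,\bullet}} \cdots \times_{\mathcal{C}_{0,\bullet,\ldots,\bullet}} \mathcal{C}_{1,\bullet,\ldots,\bullet}$ together with closure of complete $(n-1)$-fold Segal objects under limits (which you justify via reflectivity, as the paper does implicitly). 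No gaps.
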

\begin{proof}
  By Lemma~\ref{lem:seginCSScomplcond} we know that $\mathcal{C}$ is
  complete \IFF{} $\mathcal{C}_{\bullet,0,\ldots,0}$ is complete and
  the $(n-1)$-fold Segal spaces
  $\mathcal{C}_{n,\bullet,\ldots,\bullet}$ are complete for each
  $n$. But $\mathcal{C}_{0,\bullet,\ldots,\bullet}$ is constant and so
  obviously complete, and thus for $n > 1$ the Segal condition implies
  that $\mathcal{C}_{n,\bullet,\ldots,\bullet}$ is complete if
  $\mathcal{C}_{1,\bullet,\ldots,\bullet}$ is complete, since complete
  Segal objects are closed under limits in the \icat{} of presheaves.
\end{proof}

\begin{remark}
  Applying Lemma~\ref{lem:segcompl1cond} inductively, we see that an
  $n$-fold Segal space $\mathcal{C}$ is complete \IFF{} the $n$ Segal
  spaces
  $\mathcal{C}_{\bullet,0,\ldots,0},
  \mathcal{C}_{1,\bullet,0,\ldots,0}, \ldots,
  \mathcal{C}_{1,\ldots,1,\bullet}$ are all complete; this is the
  definition of completeness used in
  \cite{BarwickSchommerPriesUnicity}. An alternative proof that this
  agrees with the definition of completeness we gave above is found in
  \cite{JohnsonFreydScheimbauerLax}*{Lemma 2.8}.
\end{remark}

\begin{lemma}\label{lem:CtoXmaps}
  Suppose given an $n$-fold Segal space $\mathcal{C}$ together with a
  map $\pi \colon \mathcal{C} \to X$ where $X$ is a constant Segal
  space, and for $x \in X$ let $\mathcal{C}_{x}$ denote the $n$-fold
  Segal space that is the fibre of $\pi$ at $x$. Then for any two
  objects $c, d \in \mathcal{C}$ there is a map $\mathcal{C}(c,d) \to
  \Omega_{\pi(c),\pi(d)}X$ whose fibres are of the form
  $\mathcal{C}_{\pi(c)}(c, d')$, where $d'$ is the image of $d$ in
  $\mathcal{C}_{\pi(c)}$ under the equivalence $\mathcal{C}_{\pi(c)}
  \simeq \mathcal{C}_{\pi(d)}$ determined by the path
  from $\pi(c)$ to $\pi(d)$.
\end{lemma}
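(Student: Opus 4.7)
The argument proceeds in three steps.

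First, I construct the map. Since $X$ is a constant $n$-fold Segal space, its source and target maps at level $1$ are both the identity on $X$, so the induced source--target map $X_{1} \to X_{0} \times X_{0}$ is the diagonal $\Delta \colon X \to X \times X$. Naturality of $\pi$ at level $1$ thus produces a commutative square with top map $\pi_{1} \colon \mathcal{C}_{1,\bullet,\ldots,\bullet} \to X$, bottom map $\pi_{0} \times \pi_{0}$, left map $(s,t) \colon \mathcal{C}_{1,\bullet,\ldots,\bullet} \to \mathcal{C}_{0,\ldots,0} \times \mathcal{C}_{0,\ldots,0}$, and right map $\Delta$. Pulling this square back along $(c,d) \colon * \to \mathcal{C}_{0,\ldots,0} \times \mathcal{C}_{0,\ldots,0}$ and using the identification $\Omega_{\pi c,\pi d}X \simeq \{(\pi c, \pi d)\} \times_{X \times X} X$ yields the desired map $\mathcal{C}(c,d) \to \Omega_{\pi c,\pi d}X$ of $(n-1)$-fold Segal spaces.

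Second, I identify the fibres. Fix $\alpha \in \Omega_{\pi c, \pi d}X$, represented by a map $\widetilde{\alpha} \colon \Delta^{1} \to X$, and form the $n$-fold Segal space $\mathcal{C}_{\alpha} := \mathcal{C} \times_{X} \Delta^{1}$, pulled back along $\widetilde{\alpha}$. Viewing $c$ and $d$ as objects of $\mathcal{C}_{\alpha}$ via the inclusions of the fibres over $0$ and $1$, a straightforward pasting of pullback squares (combining the square above with the pullback description of $\Omega_{\pi c,\pi d}X$ and of the mapping spaces) identifies the fibre of $\mathcal{C}(c,d) \to \Omega_{\pi c, \pi d} X$ at $\alpha$ with the mapping $(n-1)$-fold Segal space $\mathcal{C}_{\alpha}(c,d)$.

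Finally, because $\Delta^{1}$ is contractible as a space, the fibre inclusions $\mathcal{C}_{\pi c} \hookrightarrow \mathcal{C}_{\alpha}$ and $\mathcal{C}_{\pi d} \hookrightarrow \mathcal{C}_{\alpha}$ are both equivalences of $n$-fold Segal spaces: finite limits are computed levelwise, and at each level pullback of a map of spaces over a contractible base is canonically equivalent to pullback over any point. Letting $d' \in \mathcal{C}_{\pi c}$ denote the image of $d$ under the resulting composite equivalence $\mathcal{C}_{\pi d} \simeq \mathcal{C}_{\alpha} \simeq \mathcal{C}_{\pi c}$, this equivalence restricts to an equivalence $\mathcal{C}_{\alpha}(c,d) \simeq \mathcal{C}_{\pi c}(c,d')$ of mapping $(n-1)$-fold Segal spaces, establishing the claimed fibre description. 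The main obstacle is executing the pullback chase of the second step cleanly; once the fibre has been rewritten as $\mathcal{C}_{\alpha}(c,d)$, the remaining identification reduces to the standard fact that pullback of an $n$-fold Segal space along a contractible parameter space retracts onto the fibre over any chosen point.
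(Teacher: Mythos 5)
Your proposal is correct and follows essentially the same route as the paper: the map is constructed from the identical naturality square by taking fibres over $(c,d)$, and the fibre identification is the same iterated-pullback (Fubini) computation, which the paper carries out directly as the limit of a commutative cube while you package it through the auxiliary family $\mathcal{C}\times_{X}\Delta^{1}$ and the contractibility of $\Delta^{1}$. Both versions end by identifying the fibre with $\mathcal{C}_{\pi(c)}(c,d')$, where $d'$ is the transport of $d$ along the chosen path, so there is no gap.
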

\begin{proof}
  The map $\pi$ gives a commutative square
  \nolabelcsquare{\mathcal{C}_{1}}{X}{\Ob(\mathcal{C})^{\times
      2}}{X^{\times 2}} so taking fibres over a point $(c,d) \in
  \Ob(\mathcal{C})^{\times 2}$ we get a map $\mathcal{C}(c,d) \to
  \Omega_{(\pi c, \pi d)}X$. If $\pi c$ and $\pi d$ are not in the
  same component then both spaces are empty and we are
  done. Otherwise we want to identify the fibre of this map at a point
  $p \in \Omega_{(\pi c, \pi d)}X$. Consider the commutative diagram
  \[
  \begin{tikzcd}
    \mathcal{C}(c,d)_{p} \arrow{r} \arrow{d} &
    \mathcal{C}(c,d)\arrow{d} \arrow{r} & \mathcal{C}_{1} \arrow{d}
    \arrow{dr} \\
    * \arrow{r}{p} & \Omega_{(\pi c, \pi d)}X \arrow{r} \arrow{d}& Y \arrow{r} \arrow{d}& X \arrow{d}{\Delta}\\
    & * \arrow{r} & \Ob(\mathcal{C})^{\times 2} \arrow{r}& X^{\times 2},
  \end{tikzcd}
  \]
  where all four squares are pullbacks (and $Y$ is defined as the
  pullback of $X \to X^{\times 2}$ and $\Ob(\mathcal{C})^{\times 2}
  \to X^{\times 2}$). Thus we have a commutative diagram
  \[
  \begin{tikzcd}
    \mathcal{C}(c,d)_{p} \arrow{r} \arrow{d} & \mathcal{C}_{1}
    \arrow{d} \\
    * \arrow{r} \arrow{dr} & Y \arrow{d} \\
     & X,
  \end{tikzcd}
  \]
  where the top square is a pullback. Taking fibres at the given  point of
  $X$ (which we may identify with $\pi c$) we can factor this
  diagram as
  \[
  \begin{tikzcd}
    \mathcal{C}(c,d)_{p} \arrow{r} \arrow{d} & (\mathcal{C}_{\pi
      c})_{1} \arrow{r} \arrow{d} & \mathcal{C}_{1} \arrow{d}\\
    *\arrow{r} & (\mathcal{C}_{\pi c})_{0}^{\times 2} \arrow{d} \arrow{r}& Y \arrow{d}\\
      & * \arrow{r} & X,
  \end{tikzcd}
  \]
  where the two right-hand squares are pullbacks. Then as the top
  composite square is a pullback, so is the top left
  square. Thus we have identified $\mathcal{C}(c,d)_{p}$ with a
  mapping space in $\mathcal{C}_{\pi c}$, as required.
\end{proof}

\begin{lemma}\label{lem:pseudocomplfibres}
  Let $\pi \colon \mathcal{C} \to X$ be a map of $n$-fold Segal
  spaces, where $X$ is constant. Suppose the fibres $\mathcal{C}_{x}$
  are pseudo-complete for each $x \in X$. Then $\mathcal{C}$ is also
  pseudo-complete.
\end{lemma}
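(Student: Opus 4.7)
I would argue by induction on $n$. For the base case $n = 1$, pseudo-completeness reduces to condition~(1), namely ordinary completeness of $\mathcal{C}$, since condition~(2) refers to ``$0$-fold Segal spaces''. So the base case amounts to the statement that if $\pi \colon \mathcal{C} \to X$ is a map of Segal spaces with $X$ a constant Segal space (i.e.\ just a Kan complex) and every fibre $\mathcal{C}_x$ complete, then $\mathcal{C}$ is complete. I would prove this via Rezk's criterion (Theorem~\ref{thm:rezkcompl}): it suffices to show that the map $\mathcal{C}_0 \to \mathcal{C}_{\txt{eq}}$ is an equivalence. Because $X$ is constant, the face maps $X_1 \rightrightarrows X_0$ are identities, so the projections $\mathcal{C}_0 \to X$ and $\mathcal{C}_{\txt{eq}} \to X$ are both well defined and the diagram commutes. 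The fibres of these projections over $x \in X$ are precisely $(\mathcal{C}_x)_0$ and $(\mathcal{C}_x)_{\txt{eq}}$, so completeness of each $\mathcal{C}_x$ gives a fibrewise equivalence; since $X$ is a space, a fibrewise equivalence of spaces over $X$ is an equivalence.

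For the inductive step, I check both clauses of pseudo-completeness for $\mathcal{C}$ separately. Clause~(1) is immediate: the underlying map of Segal spaces $\mathcal{C}_{\bullet,0,\ldots,0} \to X_{\bullet,0,\ldots,0}$ has fibres $(\mathcal{C}_x)_{\bullet,0,\ldots,0}$, each complete by the hypothesis that $\mathcal{C}_x$ is pseudo-complete, so the base-case argument applies verbatim. Clause~(2) is the inductive content: given objects $c, d \in \mathcal{C}$ I would apply Lemma~\ref{lem:CtoXmaps} to the map $\pi$, which yields a map of $(n-1)$-fold Segal spaces $\mathcal{C}(c,d) \to \Omega_{\pi(c),\pi(d)} X$ whose fibres are of the form $\mathcal{C}_{\pi(c)}(c, d')$. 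The target is a constant $(n-1)$-fold Segal space, since constancy of $X$ is preserved under forming mapping spaces (the loop space is just the path space in $X$ from $\pi(c)$ to $\pi(d)$, viewed constantly). Each fibre $\mathcal{C}_{\pi(c)}(c, d')$ is pseudo-complete by clause~(2) of the pseudo-completeness of $\mathcal{C}_{\pi(c)}$. The inductive hypothesis then tells us that $\mathcal{C}(c,d)$ is itself pseudo-complete, completing the induction.

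The main point to verify carefully is that constancy of $X$ as an $n$-fold Segal space really does imply constancy of its mapping $(n-1)$-fold Segal spaces, so that Lemma~\ref{lem:CtoXmaps} provides an honest reduction to the inductive hypothesis; this is essentially a formal pullback computation, but it is where the ``constant'' hypothesis on $X$ is used. Everything else is a clean transfer across fibres: the whole argument is driven by the slogan that for maps of $n$-fold Segal spaces to a constant target, pseudo-completeness is a fibrewise property.
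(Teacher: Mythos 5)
Your proposal is correct and follows essentially the same route as the paper: induction on $n$, a fibrewise completeness argument over the constant base for the Segal space $\mathcal{C}_{\bullet,0,\ldots,0}$, and Lemma~\ref{lem:CtoXmaps} to reduce the mapping $(n-1)$-fold Segal spaces to the inductive hypothesis. The only cosmetic difference is that the paper phrases the first step via $\Map(E^{1},-)$ (which visibly pulls back to $\Map(E^{1},(\mathcal{C}_x)_{\bullet,0,\ldots,0})$ on fibres) rather than via $\mathcal{C}_{\txt{eq}}$, but by Theorem~\ref{thm:rezkcompl} these are interchangeable.
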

\begin{proof}
  We prove this by induction on $n$. The map $\pi$ induces a
  commutative diagram \nolabelopctriangle{\Map(E^{1},
    \mathcal{C}_{\bullet,0,\ldots,0})}{\mathcal{C}_{0,\ldots,0}}{X}
  where the map on fibres at $x \in X$ is $\Map(E^{1},
  (\mathcal{C}_{x})_{\bullet,0,\ldots,0}) \to
  (\mathcal{C}_{x})_{0,\ldots,0}$ as $\Map(E^{1},\blank)$ commutes
  with limits. This map is an equivalence for all
  $x$, since $\mathcal{C}_{x}$ is pseudo-complete, hence the
  horizontal map in the triangle is also an equivalence and thus
  $\mathcal{C}_{\bullet,0,\ldots,0}$ is complete.

  Now given objects $c, d \in \mathcal{C}$, by
  Lemma~\ref{lem:CtoXmaps} there is a map $\mathcal{C}(c,d) \to
  \Omega_{\pi(c),\pi(d)}X$ whose fibres are mapping $(n-1)$-fold Segal
  spaces in $\mathcal{C}_{\pi(c)}$. By assumption these are
  pseudo-complete, hence by the inductive hypothesis
  $\mathcal{C}(c,d)$ is pseudo-complete for all $c$, $d$. This
  completes the proof. 
\end{proof}

\begin{proof}[Proof of Theorem~\ref{thm:CSScomplmapcond}]
  We will show, by induction on $n$, that an $n$-fold Segal space is
  complete \IFF{} it is pseudo-complete. For $n = 1$ the two notions
  coincide, so there is nothing to prove. Suppose we have shown that
  they agree for $n < k$, and let $\mathcal{C}$ be a $k$-fold Segal
  space. By Lemma~\ref{lem:segcompl1cond} $\mathcal{C}$ is complete
  \IFF{} $\mathcal{C}_{\bullet,0,\ldots,0}$ is complete and
  the $(k-1)$-fold Segal space $\mathcal{C}_{1}$ is complete. Since by
  assumption the notions of complete and pseudo-complete $(k-1)$-fold
  Segal spaces coincide, it remains to show that $\mathcal{C}_{1}$ is
  complete \IFF{} the fibres $\mathcal{C}(c,d)$ at $(c,d) \in
  \Ob(\mathcal{C})^{\times 2}$ are all complete. One direction follows
  from applying Lemma~\ref{lem:pseudocomplfibres} to the map
  $\mathcal{C}_{1} \to \Ob(\mathcal{C})^{\times 2}$, and the other follows
  since complete $(k-1)$-fold Segal spaces are closed under
  limits in $(k-1)$-fold Segal spaces.  
\end{proof}

\section{Completeness for Iterated Spans}\label{sec:complitspan}
In this section we will prove that the $n$-fold Segal spaces
$\Span_{n}(\mathcal{C})$ that we constructed above are always
complete. We first consider the case $n = 1$, which is due to Barwick:
\begin{propn}[\cite{BarwickMackey}*{Proposition 3.4}]\label{propn:Span1comp}
  Let $\mathcal{C}$ be an \icat{} with finite limits. Then the Segal
  space $\Span_{1}(\mathcal{C})$ is complete.
\end{propn}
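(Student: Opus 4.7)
The plan is to apply Rezk's criterion (Theorem~\ref{thm:rezkcompl}): it suffices to show that the degeneracy $s_0 \colon \Span_1(\mathcal{C})_0 \to \Span_1(\mathcal{C})_{\txt{eq}}$ is an equivalence of spaces. Since $\bbS^{1} = \bbL^{1}$ (so the Cartesian condition is vacuous in degree 1), we have $\Span_1(\mathcal{C})_0 \simeq \iota \mathcal{C}$ and $\Span_1(\mathcal{C})_1 \simeq \iota(\Fun(\bbS^{1}, \mathcal{C}))$, i.e.\ the space of spans $X \xfrom{f} A \xto{g} Y$ in $\mathcal{C}$; under $s_0$, an object $X$ is sent to the identity span $X \xfrom{=} X \xto{=} X$.

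The key step is to identify $\Span_1(\mathcal{C})_{\txt{eq}}$ as the subspace of $\Span_1(\mathcal{C})_1$ spanned by those spans whose two legs are both equivalences in $\mathcal{C}$. If $f$ and $g$ are equivalences then the reverse span $Y \xfrom{g} A \xto{f} X$ is a genuine two-sided inverse: both relevant pullbacks are equivalent, as spans, to the identity span. For the converse, suppose a span $X \xfrom{f} A \xto{g} Y$ has a homotopical inverse $Y \xfrom{f'} B \xto{g'} X$. Then the pullback $A \times_{Y} B$ admits an equivalence $\psi$ to $X$ such that $f \circ \pi_{A} \simeq \psi \simeq g' \circ \pi_{B}$. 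In particular $f$ admits a section $(\pi_{A} \circ \psi^{-1})$; running the same argument with the reversed composite supplies a retraction, and so $f$ is an equivalence, and by symmetry so are $g$, $f'$, $g'$.

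With this characterization in hand, it remains to verify that the space of spans $X \xfrom{f} A \xto{g} Y$ with $f$ and $g$ equivalences is equivalent, via the middle projection, to $\iota \mathcal{C}$. This follows from the general observation that the full subcategory of $\Fun(\bbS^{1}, \mathcal{C})$ spanned by such diagrams is equivalent to $\mathcal{C}$: the fibre of the middle projection over $A \in \mathcal{C}$ is the space of pairs of equivalences out of $A$, which is contractible (the projection forgetting $Y$ gives an equivalence to $\iota(\mathcal{C}_{A/, \simeq})$, itself contractible since $A$ is initial there, and likewise for $X$). This equivalence composed with the inclusion into $\Span_1(\mathcal{C})_{1}$ sends $A$ to the identity span on $A$, and therefore agrees with $s_0$, completing the proof.

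The only real obstacle is the characterization of equivalences; the rest is formal. Once one extracts from an abstract inverse span that $f$ has both a section and a retraction in $\mathcal{C}$, the remainder of the argument reduces to the contractibility of the space of equivalences out of a fixed object, which is standard. One should note that throughout we have silently identified the homotopy-category notion of invertibility with the Segal-space notion encoded in $\Span_1(\mathcal{C})_{\txt{eq}}$; this is exactly the content of the construction preceding Theorem~\ref{thm:rezkcompl}.
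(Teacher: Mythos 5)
Your overall strategy coincides with the paper's: verify Rezk's criterion (Theorem~\ref{thm:rezkcompl}) by showing that the equivalences in $\Span_1(\mathcal{C})$ are exactly the spans both of whose legs are equivalences in $\mathcal{C}$, and then identify the space of such spans with $\iota\mathcal{C}$ (your fibration-over-the-middle-object count is a harmless variant of the paper's observation that such spans are maps $\bbS^1 \to \iota\mathcal{C}$, and the groupoid obtained by inverting the morphisms of $\bbS^1$ is contractible). The problem is the converse direction of the characterization, which is precisely the nontrivial content (the paper's Lemma~\ref{lem:Span1eq}), and your argument for it does not work. From the composite of $X \xfrom{f} A \xto{g} Y$ with $Y \xfrom{f'} B \xto{g'} X$ being equivalent to the identity span you correctly extract a section $\sigma = \pi_A\psi^{-1}$ of $f$ (and one of $g'$). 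But ``running the same argument with the reversed composite'' does not supply a retraction of $f$: the identification $B \times_X A \simeq Y$ only yields sections of $f'$ and of $g$, and says nothing about the composite $\sigma \circ f \colon A \to A$. Since a morphism admitting a section need not be an equivalence (any split epimorphism), the argument stops exactly at the point you yourself identify as the only real obstacle.

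Closing this gap requires using the associativity of composition in the Segal space, i.e.\ a triple composite, which is what the paper does: composing $X \from A \to Y$, $Y \from B \to X$, $X \from A \to Y$ and using that both adjacent composites are identity spans, one obtains a pasting of two Cartesian squares with top row $A \xto{f} X \xto{\sigma} A$ and bottom row $Y \to B \xto{f'} Y$ whose bottom composite is $\id_Y$; Cartesianness of the outer rectangle then forces the top composite $\sigma f$ to be an equivalence (indeed homotopic to $\id_A$). Combined with $f\sigma \simeq \id_X$ this shows $f$ is an equivalence, and symmetric arguments handle $g$, $f'$, $g'$. Without some such use of the two-fold composite your proof of the key lemma, and hence of completeness, is incomplete.
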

For completeness we include a slightly different proof than that of
Barwick, based on the following observation:
\begin{lemma}\label{lem:Span1eq}
  A span $X \xfrom{f} A \xto{g} Y$ in $\mathcal{C}$ is an equivalence in
  $\Span_{1}(\mathcal{C})$ \IFF{} the maps $f$ and $g$ are
  equivalences.
\end{lemma}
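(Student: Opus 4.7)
The plan is to prove the two directions separately; the $(\Leftarrow)$ direction is a direct pullback computation, while the $(\Rightarrow)$ direction requires a bit of care. For $(\Leftarrow)$, suppose both $s\colon A\to X$ and $t\colon A\to Y$ are equivalences in $\mathcal{C}$. I will verify that the reverse span $\bar{\phi} := (Y \xleftarrow{t} A \xrightarrow{s} X)$ is a two-sided homotopy inverse of $\phi := (X \xleftarrow{s} A \xrightarrow{t} Y)$. The composite $\bar{\phi}\circ\phi$ is the span $(X \xleftarrow{sp_1} A\times_Y A \xrightarrow{sp_2} X)$; since $t$ is an equivalence the diagonal $\Delta\colon A\to A\times_Y A$ is too, and it exhibits a span-equivalence with $(X \xleftarrow{s} A \xrightarrow{s} X)$, which in turn is equivalent via $s$ to the identity span. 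The composite $\phi\circ\bar{\phi} \simeq \mathrm{id}_Y$ is handled symmetrically.

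For $(\Rightarrow)$, suppose $\phi$ admits a homotopy inverse $\psi = (Y \xleftarrow{t'} B \xrightarrow{s'} X)$. The equivalence $\psi\circ\phi \simeq \mathrm{id}_X$ is a point of $\iota(\mathcal{C}_{/X\times X})$, so it unpacks to an equivalence $\alpha\colon X \xrightarrow{\simeq} A\times_Y B$ in $\mathcal{C}$ over $X\times X$, i.e.\ satisfying $sp_A\alpha = \mathrm{id}_X$ and $s'p_B\alpha = \mathrm{id}_X$; similarly $\phi\circ\psi \simeq \mathrm{id}_Y$ yields an equivalence $\beta\colon Y \xrightarrow{\simeq} B\times_X A$ with $tp_A\beta = \mathrm{id}_Y$ and $t'p_B\beta = \mathrm{id}_Y$. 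Write $\iota := p_A\alpha$, $\iota_B' := p_B\alpha$, $\iota_A := p_A\beta$, $\iota_B := p_B\beta$, which are sections of $s$, $s'$, $t$, $t'$ respectively. To conclude that $s$ is an equivalence it suffices to produce $\iota s \simeq \mathrm{id}_A$. Since $\alpha$ is an equivalence and $sp_A \circ \alpha = \mathrm{id}_X$, we have $sp_A = \alpha^{-1}$, so for any $Z$ composition with $\alpha$ and $sp_A$ gives mutually inverse bijections between $\mathrm{Map}(Z, X)$ and the space of pairs $(h\colon Z\to A,\ k\colon Z\to B)$ with $th = t'k$, explicitly $f\mapsto (\iota f, \iota_B' f)$ with inverse $(h, k)\mapsto sh$. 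Applying this with $Z = A$, $h = \mathrm{id}_A$, and $k = \iota_B t$ (which satisfies $t'k = t'\iota_B t = t = th$), the pair $(\mathrm{id}_A, \iota_B t)$ corresponds to $s\colon A \to X$; tracing back through $\alpha$ gives $\alpha s = (\iota s, \iota_B' s) = (\mathrm{id}_A, \iota_B t)$. In particular $\iota s = \mathrm{id}_A$, so together with $s\iota = \mathrm{id}_X$ the map $s$ is an equivalence. A symmetric argument using $\beta$ and the section $\iota_B'$ of $s'$ (to lift $\mathrm{id}_A$ along $s'$, via $k = \iota_B' s$) produces $\iota_A t = \mathrm{id}_A$, hence $t$ is an equivalence.

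The main obstacle lies in the $(\Rightarrow)$ direction: the two homotopies $\psi\phi\simeq\mathrm{id}$ and $\phi\psi\simeq\mathrm{id}$ individually only furnish sections of the four structure maps $s, s', t, t'$, which is not enough to force any of them to be an equivalence. The crucial point is that each homotopy in fact encodes a full pullback equivalence $\alpha$ or $\beta$ in $\mathcal{C}$, and one must combine the universal property coming from $\alpha$ with the section $\iota_B$ extracted from $\beta$ (and vice versa) in order to promote the section $\iota$ of $s$ into a genuine two-sided inverse.
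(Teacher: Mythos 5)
Your proof is correct, and for the forward direction it takes a recognizably different route from the paper's, although the raw ingredients are the same: like the paper, you unpack the two invertibility homotopies into equivalences $\alpha\colon X \xrightarrow{\sim} A\times_Y B$ and $\beta\colon Y \xrightarrow{\sim} B\times_X A$ compatible with the legs, and you correctly identify that both must be combined to upgrade the section $\iota = p_A\alpha$ of $s$ to a two-sided inverse. The mechanism differs, however. The paper forms the double composite $\phi\psi\phi \simeq \phi$, argues that the leg of the re-associated iterated pullback agrees with the original $s$, and then pastes the two Cartesian squares encoding $\alpha$ and $\beta$ to conclude that the composite $A \to X \to A$ is the identity; you never form the triple composite, but instead test the pullback $X \simeq A\times_Y B$ against $Z = A$: the section $\iota_B$ of $t'$ supplied by $\beta$ lets you lift $\mathrm{id}_A$ to a point of $\Map(A, A\times_Y B)$, and comparing this lift with $\alpha s$ under the equivalence of mapping spaces given by postcomposition with $sp_A \simeq \alpha^{-1}$ yields $\iota s \simeq \mathrm{id}_A$. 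Your argument is somewhat more elementary, using only the universal property of one pullback and the fact that postcomposition with an equivalence induces an equivalence of mapping spaces, and it avoids the slightly delicate step in the paper of identifying the structure maps of the iterated composite with the original ones; the paper's pasting argument stays entirely within the span calculus and is the diagrammatic pattern reused later (e.g.\ in Lemma~\ref{lem:adj1} and Proposition~\ref{propn:lagcomp}). Your backward direction is a direct verification of what the paper simply declares clear, and is fine. One cosmetic point: equalities such as $t'\iota_B t = t$ should be read as chosen homotopies (whiskerings of $t'\iota_B \simeq \mathrm{id}_Y$), but since your final comparison only extracts the $p_A$-component after applying an equivalence of mapping spaces, this does not affect the argument.
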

\begin{proof}
  It is clear that a span is an equivalence if both the maps in it are
  equivalences, so it remains to prove the converse. Suppose $Y \xfrom{h}
  B \xto{k} X$ is an inverse, then since composing gives the identity we
  have a diagram
    \[ %
\begin{tikzpicture} %
\matrix (m) [matrix of math nodes,row sep=1em,column sep=1em,text height=1.5ex,text depth=0.25ex] %
{  &       & X &       &  \\
   & A &       & B &   \\
X &   & Y &       & X,   \\ };
\path[->,font=\footnotesize] %
(m-1-3) edge node [above left] {$f'$} (m-2-2) 
(m-1-3) edge node [above right] {$k'$} (m-2-4)
(m-2-2) edge node [above left] {$f$} (m-3-1)
(m-2-2) edge node [above right] {$g$}(m-3-3)
(m-2-4) edge node [above left] {$h$} (m-3-3)
(m-2-4) edge node [above right] {$k$} (m-3-5);
\end{tikzpicture}%
\]%
where the composites $ff' \colon X \to A \to X$ and $kk' \colon X \to B \to X$ are
equivalent to $\id_{X}$. Composing in the other order we similarly
have maps $h' \colon Y \to B$ and $g' \colon Y \to A$ such that $hh'
\colon Y \to B \to Y$ and $gg' \colon Y \to
A \to Y$ are equivalent to $\id_{Y}$. Now taking a double composite in
$\Span_{1}(\mathcal{C})$ we get a commutative diagram
  \[ %
\begin{tikzpicture} %
\matrix (m) [matrix of math nodes,row sep=1em,column sep=1em,text height=1.5ex,text depth=0.25ex] %
{
&  &       & A &       &  &\\
&  & X &       & Y &  &\\
&A   &  & B &  & A & \\
X  &   & Y &  & X & & Y.   \\ };
\path[->,font=\footnotesize] %
(m-1-4) edge node [above left] {$f''$} (m-2-3)
(m-1-4) edge node [above right] {$g''$} (m-2-5)
(m-2-3) edge node [above left] {$f'$} (m-3-2)
(m-2-3) edge node [above right] {$k'$} (m-3-4)
(m-2-5) edge node [above left] {$h'$} (m-3-4)
(m-2-5) edge node [above right] {$g'$} (m-3-6)
(m-3-2) edge node [above left] {$f$} (m-4-1)
(m-3-2) edge node [above right] {$g$}  (m-4-3)
(m-3-4) edge node [above left] {$h$} (m-4-3)
(m-3-4) edge node [above right] {$k$} (m-4-5)
(m-3-6) edge node [above left] {$f$} (m-4-5)
(m-3-6) edge node [above right] {$g$} (m-4-7);
\end{tikzpicture}%
\]%
Since the composite is the original span $X \xfrom{f} A \xto{g} Y$, the
composite along the left edge is the original map $f \colon A \to X$. But since
the composite $ff' \simeq \id_{X}$, this means that $f'' \simeq f$,
and similarly $g'' \simeq g$. But then since
compositions are formed by taking pullbacks, we have a diagram
\[
\begin{tikzcd}
  A \arrow{r}{f} \arrow{d}{g} & X \arrow{r}{f'} \arrow{d}{k'} & A \arrow{d}{g} \\
  Y\arrow{r}{h'} & B\arrow{r}{h} & Y,
\end{tikzcd}
\]
where both squares are Cartesian. The composite square is also
Cartesian, and so as $hh' \simeq \id_{Y}$, we have $f'f \simeq
\id_{A}$. Thus $f'$ is a two-sided inverse of $f$, hence $f$ is an
equivalence. Similarly, we get $g'g \simeq \id_{A}$ and so $g$ is
also an equivalence. 
\end{proof}

\begin{proof}[Proof of Proposition~\ref{propn:Span1comp}]
  By Theorem~\ref{thm:rezkcompl} the space of equivalences in
  $\Span_{1}(\mathcal{C})$ consists of the components of
  the space $\Span_{1}(\mathcal{C})_{1} \simeq \Map(\bbS^{1}, \mathcal{C})$
  that correspond to equivalences. But by Lemma~\ref{lem:Span1eq} these
  are precisely the diagrams that land in the underlying
  $\infty$-groupoid $\iota \mathcal{C}$ of $\mathcal{C}$, so the space
  in question is $\Map(\bbS^{1}, \iota\mathcal{C})$. This is
  equivalent to the space of maps to $\iota \mathcal{C}$ from the
  $\infty$-groupoid $\|\bbS^{1}\|$ obtained by inverting the morphisms in
  $\bbS^{1}$. This is contractible, so the space of equivalences is
  equivalent to $\iota \mathcal{C} \simeq \Span_{1}(\mathcal{C})_{0}$
  as required.
\end{proof}

To extend this to iterated spans, we first identify the mapping
$(\infty,n-1)$-categories in $\Span_{n}(\mathcal{C})$:
\begin{propn}\label{propn:spanmaps}
  Let $\mathcal{C}$ be an \icat{} with finite limits. If $X$ and $Y$
  are objects of $\mathcal{C}$, then the $(k-1)$-fold Segal space
  $\Span_{k}(\mathcal{C})(X,Y)$ of maps from $X$ to $Y$ in
  $\Span_{k}(\mathcal{C})$ is equivalent to
  $\Span_{k-1}(\mathcal{C}_{/X\times Y})$.
\end{propn}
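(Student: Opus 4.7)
My plan is to compute $\Span_k(\mathcal{C})(X, Y)$ directly at each multi-level $(n_2, \ldots, n_k)$ and match it with $\Span_{k-1}(\mathcal{C}_{/X \times Y})_{n_2, \ldots, n_k}$. The argument relies on two key identifications: the Cartesian/Kan-extension equivalence $\Fun^{\Cart}(\bbS^{\vec{n}}, \mathcal{D}) \simeq \Fun(\bbL^{\vec{n}}, \mathcal{D})$, together with the observation that $\bbL^1 = \bbS^1$ (so $\bbL^{1, n_2, \ldots, n_k} = \bbS^1 \times \bbL^{n_2, \ldots, n_k}$); and the ``slice'' identification $\Fun_{X, Y}(\bbS^1, \mathcal{C}) \simeq \mathcal{C}_{/X \times Y}$, expressing that a span $X \from A \to Y$ with prescribed endpoints is nothing but a morphism $A \to X \times Y$.

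Unwinding the definitions, a point of $\Span_k(\mathcal{C})(X, Y)_{n_2, \ldots, n_k}$ is a Cartesian functor $f \colon \bbS^{1, n_2, \ldots, n_k} \to \mathcal{C}$ whose restrictions $f|_{\{(0, 0)\} \times \bbS^{n_2, \ldots, n_k}}$ and $f|_{\{(1, 1)\} \times \bbS^{n_2, \ldots, n_k}}$ are the constant functors at $X$ and $Y$, together with the further constancy conditions in directions $2$ through $k$ imposed by $U_{\Seg}$. Restricting $f$ to $\bbL^{1, n_2, \ldots, n_k} = \bbS^1 \times \bbL^{n_2, \ldots, n_k}$ and currying through the first factor, such an $f$ becomes a functor $\bbL^{n_2, \ldots, n_k} \to \mathcal{C}_{/X \times Y}$; applying the Cartesian/Kan-extension equivalence in the opposite direction identifies this with an element of $\Map^{\Cart}(\bbS^{n_2, \ldots, n_k}, \mathcal{C}_{/X \times Y})$, and the residual $U_{\Seg}$-constancies should translate to those imposed by $U_{\Seg}$ on $\SPAN_{k-1}(\mathcal{C}_{/X \times Y})$, yielding $\Span_{k-1}(\mathcal{C}_{/X \times Y})_{n_2, \ldots, n_k}$. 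Naturality in $(n_2, \ldots, n_k)$ assembles these into the desired equivalence of $(k-1)$-fold Segal spaces.

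The main technical challenge is verifying the compatibility of $U_{\Seg}$-induced constancy conditions on the two sides. The functor $U_{\Seg}$ from Proposition~\ref{propn:underfold} is built via iterated Cartesian lifts of counit transformations, which concretely amount to iterated fibre products against the degenerate-object inclusions $\mathcal{D}_{0, \ldots, 0}^{\,\bullet+1} \hookrightarrow \mathcal{D}_{0, \bullet, \ldots}^{\,\bullet+1}$. After taking the fibre at $(X, Y) \in \Ob(\mathcal{C})^{\times 2}$ in the direction-$1$ level-$1$ slice of $U_{\Seg} \SPAN_k(\mathcal{C})$, the induced constancies in directions $2$ through $k$ should correspond, under the span/slice identification of Cartesian $\mathcal{C}$-diagrams with endpoints to Cartesian $\mathcal{C}_{/X \times Y}$-diagrams, to those enforced by $U_{\Seg}$ on $\SPAN_{k-1}(\mathcal{C}_{/X \times Y})$. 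This reduces to a Beck--Chevalley-type compatibility between the right adjoints $U_{\Seg}$ and fibre products over $\Ob(\mathcal{C})^{\times 2}$, which should follow by induction on $k$ from the explicit pullback formulas coming out of the Cartesian-lift construction.
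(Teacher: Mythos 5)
Your overall strategy---identify $\Span_k(\mathcal{C})(X,Y)$ levelwise with $\Span_{k-1}(\mathcal{C}_{/X\times Y})$ by viewing a span with prescribed feet $X$, $Y$ as an object of $\mathcal{C}_{/X\times Y}$---is the same as the paper's, but your proposal has a genuine gap at exactly the point you yourself flag as the ``main technical challenge'': the matching of the $U_{\Seg}$-conditions on the two sides is never proved, only asserted to ``follow by induction from the explicit pullback formulas''. Moreover, your opening description of $\Span_k(\mathcal{C})(X,Y)_{n_2,\ldots,n_k}$ as the space of Cartesian functors $\bbS^{1,n_2,\ldots,n_k}\to\mathcal{C}$ satisfying ``constancy conditions imposed by $U_{\Seg}$'' itself requires justification: $U_{\Seg}$ is constructed abstractly as a composite of right adjoints obtained from Cartesian lifts, and its value on a $k$-uple Segal space at level $([1],[n_2],\ldots,[n_k])$ is a priori an iterated fibre product along degeneracy maps such as $\SPAN_k(\mathcal{C})_{0,\ldots,0}\to\SPAN_k(\mathcal{C})_{0,n_2,\ldots,n_k}$, which are not obviously monomorphisms; so a point a priori carries trivialization data rather than satisfying a condition, and your bookkeeping of ``residual constancies'' through the currying and Kan-extension equivalences would have to track this data explicitly and naturally in $(n_2,\ldots,n_k)$. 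The Beck--Chevalley-type compatibility you invoke is neither formulated nor checked, and carrying it out by unwinding the Cartesian-lift construction of $U_{\Seg}$ is considerably harder than necessary.

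The paper sidesteps all of this by establishing the identification \emph{before} passing to underlying $k$-fold Segal spaces. A join/slice argument (writing $\bbS^1$ as $\{0,1\}^{\triangleleft}$ and decomposing $K\star\{0,1\}$ as a pushout) produces a pullback square, natural in $K$, exhibiting $\Map(K,\mathcal{C}_{/X\times Y})$ as the fibre of $\Map(\bbS^{1}\times K,\mathcal{C})\to\Map(\bbS^{0}\times K,\mathcal{C})^{\times 2}$ over $(c_X,c_Y)$; taking $K=\bbS^{n_1,\ldots,n_{k-1}}$ and restricting to Cartesian functors exhibits the $(k-1)$-uple Segal space $\SPAN_{k-1}(\mathcal{C}_{/X\times Y})$ as a pullback of $\SPAN_{k}(\mathcal{C})_{[1],\bullet,\ldots,\bullet}$ over $\SPAN_{k}(\mathcal{C})_{[0],\bullet,\ldots,\bullet}^{\times 2}$. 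One then needs only two formal facts about $U_{\Seg}$: it is a right adjoint, hence preserves this pullback, and it is compatible with evaluation at levels $[1]$ and $[0]$ in the outer simplicial direction; no explicit description of the constancy data ever enters. Your reduction via $\bbL^{1,n_2,\ldots,n_k}=\bbS^{1}\times\bbL^{n_2,\ldots,n_k}$ and currying is a reasonable substitute for the join/slice lemma (modulo checking that end-constancy on $\bbS$-diagrams and on their $\bbL$-restrictions agree for Cartesian functors, and that the slice identification is natural in $(n_2,\ldots,n_k)$), but to complete the proof you must either prove the explicit description of $U_{\Seg}\SPAN_k(\mathcal{C})$ you are relying on and then genuinely compare the conditions, or replace that part of your argument with the formal right-adjoint argument above.
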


For the proof we need a simple observation:
\begin{lemma}\label{lem:overXYpullback}
  Suppose $X$ and $Y$ are objects of an \icat{} $\mathcal{C}$ that
  have a product $X \times Y$. Then for any \icat{} $\mathcal{K}$ there is a
  natural pullback square
  \nolabelcsquare{\Map(\mathcal{K}, \mathcal{C}_{/X \times Y})}{\Map(\bbS^{1} \times \mathcal{K},
    \mathcal{C})}{\{(c_{X}, c_{Y})\}}{\Map(\bbS^{0} \times \mathcal{K},
    \mathcal{C})^{\times 2},}
  where $c_{X}$ and $c_{Y}$ denote the functors constant at $X$ and $Y$.
\end{lemma}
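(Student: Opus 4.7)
\emph{Proof plan.} The approach is to unpack both sides in terms of the functor \icat{} $\mathcal{D} := \Fun(K, \mathcal{C})$ and appeal to the universal property of the product $X \times Y$. By the adjunction between Cartesian product and functor \icats{}, we have $\Map(\bbS^{i} \times K, \mathcal{C}) \simeq \Map(\bbS^{i}, \mathcal{D})$ for $i = 0, 1$; under this identification the right-hand vertical map of the square is restriction along the two inclusions $\bbS^{0} \hookrightarrow \bbS^{1}$ of the endpoints $(0,0)$ and $(1,1)$.

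Next, I would use the pushout decomposition $\bbS^{1} \simeq [1] \sqcup_{\{0\}} [1]$ (gluing the two copies of $[1]$ at their initial vertex, corresponding to $(0,1)$) to rewrite $\Fun(\bbS^{1}, \mathcal{D}) \simeq \mathcal{D}^{[1]} \times_{\mathcal{D}} \mathcal{D}^{[1]}$, where the map to $\mathcal{D}^{\times 2}$ is the pair of target projections. The fibre of the target projection $\mathcal{D}^{[1]} \to \mathcal{D}$ over $Z \in \mathcal{D}$ is the overcategory $\mathcal{D}_{/Z}$, so the fibre of $\Fun(\bbS^{1}, \mathcal{D})$ over a pair $(Z_{1}, Z_{2})$ is $\mathcal{D}_{/Z_{1}} \times_{\mathcal{D}} \mathcal{D}_{/Z_{2}}$, which is equivalent to $\mathcal{D}_{/Z_{1} \times Z_{2}}$ whenever the product exists in $\mathcal{D}$ (by the universal property).

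Finally, apply this with $(Z_{1}, Z_{2}) = (c_{X}, c_{Y})$: products in $\mathcal{D}$ are computed pointwise, so $c_{X} \times c_{Y} \simeq c_{X \times Y}$, and the fibre is $\Fun(K, \mathcal{C})_{/c_{X \times Y}}$. I would then conclude via the identification $\Fun(K, \mathcal{C})_{/c_{Z}} \simeq \Fun(K, \mathcal{C}_{/Z})$, since a functor $K \to \mathcal{C}_{/Z}$ is exactly a pair consisting of a functor $F \colon K \to \mathcal{C}$ and a natural transformation $F \Rightarrow c_{Z}$. Passing to underlying $\infty$-groupoids via $\iota$ (which preserves limits as a right adjoint) produces the desired pullback square of mapping spaces. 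The main delicate point is the quasi-categorical identification $\Fun(K, \mathcal{C})_{/c_{Z}} \simeq \Fun(K, \mathcal{C}_{/Z})$, but this is essentially immediate from the join--slice adjunction and the fact that $\mathcal{C}_{/Z} \to \mathcal{C}$ is a right fibration, so no genuine obstacle is expected.
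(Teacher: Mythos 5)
Your plan is correct, but it takes a noticeably different route from the paper's proof, and it is worth recording the difference. The paper first replaces $\mathcal{C}_{/X\times Y}$ by the slice $\mathcal{C}_{/p}$ over the two-point diagram $p=(X,Y)$, uses the join--slice universal property to exhibit $\Map(K,\mathcal{C}_{/p})$ as the fibre of $\Map(K\star\{0,1\},\mathcal{C})\to\Map(\{0,1\},\mathcal{C})$ over $(X,Y)$, and then compares $K\star\{0,1\}$ with $K\times\bbS^{1}$ by hand, writing both as explicit pushouts built from $K\times\{0,1\}\times\Delta^{1}$; pasting the two resulting pullback squares gives the lemma. You instead pass to $\mathcal{D}=\Fun(K,\mathcal{C})$ by the exponential law, decompose $\bbS^{1}$ as $[1]\amalg_{[0]}[1]$, identify the fibre of $\Fun(\bbS^{1},\mathcal{D})\to\mathcal{D}^{\times 2}$ over $(c_{X},c_{Y})$ with $\mathcal{D}_{/c_{X}}\times_{\mathcal{D}}\mathcal{D}_{/c_{Y}}\simeq \mathcal{D}_{/c_{X}\times c_{Y}}$, and conclude via $\Fun(K,\mathcal{C})_{/c_{Z}}\simeq\Fun(K,\mathcal{C}_{/Z})$ before applying $\iota$ (which indeed preserves the pullback). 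Both arguments ultimately rest on the same two ingredients --- the product universal property (slice over $X\times Y$ versus slice over the two-point diagram) and a comparison of the join with the product with $\Delta^{1}$ --- but you outsource the second ingredient to the standard identifications of $\mathcal{D}_{/Z}$ with the fibre of the target projection $\Fun(\Delta^{1},\mathcal{D})\to\mathcal{D}$ and of $\Fun(K,\mathcal{C})_{/c_{Z}}$ with $\Fun(K,\mathcal{C}_{/Z})$, which is precisely what the paper reproves concretely with its pushout decompositions. Your version is more modular and makes the role of the product transparent; the paper's is more self-contained. Two points to make explicit in a write-up: that $c_{X\times Y}$ really is a product of $c_{X}$ and $c_{Y}$ in $\Fun(K,\mathcal{C})$ (immediate on mapping spaces; the slogan ``limits in functor categories are pointwise'' is usually stated assuming $\mathcal{C}$ has the relevant limits), and that all your identifications are induced by restriction along maps of diagram shapes, which is what yields the naturality in $K$ needed when the lemma is applied levelwise in Proposition~\ref{propn:spanmaps}.
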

\begin{proof}
  Since $X \times Y$ is a product, the \icat{} $\mathcal{C}_{/X \times
    Y}$ is equivalent to $\mathcal{C}_{/p}$ where $p$ is the diagram
  $\{0,1\} \to \mathcal{C}$ sending $0$ to $X$ and $1$ to $Y$. The
  \icat{} $\mathcal{C}_{/p}$ has the universal property that for all
  \icats{} $\mathcal{K}$ there are natural pullback squares
  \nolabelcsquare{\Map(\mathcal{K}, \mathcal{C}_{/p})}{\Map(\mathcal{K} \star \{0,1\},
    \mathcal{C})}{\{(X,Y)\}}{\Map(\{0,1\}, \mathcal{C}).}  There is an
  evident equivalence between $\bbS^{1}$ and
  $\{0,1\}^{\triangleleft}$, i.e.\ $* \amalg_{\{0,1\} \times \{0\}}
  \{0,1\} \times [1]$, and since products in $\CatI$ preserve
  colimits this gives an equivalence
 \[ \mathcal{K} \times \bbS^{1} \simeq \mathcal{K} \amalg_{\mathcal{K} \times \{0,1\} \times \{0\}} \mathcal{K} \times \{0,1\} \times
 [1].\]
 Moreover, the \icat{} $\mathcal{K} \star \{0,1\}$ is equivalent to the pushout (in $\CatI$)
 \[ \mathcal{K} \amalg_{\mathcal{K} \times \{0,1\} \times \{0\}} \mathcal{K} \times \{0,1\} \times
 [1] \amalg_{\mathcal{K} \times \{0,1\} \times \{1\}} \{0,1\},\] thus we
 get a pullback square \nolabelcsquare{\Map(\mathcal{K} \star \{0,1\},
   \mathcal{C})}{\Map(\mathcal{K} \times \bbS^{1}, \mathcal{C})}{\Map(\{0,1\},
   \mathcal{C})}{\Map(\mathcal{K} \times \{0,1\}, \mathcal{C}).}  Putting these
 two pullback squares together then completes the proof.
\end{proof}

\begin{proof}[Proof of Proposition~\ref{propn:spanmaps}]
  By Lemma~\ref{lem:overXYpullback} we have natural pullback squares
  \nolabelcsquare{\oSPAN_{k-1}(\mathcal{C}_{/X \times
      Y})_{[n_{1}],\ldots,[n_{k-1}]}}{\oSPAN_{k}(\mathcal{C})_{[1],[n_{1}],\ldots,[n_{k-1}]}}{\{(c_{X},c_{Y})\}}{\oSPAN_{k}(\mathcal{C})_{[0],[n_{1}],\ldots,[n_{k-1}]}^{\times
      2}.}  Using Lemma~\ref{lem:cartindcond} we see that this
  restricts to a pullback square
  \nolabelcsquare{\SPAN_{k-1}(\mathcal{C}_{/X \times
      Y})_{[n_{1}],\ldots,[n_{k-1}]}}{\SPAN_{k}(\mathcal{C})_{[1],[n_{1}],\ldots,[n_{k-1}]}}{\{(c_{X},c_{Y})\}}{\SPAN_{k}(\mathcal{C})_{[0],[n_{1}],\ldots,[n_{k-1}]}^{\times
      2},} since a functor to $\mathcal{C}_{/X \times Y}$ is Cartesian
  \IFF{} its composite with $\mathcal{C}_{/X \times Y} \to
  \mathcal{C}$ is Cartesian, as this forgetful functor detects
  pullbacks. Thus we have a pullback square of $(k-1)$-uple Segal
  spaces 
  \nolabelcsquare{\SPAN_{k-1}(\mathcal{C}_{/X \times
      Y})}{\SPAN_{k}(\mathcal{C})_{1}}{\{(X,Y)\}}{\SPAN_{k}(\mathcal{C})_{0}^{\times
      2}.}
  The functor $U^{k-1}_{\Seg}$ is a right adjoint, so applying it we get a
  pullback square
\nolabelcsquare{\Span_{k-1}(\mathcal{C}_{/X \times
      Y})}{U_{\Seg}^{k-1}(\SPAN_{k}(\mathcal{C})_{1})}{\{(X,Y)\}}{U_{\Seg}^{k-1}(\SPAN_{k}(\mathcal{C})_{0})^{\times
      2}.}
  On the other hand, our constuction of $U_{\Seg}^{k}$ gives us a
  pullback square
  \nolabelcsquare{\Span_{k}(\mathcal{C})_{1}}{U_{\Seg}^{k-1}(\SPAN_{k}(\mathcal{C})_{1})}{\Span_{k}(\mathcal{C})_{0}^{\times
    2}}{U_{\Seg}^{k-1}(\SPAN_{k}(\mathcal{C})_{0})^{\times
      2}.}
  The map $\{(X,Y)\} \to U_{\Seg}^{k-1}\SPAN_{k}(\mathcal{C})_{0}^{\times
      2}$ factors through the constant $(k-1)$-simplicial object
    $\Span_{k}(\mathcal{C})_{0}^{\times 2}$, so we get a commutative diagram
   \[
   \begin{tikzcd}
     \Span_{k-1}(\mathcal{C}_{/X \times
      Y}) \arrow{r} \arrow{d}& \Span_{k}(\mathcal{C})_{1}
    \arrow{r}\arrow{d} & U_{\Seg}^{k-1}(\SPAN_{k}(\mathcal{C})_{1})
    \arrow{d} \\
    \{(X,Y)\} \arrow{r} & \Span_{k}(\mathcal{C})_{0}^{\times
    2} \arrow{r} &  U_{\Seg}^{k-1}\SPAN_{k}(\mathcal{C})_{0}^{\times
      2}
   \end{tikzcd}
   \]
   where the right-hand square and the composite square are both
   Cartesian. Then the left-hand square is also Cartesian, which
   completes the proof.
\end{proof}

\begin{cor}
  Let $\mathcal{C}$ be an \icat{} with finite limits. Then the
  $n$-fold Segal space $\Span_{n}(\mathcal{C})$ is complete.
\end{cor}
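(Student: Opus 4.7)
The plan is to proceed by induction on $n$, using the inductive completeness criterion of Theorem~\ref{thm:CSScomplmapcond}. The base case $n = 1$ is exactly Proposition~\ref{propn:Span1comp}, so assume the corollary holds for $n-1$ and consider $\Span_{n}(\mathcal{C})$. By Theorem~\ref{thm:CSScomplmapcond} it suffices to establish two things:
\begin{enumerate}[(a)]
\item the Segal space $\Span_{n}(\mathcal{C})_{\bullet,0,\ldots,0}$ is complete;
\item for every pair of objects $X, Y \in \mathcal{C}$, the mapping $(n-1)$-fold Segal space $\Span_{n}(\mathcal{C})(X,Y)$ is complete.
\end{enumerate}

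Condition (b) is the easy part. By Proposition~\ref{propn:spanmaps} we have a natural equivalence $\Span_{n}(\mathcal{C})(X,Y) \simeq \Span_{n-1}(\mathcal{C}_{/X \times Y})$. Since the slice $\mathcal{C}_{/X \times Y}$ inherits finite limits from $\mathcal{C}$, the inductive hypothesis applies and yields completeness.

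For (a), the strategy is to identify $\Span_{n}(\mathcal{C})_{\bullet,0,\ldots,0}$ with $\Span_{1}(\mathcal{C})$ and invoke Proposition~\ref{propn:Span1comp}. Since $\bbS^{m,0,\ldots,0} \cong \bbS^{m}$, directly unwinding the definition of $\SPAN_{n}(\mathcal{C})$ gives $\SPAN_{n}(\mathcal{C})_{[m],[0],\ldots,[0]} \simeq \Map^{\Cart}(\bbS^{m},\mathcal{C}) = \Span_{1}(\mathcal{C})_{[m]}$, naturally in $[m]$. What remains is to check that the functor $U_{\Seg}$ does not alter the simplicial object one obtains by fixing all but the first multisimplicial coordinate to $[0]$.

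The main obstacle is this last verification. It should fall out of the explicit form of $U_{\Seg}$ from Proposition~\ref{propn:underfold}: that functor is assembled as a composite of right adjoints to inclusions $\Seg_{k}(\mathcal{C}) \hookrightarrow \Cat(\Seg_{k-1}(\mathcal{C}))$, each of which, by Lemma~\ref{lem:segncatadj} and the pullback description of Lemma~\ref{lem:fullsubradj}, acts only in one of the \emph{later} simplicial directions while leaving the first coordinate untouched. Consequently, restriction to $([m],[0],\ldots,[0])$ is preserved, yielding $\Span_{n}(\mathcal{C})_{\bullet,0,\ldots,0} \simeq \Span_{1}(\mathcal{C})$. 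Combining with Proposition~\ref{propn:Span1comp} establishes (a), and the inductive step is complete.
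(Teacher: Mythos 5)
Your proposal is correct and follows essentially the same route as the paper: induction on $n$ with base case Proposition~\ref{propn:Span1comp}, then the criterion of Theorem~\ref{thm:CSScomplmapcond}, identifying $\Span_{n}(\mathcal{C})_{\bullet,0,\ldots,0}$ with $\Span_{1}(\mathcal{C})$ and the mapping objects with $\Span_{n-1}(\mathcal{C}_{/X\times Y})$ via Proposition~\ref{propn:spanmaps}. Your extra verification that $U_{\Seg}$ does not change the $(\bullet,[0],\ldots,[0])$ restriction is a point the paper treats as clear (and addresses similarly in the proof of Proposition~\ref{propn:spanmaps}), so it only adds detail, not a different argument.
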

\begin{proof}
  We prove this by induction on $n$. The case $n = 1$ is
  Proposition~\ref{propn:Span1comp}. Suppose the result holds for all
  $n < k$. By Theorem~\ref{thm:CSScomplmapcond} to show that
  $\Span_{k}(\mathcal{C})$ is complete it suffices to prove that the
  Segal space $\Span_{k}(\mathcal{C})_{\bullet,0,\ldots,0}$ is
  complete, and the $(n-1)$-fold Segal spaces
  $\Span_{k}(\mathcal{C})(X,Y)$ are complete for all $X$, $Y$ in
  $\mathcal{C}$. But $\Span_{k}(\mathcal{C})_{\bullet,0,\ldots,0}$ is
   equivalent to $\Span_{1}(\mathcal{C})$, which we know is
  complete, and by Proposition~\ref{propn:spanmaps} we can identify
  $\Span_{k}(\mathcal{C})(X,Y)$ with $\Span_{k-1}(\mathcal{C}_{/X
    \times Y})$, which is complete by the inductive hypothesis.
\end{proof}

\section{Completeness for Iterated Spans with Local Systems}\label{sec:compllocsys}
In this section we will show that that the $k$-fold Segal space
$\Span_{k}(\mathcal{X}; \mathcal{C})$ is complete, provided
$\mathcal{X}$ is an $\infty$-topos and $\mathcal{C}$ is a complete
$k$-fold Segal object in $\mathcal{X}$. We first consider the case $k
= 1$, which follows from the following observation:
\begin{lemma}\label{lem:eqinSpanXC}
  Let $\mathcal{X}$ be an \itopos{} and $\mathcal{C}$ a Segal object
  in $\mathcal{X}$. Then there is an equivalence
  \[\Span_{1}(\mathcal{X}; \mathcal{C})_{\txt{eq}} \simeq \coprod_{X
    \in \pi_{0}\mathcal{X}} \Map_{\mathcal{X}}(X, \mathcal{C})_{\txt{eq}}.\]
\end{lemma}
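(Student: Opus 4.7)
The plan is to determine which $1$-simplices of $\Span_1(\mathcal{X}; \mathcal{C})$ are equivalences, and then parametrize the resulting space. I will first establish the characterization: an edge $(X \xfrom{s} A \xto{t} Y;\; \alpha \colon A \to \mathcal{C}_1,\; f \colon X \to \mathcal{C}_0,\; g \colon Y \to \mathcal{C}_0)$ of $\Span_1(\mathcal{X}; \mathcal{C})$ is an equivalence if and only if both $s$ and $t$ are equivalences in $\mathcal{X}$ and, in addition, $\alpha$ is an equivalence in the Segal object $\mathcal{C}$. The necessity of ``both legs equivalences'' follows by applying Lemma~\ref{lem:Span1eq} to the image under the forgetful projection $\Span_1(\mathcal{X}; \mathcal{C}) \to \Span_1(\mathcal{X})$, which is a map of Segal spaces because composition of spans-with-local-system projects onto the usual pullback composition, and hence sends equivalences to equivalences. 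The condition on $\alpha$ is obtained by running the double-composition argument of Lemma~\ref{lem:Span1eq} at the level of local systems: after identifying both legs with identities on the apex, the two composite diagrams in $\Span_1(\mathcal{X}; \mathcal{C})$ constrain $\alpha$ together with the corresponding component of the putative inverse (via the multiplication $\mathcal{C}_1 \times_{\mathcal{C}_0} \mathcal{C}_1 \to \mathcal{C}_1$ of the Segal object) to recover the degenerate local system on $X$, which is precisely the Segal-space criterion for $\alpha$ to be an equivalence in $\mathcal{C}$. Sufficiency is then by explicit construction of an inverse from the reversed span together with the inverse of $\alpha$.

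With this characterization in hand, I parametrize the space. As in the proof of Proposition~\ref{propn:Span1comp}, the space of diagrams $X \xfrom{\sim} A \xto{\sim} Y$ in $\mathcal{X}$ with both legs equivalences is equivalent to $\iota \mathcal{X}$, with the whole span contracting onto its apex $A$. Incorporating a compatible local system, the space of equivalences in $\Span_1(\mathcal{X}; \mathcal{C})$ becomes the space of pairs $(A, \alpha)$ with $A \in \iota \mathcal{X}$ and $\alpha \colon A \to \mathcal{C}_1$ an equivalence in the Segal space $\Map_{\mathcal{X}}(A, \mathcal{C})$; by Theorem~\ref{thm:rezkcompl} applied fiberwise, the space of such $\alpha$ for fixed $A$ is exactly $\Map_{\mathcal{X}}(A, \mathcal{C})_{\txt{eq}}$. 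Organizing by the connected components of $\iota \mathcal{X}$ (i.e.\ by $\pi_0 \mathcal{X}$) yields the claimed decomposition.

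The main obstacle is the first step, specifically pinning down that invertibility of the underlying span alone is not sufficient: one must also require that the local system itself be an equivalence in $\mathcal{C}$. This needs a careful unwinding of composition for spans-with-local-system (the pullback of spans together with the composition map of $\mathcal{C}$), and a faithful adaptation of the double-composite diagram from Lemma~\ref{lem:Span1eq} to track the local system coordinate. Once the characterization of equivalences is established, the reorganization into the claimed coproduct via Rezk's theorem is formal.
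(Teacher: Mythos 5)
Your proposal follows essentially the same route as the paper: reduce via the forgetful functor to $\Span_{1}(\mathcal{X})$ (using Lemma~\ref{lem:Span1eq}/Proposition~\ref{propn:Span1comp}) to see that an equivalence has trivial underlying span, and then identify the remaining datum $\alpha \colon X \to \mathcal{C}_{1}$ as being an equivalence precisely when it is one in the Segal space $\Map_{\mathcal{X}}(X,\mathcal{C}_{\bullet})$, which gives the stated decomposition indexed by $\pi_{0}\mathcal{X}$. The only differences are cosmetic: the paper dispatches your "double-composition at the level of local systems" step with the observation that it follows directly from the definition of equivalences in a Segal space, and your appeal to Theorem~\ref{thm:rezkcompl} is unnecessary, since the definition of the space of equivalences already identifies the relevant components with $\Map_{\mathcal{X}}(X,\mathcal{C})_{\txt{eq}}$.
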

\begin{proof}
  By definition, $\Span_{1}(\mathcal{X}; \mathcal{C})_{\txt{eq}}$ is
  the subspace of $\Span_{1}(\mathcal{X}; \mathcal{C})_{1} \simeq
  \iota\Fun(\bbS^{1}, \mathcal{C})_{/s_{\mathcal{C}}([1])}$ consisting
  of those components that correspond to equivalences. The forgetful
  functor $\Span_{1}(\mathcal{X}; \mathcal{C}) \to
  \Span_{1}(\mathcal{X})$ induces a map $\Span_{1}(\mathcal{X};
  \mathcal{C})_{\txt{eq}} \to \Span_{1}(\mathcal{X})_{\txt{eq}}$, so
  by Proposition~\ref{propn:Span1comp} the underlying span of an
  equivalence is trivial. We may thus identify $\Span_{1}(\mathcal{X};
  \mathcal{C})_{\txt{eq}}$ with a collection of components in
  $\coprod_{X \in \pi_{0}(\mathcal{X})} \Map_{\mathcal{X}}(X,
  \mathcal{C}_{1})$. It is moreover immediate from the definition of
  composition in $\Span_{1}(\mathcal{X}; \mathcal{C})$ that a map 
  $X \to \mathcal{C}_{1}$ has an inverse when viewed as a morphism in $\Span_{1}(\mathcal{X};
  \mathcal{C})$ \IFF{} it has one when viewed as a morphism in 
  $\Map_{\mathcal{X}}(X, \mathcal{C}_{\bullet})$, which completes the proof.
\end{proof}

\begin{propn}\label{propn:span1XCcompl}
  Suppose $\mathcal{X}$ is an \itopos{} and $\mathcal{C}$ is a
  complete Segal object in $\mathcal{X}$. Then $\Span_{1}(\mathcal{X};
  \mathcal{C})$ is a complete Segal space.
\end{propn}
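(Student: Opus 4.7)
By Theorem~\ref{thm:rezkcompl}, the plan is to show that the degeneracy map $s_{0} \colon \Span_{1}(\mathcal{X}; \mathcal{C})_{0} \to \Span_{1}(\mathcal{X}; \mathcal{C})_{\txt{eq}}$ is an equivalence of spaces. Lemma~\ref{lem:eqinSpanXC} already identifies the right-hand side as a disjoint union over $\pi_{0}\mathcal{X}$ of the spaces $\Map_{\mathcal{X}}(X, \mathcal{C})_{\txt{eq}}$, and unwinding the construction of $\SPAN_{1}(\mathcal{X}; \mathcal{C})$ at level $[0]$ gives $\Span_{1}(\mathcal{X}; \mathcal{C})_{0} \simeq \iota(\mathcal{X}_{/\mathcal{C}_{0}})$, which admits an analogous decomposition $\coprod_{X \in \pi_{0}\mathcal{X}} \Map_{\mathcal{X}}(X, \mathcal{C}_{0})$.

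First I would check that under these two identifications, the degeneracy $s_{0}$ of $\Span_{1}(\mathcal{X}; \mathcal{C})$ corresponds on each summand to the map induced by the degeneracy $s_{0} \colon \mathcal{C}_{0} \to \mathcal{C}_{1}$ of the simplicial object $\mathcal{C}$ --- this is essentially immediate from the fact that the degeneracy on $\SPAN^{+}_{1}(\mathcal{X}; \mathcal{C})$ is induced from the one on $\SPAN^{+}_{1}(\mathcal{X})$ sitting over the section $s_{\mathcal{C}}$, combined with the description of the equivalences given in the proof of Lemma~\ref{lem:eqinSpanXC} as precisely those maps $X \to \mathcal{C}_{1}$ that define equivalences in the Segal space $\Map_{\mathcal{X}}(X, \mathcal{C})$.

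Next, since by hypothesis $\mathcal{C}$ is a complete Segal object in $\mathcal{X}$, Corollary~\ref{cor:complsegtoposcond} tells us that $\Map_{\mathcal{X}}(X, \mathcal{C})$ is a complete Segal space for every $X \in \mathcal{X}$. Applying the equivalent condition (ii) of Theorem~\ref{thm:rezkcompl} to each such mapping Segal space then gives that $\Map_{\mathcal{X}}(X, \mathcal{C}_{0}) \to \Map_{\mathcal{X}}(X, \mathcal{C})_{\txt{eq}}$ is an equivalence for each $X$. Taking disjoint unions over $\pi_{0}\mathcal{X}$ assembles these into the required equivalence $s_{0} \colon \Span_{1}(\mathcal{X}; \mathcal{C})_{0} \xrightarrow{\sim} \Span_{1}(\mathcal{X}; \mathcal{C})_{\txt{eq}}$.

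The only real content in this argument is Lemma~\ref{lem:eqinSpanXC}, which is already established; the completeness criterion of Corollary~\ref{cor:complsegtoposcond} reduces matters to Rezk's theorem applied to $\Map_{\mathcal{X}}(X, \mathcal{C})$, and everything else is bookkeeping. The main point to be careful about is ensuring that the decomposition of $\Span_{1}(\mathcal{X}; \mathcal{C})_{0}$ and the map $s_{0}$ are compatible with the decomposition of $\Span_{1}(\mathcal{X}; \mathcal{C})_{\txt{eq}}$ supplied by Lemma~\ref{lem:eqinSpanXC}, which is clear by naturality of the forgetful functor to $\Span_{1}(\mathcal{X})$ (whose completeness, Proposition~\ref{propn:Span1comp}, implicitly provides the indexing by $\pi_{0}\mathcal{X}$ on both sides).
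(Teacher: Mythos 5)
Your proposal is correct and follows essentially the same route as the paper's proof: reduce via Theorem~\ref{thm:rezkcompl} to the degeneracy map being an equivalence, use Lemma~\ref{lem:eqinSpanXC} to identify the target as $\coprod_{X \in \pi_{0}\mathcal{X}} \Map_{\mathcal{X}}(X,\mathcal{C})_{\txt{eq}}$, and then apply Corollary~\ref{cor:complsegtoposcond} together with Rezk's criterion to each $\Map_{\mathcal{X}}(X,\mathcal{C})$. The extra care you take in matching the degeneracy maps and the $\pi_{0}\mathcal{X}$-decompositions is bookkeeping the paper leaves implicit, but the argument is the same.
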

\begin{proof}
  By Theorem~\ref{thm:rezkcompl} it suffices to show that the
  degeneracy map \[\Span(\mathcal{X}; \mathcal{C})_{0} \to
  \Span(\mathcal{X}; \mathcal{C})_{\txt{eq}}\] is an equivalence. By
  Lemma~\ref{lem:eqinSpanXC} we may identify this with the map
  \[\coprod_{X} \Map(X, \mathcal{C}_{0}) \to \coprod_{X} \Map(X,
  \mathcal{C})_{\txt{eq}}.\] But by
  Proposition~\ref{propn:complsegtoposcond} the Segal spaces $\Map(X,
  \mathcal{C})$ are complete since $\mathcal{C}$ is complete, and by
  Theorem~\ref{thm:rezkcompl} it follows that for each $X$ the map
  $\Map(X, \mathcal{C}_{0}) \to \Map(X, \mathcal{C})_{\txt{eq}}$ is an
  equivalence.
\end{proof}

To extend this to iterated Segal spaces, we first identify the mapping
$(\infty,k-1)$-categories of $\Span_{k}(\mathcal{X}; \mathcal{C})$:
\begin{propn}\label{propn:Famkmaps}
  Suppose $\mathcal{C}$ is a $k$-fold Segal object in $\mathcal{X}$,
  and that $\xi \colon X \to \mathcal{C}_{0,\ldots,0}$ and $\eta \colon Y \to
  \mathcal{C}_{0,\ldots,0}$ are objects of $\Span_{k}(\mathcal{X};
  \mathcal{C})$. Then the $(k-1)$-fold Segal space
  $\Span_{k}(\mathcal{X}; \mathcal{C})(\xi, \eta)$ of maps from $\xi$
  to $\eta$ in $\Span_{k}(\mathcal{X}; \mathcal{C})$ is equivalent to
  $\Span_{k-1}(\mathcal{X}; \mathcal{C}_{\xi,\eta})$, where
  $\mathcal{C}_{\xi,\eta}$ is the $(k-1)$-fold Segal object given by
  the pullback square
  \csquare{\mathcal{C}_{\xi,\eta}}{\mathcal{C}_{1}}{X \times
    Y}{\mathcal{C}_{0}^{\times 2}.}{}{}{}{\xi \times \eta}
\end{propn}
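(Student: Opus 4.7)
The plan is to adapt the proof of Proposition~\ref{propn:spanmaps} to accommodate local system data. The key intermediate step I will establish is a pullback square of $(k-1)$-uple Segal spaces
\[
\begin{tikzcd}
\SPAN_{k-1}(\mathcal{X}; \mathcal{C}_{\xi,\eta})_{\bullet,\ldots,\bullet} \ar[r] \ar[d] & \SPAN_{k}(\mathcal{X}; \mathcal{C})_{[1],\bullet,\ldots,\bullet} \ar[d] \\
\{(\xi,\eta)\} \ar[r] & \SPAN_{k}(\mathcal{X}; \mathcal{C})_{[0],\bullet,\ldots,\bullet}^{\times 2}
\end{tikzcd}
\]
mirroring the unlabeled pullback square in the proof of Proposition~\ref{propn:spanmaps}, but with all objects now carrying local systems with values in the appropriate Segal object.

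To verify this square at level $([n_{1}],\ldots,[n_{k-1}])$, I will unpack the construction: the upper-right corner is $\iota \Fun^{\Cart}(\bbS^{1,n_{1},\ldots,n_{k-1}}, \mathcal{X})_{/\Phi \circ \Pi_{1,n_{1},\ldots,n_{k-1}}}$, where $\Phi\colon (\simp^{\op})^{\times k}\to\mathcal{X}$ is the underlying $k$-uple Segal object of $\mathcal{C}$. Applying Lemma~\ref{lem:overXYpullback} with $K = \bbS^{n_{1},\ldots,n_{k-1}}$ identifies, at the level of underlying functors to $\mathcal{X}$, Cartesian functors $F\colon\bbS^{1,n_{1},\ldots,n_{k-1}}\to\mathcal{X}$ with constant restriction at $X$ on the $(0,0)$-end of $\bbS^{1}$ and at $Y$ on the $(1,1)$-end with Cartesian functors $\bar F\colon \bbS^{n_{1},\ldots,n_{k-1}}\to \mathcal{X}_{/X\times Y}$. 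Next, a natural transformation $\alpha\colon F \to \Phi \circ \Pi_{1,n_{1},\ldots,n_{k-1}}$ whose endpoint restrictions are the constant local systems $\xi$ and $\eta$ factors, by the universal property of the pullback defining $\mathcal{C}_{\xi,\eta}$, through $\mathcal{C}_{\xi,\eta}\circ\Pi_{n_{1},\ldots,n_{k-1}}$, yielding a natural transformation $\bar\alpha\colon\bar F\to\mathcal{C}_{\xi,\eta}\circ\Pi_{n_{1},\ldots,n_{k-1}}$; conversely, any such $\bar\alpha$ produces data of the first form. This identifies the fibre of the forgetful map at $(\xi,\eta)$ with $\SPAN_{k-1}(\mathcal{X};\mathcal{C}_{\xi,\eta})_{[n_{1}],\ldots,[n_{k-1}]}$ and establishes the pullback square.

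Finally, as in Proposition~\ref{propn:spanmaps}, the functor $U_{\Seg}$ is a right adjoint (Proposition~\ref{propn:underfold}) and commutes with taking $[1]$-columns and with fibres in the first multi-index coordinate, so applying it to the displayed square yields the analogous pullback for $\Span_{k}(\mathcal{X};\mathcal{C})$ and identifies $\Span_{k}(\mathcal{X};\mathcal{C})(\xi,\eta)$ with $\Span_{k-1}(\mathcal{X};\mathcal{C}_{\xi,\eta})$. I expect the main obstacle to be the second half of the middle paragraph: carefully matching the local-system data with a map into $\mathcal{C}_{\xi,\eta}$, in particular verifying that the composite $\Phi\circ\Pi_{1,n_{1},\ldots,n_{k-1}}$ restricted to an endpoint of the $\bbS^{1}$ factor genuinely lands in the constant part of $\Phi$ arising from $\mathcal{C}_{0,\ldots,0}$, so that the pullback universal property of $\mathcal{C}_{\xi,\eta}$ applies coherently and naturally in all the remaining multi-indices.
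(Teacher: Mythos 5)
Your overall skeleton is the paper's: establish a levelwise pullback square relating $\SPAN_{k-1}(\mathcal{X};\mathcal{C}_{\xi,\eta})$ to the fibre of $\SPAN_{k}(\mathcal{X};\mathcal{C})_{[1],\bullet,\ldots,\bullet} \to \SPAN_{k}(\mathcal{X};\mathcal{C})_{[0],\bullet,\ldots,\bullet}^{\times 2}$ over $(\xi,\eta)$, check that it restricts to Cartesian functors, and then apply $U_{\Seg}$ and $\iota$, exactly as in Proposition~\ref{propn:spanmaps}. The gap is in your middle paragraph, and it is precisely the step you flag as the main obstacle: the argument that a transformation $\alpha\colon F \to \Phi\circ\Pi_{1,n_{1},\ldots,n_{k-1}}$ with prescribed endpoint restrictions ``factors through $\mathcal{C}_{\xi,\eta}\circ\Pi_{n_{1},\ldots,n_{k-1}}$, and conversely'' is an objectwise description of the data, not a proof that the relevant commutative square --- with corners $\Fun^{\Cart}(\bbS^{n_{1},\ldots,n_{k-1}},\mathcal{X})_{/s_{\mathcal{C}_{\xi,\eta}}(n_{1},\ldots,n_{k-1})}$, $\Fun^{\Cart}(\bbS^{1,n_{1},\ldots,n_{k-1}},\mathcal{X})_{/s_{\mathcal{C}}(1,n_{1},\ldots,n_{k-1})}$, the point $(c_{\xi},c_{\eta})$, and $\Fun^{\Cart}(\bbS^{0,n_{1},\ldots,n_{k-1}},\mathcal{X})_{/s_{\mathcal{C}}(0,n_{1},\ldots,n_{k-1})}^{\times 2}$ --- is a pullback, naturally in $(n_{1},\ldots,n_{k-1})$. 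Lemma~\ref{lem:overXYpullback} does not supply this: it controls only the underlying functors to $\mathcal{X}$, whereas the levels of $\SPAN_{k}(\mathcal{X};\mathcal{C})$ are slice categories over the local-system section $s_{\mathcal{C}}$, and combining the two kinds of data coherently is the whole content. (By contrast, the specific point you single out --- that $\Phi\circ\Pi_{1,n_{1},\ldots,n_{k-1}}$ is constant at $\mathcal{C}_{0,\ldots,0}$ on the two endpoint slices of the $\bbS^{1}$ factor --- is immediate from condition (i) in the definition of a $k$-fold Segal object, so it is not where the work lies.)

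What is needed, and what the paper's proof supplies, is a general statement about slices over limits: Lemma~\ref{lem:Famkmaplem} shows that for a diagram $\mu\colon A\times\Delta^{1}\amalg_{A}A^{\triangleleft}\to\mathcal{D}$ with limit $Z$, the slice $\mathcal{D}_{/Z}$ is the pullback of $\Fun(A^{\triangleleft},\mathcal{D})_{/\mu|_{A^{\triangleleft}}}\to\Fun(A,\mathcal{D})_{/\mu|_{A}}$ along the point determined by $\mu|_{A\times\Delta^{1}}$; its proof rests on the cone identity of Lemma~\ref{lem:prodcone}. Applying it with $\mathcal{D}=\Fun(\bbS^{n_{1},\ldots,n_{k-1}},\mathcal{X})$, $A=\{0,1\}$, and $\mu$ the diagram whose limit is $s_{\mathcal{C}_{\xi,\eta}}(n_{1},\ldots,n_{k-1})$ (using that the pullback defining $\mathcal{C}_{\xi,\eta}$ is computed levelwise) produces the levelwise pullback square in one stroke, treating the span data and the local-system data together as a single slice; the detour through $\mathcal{X}_{/X\times Y}$ then becomes unnecessary. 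So your route can be completed, but only by proving a lemma of exactly this kind; as written, the central equivalence is asserted rather than established.
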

To prove this, we first make the following observations:
\begin{lemma}\label{lem:prodcone}
  For any \icats{} $\mathcal{A}$ and $\mathcal{B}$, the natural map
    \[ (\mathcal{A}^{\triangleleft} \times \mathcal{B} \amalg_{\mathcal{A} \times \mathcal{B}} \mathcal{A} \times
  \mathcal{B}^{\triangleleft})^{\triangleleft} \to \mathcal{A}^{\triangleleft} \times
  \mathcal{B}^{\triangleleft}\]
  is an equivalence.
\end{lemma}
\begin{proof}
  Suppose $\mathfrak{A}$ and $\mathfrak{B}$ are quasicategories
  representing the \icats{} $\mathcal{A}$ and $\mathcal{B}$. Then the
  pushout of simplicial sets
  \[ \mathfrak{A}^{\triangleleft} \times \mathfrak{B} \amalg_{\mathfrak{A} \times \mathfrak{B}} \mathfrak{A} \times
  \mathfrak{B}^{\triangleleft}\]
  is a homotopy pushout in $\sSet^{J}$, since the maps are
  cofibrations. It therefore suffices to prove that the natural map
  \[ (\mathfrak{A}^{\triangleleft} \times \mathfrak{B}
  \amalg_{\mathfrak{A} \times \mathfrak{B}} \mathfrak{A} \times
  \mathfrak{B}^{\triangleleft})^{\triangleleft} \to
  \mathfrak{A}^{\triangleleft} \times \mathfrak{B}^{\triangleleft}\]
  is a weak equivalence in the Joyal model structure. In fact, we will
  show that this map is an isomorphism for all simplicial sets
  $\mathfrak{A}$ and $\mathfrak{B}$. Since both sides preserve
  colimits in $\mathfrak{A}$ and $\mathfrak{B}$, it suffices to
  consider the case $\mathfrak{A} = \Delta^{n}$, $\mathfrak{B} =
  \Delta^{m}$. Thus, as $(\Delta^{n})^{\triangleleft} \cong
  \Delta^{n+1}$, we must show that the map
  \[ (\Delta^{n+1} \times \Delta^{m} \amalg_{\Delta^{n} \times
    \Delta^{m}} \Delta^{n} \times \Delta^{m+1})^{\triangleleft} \to
  \Delta^{n+1} \times \Delta^{m+1} \] is an isomorphism for all $n,
  m$. Now $\Delta^{n+1} \times \Delta^{m+1}$ is the nerve of the
  category $[n+1] \times [m+1]$, which is the join (of
  categories) $[0] \star \mathbf{C}$, where $\mathbf{C}$ is the
  subcategory spanned by all objects except $(0, 0)$. The nerve
  functor takes the join of categories to the join of simplicial sets,
  so it follows that $\Delta^{n+1} \times \Delta^{m+1} \cong
  (\mathrm{N}\mathbf{C})^{\triangleleft}$. Moreover, the simplicial
  set $\mathrm{N}\mathbf{C}$ is the subcomplex of
  $\Delta^{n+1} \times \Delta^{m+1}$ containing only the simplices
  that do not have $(0,0)$ as a vertex, which we can identify with
  $\Delta^{n+1} \times \Delta^{m} \amalg_{\Delta^{n} \times
    \Delta^{m}} \Delta^{n} \times \Delta^{m+1}$.
\end{proof}

\begin{lemma}\label{lem:Famkmaplem}
  Let $\mathcal{A}$ be an \icat{}, and suppose given a functor
 $\mu \colon \mathcal{A} \times [1] \amalg_{\mathcal{A} \times \{1\}} \mathcal{A}^{\triangleleft} \to \mathcal{C}$ with
  limit $X \in \mathcal{C}$. Then there is a natural pullback diagram
  \nolabelcsquare{\mathcal{C}_{/X}}{\Fun(\mathcal{A}^{\triangleleft},
    \mathcal{C})_{/\alpha}}{\{\nu\}}{\Fun(\mathcal{A}, \mathcal{C})_{/\beta},}
  where $\alpha := \mu|_{\mathcal{A}^{\triangleleft}}$, $\beta :=
  \mu|_{\mathcal{A} \times\{1\}}$, and
  $\nu$ is the object corresponding to $\mu|_{\mathcal{A} \times [1]}$.
\end{lemma}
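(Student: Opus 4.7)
The plan is to identify the pullback of the right vertical and bottom horizontal maps directly with $\mathcal{C}_{/X}$, using the universal property of the limit $X = \lim \mu$ together with the cone-identification provided by Lemma~\ref{lem:prodcone}.

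First, let $P := A \times \Delta^{1} \amalg_{A} A^{\triangleleft}$. By Lemma~\ref{lem:prodcone} applied with $B = \Delta^{0}$ (so $B^{\triangleleft} = \Delta^{1}$), we have a natural isomorphism of simplicial sets $P^{\triangleleft} \cong A^{\triangleleft} \times \Delta^{1}$, where the new cone point corresponds to the pair of cone points from $A^{\triangleleft}$ and $\Delta^{1}$. An object of the fibre product $\Fun(A^{\triangleleft}, \mathcal{C})_{/\alpha} \times_{\Fun(A, \mathcal{C})_{/\beta}} \{\nu\}$ is a morphism $\tilde\alpha \to \alpha$ in $\Fun(A^{\triangleleft}, \mathcal{C})$ whose restriction along the inclusion $A \hookrightarrow A^{\triangleleft}$ equals $\nu$, i.e.\ a map $A^{\triangleleft} \times \Delta^{1} \to \mathcal{C}$ whose restriction to $A^{\triangleleft}$ (along the appropriate face) is $\alpha$ and whose restriction to $A \times \Delta^{1}$ is $\mu|_{A \times \Delta^{1}}$. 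Since these restrictions agree on $A$ (both giving $\beta$), such a map is exactly an extension of $\mu \colon P \to \mathcal{C}$ to $P^{\triangleleft} \cong A^{\triangleleft} \times \Delta^{1}$.

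Next, one checks that these identifications are natural in a test object $K$, so that the fibre product represents the functor $K \mapsto \Map(K \star P, \mathcal{C}) \times_{\Map(P, \mathcal{C})} \{\mu\}$, which by the definition of a slice \icat{} (\cite{HTT}*{\S 1.2.9}) is precisely the functor corepresented by $\mathcal{C}_{/\mu}$. The universal property of the limit $X = \lim \mu$ gives a natural equivalence $\mathcal{C}_{/\mu} \simeq \mathcal{C}_{/X}$, and it remains only to verify that under this chain of identifications the composite map $\mathcal{C}_{/X} \to \Fun(A^{\triangleleft}, \mathcal{C})_{/\alpha}$ is the one induced by ``forgetting down to the cone on $\alpha$'', and similarly that the map $\mathcal{C}_{/X} \to \{\nu\}$ is constant at $\nu$.

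The only real obstacle is bookkeeping: one must keep track of which factor of $\Delta^{1}$ carries the cone-point role (so that restriction to $A \times \Delta^{1}$ in $A^{\triangleleft} \times \Delta^{1}$ lands on the correct face $A \times \{i\}$) and check that the gluings provided by Lemma~\ref{lem:prodcone} are compatible with the inclusion $P \hookrightarrow P^{\triangleleft}$. Once the conventions line up, everything follows from the universal properties of slice \icats{} and limits, with no further computation needed.
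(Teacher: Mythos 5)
Your proposal is correct and is essentially the paper's own argument run in the opposite direction: you use the same two ingredients (Lemma~\ref{lem:prodcone} with $B=\Delta^{0}$ to identify $(A\times\Delta^{1}\amalg_{A}A^{\triangleleft})^{\triangleleft}\cong A^{\triangleleft}\times\Delta^{1}$, and the equivalence $\mathcal{C}_{/X}\simeq\mathcal{C}_{/\mu}$ from $X$ being the limit of $\mu$), and identify the pullback with $\mathcal{C}_{/\mu}$ via its universal property. The ``natural in a test object $K$'' step you defer is exactly where the paper does its explicit work, rewriting $\mathcal{C}_{/\mu}$ as the fibre of $\Fun(A^{\triangleleft}\times\Delta^{1},\mathcal{C})\to\Fun(A\times\Delta^{1}\amalg_{A}A^{\triangleleft},\mathcal{C})$ over $\mu$ and rearranging the resulting limit, so that the slice-versus-product (fat slice) comparison is handled by the pullback-square bookkeeping rather than asserted.
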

\begin{proof}
  Since $X$ is the limit of $\mu$, the \icat{} $\mathcal{C}_{/X}$ is
  equivalent to $\mathcal{C}_{/\mu}$. And as \[(\mathcal{A} \times
  [1] \amalg_{\mathcal{A}} \mathcal{A}^{\triangleleft})^{\triangleleft} \simeq \mathcal{A}^{\triangleleft}
  \times [1]\] by Lemma~\ref{lem:prodcone}, the \icat{} $\mathcal{C}_{/\mu}$ fits in a natural
  pullback square
  \nolabelcsquare{\mathcal{C}_{/\mu}}{\Fun(\mathcal{A}^{\triangleleft} \times
    [1], \mathcal{C})}{\{\mu\}}{\Fun(\mathcal{A} \times [1]
    \amalg_{\mathcal{A} \times \{1\}} \mathcal{A}^{\triangleleft}, \mathcal{C}).}  
  Now consider the commutative diagram
  \[
  \begin{tikzcd}
    \mathcal{C}_{/\mu} \arrow{r} \arrow{d} & \Fun(\mathcal{A}^{\triangleleft},
    \mathcal{C})_{/\alpha} \arrow{r} \arrow{d} & \Fun(\mathcal{A}^{\triangleleft} \times
    [1], \mathcal{C}) \arrow{d} \\
    \{\nu\} \arrow{r} & \Fun(\mathcal{A}, \mathcal{C})_{/\beta}
    \arrow{r} \arrow{d} & \Fun(\mathcal{A} \times [1]
    \amalg_{\mathcal{A} \times \{1\}} \mathcal{A}^{\triangleleft},
    \mathcal{C}) \arrow{r} \arrow{d} & \Fun(\mathcal{A} \times [1], \mathcal{C})\arrow{d} \\
     & \{\beta\} \arrow{r}& \Fun(\mathcal{A}^{\triangleleft}, \mathcal{C}) \arrow{r}&
     \Fun(\mathcal{A}, \mathcal{C}).
  \end{tikzcd}
  \]
  Here the bottom right square and bottom composite squares are
  pullbacks, hence so is the bottom left square. Then, as the
  composite square in the middle column is a pullback, the top right
  square is a pullback. Finally, as the top composite square is a
  pullback, this implies the top left square is a pullback, as required.
\end{proof}

\begin{proof}[Proof of Proposition~\ref{propn:Famkmaps}]
  Using Lemma~\ref{lem:Famkmaplem}, we have natural pullback diagrams
  \nolabelcsquare{\Fun(\bbS^{n_1,\ldots,n_{k-1}},
    \mathcal{X})_{/s_{\mathcal{C}_{\xi,\eta}}(n_1,\ldots,n_{k-1})}}{\Fun(\bbS^{1,n_1,\ldots,n_{k-1}},
    \mathcal{X})_{/s_{\mathcal{C}}(1,n_1,\ldots,n_{k-1})}}{\{(c_{\xi},c_{\eta})\}}{\Fun(\bbS^{0,n_1,\ldots,n_{k-1}},
    \mathcal{X})_{/s_{\mathcal{C}}(0,n_1,\ldots,n_{k-1})}^{\times 2}.}
  It now follows using Lemma~\ref{lem:cartindcond} that these restrict
  to pullback diagrams
  \nolabelcsquare{\Fun^{\Cart}(\bbS^{n_1,\ldots,n_{k-1}},
    \mathcal{X})_{/s_{\mathcal{C}_{\xi,\eta}}(n_1,\ldots,n_{k-1})}}{\Fun^{\Cart}(\bbS^{1,n_1,\ldots,n_{k-1}},
    \mathcal{X})_{/s_{\mathcal{C}}(1,n_1,\ldots,n_{k-1})}}{\{(c_{\xi},c_{\eta})\}}{\Fun^{\Cart}(\bbS^{0,n_1,\ldots,n_{k-1}},
    \mathcal{X})_{/s_{\mathcal{C}}(0,n_1,\ldots,n_{k-1})}^{\times 2}.}
  The functor $\iota$, which takes the underlying space of an \icat{},
  is a right adjoint and hence preserves limits; applying it we
  therefore get, by naturality, a pullback square of $(k-1)$-uple
  Segal spaces
  \nolabelcsquare{\SPAN_{k-1}(\mathcal{X};
    \mathcal{C}_{\xi,\eta})}{\SPAN_{k}(\mathcal{X};
    \mathcal{C})_{1}}{\{(\xi, \eta)\}}{\SPAN_{k}(\mathcal{X};
    \mathcal{C})_{0}^{\times 2}.}

  Now applying the right adjoint $U^{k-1}_{\Seg}$ we get from this a
  pullback square 
  \nolabelcsquare{\Span_{k-1}(\mathcal{X};
    \mathcal{C}_{\xi,\eta})}{U_{\Seg}^{k-1}(\SPAN_{k}(\mathcal{X};
    \mathcal{C})_{1})}{\{(\xi, \eta)\}}{U_{\Seg}^{k-1}(\SPAN_{k}(\mathcal{X};
    \mathcal{C})_{0})^{\times 2}.}
  As in the proof of Proposition~\ref{propn:spanmaps} we can now
  combine this with a pullback square we get from the construction
  of $U_{\Seg}^{k}$, to get a pullback square
  \nolabelcsquare{\Span_{k-1}(\mathcal{X};
    \mathcal{C}_{\xi,\eta})}{\Span_{k}(\mathcal{X};
    \mathcal{C})_{1}}{\{(c_{\xi}, c_{\eta})\}}{\Span_{k}(\mathcal{X};
    \mathcal{C})_{0}^{\times 2},} as required.
\end{proof}

\begin{remark}
  In particular, if $X \simeq Y \simeq *$, so that $\xi$ and $\eta$
  are determined by two objects $x$ and $y$ of $\mathcal{C}$, then
  $\Span_{k}(\mathcal{X}; \mathcal{C})(\xi, \eta) \simeq
  \Span_{k-1}(\mathcal{X}; \mathcal{C}(x,y))$.
\end{remark}

\begin{cor}
  Suppose $\mathcal{X}$ is an \itopos{} and $\mathcal{C}$ is a
  complete $k$-fold Segal object in $\mathcal{X}$. Then
  $\Span_{k}(\mathcal{X}; \mathcal{C})$ is a complete $k$-fold Segal space.
\end{cor}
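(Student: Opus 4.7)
The plan is to induct on $k$. The base case $k = 1$ is precisely Proposition~\ref{propn:span1XCcompl}. For the inductive step, assume the statement for all values less than $k$. I would then apply Theorem~\ref{thm:CSScomplmapcond}, which reduces completeness of $\Span_{k}(\mathcal{X}; \mathcal{C})$ to two conditions: (a) completeness of the Segal space $\Span_{k}(\mathcal{X}; \mathcal{C})_{\bullet,0,\ldots,0}$, and (b) completeness of each mapping $(\infty,k-1)$-category $\Span_{k}(\mathcal{X}; \mathcal{C})(\xi, \eta)$ for objects $\xi, \eta$.

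For (a), I would first observe that there is a natural equivalence $\Span_{k}(\mathcal{X}; \mathcal{C})_{\bullet,0,\ldots,0} \simeq \Span_{1}(\mathcal{X}; u_{(\infty,1)}\mathcal{C})$. Unwinding the construction, the \icat{} $\SPAN_{k}^{+}(\mathcal{X}; \mathcal{C})_{[n],[0],\ldots,[0]}$ is a slice of Cartesian functors $\bbS^{n,0,\ldots,0} \simeq \bbS^{n} \to \mathcal{X}$ over the associated section, and since $\bbS^{0}$ is terminal the relevant section depends only on the restriction $u_{(\infty,1)}\mathcal{C}$. Because $\mathcal{C}$ is a complete $k$-fold Segal object, iterated application of Lemma~\ref{lem:seginCSScomplcond} (or directly Lemma~\ref{lem:segcompl1cond}) shows that $u_{(\infty,1)}\mathcal{C} = \mathcal{C}_{\bullet,0,\ldots,0}$ is a complete Segal object in $\mathcal{X}$, whence Proposition~\ref{propn:span1XCcompl} yields completeness of $\Span_{1}(\mathcal{X}; u_{(\infty,1)}\mathcal{C})$.

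For (b), Proposition~\ref{propn:Famkmaps} identifies the mapping $(\infty,k-1)$-category $\Span_{k}(\mathcal{X}; \mathcal{C})(\xi, \eta)$ with $\Span_{k-1}(\mathcal{X}; \mathcal{C}_{\xi,\eta})$, where $\mathcal{C}_{\xi,\eta} := \mathcal{C}_{1} \times_{\mathcal{C}_{0}^{\times 2}} (X \times Y)$ is formed in $(k-1)$-fold Segal objects of $\mathcal{X}$. By the inductive hypothesis it suffices to show that $\mathcal{C}_{\xi,\eta}$ is itself a complete $(k-1)$-fold Segal object. Using Lemma~\ref{lem:seginCSScomplcond} once more, completeness of $\mathcal{C}$ implies that each level $\mathcal{C}_{m,\bullet,\ldots,\bullet}$, and in particular $\mathcal{C}_{1}$, is a complete $(k-1)$-fold Segal object; the constant $(k-1)$-fold Segal objects $X \times Y$ and $\mathcal{C}_{0}^{\times 2}$ are tautologically complete; and $\CSS^{k-1}(\mathcal{X})$ is a reflective subcategory of $\Seg_{k-1}(\mathcal{X})$, hence closed under all limits. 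Thus the pullback $\mathcal{C}_{\xi,\eta}$ is complete and the inductive hypothesis applies.

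The argument is essentially formal once the previous results are in place; the one step that requires any genuine unwinding of definitions rather than direct invocation of earlier results is the identification in (a), which relies only on the triviality of $\bbS^{0}$ and the observation that $s_{\mathcal{C}}$ restricted to directions indexed by $[0]$ carries no information about $\mathcal{C}$ beyond its underlying Segal object. I therefore do not expect any serious obstacle beyond this bit of bookkeeping.
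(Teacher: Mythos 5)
Your proposal is correct and follows essentially the same route as the paper: induction on $k$ with Proposition~\ref{propn:span1XCcompl} as the base case, Theorem~\ref{thm:CSScomplmapcond} to reduce to the bottom row and the mapping $(\infty,k-1)$-categories, the identification of the bottom row with $\Span_{1}(\mathcal{X};\mathcal{C}_{\bullet,0,\ldots,0})$, and Proposition~\ref{propn:Famkmaps} plus closure of complete Segal objects under pullback for the inductive step. Your extra justifications (via Lemma~\ref{lem:seginCSScomplcond} and reflectivity of $\CSS^{k-1}(\mathcal{X})$) simply spell out steps the paper treats as immediate.
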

\begin{proof}
  The case $k = 1$ is Proposition~\ref{propn:span1XCcompl}; we will
  prove the general case by induction on $k$. Suppose we know the
  result for $k$-fold Segal objects for all $k < n$. By
  Theorem~\ref{thm:CSScomplmapcond} to show that
  $\Span_{n}(\mathcal{X}; \mathcal{C})$ is complete it suffices to
  prove that the Segal space $\Span_{n}(\mathcal{X};
  \mathcal{C})_{\bullet,0,\ldots,0}$ is complete, and the $(n-1)$-fold
  Segal spaces $\Span_{n}(\mathcal{X}; \mathcal{C})(\xi,\eta)$ are
  complete for all $\xi$, $\eta$. But $\Span_{n}(\mathcal{X};
  \mathcal{C})_{\bullet,0,\ldots,0}$ is equivalent to
  $\Span_{1}(\mathcal{X}; \mathcal{C}_{\bullet,0,\ldots,0})$, which we
  know is complete, and by Proposition~\ref{propn:Famkmaps} we can
  identify $\Span_{n}(\mathcal{X}; \mathcal{C})(\xi,\eta)$ with
  $\Span_{n-1}(\mathcal{X}; \mathcal{C}_{\xi,\eta})$. The $(n-1)$-fold
  Segal object $\mathcal{C}_{\xi,\eta}$ is complete since
  complete $(n-1)$-fold Segal objects in $\mathcal{X}$ are closed under
  pullback, so $\Span_{n-1}(\mathcal{X}; \mathcal{C}_{\xi,\eta})$ is
  complete by the inductive hypothesis.
\end{proof}

\section{Symmetric Monoidal Structures on Iterated Segal Spaces}\label{sec:symmmon}
In this section we introduce ($n$-fold) monoids in \icats{}, with a
special case being ($n$-fold) monoidal structures on iterated Segal
spaces. In the limit, we use these to give a definition of $\infty$-fold
monoids and $\infty$-fold monoidal structures, which is the form in
which symmetric monoidal structures will show up below. We then show
that $n$-fold monoids are the same thing as $\mathbb{E}_{n}$-algebras
(i.e.\ algebras for the $\infty$-operad corresponding to the
little $n$-disc operad) and $\infty$-fold monoids are
commutative algebras, as defined in \cite{HA}; for this we assume the
reader has some familiarity with the formalism of $\infty$-operads,
but this discussion can easily be skipped as it is not needed in 
the rest of the paper.

\begin{defn}
  Suppose $\mathcal{C}$ is an \icat{} with finite products. An
  (\emph{associative}) \emph{monoid} in $\mathcal{C}$ is a simplicial
  object $A_{\bullet} \colon \simp^{\op} \to \mathcal{C}$ such that the Segal
  maps $A_{n} \to \prod_{i = 1}^{n} A_{1}$ (induced by the inert maps
  $\rho_{i} \colon \{i-1,i\} \to [n]$) are equivalences for all $n =
  0,1,\ldots$. We write $\Mon(\mathcal{C})$ for the full subcategory
  of $\Fun(\simp^{\op}, \mathcal{C})$ spanned by the monoids. If
  $\mathcal{C}$ is the \icat{} $\Seg_{k}(\mathcal{X})$ or
  $\CSS_{k}(\mathcal{X})$ we refer to monoids as \emph{monoidal}
  $k$-fold (complete) Segal objects.
\end{defn}

\begin{defn}
  We inductively define an \emph{$n$-fold monoid} in $\mathcal{C}$ to
  be an $(n-1)$-fold monoid in $\Mon(\mathcal{C})$. Unwinding this
  definition, we see that an $n$-fold monoid is a functor $A \colon
  \simp^{n,\op} \to \mathcal{C}$ such that the natural maps
  \[ A_{t_{1},\ldots,t_{n}} \to \prod_{i_{1} = 1}^{t_{1}} \cdots
  \prod_{i_{n} = 1}^{t_{n}} A_{1,\ldots,1}\] are equivalences for all
  $t_{1},\ldots,t_{n}$. We write $\Mon_{n}(\mathcal{C}) :=
  \Mon_{n-1}(\Mon(\mathcal{C}))$ for the \icat{} of $n$-fold monoids
  in $\mathcal{C}$. If $\mathcal{C}$ is the \icat{}
  $\Seg_{k}(\mathcal{X})$ or $\CSS_{k}(\mathcal{X})$ we refer to
  $n$-fold monoids as \emph{$n$-monoidal} $k$-fold (complete)
  Segal spaces.
\end{defn}

\begin{remark}
  Since the localization functor $\Seg_{k}(\mathcal{X}) \to
  \CSS_{k}(\mathcal{X})$ preserves products by
  Lemma~\ref{lem:CSSlocprod}, the completion of an $n$-monoidal
  $k$-fold Segal space is an $n$-monoidal $k$-fold complete Segal
  space.
\end{remark}

\begin{defn}
  We define the \icat{} $\Mon_{\infty}(\mathcal{C})$ of
  \emph{$\infty$-fold monoids} in $\mathcal{C}$ as the limit of the sequence
  \[ \cdots \to \Mon_{n}(\mathcal{C}) \to \Mon_{n-1}(\mathcal{C}) \to
  \cdots \Mon(\mathcal{C}) \to \mathcal{C},\] where the functors are
  given by evaluation at $[1]$ in the first factor of
  $\simp^{n,\op}$. Thus an $\infty$-fold monoid in $\mathcal{C}$ is a
  sequence $(A^{0},A^{1},\ldots)$ where $A^{n}$ is an $n$-fold monoid,
  such that $A^{n-1} \simeq A^{n}([1],\blank,\ldots,\blank)$ for each
  $n$. If $\mathcal{C}$ is the \icat{} $\Seg_{k}(\mathcal{X})$ or
  $\CSS_{k}(\mathcal{X})$ we refer to $\infty$-fold monoids as
  \emph{$\infty$-monoidal} $k$-fold (complete) Segal objects.
\end{defn}

\begin{remark}
  An associative monoid in $\mathcal{C}$ is the same thing as a
  category object $X$ in $\mathcal{C}$ such that $X_{0} \simeq
  *$. Because of this, we can extract a monoid $\Omega_{p}X$ from a
  pointed category object, i.e.\ a category object $X$ equipped with a
  map $p \colon * \to X$ (or equivalently a map
  $p \colon * \to X_{0}$)\footnote{This has been changed from the
    published version, which has an incorrect construction of
    $\Omega_{p}X$}: Let $i$ denote the inclusion
  $\{[0]\} \hookrightarrow \simp^{\op}$; then the functor
  $i^{*} \colon \Fun(\simp^{\op}, \mathcal{C}) \to \mathcal{C}$,
  given by evaluation at $[0]$, has a left adjoint $i_{!}$ and a right
  adjoint $i_{*}$, given by left and right Kan extension along
  $i$. From the formula for right Kan extensions we see 
  $i_{*}C$ is given by $(i_{*}C)_{n} \simeq C^{\times (n+1)}$ with
  face maps being projections and degeneracies diagonal maps. The map
  $p \colon * \to X_{0}$ induces a map
  $i_{*}p \colon * \simeq i_{*}* \to i_{*}X_{0}$, and so we can take
  the pullback \nolabelcsquare{\Omega_{p}X}{X}{*}{i_{*}X_0} in
  category objects. We think of the monoid $\Omega_{p}X$ as the monoid
  of endomorphisms of the object $p$ of $X$.
\end{remark}

\begin{remark}
  Suppose $\mathcal{X}$ is an $\infty$-topos. We have a composite
  functor $\Mon(\mathcal{X}) \to \Seg(\mathcal{X}) \to
  \CSS(\mathcal{X})$, where the first functor is the natural inclusion
  and the second is the localization functor. Then the induced functor
  $\Mon(\mathcal{X}) \to \CSS(\mathcal{X})_{*}$ to pointed complete
  Segal objects is fully faithful, and is left adjoint to the functor
  $\Omega \colon \CSS(\mathcal{X})_{*} \to \Mon(\mathcal{X})$ we just
  defined. This can be proved by the same argument as for
  \cite{enr}*{Theorem 6.3.2}; we do not recall this as we will not
  make any further use of this observation.
\end{remark}

\begin{remark}\label{rmk:Ekmonfromnplusk}
  Similarly, an $n$-fold monoid is the same thing as an $n$-fold Segal
  object $X$ such that
  $X_{0,\ldots,0} \simeq X_{1,0,\ldots,0} \simeq X_{1,\ldots,1,0}
  \simeq *$. We can again use this to extract an $n$-fold monoid from
  a pointed $n$-fold Segal object (or $n$-fold category object)
  $(X, p \colon * \to X)$.\footnote{This has been changed from the
    published version, which has an incorrect construction of
    $\Omega^{n}_{p}X$.} If $X' := X_{\bullet,\ldots,\bullet,0}$
  denotes the underlying $(n-1)$-fold Segal object (or $n$-fold
  category object), then we get an $n$-fold monoid $\Omega^{n}_{p}X$
  by taking the pullback
  \nolabelcsquare{\Omega^{n}_{p}X}{X}{*}{i_{*}X',} where $*$ denotes
  the terminal $n$-fold simplicial object and
  $i_{*}X'$ is obtained by taking the right Kan extension along
  $i \colon \{[0]\} \hookrightarrow \simp^{\op}$ in the last
  simplicial coordinate. We think of $\Omega^{n}_{p}X$ as the $n$-fold
  monoid of endomorphisms of the identity $(n-1)$-morphism of $p$. As
  in the case $n = 1$, if $\mathcal{X}$ is an $\infty$-topos it can be
  shown that
  $\Omega^{n} \colon \CSS_{n}(\mathcal{X})_{*} \to
  \Mon_{n}(\mathcal{X})$ has a fully faithful left adjoint $B^{n}$
  given by the composite
  $\Mon_{n}(\mathcal{X}) \to \Seg_{n}(\mathcal{X})_{*} \to
  \CSS_{n}(\mathcal{X})_{*}$ where the second functor is the
  localization.
\end{remark}

\begin{ex}
  Given an $(n+k)$-fold Segal space $X$ and an object $p \in
  X_{0,\ldots,0}$, we can extract an $n$-monoidal $k$-fold Segal
  space $\Omega^{n}_{p}X$.
\end{ex}

\begin{defn}\label{defn:symmmonfromseq}
  Given a pointed $n$-fold Segal object $(X,p)$ in $\mathcal{C}$, we
  can extract a pointed $(n-1)$-fold Segal object by taking the
  pullback \csquare{X'}{X_{1}}{*}{X_{0}^{\times 2}}{}{}{}{p} with $X'$
  pointed via the degeneracy by $s_{0} \circ p$. This defines a
  functor $\phi_{n} \colon \Seg_{n}(\mathcal{C})_{*} \to
  \Seg_{n-1}(\mathcal{C})_{*}$, which restricts to the forgetful
  functor $\Mon_{n}(\mathcal{C}) \to \Mon_{n-1}(\mathcal{C})$ we used
  above. The $(n-1)$-fold Segal object $\phi_{n}(X, p)$ is the
  mapping object $X(p,p)$. Let us define
  $\Seg_{\infty}(\mathcal{C})_{*}$ to be the limit of the sequence of
  \icats{}
  \[ \cdots \to \Seg_{n}(\mathcal{C})_{*} \to
  \Seg_{n-1}(\mathcal{C})_{*} \to \cdots \to \Seg(\mathcal{C})_{*} \to
  \mathcal{C}_{*}.\]
  The objects of $\Seg_{\infty}(\mathcal{C})_{*}$, which we will call
  \emph{infinite delooping sequences}, can then be described as sequences
  $((X^{0},p^{0}),(X^{1},p^{1}),\ldots)$ such that $(X^{n},p^{n})$ is
  a pointed $n$-fold Segal object, together with equivalences $(X^{n-1},p^{n-1}) \simeq
  \phi_{n}(X^{n},p^{n})$ for all $n$. The functors $\Omega^{n}$ we defined above
  sit in commutative diagrams
  \csquare{\Seg_{n}(\mathcal{C})_{*}}{\Mon_{n}(\mathcal{C})}{\Seg_{n-1}(\mathcal{C})_{*}}{\Mon_{n-1}(\mathcal{C}),}{\Omega^{n}}{\phi_{n}}{}{\Omega^{n-1}}
  and so taking the limit we get a functor $\Omega^{\infty} \colon
  \Seg_{\infty}(\mathcal{C})_{*} \to \Mon_{\infty}(\mathcal{C})$ that
  extracts an $\infty$-fold monoid from an infinite delooping sequence.
\end{defn}

\begin{ex}
  Given a sequence $(X^{n}, p^{n})$ where $(X^{n},p^{n})$ is a pointed
  $(k+n)$-fold Segal space, such that $(X^{n-1}, p^{n-1}) \simeq
  (X^{n}(p^{n},p^{n}), \id_{p^{n}})$, we can extract an $\infty$-monoidal $k$-fold Segal space as $\Omega^{\infty}_{p^{\bullet}}X^{\bullet}$.
\end{ex}

We now wish to compare our notions of $n$-fold monoids to
$\mathbb{E}_{n}$-algebras, where $\mathbb{E}_{n}$ is the
$\infty$-operad associated to the little $n$-disc operad. This is a
straightforward consequence of results proved in \cite{HA}:
\begin{propn} \label{propn:iopdcomp}
Let $\mathcal{C}$ be an \icat{} with finite products,
  and let $\mathcal{C}^{\times}$ denote the associated Cartesian
  symmetric monoidal \icat{} (see \cite{HA}*{\S 2.4.1}). Then:
  \begin{enumerate}[(i)]
  \item There is a natural equivalence $\Mon(\mathcal{C}) \simeq
    \Alg_{\mathbb{E}_{1}}(\mathcal{C}^{\times})$.
  \item For every integer $n$ there is a natural equivalence
    $\Mon_{n}(\mathcal{C}) \simeq
    \Alg_{\mathbb{E}_{n}}(\mathcal{C}^{\times})$; under this
    equivalence the forgetful map $\Mon_{n}(\mathcal{C}) \to
    \Mon_{n-1}(\mathcal{C})$ corresponds to the map induced by
    a map of $\infty$-operads $\mathbb{E}_{n-1} \to \mathbb{E}_{n}$.
  \item There is a natural equivalence $\Mon_{\infty}(\mathcal{C})\simeq
    \Alg_{\mathbb{E}_{\infty}}(\mathcal{C}^{\times})$, where
    $\mathbb{E}_{\infty}$ denotes the commutative $\infty$-operad.
  \end{enumerate}
\end{propn}
\begin{proof}
  (i) follows by combining \cite{HA}*{Proposition 4.1.2.10},
  \cite{HA}*{Proposition 2.4.2.5} and \cite{HA}*{Example 5.1.0.7}. Now
  we prove (ii) by induction: if it holds for $n-1$ we have an
  equivalence
  \[ \Mon_{n}(\mathcal{C}) \simeq \Mon(\Mon_{n-1}(\mathcal{C})) \simeq
  \Mon(\Alg_{\mathbb{E}_{n-1}}(\mathcal{C}^{\times})) \simeq
  \Alg_{\mathbb{E}_{1}}(\Alg_{\mathbb{E}_{n-1}}(\mathcal{C}^{\times})^{\times}).\]
  The universal property of the Boardman-Vogt tensor product
  (see \cite{HA}*{\S 2.2.5}) implies that this is naturally equivalent
  to $\Alg_{\mathbb{E}_{1} \otimes
    \mathbb{E}_{n-1}}(\mathcal{C})$. By the Additivity Theorem,
  \cite{HA}*{Theorem 5.1.2.2} the tensor product $\mathbb{E}_{1} \otimes
  \mathbb{E}_{n-1}$ is equivalent to $\mathbb{E}_{n}$, so we obtain
  a natural equivalence $\Mon_{n}(\mathcal{C}) \simeq
  \Alg_{\mathbb{E}_{n}}(\mathcal{C}^{\times})$. Moreover, under this
  equivalence the forgetful functor $\Mon_{n}(\mathcal{C}) \to
  \Mon_{n-1}(\mathcal{C})$ corresponds to the map induced by
  \[\mathbb{E}_{n-1} \simeq \txt{Triv} \otimes \mathbb{E}_{n-1}\to
  \mathbb{E}_{1} \otimes \mathbb{E}_{n-1} \simeq \mathbb{E}_{n},\]
  where $\txt{Triv} \to \mathbb{E}_{1}$ is the obvious map from the
  trivial operad corresponding to the forgetful functor from
  $\mathbb{E}_{1}$-algebras in $\mathcal{C}$ to $\mathcal{C}$.

  Taking the limit of the equivalences in (iii) as $n$ goes to
  $\infty$, we get an equivalence $\Mon_{\infty}(\mathcal{C}) \simeq
  \lim_{n \to \infty} \Alg_{\mathbb{E}_{n}}(\mathcal{C})$. Since the
  functors in the diagram come from maps of $\infty$-operads, we can identify
  the right-hand side with $\Alg_{\colim_{n \to
      \infty}\mathbb{E}_{n}}(\mathcal{C})$. But by
  \cite{HA}*{Corollary 5.1.1.5} the colimit $\colim_{n \to
      \infty}\mathbb{E}_{n}$ is the commutative $\infty$-operad
    $\mathbb{E}_{\infty}$. Putting these equivalences together now gives a natural equivalence
    $\Mon_{\infty}(\mathcal{C}) \simeq \Alg_{\mathbb{E}_{\infty}}(\mathcal{C}^{\times})$.  
\end{proof}
Given these natural equivalences, we will allow ourselves to refer to $n$-
and $\infty$-monoidal (complete) $k$-fold Segal spaces as
\emph{$\mathbb{E}_{n}$-monoidal} and \emph{symmetric monoidal}
(complete) $k$-fold Segal spaces.

\section{Adjoints and Duals in Iterated Segal Spaces}\label{sec:adj}
In this section we first review the notions of $(\infty,k)$-categories
with adjoints and (symmetric) monoidal $(\infty,k)$-categories with
duals from \cite{LurieCob}, and then extend these notions to
$(\infty,k)$-categories internal to an \itopos{}. We begin by
recalling some key facts about adjunctions in $(\infty,2)$-categories
due to Riehl and Verity:
\begin{defn}
  Let $\Adj$ denote the generic adjunction, i.e.\ the universal
  2-category containing an adjunction between two 1-morphisms. An
  explicit description of $\Adj$ can be found in
  \cite{RiehlVerityAdj}*{\S 4}. We will think of $\Adj$ as a 2-fold
  Segal space via the nerve functor from 2-categories to
  2-fold Segal spaces. An \emph{adjunction} in a (complete)
  2-fold Segal space $\mathcal{C}$ is then a map of 2-fold Segal
  spaces $\Adj \to \mathcal{C}$. If $\mathcal{C}$ is a complete 2-fold Segal
  space, we write $\Adj(\mathcal{C}) := \Map(\Adj, \mathcal{C})$ for
  the space of adjunctions in $\mathcal{C}$.
\end{defn}

\begin{thm}[\cite{RiehlVerityAdj}*{Theorem 5.3.9}]
  Every adjunction in the homotopy 2-category of an
  $(\infty,2)$-category extends to an adjunction in the
  $(\infty,2)$-category. In particular, a 1-morphism in an
  $(\infty,2)$-category has a (left or right) adjoint \IFF{} it has
  one in the homotopy 2-category.
\end{thm}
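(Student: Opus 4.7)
The plan is to follow the combinatorial strategy of Riehl--Verity: model $\Adj$ as a simplicial computad, identify a small sub-2-category $\Adj^0 \subset \Adj$ carrying exactly the data visible in the homotopy 2-category, and show that the inclusion $\Adj^0 \hookrightarrow \Adj$ is an acyclic cofibration in a suitable model structure on $(\infty,2)$-categories (e.g.\ the Bergner-style model structure on 2-fold complete Segal spaces, or the 2-trivial complicial model structure on stratified simplicial sets). Having this, any map $\Adj^0 \to \mathcal{C}$ extends to $\Adj \to \mathcal{C}$, and extension is what is needed.

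First I would recall Schanuel--Street's explicit presentation of $\Adj$ (objects $+, -$; 1-cells freely generated by $f\colon +\to -$ and $u\colon -\to +$; 2-cells generated by $\eta\colon \id_+ \to uf$ and $\epsilon\colon fu \to \id_-$; subject to the triangle identities) and the ensuing description of its non-degenerate cells by ``strictly undulating squiggles.'' Next I would build $\Adj^0$ as the sub-2-category generated by the 0-, 1-, and 2-cells above together with 3-cells witnessing the two triangle identities (in the $(\infty,2)$-categorical setting these are equivalences rather than equalities). An adjunction in the homotopy 2-category $\mathrm{h}_2\mathcal{C}$ is precisely a map $\Adj^0 \to \mathcal{C}$: the 1- and 2-cells lift from $\mathrm{h}_2\mathcal{C}$ because the underlying mapping spaces surject onto their sets of components, and the triangle-identity 3-cells exist because the identities already hold on components.

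The bulk of the work is showing that $\Adj^0 \hookrightarrow \Adj$ is an acyclic cofibration. Filter the complement by dimension of cells, and for each squiggle cell of dimension $n \geq 3$ verify two things: (1) its attaching sphere lies in the previously built sub-complex, so that the attachment is a pushout in the category of simplicial computads, and (2) the attached cell is ``equivalence-valued'' in the $(\infty,2)$-categorical sense --- equivalently, its boundary bounds an invertible cell, because the associated 2-morphism is a composite of units, counits, and identities, which is forced to be invertible by the triangle identities already encoded in $\Adj^0$. Organizing these attachments as iterated pushouts of generating trivial cofibrations of the chosen model structure is the main obstacle: the squiggle indexing combinatorics must be matched precisely against the appropriate horn/marking fillers, and keeping track of how the triangle 3-cells witness invertibility of all higher cells requires careful inductive bookkeeping.

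Granting the extension, both parts of the theorem follow at once. For the first clause, any adjunction $\Adj \to \mathrm{h}_2\mathcal{C}$ lifts (through the quotient $\mathcal{C} \to \mathrm{h}_2\mathcal{C}$) to a map $\Adj^0 \to \mathcal{C}$, hence extends to $\Adj \to \mathcal{C}$. For the ``in particular'' clause, a 1-morphism with a right adjoint in $\mathrm{h}_2\mathcal{C}$ determines, by Schanuel--Street, a full adjunction in $\mathrm{h}_2\mathcal{C}$, which then lifts to $\mathcal{C}$ and in particular exhibits the lifted 1-morphism as a right adjoint of $f$ in $\mathcal{C}$.
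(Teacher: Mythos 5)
First, note that the paper does not prove this statement at all: it is imported verbatim from Riehl--Verity (their Theorem 5.3.9), with only a remark that transporting it to complete $2$-fold Segal spaces is legitimate by the Barwick--Schommer-Pries unicity theorem. So the comparison has to be with Riehl--Verity's actual argument, and there your sketch has a genuine gap at its keystone. You define $\Adj^{0}$ to be the sub-structure generated by $\mathfrak{f}$, $\mathfrak{g}$, the unit, the counit, \emph{and invertible witnesses for both triangle identities}, and you propose to show that $\Adj^{0} \hookrightarrow \Adj$ is an acyclic cofibration, so that every map $\Adj^{0} \to \mathcal{C}$ extends. This is not merely the ``main obstacle'' you defer --- it is false. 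Mapping out of $\Adj$ computes the space of homotopy coherent adjunctions, while mapping out of your $\Adj^{0}$ computes the space of tuples $(\mathfrak{f},\mathfrak{g},\mathfrak{u},\mathfrak{c},\alpha,\beta)$ with both triangle witnesses chosen freely; these spaces are not equivalent, and in fact an arbitrary choice of \emph{both} witnesses need not extend to a coherent adjunction at all (the two witnesses must satisfy a swallowtail-type compatibility, which in $\Adj$ is automatic but in your free $\Adj^{0}$ is not imposed). Riehl--Verity are explicit that one may prescribe only part of this data: their extension theorem starts from the $1$-cells together with, say, the counit (or the counit plus a single triangle witness), i.e.\ from one of their ``parental'' subcomputads, and their Theorem 5.4.22 shows that extensions from such data exist and form a contractible space --- a statement that fails for the subcomputad carrying both triangle witnesses. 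So the reduction ``an adjunction in the homotopy $2$-category is precisely a map $\Adj^{0}\to\mathcal{C}$; now extend along a trivial cofibration'' cannot work as stated.

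The repair is essentially to follow Riehl--Verity more closely: from the adjunction in the homotopy $2$-category lift only representatives of $\mathfrak{f}$, $\mathfrak{g}$ and the counit (the unit and all witnesses are then \emph{constructed}, not prescribed), and extend over the filtration of $\Adj$ by parental subcomputads, checking at each stage that every new squiggle cell attaches along a horn whose distinguished face is ``fillable'' because it is degenerate on the counit; this cell-by-cell lifting against isofibrations of mapping quasi-categories is what replaces your claimed model-categorical acyclicity. With that change the deduction of both clauses of the theorem at the end of your sketch is fine (a right adjoint in the homotopy $2$-category yields Schanuel--Street adjunction data there, of which one retains only the fillable part before extending), but as written the argument collapses at the acyclic-cofibration step.
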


\begin{defn}
  More or less keeping the notation of \cite{RiehlVerityAdj}, among
  the data defining the  $(\infty,2)$-category $\Adj$ we have:
  \begin{itemize}
  \item two objects $+$ and $-$,
  \item 1-morphisms $\mathfrak{f} \colon - \to +$ (the left adjoint) and $\mathfrak{g}
    \colon + \to -$ (the right adjoint),
  \item 2-morphisms $\mathfrak{u} \colon \id_{+} \to \mathfrak{g}\mathfrak{f}$ (the unit) and $\mathfrak{c}
    \colon \mathfrak{f}\mathfrak{g} \to \id_{-}$ (the counit), satisfying the triangle identities.
  \end{itemize}
\end{defn}

\begin{thm}[\cite{RiehlVerityAdj}*{Theorem 5.4.22}]\label{thm:RVadjtrunc}
  Suppose $\mathcal{C}$ is a complete 2-fold Segal space. The maps
  $\mathfrak{f}^{*}$ and $\mathfrak{g}^{*} \colon \Adj(\mathcal{C}) \to \Mor_{1}(\mathcal{C})$ sending
  an adjunction in $\mathcal{C}$ to the left and right adjoint,
  respectively, are
  (-1)-connected, i.e.\ their fibres are either empty or contractible.
\end{thm}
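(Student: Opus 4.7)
The plan is to exhibit the walking adjunction $\Adj$ as an iterated cell attachment starting from the 1-simplex $\Delta^{1} \hookrightarrow \Adj$ picking out $\mathfrak{f}$, in such a way that each new cell is attached along a universal property; this will force the fibre of $\mathfrak{f}^{*}$ to be an iterated limit of contractible spaces of representability witnesses, whenever it is nonempty. The case of $\mathfrak{g}^{*}$ follows by applying the same argument after reversing the direction of 1-morphisms in $\mathcal{C}$, so I would concentrate on $\mathfrak{f}^{*}$.

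First I would fix a 1-morphism $f \in \Mor_{1}(\mathcal{C})$ and identify $(\mathfrak{f}^{*})^{-1}(f)$ with the space of completions of $f$ to full adjunction data $(f, \mathfrak{g}, \mathfrak{u}, \mathfrak{c})$ in $\mathcal{C}$, equipped with the filler 2-morphisms witnessing the triangle identities. The next step is to express this as an iterated pullback along three successive universal-property constraints: (i) the pair $(\mathfrak{g}, \mathfrak{u})$ with $\mathfrak{u}\colon \id \to \mathfrak{g} f$ must exhibit $\mathfrak{g}$ as a pointwise right adjoint to $f$, i.e.\ post-composition with $\mathfrak{u}$ induces equivalences of mapping spaces in each hom $(\infty,1)$-category of $\mathcal{C}$; (ii) given such $(\mathfrak{g}, \mathfrak{u})$, the counit $\mathfrak{c}$ is determined up to contractible choice by requiring one of the triangle identities; (iii) the remaining triangle identity is then automatic, or more precisely its filler lives in a contractible space. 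Each of these three spaces of choices is either empty or contractible by the version of the Yoneda lemma applied to the mapping $(\infty,1)$-categories of $\mathcal{C}$, so their total space is too.

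Justifying step (i) is where the bulk of the work lies, and where the completeness hypothesis on $\mathcal{C}$ enters: one must show that the space of representing data for the profunctor $\mathcal{C}(f(-), ?)$ on the mapping $(\infty,1)$-categories of $\mathcal{C}$ is empty or contractible. Classically this is standard, but promoting the argument to complete 2-fold Segal spaces requires a precise combinatorial grip on $\Adj$; the main obstacle is controlling the higher coherences among the cells of $\Adj$ as they are attached. This is precisely what Riehl and Verity accomplish in \cite{RiehlVerityAdj}*{Theorem 5.4.22}, using the explicit presentation of $\Adj$ from \cite{RiehlVerityAdj}*{\S 4} together with the lifting theorem already quoted above, and my plan would be to simply follow their argument.
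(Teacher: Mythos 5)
The paper does not actually prove this statement: it is imported verbatim from Riehl--Verity, and the only substantive content the paper adds is the subsequent remark explaining why a theorem proved in a \emph{different} model of $(\infty,2)$-categories --- categories strictly enriched in simplicial sets with the Joyal model structure --- may be applied to complete 2-fold Segal spaces, namely via the unicity theorem of Barwick and Schommer-Pries \cite{BarwickSchommerPriesUnicity} (or the explicit comparison in \cite{enrcomp}). Your proposal ends in essentially the same place, deferring the hard part to \cite{RiehlVerityAdj}*{Theorem 5.4.22}, so in that sense it is not wrong; but it contains one genuine gap: you write as if Riehl and Verity prove the statement for complete 2-fold Segal spaces (``promoting the argument to complete 2-fold Segal spaces \ldots\ is precisely what Riehl and Verity accomplish''), which they do not. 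To ``simply follow their argument'' you must either transport the statement across an equivalence of models of $(\infty,2)$-categories --- the route taken here --- or redo their combinatorial analysis of $\Adj$ internally to 2-fold Segal spaces, which would be a substantial piece of new work. Naming and justifying that bridge is the one thing a proof in this paper's setting actually has to supply.

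A secondary caveat: your three-step decomposition (unit by representability, counit by one triangle identity, the other triangle identity ``automatic'') is the classical 2-categorical heuristic, but it cannot by itself yield that the fibres of $\mathfrak{f}^{*}$ are empty or contractible, because $\Adj$ carries coherence cells in all dimensions and a finite filtration does not account for them; Riehl and Verity's actual argument proceeds by an infinite cell-by-cell extension along their explicit simplicial computad presentation of $\Adj$, showing each restriction map is a trivial fibration. You acknowledge the issue and defer to them, which is fine, but be aware that the sketch as written is not an outline of their proof so much as a statement of what their proof must overcome.
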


\begin{remark}
  The results of Riehl and Verity are proved in the context of
  categories strictly enriched in simplicial sets equipped with the
  Joyal model structure. Reformulating these theorems in terms of
  complete 2-fold Segal spaces is justified because these two models
  of $(\infty,2)$-categories are equivalent by the unicity theorem of
  Barwick and Schommer-Pries~\cite{BarwickSchommerPriesUnicity}. (An
  explicit equivalence can also be obtained by combining Theorem 5.9 and
  Corollary 7.21 of \cite{enrcomp}.)
\end{remark}

Now we recall what it means for an $(\infty,n)$-category to \emph{have
  adjoints}:
\begin{defn}
  Suppose $\mathcal{C}$ is a (complete) $n$-fold Segal space with $n >
  1$. We say that $\mathcal{C}$ \emph{has adjoints for 1-morphisms} if
  every 1-morphism in the homotopy 2-category of $\mathcal{C}$ has a
  left and a right adjoint. Equivalently, $\mathcal{C}$ has adjoints
  for 1-morphims if the maps \[\mathfrak{f}^{*},\mathfrak{g}^{*} \colon \Adj(u_{(\infty,2)}\mathcal{C}) \to
  \Mor_{1}(u_{(\infty,2)}\mathcal{C})\] are both equivalences.
\end{defn}

\begin{defn}\label{defn:hasadjsegspace}
  Suppose $\mathcal{C}$ is a (complete) $n$-fold Segal space with $n >
  1$. For $1 < k < n$ we say that $\mathcal{C}$ \emph{has adjoints for
    k-morphisms} if for all objects $X,Y$ of $\mathcal{C}$ the
  $(n-1)$-fold Segal space $\mathcal{C}(X,Y)$ has adjoints for
  $(k-1)$-morphims. We say that $\mathcal{C}$ \emph{has adjoints} if
  it has adjoints for $k$-morphims for all $k =1,\ldots, n-1$.
\end{defn}

\begin{remark}
  To see that a not necessarily complete $n$-fold Segal space
  $\mathcal{C}$ has adjoints, it is not necessary to complete it:
  Whether $\mathcal{C}$ has adjoints for 1-morphisms only depends on
  the homotopy $2$-category, which is easy to describe without
  completing $\mathcal{C}$. Moreover, by \cite{nmorita}*{Lemma 5.50}
  the mapping $(n-1)$-fold Segal spaces in the completion of
  $\mathcal{C}$ are the completions of the mapping $(n-1)$-fold Segal
  spaces of $\mathcal{C}$, so by induction we do not need to complete
  to see that $\mathcal{C}$ has adjoints for $k$-morphisms also for $k
  > 1$.
\end{remark}

\begin{defn}
  We say that a monoidal $n$-fold Segal space
  $\mathcal{C}^{\otimes}$ \emph{has duals} if $\mathcal{C}$ has
  adjoints when regarded as an $(n+1)$-fold Segal space. We also say a
  symmetric monoidal or $\mathbb{E}_{k}$-monoidal $n$-fold Segal space
  \emph{has duals} if the underlying monoidal $n$-fold Segal space has
  duals.
\end{defn}

\begin{lemma}\label{lem:gplikedualspace}
  We may regard a space $X$ as an $n$-fold Segal space for any $n$ by
  taking the constant functor with value $X$. If $X$ is an associative
  monoid in $\mathcal{S}$ (or in other words an
  $A_{\infty}$-space), then $X$ has duals \IFF{} $X$ is grouplike,
  i.e.\ under the induced multiplication the monoid $\pi_{0}X$ is a group.
\end{lemma}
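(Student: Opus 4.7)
The plan is to reduce the statement that $X$ has duals to invertibility of the monoid $\pi_0 X$, by unpacking the definition down to the homotopy 2-category of $X^{\otimes}$ via Riehl--Verity.

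First I unpack the condition. By Definition~\ref{defn:monSegsp}, saying $X$ has duals means the $(n+1)$-fold Segal space $X^{\otimes}$ associated to the $A_{\infty}$-algebra structure has adjoints. Now $X^{\otimes}$ has a single object, and its mapping $n$-fold Segal space from this object to itself is the constant $n$-fold Segal space $X$. Because constant Segal spaces have all $k$-morphisms invertible, all iterated mapping Segal spaces of $X^{\otimes}$ at level $\geq 2$ are again constant, and so adjoints for $k$-morphisms with $k \geq 2$ exist automatically (every equivalence is its own adjoint in either direction). Hence the only non-trivial condition is that every 1-morphism in $X^{\otimes}$ admits a left and a right adjoint.

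Next, by Theorem~\ref{thm:RVadjtrunc} this condition depends only on the homotopy 2-category $\mathbf{h}_{2}$ of $u_{(\infty,2)} X^{\otimes}$. This 2-category I describe directly: it has a single object $*$, the set of 1-morphisms is the monoid $\pi_0 X$ (with product induced from the $A_{\infty}$-multiplication), and the set of 2-morphisms from $[x]$ to $[y]$ is $\pi_0 \Map_{X}(x,y)$, which is empty unless $[x] = [y]$ in $\pi_0 X$ and is a $\pi_{1}(X,x)$-torsor in that case. In particular, every 2-morphism in $\mathbf{h}_{2}$ is invertible.

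Finally, if $[x] \dashv [y]$ is an adjunction in $\mathbf{h}_{2}$, the existence of a unit $[e] \Rightarrow [y][x]$ and counit $[x][y] \Rightarrow [e]$ forces $[y][x] = [e] = [x][y]$ in $\pi_{0} X$, exhibiting $[x]$ as invertible; the same equations are forced by the existence of a right adjoint to $[x]$. Conversely, if $[x]$ is invertible in $\pi_0 X$ with inverse $[y]$, then any choice of unit and counit 2-morphisms gives an equivalence in $\mathbf{h}_{2}$, which can be promoted to an adjoint equivalence by the classical 2-categorical argument (using that all 2-morphisms in $\mathbf{h}_{2}$ are invertible), yielding both a left and a right adjoint. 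Therefore $X^{\otimes}$ has adjoints for 1-morphisms if and only if every element of $\pi_{0}X$ has an inverse, i.e.\ if and only if $X$ is grouplike. The only mildly delicate point is the promotion of an equivalence to an adjoint equivalence in $\mathbf{h}_{2}$, but this is classical.
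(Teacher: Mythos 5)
Your argument is correct and is essentially the paper's proof in expanded form: the paper likewise notes that only the 1-morphism level of $X^{\otimes}$ gives a nontrivial condition and reduces to the fact that an object of the monoidal fundamental groupoid of $X$ (equivalently, a 1-morphism in the one-object homotopy 2-category with invertible 2-cells that you describe) has a dual \IFF{} its class in $\pi_{0}X$ is invertible. Two cosmetic points: the reduction to the homotopy 2-category is the content of the first Riehl--Verity theorem quoted in \S\ref{sec:adj} (indeed it is built into the definition of having adjoints for 1-morphisms), not of Theorem~\ref{thm:RVadjtrunc}, and an equivalence is adjoint to its \emph{inverse} rather than to itself.
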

\begin{proof}
  It suffices to check that the homotopy 1-category of $X$, equipped with
  the induced monoidal structure, has duals. But this is just the
  fundamental 1-groupoid of $X$, and an object of a
  monoidal groupoid has a dual \IFF{} it has an inverse.
\end{proof}

We now prove a characterization of $n$-fold Segal spaces with adjoints
that will be useful later:
\begin{propn}\label{propn:hasadjointseqt}
  Suppose $\mathcal{C}$ is a complete $n$-fold Segal space. Then
  $\mathcal{C}$ has adjoints for $k$-morphisms for any $2 < k < n$
  \IFF{} for every map $\phi \colon X \to \Ob(\mathcal{C})^{\times 2}$
  in $\mathcal{S}$, the complete $(n-1)$-fold Segal space
  $\mathcal{C}_{\phi}$, defined by the pullback square
  \nolabelcsquare{\mathcal{C}_{\phi}}{\mathcal{C}_{1}}{X}{\mathcal{C}_{0}^\times
    2} in $(n-1)$-fold Segal spaces, has adjoints for
  $(k-1)$-morphisms.
\end{propn}

The proof depends on the following observation:
\begin{lemma}\label{lem:hasadjfamily}
  Suppose given a morphism of $n$-fold Segal spaces $\mathcal{C} \to
  X$, where $X$ is constant. If all the fibres $\mathcal{C}_{x}$ for
  $x \in X$ have adjoints for $k$-morphisms, then so does $\mathcal{C}$.
\end{lemma}
\begin{proof}
  To prove this, we induct on $n$. For $n = 1$, there is nothing to
  prove, so we may suppose that the statement is true for $(n-1)$-fold
  Segal spaces for all $k = 1,\ldots,n-1$. 
  
  We first consider the case $k = 1$. Since $\Adj(X) \simeq
  \Mor_{1}(X) \simeq X$, we have a commutative diagram
  \nolabelopctriangle{\Adj(\mathcal{C})}{\Mor_{1}(\mathcal{C})}{X.}
  Since the functors $\Adj(\blank)$ and $\Mor_{1}(\blank)$
  preserve limits, the induced map on fibres over $p \in X$ can be
  identified with $\Adj(\mathcal{C}_{p}) \to
  \Mor_{1}(\mathcal{C}_{p})$. By assumption this is an equivalence for
  all $p \in X$, and so $\Adj(\mathcal{C}) \to \Mor_{1}(\mathcal{C})$ is also an
  equivalence, i.e.\ $\mathcal{C}$ has adjoints for $1$-morphisms.

  For $k > 2$, we must show that $\mathcal{C}(c,d)$ has adjoints for
  $(k-1)$-morphisms for all $c,d \in \Ob(\mathcal{C})$. But by
  Lemma~\ref{lem:CtoXmaps} there is a map $\mathcal{C}(c,d) \to
  \Omega_{\pi(c),\pi(d)}X$ whose fibres are mapping $(n-1)$-fold Segal
  spaces in the fibres of $\pi$, and so have adjoints for
  $(k-1)$-morphisms. The result therefore holds by the inductive
  hypothesis.
\end{proof}

\begin{proof}[Proof of Proposition~\ref{propn:hasadjointseqt}]
  One direction is obvious: If $\mathcal{C}_{\phi}$ has adjoints for
  $(k-1)$-morphisms for every map $\phi$, then in particular this is
  true for the $(n-1)$-fold Segal spaces $\mathcal{C}(x,y)$ for all
  objects $x,y$, so $\mathcal{C}$ has adjoints for $k$-morphisms.

  For the other direction we must show that if $\mathcal{C}$ has
  adjoints for $k$-morphisms then $\mathcal{C}_{\phi}$ has adjoints
  for $(k-1)$-morphisms for all $\phi$. By
  Lemma~\ref{lem:hasadjfamily}, to see this it suffices to show that
  the fibres of the map $\mathcal{C}_{\phi} \to X$ have adjoints for
  $(k-1)$-morphisms. But the fibre of this map at $p \in X$ is
  $\mathcal{C}(a,b)$ where $\phi(p) \simeq (a,b)$, which by assumption
  has adjoints for $k$-morphisms.
\end{proof}

An advantage of the characterization of
Proposition~\ref{propn:hasadjointseqt} is that this has a
straightforward generalization to other $\infty$-topoi. We introduce
this after some preliminary discussion:
\begin{defn}
  Suppose $\mathcal{X}$ is an \itopos, and let 
  \[r^{*} : \mathcal{S} \rightleftarrows \mathcal{X} : r_{*} \]
  denote the unique geometric morphism from the \icat{} of spaces. By
  Proposition~\ref{propn:CSSfunc}, this induces an adjunction
  \[L_{n}(r^{*})_{*} : \CSS^{n}(\mathcal{S}) \rightleftarrows
  \CSS^{n}(\mathcal{X}) : (r_{*})_{*} \]
  If $\mathcal{C}$ is a complete $2$-fold Segal object in
  $\mathcal{X}$, then an \emph{adjunction} in $\mathcal{C}$ is a
  functor $(r^{*})_{*}\Adj \to \mathcal{C}$. We write
  $\Adj(\mathcal{C}) \in \mathcal{X}$ for the mapping object
  $\MAP((r^{*})_{*}\Adj, \mathcal{C})$ in $\mathcal{X}$, defined in
  Definition~\ref{defn:MAP}. Similarly, if $\mathcal{C}$ is
  a complete $k$-fold Segal object in $\mathcal{X}$, we write
  $\Ob(\mathcal{C}) := \MAP((r^{*})_{*}C_{0}, \mathcal{C}) \simeq
  \mathcal{C}_{0,\ldots,0}$ and $\Mor_{n}(\mathcal{C}) :=
  \MAP((r^{*})_{*}C_{n}, \mathcal{C})$ for $n = 1,\ldots,k$.
\end{defn}

\begin{lemma}\label{lem:adjtrunctopos}
  Let $\mathcal{C}$ be a complete 2-fold Segal object in an \itopos{}
  $\mathcal{X}$. Then the morphisms $\mathfrak{f}^{*}$ and $\mathfrak{g}^{*} \colon
  \Adj(\mathcal{C}) \to \Mor_{1}(\mathcal{C})$ are $(-1)$-truncated.
\end{lemma}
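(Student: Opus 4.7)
The plan is to reduce the statement to the already-known case of complete $2$-fold Segal spaces (Theorem~\ref{thm:RVadjtrunc}) by probing with mapping spaces out of arbitrary objects of $\mathcal{X}$.

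First I would recall that in the \itopos{} $\mathcal{X}$, a morphism $f \colon A \to B$ is $(-1)$-truncated \IFF{} for every $X \in \mathcal{X}$ the induced map of spaces $\Map_{\mathcal{X}}(X, A) \to \Map_{\mathcal{X}}(X, B)$ is $(-1)$-truncated (i.e.\ has empty or contractible fibres). So it suffices to check this for $f = \mathfrak{f}^{*}$ and $f = \mathfrak{g}^{*}$ after fixing an arbitrary $X \in \mathcal{X}$.

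Next, I would use the pseudo-geometric morphism $X \times r^{*}(\blank) \dashv \Map_{\mathcal{X}}(X, \blank)$ (as in the proof of Corollary~\ref{cor:complsegtoposcond}) together with Proposition~\ref{propn:CSSfunc} to transport the situation from $\CSS^{2}(\mathcal{X})$ to $\CSS^{2}(\mathcal{S})$. Concretely, unwinding the definition of $\Adj(\mathcal{C}) = \MAP((r^{*})_{*}\Adj, \mathcal{C})$ from Definition~\ref{defn:MAP} gives
\[
\Map_{\mathcal{X}}(X, \Adj(\mathcal{C})) \simeq \Map_{\CSS^{2}(\mathcal{X})}\bigl((r^{*})_{*}\Adj \times c^{*}X, \mathcal{C}\bigr),
\]
and the adjunction of pseudo-geometric morphisms from Proposition~\ref{propn:CSSfunc}(i) then identifies this with $\Map_{\CSS^{2}(\mathcal{S})}(\Adj, \Map_{\mathcal{X}}(X, \mathcal{C}))$, which is precisely $\Adj(\Map_{\mathcal{X}}(X, \mathcal{C}))$ for the complete $2$-fold Segal space $\Map_{\mathcal{X}}(X, \mathcal{C})$. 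The analogous identification $\Map_{\mathcal{X}}(X, \Mor_{1}(\mathcal{C})) \simeq \Mor_{1}(\Map_{\mathcal{X}}(X, \mathcal{C}))$ is immediate. Under these identifications the maps $\mathfrak{f}^{*}, \mathfrak{g}^{*}$ in $\mathcal{X}$ correspond to the same-named maps for the complete $2$-fold Segal space $\Map_{\mathcal{X}}(X, \mathcal{C})$.

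Finally, Theorem~\ref{thm:RVadjtrunc} applies to the complete $2$-fold Segal space $\Map_{\mathcal{X}}(X, \mathcal{C})$ and shows that $\mathfrak{f}^{*}$ and $\mathfrak{g}^{*}$ have empty or contractible fibres there, i.e.\ are $(-1)$-truncated as maps of spaces. Since this holds for every $X \in \mathcal{X}$, the original maps in $\mathcal{X}$ are $(-1)$-truncated, as required. The main technical point (rather than a genuine obstacle) is justifying the identification in the middle paragraph: one needs that $(r^{*})_{*}\Adj \times c^{*}X$ is already complete---so that no extra application of $L_{\mathcal{X}\subseteq\CSS(\mathcal{X})}$ is needed before pairing with $\mathcal{C}$---which follows from the facts that $r^{*}$ preserves the Segal and completeness conditions (being a left-exact left adjoint) and that products of complete objects are complete by Lemma~\ref{lem:CSSlocprod}.
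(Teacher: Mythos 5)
Your proof is correct and follows essentially the same route as the paper: probe with an arbitrary $X \in \mathcal{X}$, transport to $\CSS^{2}(\mathcal{S})$ via the adjunctions of Proposition~\ref{propn:CSSfunc}, identify the result with $\Adj$ and $\Mor_{1}$ of a complete $2$-fold Segal space, and apply Theorem~\ref{thm:RVadjtrunc}. The only (cosmetic) difference is that the paper phrases the intermediate complete $2$-fold Segal space as $(r_{*})_{*}\mathcal{C}^{X}$ using the internal hom, whereas you use the pseudo-geometric morphism $X \times r^{*}(\blank) \dashv \Map_{\mathcal{X}}(X,\blank)$ directly, exactly as in the proof of Corollary~\ref{cor:complsegtoposcond}; these yield equivalent objects.
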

\begin{proof}
  We must show that for any $X \in \mathcal{X}$, the map
  $\Map_{\mathcal{X}}(X, \Adj(\mathcal{C})) \to \Map_{\mathcal{X}}(X,
  \Mor_{1}(\mathcal{C}))$ is $(-1)$-truncated. But there is a natural equivalence
  \[ 
  \begin{split}
\Map_{\mathcal{X}}(X, \Adj(\mathcal{C})) & \simeq
  \Map_{\CSS^{2}(\mathcal{X})}(X \times (r^{*})_{*}\Adj, \mathcal{C})
  \simeq \Map_{\CSS^{2}(\mathcal{X})}((r^{*})_{*}\Adj,
  \mathcal{C}^{X}) \\ & \simeq \Map_{\CSS^{2}(\mathcal{S})}(\Adj,
  (r_{*})_{*}\mathcal{C}^{X}) \simeq
  \Adj((r_{*})_{*}\mathcal{C}^{X}),    
  \end{split}
\]
  and similarly $\Map_{\mathcal{X}}(X, \Mor_{1}(\mathcal{C})) \simeq
  \Mor_{1}((r_{*})_{*}\mathcal{C}^{X})$. Thus this follows by applying
  Theorem~\ref{thm:RVadjtrunc} to the complete 2-fold Segal spaces
  $(r_{*})_{*}\mathcal{C}^{X}$ for all $X \in \mathcal{X}$.
\end{proof}

\begin{defn}
  Suppose $\mathcal{C}$ is a complete $n$-fold Segal object in
  $\mathcal{X}$ with $n > 1$. We say that $\mathcal{C}$ \emph{has
    adjoints for 1-morphisms} if the maps $\mathfrak{f}^{*},\mathfrak{g}^{*} \colon
  \Adj(u_{(\infty,2)}\mathcal{C}) \to
  \Mor_{1}(u_{(\infty,2)}\mathcal{C})$ are both equivalences.
\end{defn}

\begin{defn}\label{defn:hasadjitopos}
  Suppose $\mathcal{C}$ is a complete $n$-fold Segal object in
  $\mathcal{X}$ with $n > 1$. For $1 < k < n$ we say that
  $\mathcal{C}$ \emph{has adjoints for k-morphisms} if for all maps
  $\phi \colon X \to \Ob(\mathcal{C})^{\times 2}$ in $\mathcal{X}$,
  the complete $(n-1)$-fold Segal object $\mathcal{C}_{\phi}$, defined
  by the pullback square
  \nolabelcsquare{\mathcal{C}_{\phi}}{\mathcal{C}_{1}}{X}{\mathcal{C}_{0}^\times
    2} in $(k-1)$-fold Segal objects, has adjoints for $(k-1)$-morphisms.
  We say that $\mathcal{C}$ \emph{has adjoints} if
  it has adjoints for $k$-morphims for all $k =1,\ldots, n-1$.
\end{defn}

\begin{defn}
  If $\mathcal{C}$ is a (not necessarily complete) $n$-fold Segal
  object in $\mathcal{X}$, we say that $\mathcal{C}$ \emph{has adjoints} (for
  $k$-morphisms) if this is true of the completion $L\mathcal{C}$.
\end{defn}

\begin{defn}\label{defn:hasdualtopoi}
  We say that a monoidal complete $n$-fold Segal space $\mathcal{C}$
  \emph{has duals} if it has adjoints when regarded as an $(n+1)$-fold
  Segal space.  We also say a symmetric monoidal or
  $\mathbb{E}_{k}$-monoidal complete $n$-fold Segal object \emph{has
    duals} if the underlying monoidal complete $n$-fold Segal object
  has duals.
\end{defn}

\begin{propn}\label{propn:grouplike}
  We may regard an object $X \in \mathcal{X}$ as a complete $n$-fold Segal object for any $n$ by
  taking the constant functor with value $X$. If $X$ is an associative
  monoid in $\mathcal{X}$ then $X$ has duals \IFF{} $X$ is
  grouplike, i.e.\ it is a groupoid object in the sense of Definition~\ref{defn:gpdob}.
\end{propn}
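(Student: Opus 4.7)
The plan is to reduce to the space-level statement in Lemma~\ref{lem:gplikedualspace} by testing both conditions against every object $Y \in \mathcal{X}$. Let $X^{\otimes} \colon (\simp^{\op})^{\times (n+1)} \to \mathcal{X}$ denote the $(n+1)$-fold Segal object encoding the associative algebra structure on the constant $X$; concretely $X^{\otimes}_{k,i_{1},\ldots,i_{n}} \simeq X^{k}$ for $k \geq 1$ and is terminal when $k = 0$. By Definition~\ref{defn:hasdualtopoi}, $X$ has duals \IFF{} (the completion of) $X^{\otimes}$ has adjoints in the sense of Definition~\ref{defn:hasadjitopos}.

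First, I would establish that $X^{\otimes}$ has adjoints \IFF{} $(r_{*})_{*}(X^{\otimes})^{Y}$ does as a complete $(n+1)$-fold Segal space for every $Y \in \mathcal{X}$. This parallels the computation in the proof of Lemma~\ref{lem:adjtrunctopos}, which supplies natural equivalences
\[ \Map_{\mathcal{X}}(Y, \Adj(X^{\otimes})) \simeq \Adj((r_{*})_{*}(X^{\otimes})^{Y}), \]
\[ \Map_{\mathcal{X}}(Y, \Mor_{1}(X^{\otimes})) \simeq \Mor_{1}((r_{*})_{*}(X^{\otimes})^{Y}), \]
together with analogous identifications for the higher mapping objects needed to check adjoints for $k$-morphisms inductively. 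Since equivalences and the pullbacks featuring in Definition~\ref{defn:hasadjitopos} are detected in $\mathcal{X}$ by applying $\Map_{\mathcal{X}}(Y, \blank)$ for all $Y$, the equivalence of conditions follows.

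Next I would identify $(r_{*})_{*}(X^{\otimes})^{Y}$ with the delooping $\Map_{\mathcal{X}}(Y, X)^{\otimes}$ in spaces. Both $(\blank)^{Y}$ and $(r_{*})_{*}$ are right adjoints and therefore preserve limits, so they send the formula $X^{\otimes}_{k,\ldots} \simeq X^{k}$ to $\Map_{\mathcal{X}}(Y, X)^{k}$, matching the delooping of the induced $A_{\infty}$-algebra $\Map_{\mathcal{X}}(Y, X) \in \mathcal{S}$. By Lemma~\ref{lem:gplikedualspace}, this complete $(n+1)$-fold Segal space has duals \IFF{} $\Map_{\mathcal{X}}(Y, X)$ is grouplike.

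Finally, grouplikeness of $X$ is itself detected by mapping in: $X$ is grouplike \IFF{} the shearing map $X \times X \to X \times X$, $(a, b) \mapsto (a, ab)$, is an equivalence in $\mathcal{X}$, and by the Yoneda lemma this holds \IFF{} $\Map_{\mathcal{X}}(Y, X)$ is grouplike for every $Y$. Stringing the three equivalences together yields the proposition. The main obstacle I anticipate is the second reduction: one must carefully justify the formula for the internal hom $(X^{\otimes})^{Y}$ in $\CSS^{n+1}(\mathcal{X})$ (see Definition~\ref{defn:MAP}) and its compatibility with the completion functor, to ensure that no additional constraints enter from the $(n+1)$-fold Segal structure beyond the monoidal part.
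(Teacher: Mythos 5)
Your proposal is correct and follows essentially the same route as the paper: the paper likewise tests both conditions against all $Y \in \mathcal{X}$ via the internal hom and $(r_{*})_{*}$ (using the computation from Lemma~\ref{lem:adjtrunctopos} for adjoints and the corresponding statement for groupoid objects, i.e.\ grouplikeness), identifies the result with the delooping of the $A_{\infty}$-space $\Map_{\mathcal{X}}(Y,X)$, and then invokes Lemma~\ref{lem:gplikedualspace}. Your extra remarks about the shearing-map criterion and the compatibility of the internal hom with completion are just slightly more explicit versions of steps the paper leaves implicit.
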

\begin{proof}
  Write $\mathcal{C}$ for the associative monoid corresponding to $X$,
  regarded as an $(n+1)$-fold Segal object in $\mathcal{X}$. Then by
  Lemma~\ref{lem:adjtrunctopos}, the $(n+1)$-fold Segal object
  $\mathcal{C}$ has adjoints for 1-morphisms \IFF{} for every $Y \in
  \mathcal{X}$ the $(n+1)$-fold Segal space
  $(r_{*})_{*}\mathcal{C}^{Y}$ has adjoints for
  1-morphisms. Similarly, $\mathcal{C}$ is a groupoid object \IFF{}
  $(r_{*})_{*}\mathcal{C}^{Y}$ is a groupoid object for all $Y \in
  \mathcal{X}$. The result therefore follows from
  Lemma~\ref{lem:gplikedualspace}.
\end{proof}

\section{Full Dualizability for Iterated Spans}\label{sec:dual}
In this section we will show that $\Span_{k}(\mathcal{C})$ is
symmetric monoidal, and that all its objects are fully dualizable ---
in fact, we will show that $\Span_{k}(\mathcal{C})$ has duals.

\begin{propn}
  Suppose $\mathcal{C}$ is an \icat{} with finite limits. Then the
  $(\infty,k)$-category $\Span_{k}(\mathcal{C})$ is symmetric monoidal.
\end{propn}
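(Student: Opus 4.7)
The plan is to transport the Cartesian symmetric monoidal structure of $\mathcal{C}$ through the construction $\Span_{k}$, exploiting the fact, noted in \S\ref{sec:spancat}, that $\SPAN^{+}_{k}$ preserves finite limits. A symmetric monoidal $k$-fold Segal space is by definition a commutative algebra object in $\Seg_{n}(\mathcal{S})$, equivalently a functor $\mathrm{Fin}_{*} \to \Seg_{n}(\mathcal{S})$ satisfying the Segal condition that $F(\langle n \rangle) \to F(\langle 1 \rangle)^{\times n}$ is an equivalence.

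Since $\mathcal{C}$ has finite products, it carries a canonical Cartesian symmetric monoidal structure, presented by a functor $\mathcal{C}^{\times} \colon \mathrm{Fin}_{*} \to \CatI$ with $\mathcal{C}^{\times}(\langle n \rangle) \simeq \mathcal{C}^{\times n}$; its structure morphisms are built from projections and diagonals in $\mathcal{C}$, so they preserve finite limits, and therefore $\mathcal{C}^{\times}$ factors through the $\infty$-subcategory of $\CatI$ consisting of \icats{} with finite limits and finite-limit-preserving functors. I would then apply $\SPAN^{+}_{k}$ levelwise (as suggested explicitly in the remark following Proposition~\ref{propn:SPANkuple}) to obtain a composite
\[ \mathrm{Fin}_{*} \xto{\mathcal{C}^{\times}} \CatI \xto{\SPAN^{+}_{k}} \Cat^{k}(\CatI). \]
Postcomposing with the underlying $\infty$-groupoid functor $\iota$ (to extract the underlying left fibration) and with the functor $U_{\Seg}$ from Proposition~\ref{propn:underfold}, both of which are right adjoints and hence preserve finite limits, lands us in $\Seg_{k}(\mathcal{S})$.

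The key step is to verify that this composite is genuinely a $\mathrm{Fin}_{*}$-monoid, i.e.\ that the natural maps $\Span_{k}(\mathcal{C}^{\times n}) \to \Span_{k}(\mathcal{C})^{\times n}$ are equivalences. This amounts to the assertion that $\Span_{k}$ preserves finite products, which follows from Li-Bland's theorem that $\SPAN^{+}_{k}$ preserves limits combined with the limit-preservation of $\iota$ and $U_{\Seg}$ noted above. Once this is in place, the resulting functor is a symmetric monoidal structure on $\Span_{k}(\mathcal{C})$, and by construction its tensor product is induced by the Cartesian product in $\mathcal{C}$.

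The main obstacle is really bookkeeping rather than mathematical: one must make the $\infty$-functoriality of $\Span_{k}(\blank)$ precise enough to form the composition with $\mathcal{C}^{\times}$ coherently, which involves assembling the various ingredients (the functor $\Fun^{\Cart}(\bbS^{\bullet,\ldots,\bullet}, \blank)$, straightening--unstraightening, taking underlying left fibrations, and applying $U_{\Seg}$) into a single $\infty$-functor on \icats{} with finite limits. Each piece is individually $\infty$-functorial, and Li-Bland's result furnishes the decisive product-preservation property, so no substantial new ideas are required beyond careful bookkeeping.
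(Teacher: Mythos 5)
Your proposal is correct in outline, but it proves the statement by a genuinely different route than the paper. The paper's proof is a delooping argument: by Proposition~\ref{propn:spanmaps}, $\Span_{k}(\mathcal{C}) \simeq \Span_{k+1}(\mathcal{C})(*,*)$ (since $\mathcal{C}_{/\ast\times\ast} \simeq \mathcal{C}$), and iterating exhibits $\Span_{k}(\mathcal{C})$ as the endomorphism $(\infty,k)$-category of the identity $(n-1)$-morphism of $*$ inside $\Span_{k+n}(\mathcal{C})$ for every $n$; Remark~\ref{rmk:symmmonfromseq} then converts this compatible sequence of deloopings into a symmetric monoidal structure. That argument needs no functoriality of $\Span_{k}(\blank)$ at all, only the already-established identification of mapping categories, which is why the paper can keep it to three lines (and it is the same mechanism later reused for $\Span_{k}(\mathcal{X};\mathcal{C})$). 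Your route instead builds a $\mathrm{Fin}_{*}$-monoid directly by applying $\Span_{k}$ levelwise to the Cartesian structure $\mathcal{C}^{\times}$, which requires (a) $\Span_{k}$ as a coherent $\infty$-functor on \icats{} with finite limits and limit-preserving functors, and (b) preservation of finite products. Point (b) is actually cheaper than you make it: you do not need Li-Bland's full limit-preservation theorem, since $\Fun^{\Cart}(\bbS^{n_{1},\ldots,n_{k}}, \mathcal{C}\times\mathcal{D}) \simeq \Fun^{\Cart}(\bbS^{n_{1},\ldots,n_{k}},\mathcal{C}) \times \Fun^{\Cart}(\bbS^{n_{1},\ldots,n_{k}},\mathcal{D})$ is immediate and $\iota$, $U_{\Seg}$ are right adjoints. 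Point (a) is the real cost, and it is exactly the bookkeeping you flag; note the paper only asserts this functoriality in a remark (deferring to Li-Bland), so your proof leans on machinery the paper deliberately avoids in this proposition. What your approach buys in exchange is that the tensor product is manifestly the Cartesian product of $\mathcal{C}$ and that the symmetric monoidal structure is visibly natural in limit-preserving functors (which the paper itself wants later, in Remark~\ref{rmk:spanTQFT}); one also implicitly needs the trivial check $\Span_{k}(*)\simeq *$ for the unit. So: correct, but a heavier-machinery alternative to the paper's lightweight delooping argument.
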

\begin{proof}
  By Proposition~\ref{propn:spanmaps} we can identify
  $\Span_{k}(\mathcal{C})$ with the $(\infty,k)$-category
  $\Span_{k+1}(\mathcal{C})(*, *)$ of endomorphisms of $*$ in
  $\Span_{k+1}(\mathcal{C})$. The sequence $(\Span_{k}(\mathcal{C}),
  *)$ of pointed $k$-fold Segal
  spaces therefore defines an infinite
  delooping sequence $\Span_{\infty+k}(\mathcal{C})$ in $k$-fold
  complete Segal spaces. From this we can extract an $\infty$-fold
  monoid $\Omega^{\infty}\Span_{\infty+k}(\mathcal{C})$ in complete
  $k$-fold Segal spaces. By Proposition~\ref{propn:iopdcomp} this is
  equivalent to a symmetric monoidal structure on
  $\Span_{k}(\mathcal{C})$.
\end{proof}

\begin{remark}
  In the case $k = 1$, an explicit construction of this symmetric
  monoidal structure can also be found in \cite{BarwickMackey2}.
\end{remark}

\begin{lemma}\label{lem:adj1}
  Let $\mathcal{C}$ be an \icat{} with finite limits. For all $k \geq
  2$, the 1-morphisms in $\Span_{k}(\mathcal{C})$ have adjoints.
\end{lemma}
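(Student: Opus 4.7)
The plan is to exhibit, for every span $f = (X \xleftarrow{s} A \xrightarrow{t} Y)$ in $\mathcal{C}$, the reversed span $f^{\vee} = (Y \xleftarrow{t} A \xrightarrow{s} X)$ as both a left and a right adjoint to $f$ in $\Span_{k}(\mathcal{C})$. By Theorem~\ref{thm:RVadjtrunc} and the preceding extension theorem of Riehl--Verity, a 1-morphism in an $(\infty,2)$-category has a left (resp.\ right) adjoint iff it has one in the homotopy 2-category, and any adjunction datum there extends essentially uniquely. Since having adjoints for 1-morphisms is a property of $u_{(\infty,2)}\Span_{k}(\mathcal{C})$, it suffices to work at the 2-categorical level, where 2-morphisms are isomorphism classes of 2-fold spans in $\mathcal{C}$ and their various compositions are computed by pullback.

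For $f \dashv f^{\vee}$ I propose as unit $\eta \colon \mathrm{id}_{X} \Rightarrow f^{\vee} \circ f = (X \leftarrow A \times_{Y} A \to X)$ the 2-fold span with apex $A$, whose leg to the identity span $(X = X = X)$ is $s$ and whose leg to $A \times_{Y} A$ is the relative diagonal $\Delta_{t} \colon A \to A \times_{Y} A$; and as counit $\epsilon \colon f \circ f^{\vee} = (Y \leftarrow A \times_{X} A \to Y) \Rightarrow \mathrm{id}_{Y}$ the symmetric 2-fold span with apex $A$, relative diagonal $\Delta_{s} \colon A \to A \times_{X} A$, and leg $t$. Reading the same underlying apex--leg data in the opposite direction (swapping which span plays source and target in the 2-fold span picture) supplies the unit and counit witnessing $f^{\vee} \dashv f$.

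The main computation is the verification of the two triangle identities $(\epsilon * f)(f * \eta) \simeq \mathrm{id}_{f}$ and $(f^{\vee} * \epsilon)(\eta * f^{\vee}) \simeq \mathrm{id}_{f^{\vee}}$ at the level of isomorphism classes of 2-fold spans. Whiskering a 2-fold span by a 1-morphism and vertical composition of 2-fold spans are both computed by iterated pullbacks in $\mathcal{C}$, so unpacking the first composite yields a 2-fold span from $f$ to $f$ whose apex is an iterated fibre product built from copies of $A$, $A \times_{Y} A$, and $A \times_{X} A$. The key cancellation is that the fibre product of a relative diagonal against itself recovers the original object; a direct diagram chase using this identification collapses the apex to $A$ and identifies the composite with the identity 2-morphism on $f$. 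The second triangle is handled identically after interchanging $s$ and $t$, and the triangles for $f^{\vee} \dashv f$ follow by interpreting the same pullback diagrams in the reversed direction. The principal obstacle will be the careful bookkeeping of these iterated pullbacks; the appeal to Riehl--Verity is what makes the argument tractable by removing the need to produce coherent higher-simplicial adjunction data.
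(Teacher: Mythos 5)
Your proposal is correct and follows essentially the same route as the paper's proof: reduce to the homotopy 2-category via Riehl--Verity, take the reversed span as the (right and, by symmetry, left) adjoint, define the unit and counit by the relative diagonals $A \to A\times_{Y}A$ and $A \to A\times_{X}A$, and verify the triangle identities by the same iterated-pullback collapse of the composite apex back to $A$.
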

\begin{proof}
  It suffices to check this in the homotopy 2-category of
  $\Span_{k}(\mathcal{C})$. A 1-morphism $\phi \colon A \to B$ in
  $\Span_{k}(\mathcal{C})$ is a span \nolabelspandiag{X}{A}{B.}
  We will show
  that the reversed span $\bar{\phi}$ given by \nolabelspandiag{X}{B}{A} is a right
  adjoint to this, with unit $\eta \colon \id_{A} \to \bar{\phi}\phi$
  given by the span
  \spandiag{X}{A}{X \times_{B} X}{}{\Delta}
  over $A \times A$, and counit $\epsilon \colon \phi\bar{\phi} \to
  \id_{B}$ given by
  \spandiag{X}{X \times_{A}X}{B}{\Delta}{} over $B \times B$, where
  $\Delta$ denotes the relevant diagonal maps. To see this it suffices
  to check that the triangle equations hold up to homotopy. The
  2-morphism $\phi\eta \colon \phi \to \phi\bar{\phi}\phi$ is given by
  the span
  \spandiag{X \times_{B}X}{X}{X \times_{B} X \times_{A} X,}{\pi_{1}}{\id \times \Delta}
  and $\epsilon \phi$ is given by
  \spandiag{X \times_{A} X}{X \times_{B} X \times_{A} X}{X.}{\Delta
    \times \id}{\pi_{2}} The composite $\phi \to \phi$ of
  these two maps is therefore given by the pullback
  \[ (X \times_{B} X) \times_{(X \times_{B} X \times_{A} X)} (X
  \times_{A} X).\]
  We claim that this pullback is equivalent to the limit of the
  diagram
  \[ %
\begin{tikzpicture} %
\matrix (m) [matrix of math nodes,row sep=2em,column sep=1.5em,text height=1.5ex,text depth=0.25ex] %
{  & X &   & X & \\
 X &   & X &   & X \\
   & A &   & B. &   \\ };
\path[->,font=\footnotesize] %
(m-1-2) edge node[above left] {$\id$} (m-2-1)
(m-1-2) edge node[above right] {$\id$} (m-2-3)
(m-1-4) edge node[above left] {$\id$} (m-2-3)
(m-1-4) edge node[above right] {$\id$} (m-2-5)
(m-2-1) edge (m-3-2)
(m-2-3) edge (m-3-2)
(m-2-3) edge (m-3-4)
(m-2-5) edge (m-3-4);
\end{tikzpicture}%
\]
To see this, take the right Kan extension of this diagram along the
inclusion into the category with shape
\[
\begin{tikzcd}
{} & \circ \arrow{d} \arrow{dr} & & \circ \arrow{d} \arrow{dl}& \\
 & \bullet \arrow{dl} \arrow{dr} & \circ \arrow{dll} \arrow{d} \arrow{drr} & \bullet\arrow{dl}\arrow{dr} & \\
\bullet \arrow{dr} & & \bullet \arrow{dl} \arrow{dr} & & \bullet \arrow{dl}\\
 & \bullet & & \bullet,
\end{tikzcd}
\]
where $\circ$ denotes the new objects. This produces a diagram
\[
\begin{tikzcd}
{} & X \times_{B} X \arrow{d} \arrow{dr} & & X \times_{A} X \arrow{d} \arrow{dl}& \\
 & X \arrow{dl} \arrow{dr} & X \times_{A} X \times_{B} X \arrow{dll} \arrow{d} \arrow{drr} &
 X \arrow{dl}\arrow{dr} & \\
X \arrow{dr} & & X \arrow{dl} \arrow{dr} & & X \arrow{dl}\\
 & A & & B,
\end{tikzcd}
\]
whose limit can be identified with $(X \times_{B} X) \times_{(X
  \times_{B} X \times_{A} X)} (X \times_{A} X)$ by a simple cofinality
argument. On the other hand, since right Kan extensions are
transitive, this must agree with the limit of the first diagram, which
can be identified with $X$ (again by an easy cofinality
argument). Thus $(\epsilon \phi) \circ (\phi \eta) \simeq \id_{\phi}$,
and the other triangle equivalence, $(\bar{\phi} \epsilon) \circ (\eta
\bar{\phi})\simeq \id_{\bar{\phi}}$, is proved similarly.
\end{proof}

\begin{thm}
  The $(\infty,k)$-category $\Span_{k}(\mathcal{C})$ has adjoints.
\end{thm}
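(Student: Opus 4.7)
The plan is to prove the theorem by induction on $k$, using Lemma~\ref{lem:adj1} as the engine and Proposition~\ref{propn:spanmaps} to identify the mapping $(\infty, k-1)$-categories so that the inductive hypothesis applies to them.

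For the base case $k = 2$, having adjoints means only having adjoints for $1$-morphisms, and this is exactly Lemma~\ref{lem:adj1}. For the inductive step, suppose that $\Span_{k-1}(\mathcal{D})$ has adjoints whenever $\mathcal{D}$ is an \icat{} with finite limits, and let $\mathcal{C}$ be any \icat{} with finite limits. I need to verify that $\Span_{k}(\mathcal{C})$ has adjoints for $j$-morphisms for each $j = 1, \ldots, k-1$. The case $j = 1$ is again Lemma~\ref{lem:adj1}. For $j > 1$, Definition~\ref{defn:hasadjsegspace} requires that for all objects $X, Y$ of $\Span_k(\mathcal{C})$, i.e.\ all $X, Y \in \mathcal{C}$, the mapping $(\infty, k-1)$-category $\Span_k(\mathcal{C})(X, Y)$ has adjoints for $(j-1)$-morphisms. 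But Proposition~\ref{propn:spanmaps} identifies this mapping category with $\Span_{k-1}(\mathcal{C}_{/X \times Y})$. Since $\mathcal{C}_{/X \times Y}$ is again an \icat{} with finite limits (as $\mathcal{C}$ has finite limits), the inductive hypothesis guarantees that $\Span_{k-1}(\mathcal{C}_{/X \times Y})$ has adjoints, and in particular for $(j-1)$-morphisms (as $j-1 \leq k-2$).

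There is no real obstacle here: the key nontrivial ingredients — the explicit dual span construction with its unit and counit, and the identification of mapping categories as iterated span categories of slice \icats{} — have already been established in Lemma~\ref{lem:adj1} and Proposition~\ref{propn:spanmaps}. The only point to be careful about is bookkeeping: the definition of ``has adjoints'' quantifies over all intermediate levels $j = 1, \ldots, k-1$, so the inductive hypothesis on $\Span_{k-1}$ must itself be the full statement ``has adjoints'' (not merely ``has adjoints for $1$-morphisms''), which is precisely what the induction provides.
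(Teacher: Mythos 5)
Your proposal is correct and follows essentially the same argument as the paper: induction on $k$, with Lemma~\ref{lem:adj1} supplying adjoints for $1$-morphisms and Proposition~\ref{propn:spanmaps} identifying $\Span_{k}(\mathcal{C})(X,Y)$ with $\Span_{k-1}(\mathcal{C}_{/X\times Y})$ so the inductive hypothesis (the full statement ``has adjoints'', quantified over all \icats{} with finite limits) applies. The only cosmetic difference is that the paper starts the induction at $k=1$, where there is nothing to prove, rather than at $k=2$.
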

\begin{proof}
  We prove this by induction on $k$. For $k = 1$, there is nothing to
  prove. Suppose we have shown that for all $\mathcal{C}$ the
  $(\infty,k-1)$-category $\Span_{k-1}(\mathcal{C})$ has adjoints.  We
  saw in Lemma~\ref{lem:adj1} that $\Span_{k}(\mathcal{C})$ has
  adjoints for 1-morphisms, and for every pair $X, Y$ of objects in
  $\mathcal{C}$ the $(\infty,k-1)$-category
  $\Span_{k}(\mathcal{C})(X,Y)$ can be identified with
  $\Span_{k-1}(\mathcal{C}_{/X\times Y})$ by
  Proposition~\ref{propn:spanmaps}, and so has adjoints by the
  inductive hypothesis. Thus $\Span_{k}(\mathcal{C})$ also has
  adjoints.
\end{proof}

\begin{cor}
  The (symmetric) monoidal $(\infty,k)$-category
  $\Span_{k}(\mathcal{C})$ has duals.
\end{cor}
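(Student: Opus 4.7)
The plan is to deduce the corollary directly from the preceding theorem, which states that $\Span_{k+1}(\mathcal{C})$ has adjoints. By Definition~\ref{defn:monSegsp}, a monoidal $(\infty,k)$-category has duals precisely when the corresponding $(k+1)$-fold Segal space $\mathcal{D}^{\otimes}$ (with $\mathcal{D}^{\otimes}_{0,\ldots,0} \simeq *$) has adjoints, and a symmetric monoidal $(\infty,k)$-category has duals iff the underlying monoidal one does. So it suffices to handle the monoidal case. Recall from the proof of the preceding proposition that the $(k+1)$-fold Segal space $\Span_{k}(\mathcal{C})^{\otimes}$ encoding the monoidal structure is obtained from $\Span_{k+1}(\mathcal{C})$ by pointing: its object space is forced to be $\{*\}$ for the terminal object $* \in \mathcal{C}$, and its mapping $(\infty,k)$-category is $\Span_{k+1}(\mathcal{C})(*,*) \simeq \Span_{k}(\mathcal{C})$ by Proposition~\ref{propn:spanmaps}.

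It thus remains to verify that $\Span_{k}(\mathcal{C})^{\otimes}$ has adjoints for $i$-morphisms for all $1 \le i \le k$. For $i = 1$: a 1-morphism of $\Span_{k}(\mathcal{C})^{\otimes}$ is a span $* \leftarrow X \to *$ in $\mathcal{C}$, i.e.\ simply an object $X \in \mathcal{C}$ (and an adjoint for it corresponds to a dual for $X$ in the monoidal $\Span_{k}(\mathcal{C})$). Applying the proof of Lemma~\ref{lem:adj1} inside $\Span_{k+1}(\mathcal{C})$, the reversed span (which is again $X$) is both left and right adjoint, with unit and counit given by spans whose apex is $X \times_{*} X \simeq X \times X$ equipped with the diagonal; crucially, every vertex appearing in this adjunction data maps to $*$, so the entire adjunction lies in the pointed sub-$(\infty,k+1)$-category $\Span_{k}(\mathcal{C})^{\otimes}$.

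For $i > 1$: by Definition~\ref{defn:hasadjsegspace}, this reduces to the claim that every mapping $(\infty,k)$-category of $\Span_{k}(\mathcal{C})^{\otimes}$ has adjoints for $(i-1)$-morphisms. These mapping categories are computed as pullbacks of mapping categories in $\Span_{k+1}(\mathcal{C})$, and so by Proposition~\ref{propn:spanmaps} they are of the form $\Span_{k}(\mathcal{C}_{/X \times Y})$ for suitable $X,Y \in \mathcal{C}$; these have adjoints by the preceding theorem. The only real bookkeeping is checking that ``has adjoints'' descends from $\Span_{k+1}(\mathcal{C})$ to the pointed sub-$(k+1)$-fold Segal space: the descent for $i > 1$ is automatic from Proposition~\ref{propn:spanmaps}, while the $i = 1$ case just requires unpacking the adjunction data of Lemma~\ref{lem:adj1} and observing that it lies over $*$. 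Neither step is a substantive obstacle.
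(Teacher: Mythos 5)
Your argument is correct and takes essentially the same route as the paper: identify the monoidal structure on $\Span_{k}(\mathcal{C})$ with the endomorphism $(\infty,k)$-category $\Span_{k+1}(\mathcal{C})(*,*)$ and deduce duals from the fact that $\Span_{k+1}(\mathcal{C})$ has adjoints. The paper leaves implicit the point you spell out explicitly --- that the adjunction data (from Lemma~\ref{lem:adj1} and the mapping-category identification of Proposition~\ref{propn:spanmaps}) restricts to the pointed full sub-Segal space on $*$ --- so your verification is a harmless elaboration of the same proof.
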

\begin{proof}
  As a monoidal $(\infty,k)$-category, we may
  identify $\Span_{k}(\mathcal{C})$ with the endomorphism
  $(\infty,k)$-category $\Span_{k+1}(\mathcal{C})(*,*)$. Since $\Span_{k+1}(\mathcal{C})$ has adjoints, it follows that $\Span_{k}(\mathcal{C})$ has duals.
\end{proof}

Invoking the cobordism hypothesis in the form \cite{LurieCob}*{Theorem
  2.4.6}, we get:
\begin{cor}
  Suppose $\mathcal{C}$ is an \icat{} with finite limits. Then every object
  $C$ of $\mathcal{C}$ defines a framed $k$-dimensional TQFT
  $\mathcal{Z}^{k}_{C} \colon \txt{Bord}_{k}^{\txt{fr}} \to
  \Span_{k}(\mathcal{C})$, where $\txt{Bord}_{k}^{\txt{fr}}$ denotes
  the $(\infty,k)$-category of framed cobordisms.
\end{cor}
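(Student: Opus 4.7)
The plan is to assemble the pieces already proved and feed them into the cobordism hypothesis as stated in \cite{LurieCob}*{Theorem 2.4.6}. First I would recall that by the preceding corollary the symmetric monoidal $(\infty,k)$-category $\Span_{k}(\mathcal{C})$ has duals, meaning in particular that every object is dualizable and that all $i$-morphisms for $i < k$ admit both left and right adjoints. By the standard inductive definition of full dualizability (see \cite{LurieCob}*{\S 2.3}), having duals implies that every object is fully dualizable; so every object $C \in \mathcal{C}$, viewed as an object of $\Span_{k}(\mathcal{C})$, is a fully dualizable object of this symmetric monoidal $(\infty,k)$-category.

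Next I would invoke the framed cobordism hypothesis: Lurie's theorem asserts that for any symmetric monoidal $(\infty,k)$-category $\mathcal{D}$, evaluation at the positively framed point induces an equivalence
\[ \Fun^{\otimes}(\txt{Bord}_{k}^{\txt{fr}}, \mathcal{D}) \xrightarrow{\sim} \iota \mathcal{D}^{\txt{fd}}, \]
where $\mathcal{D}^{\txt{fd}}$ denotes the full subcategory of fully dualizable objects. Taking $\mathcal{D} = \Span_{k}(\mathcal{C})$ and using that every object of $\Span_{k}(\mathcal{C})$ is fully dualizable, the point $C \in \iota\Span_{k}(\mathcal{C})$ corresponds under this equivalence to a symmetric monoidal functor, which we name $\mathcal{Z}_{C}^{k} \colon \txt{Bord}_{k}^{\txt{fr}} \to \Span_{k}(\mathcal{C})$.

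There is essentially no obstacle beyond verifying that our notion of ``having duals'' (Definitions \ref{defn:hasadjsegspace} and \ref{defn:hasdualtopoi}) matches Lurie's usage in the statement of the cobordism hypothesis; this is a matter of unwinding definitions, since both formulations require exactly that every object has a dual and every $i$-morphism for $i < k$ has both a left and a right adjoint. Granting this compatibility, the corollary is an immediate application of the cobordism hypothesis to the symmetric monoidal $(\infty,k)$-category produced by the previous results.
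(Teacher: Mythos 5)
Your proposal matches the paper exactly: the paper derives this corollary in one line by invoking the cobordism hypothesis in the form \cite{LurieCob}*{Theorem 2.4.6}, applied to the immediately preceding corollary that the symmetric monoidal $(\infty,k)$-category $\Span_{k}(\mathcal{C})$ has duals (hence every object is fully dualizable). Your additional remark about checking compatibility of the notion of ``having duals'' with Lurie's usage is a sensible caveat but does not change the argument.
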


\begin{remark}\label{rmk:spanTQFT}
  For $\mathcal{D}$ an \icat{} with finite colimits, we write
  $\txt{Cospan}_{k}(\mathcal{D})$ for the $(\infty,k)$-category
  $\Span_{k}(\mathcal{D}^{\op})$. If $\txt{Bord}_{k}^{\txt{un}}$
  denotes the unoriented cobordism $(\infty,k)$-category, it is
  reasonable to expect that there is a symmetric monoidal ``forgetful
  functor'' $\txt{Bord}_{k}^{\txt{un}}\to
  \txt{Cospan}_{k}(\mathcal{S}^{\txt{fin}})$, where
  $\mathcal{S}^{\txt{fin}}$ is the \icat{} of finite
  CW-complexes. This would send a cobordism between manifolds with
  corners to the iterated cospan given by the inclusions of the
  underlying homotopy types of the incoming and outgoing boundaries and corners. If we assume this, we
  can give an explicit construction of the framed field theory
  $\mathcal{Z}_{C}^{k}$ valued in $\Span_{k}(\mathcal{C})$ associated
  to an object $C \in \mathcal{C}$:
  \begin{enumerate}[(1)]
  \item If $\mathcal{C}$ is an \icat{} with finite limits, then
    $\mathcal{C}$ is cotensored over $\mathcal{S}^{\txt{fin}}$. Thus given $C \in
    \mathcal{C}$ there is a functor $C^{(\blank)} \colon
    (\mathcal{S}^{\txt{fin}})^{\op} \to \mathcal{C}$. Since $\Span_{k}(\blank)$ is
    natural in limit-preserving functors, this induces a functor
    $C^{(\blank)} \colon \txt{Cospan}_{k}(\mathcal{S}^{\txt{fin}}) \to
    \Span_{k}(\mathcal{C})$ for all $k$.
  \item Identifying $\Span_{k}(\mathcal{C})$ as an endomorphism
    $(\infty,k)$-category in $\Span_{k+n}(\mathcal{C})$ for all $n$, we
    conclude that the functor $C^{(\blank)}$ is
    $\mathbb{E}_{n}$-monoidal for all $n$, hence symmetric monoidal.
  \item Composing, we get a symmetric monoidal functor
    $\widehat{\mathcal{Z}}^{k}_{C}\colon \txt{Bord}_{k}^{\txt{un}} \to
    \Span_{k}(\mathcal{C})$ that sends a cobordism $X$ to $C^{X}$ and
    the iterated span coming from the iterated boundary of $X$.
  \item By the cobordism hypothesis, the composite of
    $\widehat{\mathcal{Z}}^{k}_{C}$ with the forgetful functor
    $\txt{Bord}_{k}^{\txt{fr}} \to \txt{Bord}_{k}^{\txt{un}}$ is the unique
    symmetric monoidal functor $\txt{Bord}_{k}^{\txt{fr}} \to
    \Span_{k}(\mathcal{C})$ that sends the point to $C$, hence it must
    be equivalent to $\mathcal{Z}^{k}_{C}$.
  \end{enumerate}
  This construction would also allow us to understand the $O(k)$-action on
  the space $\iota \mathcal{C}$ of objects of
  $\Span_{k}(\mathcal{C})$: This action is given on the space of TQFTs
  by acting by $O(k)$ on the framings in
  $\txt{Bord}_{k}^{\txt{fr}}$. Since we know all the framed TQFTs
  factor through the forgetful functor $\txt{Bord}_{k}^{\txt{fr}} \to
  \txt{Cospan}_{k}(\mathcal{S})$, which is $O(k)$-equivariant with
  respect to the trivial action on the target, we see that the action
  on $\iota \mathcal{C}$ is trivial. As a consequence, we can also
  classify other kinds of TQFTs in $\Span_{k}(\mathcal{C})$, since by
  \cite{LurieCob}*{Theorem 2.4.18} these are determined by
  $O(k)$-equivariant maps to $\iota \mathcal{C}$. For example, the
  space of unoriented field theories is equivalent to $\Map(BO(n),
  \mathcal{C})$ --- when $\mathcal{C}$ is the \icat{} $\mathcal{S}$
  of spaces, this is precisely the $\infty$-groupoid of spaces
  equipped with an $n$-dimensional vector bundle.

  Actually constructing such a forgetful functor from cobordisms to
  cospans would, however, depend on the details of a construction of
  $\txt{Bord}_{k}^{\txt{un}}$, such as that of Calaque and
  Scheimbauer~\cite{CalaqueScheimbauerCob}, and we will not attempt to
  do so here.
\end{remark}

\section{Full Dualizability for Iterated Spans with Local
  Systems}\label{sec:duallocsys}
In this section we consider dualizability for iterated spans with
local systems. We'll prove that if $\mathcal{X}$ is an \itopos{} and
$\mathcal{C}$ is a complete $k$-fold Segal object in $\mathcal{X}$,
then a symmetric monoidal structure on $\mathcal{C}$ induces one on
$\Span_{k}(\mathcal{X}; \mathcal{C})$. Moreover, we will show that if
$\mathcal{C}$ has duals then the same is true of $\Span_{k}(\mathcal{X}; \mathcal{C})$.
\begin{propn}
  Suppose $\mathcal{C}$ is a symmetric monoidal
  complete $k$-fold Segal object in an \itopos{} $\mathcal{X}$. Then
  the $(\infty,k)$-category
  $\Span_{k}(\mathcal{X}; \mathcal{C})$ is symmetric monoidal.
\end{propn}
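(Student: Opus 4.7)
The plan is to mimic the argument used for $\Span_{k}(\mathcal{C})$ in the previous section: extract the symmetric monoidal structure from a sequence of pointed higher-fold Segal spaces via Remark~\ref{rmk:symmmonfromseq}, where each $(\infty,k+i)$-category in the sequence is itself a $\Span$-construction with local systems.

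First, I would invoke Remark~\ref{rmk:symmmontopoi} to unpack the symmetric monoidal structure on $\mathcal{C}$ as a compatible sequence of pointed complete $(k+i)$-fold Segal objects $(\mathcal{C}_{i}, c_{i})$ in $\mathcal{X}$ (with $\mathcal{C}_{0} = \mathcal{C}$), such that there are natural equivalences $\mathcal{C}_{i-1} \simeq \mathcal{C}_{i}(c_{i},c_{i})$ of complete $(k+i-1)$-fold Segal objects. The base point $c_{i}$ is a map $* \to (\mathcal{C}_{i})_{0,\ldots,0}$, so it determines a canonical object $d_{i}$ of the $(\infty,k+i)$-category $\Span_{k+i}(\mathcal{X}; \mathcal{C}_{i})$. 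By the completeness result of \S\ref{sec:compllocsys}, each $\Span_{k+i}(\mathcal{X}; \mathcal{C}_{i})$ is a complete $(k+i)$-fold Segal space.

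Next, I would apply Proposition~\ref{propn:Famkmaps} to the pair $(d_{i},d_{i})$: since $\xi = \eta = c_{i}$ with source the point, the pullback $(\mathcal{C}_{i})_{\xi,\eta}$ coincides with the mapping object $\mathcal{C}_{i}(c_{i},c_{i})$, yielding a natural equivalence
\[ \Span_{k+i}(\mathcal{X}; \mathcal{C}_{i})(d_{i}, d_{i}) \simeq \Span_{k+i-1}(\mathcal{X}; \mathcal{C}_{i}(c_{i},c_{i})) \simeq \Span_{k+i-1}(\mathcal{X}; \mathcal{C}_{i-1}).\]
This exactly exhibits $(\Span_{k+i}(\mathcal{X}; \mathcal{C}_{i}), d_{i})$ as a tower of pointed complete $(k+i)$-fold Segal spaces satisfying the hypotheses of Remark~\ref{rmk:symmmonfromseq}, so the resulting symmetric monoidal structure is the desired one on $\Span_{k}(\mathcal{X}; \mathcal{C})$.

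The main obstacle is a functoriality/coherence issue, rather than a computational one: I need that the construction $\mathcal{D} \mapsto \Span_{k}(\mathcal{X}; \mathcal{D})$ is genuinely functorial in the $k$-fold Segal object argument $\mathcal{D}$ (so that maps of pointed Segal objects induce maps of pointed span-with-local-system categories), and that the identifications of mapping $(\infty,k+i-1)$-categories above are natural in all morphisms coming from the $E_{\infty}$-structure. Functoriality follows from the construction of $\SPAN^{+}_{k}(\mathcal{X}; \mathcal{C})$ via the fibrewise slice on coCartesian fibrations (together with the universal property in \cite{HTT}*{Definition 4.2.2.1}), and the naturality of the identification in Proposition~\ref{propn:Famkmaps} is visible from its proof, which expresses the mapping category as a limit of functorial pullbacks; with these in hand, the assembly into a symmetric monoidal structure is formal.
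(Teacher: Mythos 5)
Your proposal is correct and follows essentially the same route as the paper: decompose the symmetric monoidal structure on $\mathcal{C}$ into a sequence of pointed deloopings via Remark~\ref{rmk:symmmontopoi}, identify $\Span_{k+i}(\mathcal{X};\mathcal{C}_{i})$ with the endomorphism $(\infty,k+i)$-category of the base point in $\Span_{k+i+1}(\mathcal{X};\mathcal{C}_{i+1})$ using Proposition~\ref{propn:Famkmaps}, and reassemble via Remark~\ref{rmk:symmmonfromseq}. Your closing discussion of functoriality and naturality is a reasonable gloss on coherence points the paper treats implicitly, but it does not change the argument.
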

\begin{proof}
  Since $\mathcal{C}$ is symmetric monoidal, we can choose a sequence of
  ``deloopings'' $(\mathcal{C}_{i}, c_{i})$ such that $\mathcal{C}_{0} = \mathcal{C}$
  and $\mathcal{C}_{i} \simeq \mathcal{C}_{i+1}(c_{i+1},c_{i+1})$,
  i.e.\ an infinite delooping sequence in complete
  $k$-fold Segal objects. By
  Proposition~\ref{propn:Famkmaps}, we can then identify
  $\Span_{k+i}(\mathcal{X}; \mathcal{C}_{i})$ with the mapping
  $(\infty,k+i)$-category \[\Span_{k+i+1}(\mathcal{X};
  \mathcal{C}_{i+1})(x_{i+1},x_{i+1})\] in $\Span_{k+i+1}(\mathcal{X};
  \mathcal{C}_{i+1})$, where the object $x_{i+1}$ is the map $* \to
  \Ob(\mathcal{C}_{i+1})$ corresponding to the object $c_{i+1}$. Thus
  we get an infinite delooping sequence
  $\Span_{\infty+k}(\mathcal{X}; \mathcal{C})$, from which we can extract an
  $\infty$-fold monoid $\Omega^{\infty}\Span_{\infty+k}(\mathcal{X}; \mathcal{C})$
  in complete $k$-fold Segal spaces. By
  Proposition~\ref{propn:iopdcomp} this is equivalent to a symmetric
  monoidal structure on $\Span_{k}(\mathcal{X}; \mathcal{C})$.
\end{proof}

\begin{propn}\label{propn:locsysoneadj}
  Suppose $\mathcal{C}$ is a complete $k$-fold Segal object in
  $\mathcal{X}$ that has adjoints for 1-morphisms. Then
  $\Span_{k}(\mathcal{X}; \mathcal{C})$ has adjoints for 1-morphisms.
\end{propn}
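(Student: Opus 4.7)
By Lemma~\ref{lem:adjtrunctopos} the maps $\mathfrak{f}^{*}, \mathfrak{g}^{*} \colon \Adj(\Span_{k}(\mathcal{X}; \mathcal{C})) \to \Mor_{1}(\Span_{k}(\mathcal{X}; \mathcal{C}))$ are already $(-1)$-truncated, so it suffices to check essential surjectivity: that every 1-morphism of $\Span_{k}(\mathcal{X}; \mathcal{C})$ extends to an adjunction. A 1-morphism from $\xi \colon X \to \mathcal{C}_{0,\ldots,0}$ to $\eta \colon Y \to \mathcal{C}_{0,\ldots,0}$ is a span $X \xleftarrow{s} A \xrightarrow{t} Y$ together with a compatible local system $\alpha \colon A \to \mathcal{C}_{1,0,\ldots,0}$.

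I will build the candidate right adjoint by splitting it into two layers. At the span level I use the adjunction of Lemma~\ref{lem:adj1}: the reversed span $Y \xleftarrow{t} A \xrightarrow{s} X$, with unit and counit the 2-fold spans whose apex is $A$, mapping diagonally into $A \times_{Y} A$ (resp.\ $A \times_{X} A$) on one side and via $s$ (resp.\ $t$) on the other. At the local-system level, the hypothesis that $\mathcal{C}$ has adjoints for 1-morphisms says precisely, by Definition~\ref{defn:hasadjitopos}, that $\mathfrak{g}^{*} \colon \Adj(\mathcal{C}) \to \Mor_{1}(\mathcal{C})$ is an equivalence in $\mathcal{X}$; hence $\alpha$ lifts essentially uniquely to a map $\tilde\alpha \colon A \to \Adj(\mathcal{C})$. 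From $\tilde\alpha$ we read off the right-adjoint local system $\bar\alpha := \mathfrak{f}^{*}\tilde\alpha$ on the reversed span, together with internal unit and counit $\mathfrak{u}^{*}\tilde\alpha,\, \mathfrak{c}^{*}\tilde\alpha \colon A \to \mathcal{C}_{1,1,0,\ldots,0}$ that decorate the apices of the 2-fold spans above.

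The triangle identities in $\Span_{k}(\mathcal{X}; \mathcal{C})$ project onto those of $\Span_{k}(\mathcal{X})$, which hold by Lemma~\ref{lem:adj1}, and, fibrewise over $A$, onto the ones satisfied by $\tilde\alpha$ inside $\mathcal{C}$. The main obstacle is checking that these two packets assemble into a genuine map $(r^{*})_{*}\Adj \to \Span_{k}(\mathcal{X}; \mathcal{C})$, i.e.\ a lift of Lemma~\ref{lem:adj1}'s adjunction along the forgetful functor $\Span_{k}(\mathcal{X}; \mathcal{C}) \to \Span_{k}(\mathcal{X})$. The most economical route is to exploit the description of $\SPAN_{k}^{+}(\mathcal{X}; \mathcal{C})$ from \S\ref{sec:spanlocsys} as a fibrewise slice of $\SPAN_{k}^{+}(\mathcal{X})$ under the section $s_{\mathcal{C}}$: once the level of 1-morphisms has been prescribed, the essential uniqueness of the internal adjunction $\tilde\alpha$ forces the remaining higher coherences of $\Adj$ to lift uniquely, reducing the entire check to the ground-level construction above.
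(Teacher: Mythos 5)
Your construction is, in substance, the one the paper uses: reverse the underlying span, transport the local system through $\Adj(\mathcal{C})$ using the hypothesis that $\mathfrak{f}^{*}$ and $\mathfrak{g}^{*}$ are equivalences, and decorate the unit and counit spans of Lemma~\ref{lem:adj1} with the images of the lift under $\mathfrak{u}^{*}$ and $\mathfrak{c}^{*}$. One directional slip: with the span-level unit you chose (apex $A$, mapped to $X$ via $s$ and diagonally into $A\times_{Y}A$), the required decoration is a $2$-cell, parametrized by $A$, from the identity of $\xi\circ s$ to the composite of $\alpha$ followed by $\bar\alpha$ (an endomorphism of $\xi\circ s$); this is $\mathfrak{u}^{*}$ applied to a lift of $\alpha$ along $\mathfrak{f}^{*}$, i.e.\ it exhibits $\alpha$ as the \emph{left} adjoint internally. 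Your lift $\tilde\alpha$ along $\mathfrak{g}^{*}$ instead gives $\mathfrak{u}^{*}\tilde\alpha$ as a $2$-cell from the identity of $\eta\circ t$ to the composite in the other order, which does not have the source and target demanded by your span-level unit; paired with the mirror-image unit/counit spans it would instead produce a left adjoint. This is easily repaired by interchanging $\mathfrak{f}^{*}$ and $\mathfrak{g}^{*}$, but note that the proposition needs both left and right adjoints, and as written you only treat one case (likewise, ``every 1-morphism extends to an adjunction'' must be checked in both the $\mathfrak{f}$- and the $\mathfrak{g}$-role).

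The substantive gap is your last paragraph. You set the bar at producing a genuine map $(r^{*})_{*}\Adj \to \Span_{k}(\mathcal{X};\mathcal{C})$ lifting the adjunction of Lemma~\ref{lem:adj1}, and then dispose of it by asserting that essential uniqueness of $\tilde\alpha$ ``forces the remaining higher coherences of $\Adj$ to lift uniquely.'' That is not an argument: contractibility of the space of internal adjunctions extending $\alpha$ does not control the space of lifts of the whole $\Adj$-diagram through the fibrewise slice, since the higher cells of $\Adj$ land in composites and whiskerings whose underlying spans and local systems involve pullbacks of $A$ along the span legs, not just the fibre over the single $1$-morphism $\alpha$. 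The point of \S\ref{sec:adj} is precisely that no coherent $\Adj$-lift needs to be constructed by hand: by the Riehl--Verity extension theorem and Theorem~\ref{thm:RVadjtrunc} (together with Proposition~\ref{propn:hasadjointseqt}), having adjoints for $1$-morphisms is detected in the homotopy $2$-category, so it suffices to exhibit the unit and counit above and verify the two triangle identities up to homotopy. These follow as in Lemma~\ref{lem:adj1}: the triangle composites have underlying $2$-fold spans whose iterated pullbacks contract to $A$, and the induced local system on $A$ is the corresponding triangle composite of the internal $2$-cells $\mathfrak{u}^{*}\tilde\alpha$ and $\mathfrak{c}^{*}\tilde\alpha$, which the triangle identities of the generic adjunction, pulled back along $\tilde\alpha$, identify with the identity $2$-cell. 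With that replacement (and the $\mathfrak{f}^{*}/\mathfrak{g}^{*}$ swap), your argument coincides with the paper's proof.
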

\begin{proof}
  Suppose given a 1-morphism in $\Span_{k}(\mathcal{X}; \mathcal{C})$, i.e.\ a span
  $A \from X \to B$ in $\mathcal{X}$ equipped with a map to the span
  $\Ob(\mathcal{C}) \from \Mor_{1}(\mathcal{C}) \to
  \Ob(\mathcal{C})$. We will show that a right adjoint to
  this morphism is given by $B \from X \to A$, now with $X$ equipped
  with the map \[X \to \Mor_{1}(\mathcal{C}) \xto{(\mathfrak{f}^{*})^{-1}}
  \Adj(\mathcal{C}) \xto{\mathfrak{g}^{*}} \Mor_{1}(\mathcal{C}),\] which
  interchanges the source and target of 1-morphisms in $\mathcal{C}$.

  The unit for the adjunction is given by the span $A \from X \to X
  \times_{B} X$ over $A \times A$, where the map $X \to
  \Mor_{2}(\mathcal{C})$ is the composite
  \[ X \to \Mor_{1}(\mathcal{C}) \xto{(\mathfrak{f}^{*})^{-1}} \Adj(\mathcal{C})
  \xto{\mathfrak{u}^{*}} \Mor_{2}(\mathcal{C}) \]
  and the counit by $B \from X \to X \times_{A} X$, where $X$ is now
  equipped with 
  \[ X \to \Mor_{1}(\mathcal{C}) \xto{(\mathfrak{f}^{*})^{-1}} \Adj(\mathcal{C})
  \xto{\mathfrak{c}^{*}} \Mor_{2}(\mathcal{C}). \]
  The triangle identities for the adjunction then follow by combining
  the proof of Lemma~\ref{lem:adj1} with the homotopies coming from
  the triangle identities for the generic adjunction. Thus all
  1-morphisms in $\Span_{k}(\mathcal{X}; \mathcal{C})$ have right adjoints. To see
  that they also have left adjoints, we simply interchange the roles
  of the morphisms $\mathfrak{f}^{*}$ and $\mathfrak{g}^{*}$ above.
\end{proof}

\begin{thm}\label{thm:locsyshasadj}
  Suppose $\mathcal{C}$ is a complete $k$-fold Segal object in
  $\mathcal{X}$ that has adjoints. Then $\Span_{k}(\mathcal{X};
  \mathcal{C})$ has adjoints.
\end{thm}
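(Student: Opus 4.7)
The plan is to induct on $k$, mirroring the proof that $\Span_{k}(\mathcal{C})$ has adjoints given at the end of Section~\ref{sec:dual}. The base case $k=1$ is vacuous. For the inductive step I assume the theorem for $(k-1)$-fold Segal objects and verify the two clauses of Definition~\ref{defn:hasadjsegspace} for $\Span_{k}(\mathcal{X};\mathcal{C})$ in turn.

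The first clause, that every 1-morphism has adjoints, is Proposition~\ref{propn:locsysoneadj} applied to $\mathcal{C}$, which in particular has adjoints for 1-morphisms. For the second clause I must show that for each pair of objects $\xi\colon X\to \mathcal{C}_{0,\ldots,0}$ and $\eta\colon Y\to \mathcal{C}_{0,\ldots,0}$ the mapping $(\infty,k-1)$-category $\Span_{k}(\mathcal{X};\mathcal{C})(\xi,\eta)$ has adjoints. By Proposition~\ref{propn:Famkmaps} this is $\Span_{k-1}(\mathcal{X};\mathcal{C}_{\xi,\eta})$, so it will suffice to apply the inductive hypothesis to $\mathcal{C}_{\xi,\eta}$, provided this is a complete $(k-1)$-fold Segal object with adjoints. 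Completeness is routine, since $\mathcal{C}_{\xi,\eta}$ is a pullback of the complete $(k-1)$-fold Segal objects $\mathcal{C}_{1}$, $X\times Y$, and $\mathcal{C}_{0}\times\mathcal{C}_{0}$, and $\CSS^{k-1}(\mathcal{X})$ is closed under limits in $\Seg_{k-1}(\mathcal{X})$. For the adjoints, I unwind Definition~\ref{defn:hasadjitopos}: by hypothesis $\mathcal{C}$ has adjoints for $j$-morphisms for every $2\le j\le k-1$, and specialising the general $\phi$ in that definition to $\phi=\xi\times\eta\colon X\times Y\to \mathcal{C}_{0}^{\times 2}$ yields that $\mathcal{C}_{\phi}=\mathcal{C}_{\xi,\eta}$ has adjoints for $(j-1)$-morphisms. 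Ranging over $j$ this gives the adjoints on $\mathcal{C}_{\xi,\eta}$ as a $(k-1)$-fold Segal object, and the inductive hypothesis then completes the argument.

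There is no serious obstacle here: once Propositions~\ref{propn:locsysoneadj} and \ref{propn:Famkmaps} are in hand, the induction runs mechanically in parallel with the argument for $\Span_{k}(\mathcal{C})$. The only care needed is the bookkeeping above, verifying that the pullback $\mathcal{C}_{\xi,\eta}$ inherits both completeness and the adjoints property from $\mathcal{C}$.
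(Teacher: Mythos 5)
Your proposal is correct and follows essentially the same route as the paper: the base/1-morphism case is handled by Proposition~\ref{propn:locsysoneadj}, and higher adjoints are obtained by identifying the mapping $(\infty,k-1)$-categories with $\Span_{k-1}(\mathcal{X};\mathcal{C}_{\xi,\eta})$ via Proposition~\ref{propn:Famkmaps} and noting that $\mathcal{C}_{\xi,\eta}$ inherits adjoints (one level down) directly from Definition~\ref{defn:hasadjitopos}. The only cosmetic difference is that you induct on $k$ checking all morphism levels at once, while the paper phrases the induction level-by-level in $i$; your extra verification that $\mathcal{C}_{\xi,\eta}$ is complete is the same closure-under-pullback observation the paper uses in its completeness section.
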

\begin{proof}
  We will show that if $\mathcal{C}$ has adjoints for $i$-morphisms
  then $\Span_{k}(\mathcal{X}; \mathcal{C})$ also has adjoints for
  $i$-morphisms. The case $i = 1$ was proved in
  Proposition~\ref{propn:locsysoneadj}. Suppose $i > 1$, then we must
  show that $\Span_{k}(\mathcal{X}; \mathcal{C})(\xi, \eta)$ has
  adjoints for $(i-1)$-morphisms for all $\xi, \eta \in
  \Span_{k}(\mathcal{X}; \mathcal{C})$. By
  Proposition~\ref{propn:Famkmaps}, this $(k-1)$-fold Segal space can
  be identified with $\Span_{k-1}(\mathcal{X};
  \mathcal{C}_{\xi,\eta})$, and by definition (or by
  Proposition~\ref{propn:hasadjointseqt} in the case of spaces)
  $\mathcal{C}_{\xi,\eta}$ has adjoints for $(i-1)$-morphisms if
  $\mathcal{C}$ has adjoints for $i$-morphisms. Thus by induction we
  see that $\Span_{k}(\mathcal{X}; \mathcal{C})$ has adjoints for
  $i$-morphisms.
\end{proof}

\begin{cor}
  Suppose $\mathcal{C}$ is a (symmetric) monoidal complete $k$-fold
  Segal object in $\mathcal{X}$ that has duals. Then the (symmetric)
  monoidal $(\infty,k)$-category $\Span_{k}(\mathcal{X}; \mathcal{C})$
  also has duals.
\end{cor}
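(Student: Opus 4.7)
The plan is to reduce the statement directly to Theorem~\ref{thm:locsyshasadj} via the construction of the symmetric monoidal structure on $\Span_{k}(\mathcal{X}; \mathcal{C})$ carried out in the preceding proposition. By Definition~\ref{defn:hasdualtopoi}, a symmetric monoidal structure has duals \IFF{} the underlying monoidal structure does, so it suffices to treat the monoidal case. By the same definition, a monoidal complete $k$-fold Segal object $\mathcal{C}$ has duals precisely when its delooping $B\mathcal{C}$, regarded as a pointed complete $(k+1)$-fold Segal object in $\mathcal{X}$, has adjoints.

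First I would observe that the construction in the previous proposition shows that the delooping of $\Span_{k}(\mathcal{X}; \mathcal{C})$ as a monoidal $(\infty,k)$-category is precisely $\Span_{k+1}(\mathcal{X}; B\mathcal{C})$, pointed at the map $* \to \Ob(B\mathcal{C})$ corresponding to the base point of $B\mathcal{C}$. Indeed, that proposition identifies $\Span_{k+i}(\mathcal{X}; \mathcal{C}_{i})$ with the mapping $(\infty,k+i)$-category $\Span_{k+i+1}(\mathcal{X}; \mathcal{C}_{i+1})(x_{i+1}, x_{i+1})$, which is exactly the statement that each $\Span_{k+i+1}(\mathcal{X}; \mathcal{C}_{i+1})$ serves as a delooping of $\Span_{k+i}(\mathcal{X}; \mathcal{C}_{i})$ in the sense of Remark~\ref{rmk:symmmontopoi}.

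By the hypothesis that $\mathcal{C}$ has duals, $B\mathcal{C}$ has adjoints as a complete $(k+1)$-fold Segal object in $\mathcal{X}$. Applying Theorem~\ref{thm:locsyshasadj} to the complete $(k+1)$-fold Segal object $B\mathcal{C}$, we conclude that $\Span_{k+1}(\mathcal{X}; B\mathcal{C})$ has adjoints. Since this is (up to the choice of base point) precisely the delooping of the monoidal $(\infty,k)$-category $\Span_{k}(\mathcal{X}; \mathcal{C})$, it follows that $\Span_{k}(\mathcal{X}; \mathcal{C})$ has duals, as required.

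I expect no serious obstacle here: all the real work has been done in Theorem~\ref{thm:locsyshasadj} and in the previous proposition. The only point requiring any care is the bookkeeping to check that the delooping identification from the previous proposition indeed matches the delooping appearing in Definition~\ref{defn:hasdualtopoi} of ``has duals''; this is a direct comparison of the two instances of Remark~\ref{rmk:symmmontopoi} applied, respectively, to $\mathcal{C}$ and to $\Span_{k}(\mathcal{X}; \mathcal{C})$.
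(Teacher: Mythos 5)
Your proposal is correct and follows essentially the same route as the paper: identify the delooping of $\mathcal{C}$ with a pointed complete $(k+1)$-fold Segal object with adjoints, realize $\Span_{k}(\mathcal{X};\mathcal{C})$ as the endomorphism $(\infty,k)$-category of the corresponding base point in $\Span_{k+1}(\mathcal{X};B\mathcal{C})$ via the monoidal-structure proposition, and apply Theorem~\ref{thm:locsyshasadj}. The only cosmetic difference is notation ($B\mathcal{C}$ versus the paper's $\mathcal{C}^{\otimes}$), and your parenthetical care about matching base points is exactly the bookkeeping the paper also passes over.
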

\begin{proof}
  Since $\mathcal{C}$ is monoidal, by
  Definition~\ref{defn:hasdualtopoi} there is a pointed complete $(n+1)$-fold
  Segal object $(\mathcal{C}^{\otimes}, *)$ with adjoints such that
  $\mathcal{C}$ is the endomorphism $n$-fold Segal object
  $\mathcal{C}^{\otimes}(*, *)$. Then $\Span_{k}(\mathcal{X};
  \mathcal{C})$ is the endomorphism $(\infty,n)$-category
  $\Span_{k}(\mathcal{X}; \mathcal{C}^{\otimes})(x,x)$, where $x$ is
  the object $* \to \Ob(\mathcal{C}^{\otimes})$ corresponding to the
  base point. By Theorem~\ref{thm:locsyshasadj} the
  $(\infty,n+1)$-category $\Span_{k}(\mathcal{X};
  \mathcal{C}^{\otimes})$ has adjoints, hence the monoidal
  $(\infty,n)$-category $\Span_{k}(\mathcal{X}; \mathcal{C})$ has duals.
\end{proof}

Invoking the cobordism hypothesis, we get:
\begin{cor}
  Suppose $\mathcal{C}$ is a symmetric monoidal complete $k$-fold
  Segal object in $\mathcal{X}$ that has duals. Every morphism $\phi
  \colon X \to \Ob(\mathcal{C})$ in $\mathcal{X}$ defines a framed
  $k$-dimensional TQFT
  $\mathcal{Z}^{k}_{\phi} \colon \txt{Bord}_{k}^{\txt{fr}} \to
  \Span_{k}(\mathcal{X}; \mathcal{C})$, where $\txt{Bord}_{k}^{\txt{fr}}$ denotes
  the $(\infty,k)$-category of framed bordisms.
\end{cor}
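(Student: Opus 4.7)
The plan is to derive this directly from the previous corollary together with the Cobordism Hypothesis, in the form of \cite{LurieCob}*{Theorem 2.4.6}. By the previous corollary, the symmetric monoidal $(\infty,k)$-category $\Span_{k}(\mathcal{X}; \mathcal{C})$ has duals; in particular, every object is fully dualizable. The Cobordism Hypothesis then asserts that framed $k$-dimensional TQFTs valued in $\Span_{k}(\mathcal{X}; \mathcal{C})$ are classified by the underlying $\infty$-groupoid of its space of objects, with a TQFT being uniquely determined by the image of the point.

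It therefore suffices to identify the objects of $\Span_{k}(\mathcal{X}; \mathcal{C})$ with morphisms $\phi \colon X \to \Ob(\mathcal{C})$ in $\mathcal{X}$. By construction, $\Span_{k}(\mathcal{X}; \mathcal{C})$ is obtained as the underlying $k$-fold Segal space of the $k$-uple Segal space $\SPAN_{k}(\mathcal{X}; \mathcal{C})$, and passage to the underlying $k$-fold Segal space via $U_{\Seg}$ preserves the space of objects. Evaluating the defining description of $\SPAN^{+}_{k}(\mathcal{X}; \mathcal{C})$ at $([0],\ldots,[0])$ via the slice construction of \cite{HTT}*{Definition 4.2.2.1}, one obtains a natural equivalence with $\Fun^{\Cart}(\bbS^{0,\ldots,0}, \mathcal{X})_{/s_{\mathcal{C}}(0,\ldots,0)} \simeq \mathcal{X}_{/\Ob(\mathcal{C})}$, whose underlying $\infty$-groupoid consists precisely of arbitrary morphisms $\phi \colon X \to \Ob(\mathcal{C})$ in $\mathcal{X}$.

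Combining these two observations produces, for each such $\phi$, a framed $k$-dimensional TQFT $\mathcal{Z}^{k}_{\phi} \colon \txt{Bord}_{k}^{\txt{fr}} \to \Span_{k}(\mathcal{X}; \mathcal{C})$, as claimed. There is no substantive obstacle to overcome: the essential content resides in the previous corollary (that $\Span_{k}(\mathcal{X}; \mathcal{C})$ has duals whenever $\mathcal{C}$ does) and in Lurie's proof of the Cobordism Hypothesis. The present statement is simply the translation of these two inputs through the unwinding of the definition of objects in $\Span_{k}(\mathcal{X}; \mathcal{C})$.
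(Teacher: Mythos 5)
Your proposal is correct and matches the paper's treatment: the paper likewise obtains this corollary purely by invoking the Cobordism Hypothesis (\cite{LurieCob}*{Theorem 2.4.6}) together with the immediately preceding corollary that $\Span_{k}(\mathcal{X};\mathcal{C})$ has duals, the objects of $\Span_{k}(\mathcal{X};\mathcal{C})$ being exactly the morphisms $\phi\colon X \to \Ob(\mathcal{C})$ in $\mathcal{X}$ (as used, e.g., in Proposition~\ref{propn:Famkmaps}). Your unwinding of the space of objects via the fibre $\Fun^{\Cart}(\bbS^{0,\ldots,0},\mathcal{X})_{/s_{\mathcal{C}}(0,\ldots,0)} \simeq \mathcal{X}_{/\Ob(\mathcal{C})}$ is accurate and fills in the only detail the paper leaves implicit.
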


\begin{ex}\label{ex:gplike}
  Suppose $A$ is a grouplike $E_{\infty}$-algebra in an \itopos{}
  $\mathcal{X}$. Then by Proposition~\ref{propn:grouplike} we may
  regard $A$ as a symmetric monoidal $n$-fold complete Segal object
  with duals in $\mathcal{X}$ for any $n$, and so we get for every $n$
  a symmetric monoidal $(\infty,n)$-category $\Span_{n}(\mathcal{X};
  A)$. The underlying $(\infty,n)$-category of this is just
  $\Span_{n}(\mathcal{X}_{/A})$, but the symmetric monoidal structure
  is not that coming from the Cartesian product in $\mathcal{X}_{/A}$
  (i.e.\ the fibre product over $A$); instead, the tensor product of
  two maps $X,Y \to A$ is the product $X \times Y$ equipped with the
  composite map $X \times Y \to A \times A \to A$ where the second map
  is the multiplication in $A$. Similarly, the unit for the symmetric
  monoidal structure is the unit map $* \to A$, and the dual of an
  object $X \to A$ is the composite $X \to A \to A$ where the second
  map is the inverse mapping for $A$.
\end{ex}

\begin{remark}
  According to \cite{LurieCob}*{Proposition 3.2.7}, the \icat{}
  $\Span_{k}(\mathcal{S}; \mathcal{C})$, where $\mathcal{C}$ is a
  complete $k$-fold Segal space, should have a universal
  property. Namely, if $\mathcal{B}$ is a symmetric monoidal
  $(\infty,k)$-category equipped with a symmetric monoidal functor to
  $\Span_{k}(\mathcal{S})$, then the space of symmetric monoidal
  functors $\mathcal{B} \to \Span_{k}(\mathcal{S}; \mathcal{C})$ over
  $\Span_{k}(\mathcal{S})$ should be naturally equivalent to the space
  of symmetric monoidal functors from the pullback $\mathcal{B}
  \times_{\Span_{k}(\mathcal{S})} \Span_{k}(\mathcal{S}_{*})$ to
  $\mathcal{C}$. Moreover, \cite{LurieCob}*{Proposition 3.2.6} gives a
  description of the general cobordism $(\infty,k)$-category
  $\txt{Bord}_{k}^{(X,\zeta)}$ (where $\zeta$ is a map of spaces $X
  \to BO(k)$) as a pullback of this form, namely
  \nolabelcsquare{\txt{Bord}_{k}^{(X,\zeta)}}{\Span_{k}(\mathcal{S}_{*})}{\txt{Bord}_{k}^{\txt{un}}}{\Span_{k}(\mathcal{S})}
  where the bottom map is the unoriented TQFT determined by
  $\zeta$. If we assume this, as well as the consequences of a
  hypothetical forgetful functor from cobordisms to cospans discussed
  in Remark~\ref{rmk:spanTQFT}, we would get much more information
  about field theories valued in $\Span_{k}(\mathcal{S};
  \mathcal{C})$. For example, we would be able to describe the
  $O(k)$-action on the space of objects: the forgetful functor
  $\Span_{k}(\mathcal{S}; \mathcal{C}) \to \Span_{k}(\mathcal{S})$
  induces an $O(k)$-equivariant map from framed field theories valued
  in $\Span_{k}(\mathcal{S}; \mathcal{C})$ to those valued in
  $\Span_{k}(\mathcal{S})$. Since the $O(k)$-action on the target is
  trivial, this would mean that in the source $O(k)$ can only act on the
  fibres of this map. The fibre at $X \in \mathcal{S}$ can be
  identified with the space of $(X,\zeta)$-field theories valued in
  $\mathcal{C}$, where $\zeta$ is the map $X \to * \to BO(k)$;
  invoking the cobordism hypothesis it would follow that the fibre is
  $\Map(X, \Ob(\mathcal{C}))$, with the obvious $O(k)$-action induced
  from that on $\Ob(\mathcal{C})$.
\end{remark}

\section{The $(\infty,1)$-Category of Lagrangian Correspondences}\label{sec:lag}
In this section we will use the theory of symplectic derived stacks
and Lagrangian morphisms developed by Pantev, To\"{e}n, Vaquié, and
Vezzosi~\cite{PTVVShiftedSympl} to construct an \icat{}
$\Lag_{(\infty,1)}^{n}$ of $n$-symplectic derived stacks, with
morphisms given by Lagrangian correspondences between them. This
construction is based on ideas of Calaque, who describes the
underlying homotopy category in \cite{CalaqueTFT}. We begin by briefly
recalling the setup for derived stacks and (closed) $p$-forms; for
full details we refer to \cite{HAG2} and \cite{PTVVShiftedSympl}.

\begin{defn}
  Let $k$ be a field of characteristic 0. We write
  $\txt{cdga}_{k}^{\leq 0}$ for the category of non-positively graded
  commutative algebras in cochains over $k$, equipped with the
  usual model structure, and $\dAff^{\op}$ for the associated
  \icat{}. We may equip this \icat{} with an \emph{étale topology}, as
  described in \cite{HAG2}, and we write $\dSt :=
  \txt{Sh}_{\txt{ét}}(\dAff)$ for the associated (very large) \itopos{} of
  \'{e}tale sheaves of (large) spaces.
\end{defn}

\begin{remark}
  It is not necessary to take $k$ to be a field of characteristic
  zero. However, for more general rings commutative differential
  graded algebras are often not the most appropriate notion of
  ``derived rings'', the more useful notions being simplicial
  commutative algebras and (connective) $E_{\infty}$-algebras. For a
  field of characteristic zero, however, all three notions coincide.
\end{remark}

\begin{remark}
  For many purposes we do not want to consider arbitrary objects of
  $\dSt$, but only some subclass of ``geometric'' objects. The notion
  of \emph{derived Artin stacks} provides a good definition of such
  geometric stacks; roughly speaking they are derived stacks obtained
  as iterated realizations of smooth groupoids --- see \cite{HAG2} for
  details. In particular, derived Artin stacks always have cotangent
  complexes. As in \cite{PTVVShiftedSympl} we will implicitly add the
  assumption that all derived Artin stacks considered are locally of
  finite presentation, so that their cotangent complexes are
  dualizable quasicoherent sheaves. We write $\dStA$ for the full
  subcategory of $\dSt$ spanned by the derived Artin stacks locally of
  finite presentation --- by \cite{HAG2}*{Corollary 1.3.3.5} this is
  closed under finite limits in $\dSt$.
\end{remark}

\begin{defn}
  In \cite{PTVVShiftedSympl}, functors $\Omega^{p}$ and
  $\Omega^{p}_{\txt{cl}}$ from $\dAff^{\op}$ to the \icat{} of cochain
  complexes that take a derived affine scheme to its complex of
  $p$-forms and closed $p$-forms, respectively, are constructed and
  shown to be étale sheaves. We let $\mathcal{A}^{p}[n]$ and
  $\mathcal{A}^{p}_{\txt{cl}}[n]$ be the derived stacks (i.e.\ sheaves
  of spaces on $\dSt$) obtained from the shifts $\Omega^{p}[n]$ and
  $\Omega^{p}_{\txt{cl}}[n]$ via the Dold-Kan construction. If $X$ is
  a derived stack, we refer to $\mathcal{A}^{p}[n](X)$ as the space of
  \emph{$n$-shifted $p$-forms} on $X$. There is a ``forgetful'' map
  $\Omega^{p}_{\txt{cl}} \to \Omega^{p}$, which induces natural
  transformations $\mathcal{A}^{p}_{\txt{cl}}[n] \to
  \mathcal{A}^{p}[n]$, but the components are not in general
  monomorphisms (i.e.\ inclusions of a subset of the connected
  components).
\end{defn}

\begin{remark}
  Since $\mathcal{A}^{p}_{\txt{cl}}[n]$ comes from a sheaf of cochain
  complexes on $\dSt$, and hence a sheaf of spectra, we may regard it
  as a sheaf of grouplike $E_{\infty}$-spaces, i.e.\ a grouplike
  $E_{\infty}$-monoid in $\dSt$. Thus by Example~\ref{ex:gplike} there
  is a symmetric monoidal $(\infty,k)$-category $\Span_{k}(\dSt;
  \mathcal{A}^{p}_{\txt{cl}}[n])$ with duals for all $p, n$.
\end{remark}

\begin{defn}
  If $X$ is a derived Artin stack, an $n$-shifted 2-form $\omega \in
  \mathcal{A}^{2}[n](X)$ corresponds to a morphism
  $\Lambda^{2}\mathbb{T}_{X} \to \mathcal{O}_{X}[n]$ of quasi-coherent
  sheaves on $X$, where the tangent complex $\mathbb{T}_{X}$ is the
  dual of the cotangent complex $\mathbb{L}_{X}$. We say that $\omega$
  is \emph{non-degenerate} if the induced morphism $\mathbb{T}_{X} \to
  \mathbb{L}_{X}[n]$ is an equivalence, and write
  $\mathcal{A}^{2}_{\txt{nd}}[n](X)$ for the collection of components
  of $\mathcal{A}^{2}[n](X)$ corresponding to the non-degenerate
  $n$-shifted 2-forms.
\end{defn}

\begin{defn}
  An \emph{$n$-shifted symplectic form} on a derived Artin stack $X$ is a
  non-degenerate closed 2-form, i.e.\ an element of the pullback
  \[ \Sympl_{n}(X) := \mathcal{A}^{2}_{\txt{cl}}[n](X)
  \times_{\mathcal{A}^{2}[n](X)} \mathcal{A}^{2}_{\txt{nd}}[n](X),\]
  which is a subset of the connected components of
  $\mathcal{A}^{2}_{\txt{cl}}[n](X)$. An \emph{$n$-symplectic derived
   Artin stack} $(X, \omega)$ is a derived Artin stack $X$ equipped with an
  $n$-shifted symplectic form $\omega$.
\end{defn}

\begin{defn}
  Suppose $X$ is a derived stack, and $\omega$ is an $n$-shifted closed
  2-form on $X$. If $f \colon L \to X$ is a morphism of derived
  stacks, then an \emph{$\omega$-isotropic structure} on $f$ is a
  commutative square
  \csquare{L}{X}{*}{\mathcal{A}^{2}_{\txt{cl}}[n].}{f}{}{\omega}{0}
  Equivalently, it is a path from $0$ to the composite closed
  $n$-shifted 2-form $f^{*}\omega$ in
  $\mathcal{A}^{2}_{\txt{cl}}[n](L)$.
\end{defn}

\begin{defn}
  Suppose $(X, \omega)$ is an $n$-symplectic derived Artin stack, and
  $f \colon L \to X$ is a morphism of derived Artin stacks. Then an
  isotropic structure on $f$ induces a commutative square
  (see \cite{PTVVShiftedSympl}*{\S 2.2} for the details)
  \nolabelcsquare{\mathbb{T}_{L}}{f^{*}\mathbb{T}_{X}}{0}{\mathbb{L}_{L}[n]}
  of quasi-coherent sheaves on $L$. We say that the isotropic structure
  is \emph{Lagrangian} if this square is Cartesian.
\end{defn}

\begin{defn}
  Suppose $(X, \omega_{X})$ and $(Y, \omega_{Y})$ are $n$-shifted
  symplectic derived Artin stacks. A span \spandiag{L}{X}{Y}{f}{g} in
  $(\dStA)_{/\mathcal{A}^{2}_{\txt{cl}}[n]}$ induces a commutative
  square
  \nolabelcsquare{\mathbb{T}_{L}}{f^{*}\mathbb{T}_{X}}{g^{*}\mathbb{T}_{Y}}{\mathbb{L}_{L}[n]}
  of quasi-coherent sheaves on $L$. We say the span is a
  \emph{Lagrangian correspondence} if this square is Cartesian.
\end{defn}

\begin{remark}
  If we write $\overline{Y}$ for the derived stack $Y$ equipped with
  the \emph{negative} of the symplectic form of $Y$, which is also a
  symplectic form, we may identify spans of the form $X \xfrom{f} L
  \xto{g} Y$ over $\mathcal{A}^{2}_{\txt{cl}}[n]$ with isotropic
  morphisms $L \to X \times \overline{Y}$: Making the maps $f$ and $g$
  into a span over $\mathcal{A}^{2}_{\txt{cl}}[n]$ precisely
  corresponds to choosing a path from $f^{*}\omega_{X}$ to
  $g^{*}\omega_{Y}$ in the space of closed 2-forms on $L$, which is
  the same as giving a path from the zero form to $f^{*}\omega_{X} -
  g^{*}\omega_{Y}$. Under this equivalence, Lagrangian correspondences
  correspond to Lagrangian morphisms to $X \times \overline{Y}$: since
  quasicoherent sheaves on $L$ form a stable \icat{}, producing a
  pullback square
  \nolabelcsquare{\mathbb{T}_{L}}{f^{*}\mathbb{T}_{X}}{g^{*}\mathbb{T}_{Y}}{\mathbb{L}_{L}[n]}
  is the same thing as producing a fibre sequence
  \nolabelcsquare{\mathbb{T}_{L}}{f^*\mathbb{T}_{X} \oplus
    g^{*}\mathbb{T}_{Y}}{0}{\mathbb{L}_{L}[n],}
  where the right vertical map is the \emph{difference} of the two
  maps to $\mathbb{L}_{L}[n]$ in the first square.
\end{remark}

\begin{propn}[\cite{CalaqueTFT}*{Theorem 4.4}]\label{propn:lagcomp}
  Suppose $X$, $Y$, and $Z$ are $n$-symplectic derived Artin stacks,
  and $X \xfrom{f} K \xto{g} Y$ and $Y \xfrom{h} L \xto{k} Z$ are
  Lagrangian correspondences. Then the composite span \nolabelspandiag{K\times_{Y}L}{X}{Z}
  is also a Lagrangian correspondence.
\end{propn}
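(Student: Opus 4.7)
The composite span $K \times_{Y} L \to X, Z$ lives naturally in $(\dStA)_{/\mathcal{A}^{2}_{\txt{cl}}[n]}$ by pasting together the squares encoding the isotropic structures on $K$ and $L$: the pullback square defining $W := K \times_{Y} L$ pulls the chosen path from $0$ to $f^{*}\omega_{X} - g^{*}\omega_{Y}$ and the chosen path from $0$ to $h^{*}\omega_{Y} - k^{*}\omega_{Z}$ back to $W$ and concatenates them to produce a path from $0$ to $(fp)^{*}\omega_{X} - (kq)^{*}\omega_{Z}$ in $\mathcal{A}^{2}_{\txt{cl}}[n](W)$, where $p \colon W \to K$ and $q \colon W \to L$ are the projections. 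This gives the underlying isotropic structure of the putative Lagrangian correspondence. What remains is the nondegeneracy assertion, i.e.\ that the induced square
\[
\begin{tikzcd}
\mathbb{T}_{W} \arrow{r} \arrow{d} & (fp)^{*}\mathbb{T}_{X} \arrow{d} \\
(kq)^{*}\mathbb{T}_{Z} \arrow{r} & \mathbb{L}_{W}[n]
\end{tikzcd}
\]
is Cartesian in $\QCoh(W)$.

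The plan is to realize this square as a pasting of squares each of which is known to be Cartesian. I would assemble the following inputs, all pulled back to $W$:
\begin{enumerate}[(a)]
\item From $W = K \times_{Y} L$, the standard fibre sequence $\mathbb{T}_{W} \to p^{*}\mathbb{T}_{K} \oplus q^{*}\mathbb{T}_{L} \to (gp)^{*}\mathbb{T}_{Y}$, and dually the pushout expression $\mathbb{L}_{W}[n] \simeq p^{*}\mathbb{L}_{K}[n] \amalg_{(gp)^{*}\mathbb{L}_{Y}[n]} q^{*}\mathbb{L}_{L}[n]$.
\item The Lagrangian conditions on $K$ and $L$: the pulled-back squares
\[
\mathbb{T}_{K} \to f^{*}\mathbb{T}_{X} \oplus g^{*}\mathbb{T}_{Y} \to \mathbb{L}_{K}[n], \qquad
\mathbb{T}_{L} \to h^{*}\mathbb{T}_{Y} \oplus k^{*}\mathbb{T}_{Z} \to \mathbb{L}_{L}[n]
\]
are fibre sequences.
\item The symplectic form on $Y$ gives an equivalence $(gp)^{*}\mathbb{T}_{Y} \simeq (gp)^{*}\mathbb{L}_{Y}[n]$ which, being the pullback of a form compatible with the two isotropic structures, identifies the two incarnations of $(gp)^{*}\mathbb{T}_{Y} \simeq (hq)^{*}\mathbb{T}_{Y}$ inside the diagram.
\end{enumerate}
Next I would arrange these as a $3\times 3$ grid in $\QCoh(W)$ with $\mathbb{T}_{W}$ in one corner and $\mathbb{L}_{W}[n]$ in the opposite corner, in such a way that the top-left, top-right and bottom-left sub-squares are precisely (a) and the two squares of (b) pulled back to $W$. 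Since these are all Cartesian (equivalently, bicartesian, because $\QCoh(W)$ is stable), pasting gives that the outer square is Cartesian; identifying the outer square with the desired one using (a) for $\mathbb{L}_{W}[n]$ and (c) to cancel the two copies of the tangent complex of $Y$ then yields the Lagrangian condition for $W \to X \times \overline{Z}$.

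The main obstacle is bookkeeping: one must verify that, when the three pieces are pasted, the map $(gp)^{*}\mathbb{T}_{Y} \to p^{*}\mathbb{L}_{K}[n]$ coming from the Lagrangian structure on $K$ and the map $(hq)^{*}\mathbb{T}_{Y} \to q^{*}\mathbb{L}_{L}[n]$ coming from the Lagrangian structure on $L$ are compatibly identified via the symplectic form on $Y$, and that the difference of signs matches the concatenation of isotropic paths described in the first paragraph. This compatibility is precisely what the assumption ``span over $\mathcal{A}^{2}_{\txt{cl}}[n]$'' enforces at the level of closed $2$-forms; translating it to a statement about the induced maps on cotangent complexes is the content of \cite{PTVVShiftedSympl}*{\S 2.2}, which I would invoke to conclude.
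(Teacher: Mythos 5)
Your plan is essentially the paper's own proof: it pastes the same $3\times 3$ diagram of quasi-coherent sheaves on $K\times_{Y}L$, with the tangent fibre-product square, the two pulled-back Lagrangian squares, and the remaining square identified via the symplectic equivalence $\mathbb{T}_{Y}\simeq \mathbb{L}_{Y}[n]$ with the shifted dual of the tangent square (hence Cartesian by stability), so the outer square is Cartesian. The bookkeeping of signs and paths you flag is exactly what the paper delegates to \cite{PTVVShiftedSympl}*{\S 2.2}, so there is no substantive difference.
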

\begin{proof}
  A Cartesian square
  \csquare{N}{K}{L}{Y}{\phi}{\psi}{g}{h}
  induces a commutative diagram
  \[ %
\begin{tikzpicture} %
\matrix (m) [matrix of math nodes,row sep=3em,column sep=2.5em,text height=1.5ex,text depth=0.25ex] %
{ \mathbb{T}_{N} & \phi^{*}\mathbb{T}_{K} &
  \phi^{*}f^{*}\mathbb{T}_{X}  \\
  \psi^{*}\mathbb{T}_{L} & \phi^{*}g^{*}\mathbb{T}_{Y} &
  \phi^{*}\mathbb{L}_{K}[n] \\
  \psi^{*}k^{*}\mathbb{T}_{Z} & \psi^{*}\mathbb{L}_{L}[n] &
  \mathbb{L}_{N}[n]. \\};
\path[->,font=\footnotesize] %
(m-1-1) edge (m-1-2)
(m-1-2) edge (m-1-3)
(m-2-1) edge (m-2-2)
(m-2-2) edge (m-2-3)
(m-3-1) edge (m-3-2)
(m-3-2) edge (m-3-3)
(m-1-1) edge (m-2-1)
(m-2-1) edge (m-3-1)
(m-1-2) edge (m-2-2)
(m-2-2) edge (m-3-2)
(m-1-3) edge (m-2-3)
(m-2-3) edge (m-3-3)
;
\end{tikzpicture}%
\]%
Here the upper right square is Cartesian since $X \from K \to Y$ is a
Lagrangian correspondence, and the bottom left square is Cartesian
since $Y \from L \to Z$ is a Lagrangian correspondence. The top left
square is Cartesian since $N$ is the fibre product of $K$ and $L$ over
$Y$. Finally, since $Y$ is symplectic we have an equivalence
$\phi^{*}g^{*}\mathbb{T}_{Y} \simeq \phi^{*}g^{*}\mathbb{L}_{Y}[n]$,
and so we can identify the bottom right square with a shift of the
dual of the top left square. The bottom right square is therefore
coCartesian, but we're in a stable \icat{} so coCartesian and
Cartesian squares coincide. Thus the boundary square in the diagram is
also Cartesian, which by definition means that $X \from N \to Z$ is a
Lagrangian correspondence.
\end{proof}

\begin{defn}
  By Proposition~\ref{propn:lagcomp}, Lagrangian correspondences are
  closed under composition in the homotopy category of
  $\Span_{1}(\dSt; \mathcal{A}^{2}_{\txt{cl}}[n])$. We can therefore
  define the \icat{} $\Lag_{(\infty,1)}^{n}$ to be the subcategory of
  $\Span_{1}(\dSt; \mathcal{A}^{2}_{\txt{cl}}[n])$ whose objects are
  the $n$-symplectic derived Artin stacks and whose 1-morphisms are
  the Lagrangian correspondences between these.
\end{defn}

\begin{remark}
  The idea of considering symplectic derived stacks and Lagrangian
  correspondences as a subcategory of $\Span_{1}(\dSt;
  \mathcal{A}^{p}_{\txt{cl}}[n])$ is taken from
  \cite{SchreiberCFTviaCohHT}.
\end{remark}

\begin{lemma}
  The symmetric monoidal structure on $\Span_{1}(\dSt;
  \mathcal{A}^{2}_{\txt{cl}}[n])$ induces a symmetric monoidal
  structure on $\Lag_{(\infty,1)}^{n}$.
\end{lemma}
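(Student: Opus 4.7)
The plan is to show that the subcategory $\Lag^{n}_{(\infty,1)} \subseteq \Span_{1}(\dSt; \mathcal{A}^{2}_{\txt{cl}}[n])$ is closed under the tensor product and contains the monoidal unit, so that the symmetric monoidal structure restricts. By Example~\ref{ex:gplike} applied to the grouplike $E_\infty$-algebra $\mathcal{A}^{2}_{\txt{cl}}[n]$ in $\dSt$, the tensor product of objects $\xi \colon X \to \mathcal{A}^{2}_{\txt{cl}}[n]$ and $\eta \colon Y \to \mathcal{A}^{2}_{\txt{cl}}[n]$ is the product $X \times Y$ equipped with the composite $X \times Y \to \mathcal{A}^{2}_{\txt{cl}}[n] \times \mathcal{A}^{2}_{\txt{cl}}[n] \to \mathcal{A}^{2}_{\txt{cl}}[n]$, where the second map is the addition of closed 2-forms; concretely, this is the 2-form $\pi_{X}^{*}\omega_{X} + \pi_{Y}^{*}\omega_{Y}$ on $X \times Y$. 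Similarly, the tensor product of two spans is the levelwise product of spans, equipped with the sum of the induced forms on the apex.

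First I would check closure under tensor product of objects. The derived stack $X \times Y$ is Artin and locally of finite presentation as these properties are preserved under finite products. The tangent complex of $X \times Y$ decomposes as $\pi_{X}^{*}\mathbb{T}_{X} \oplus \pi_{Y}^{*}\mathbb{T}_{Y}$, and similarly for the cotangent complex. The morphism $\mathbb{T}_{X \times Y} \to \mathbb{L}_{X \times Y}[n]$ induced by $\pi_{X}^{*}\omega_{X} + \pi_{Y}^{*}\omega_{Y}$ is then the direct sum of the two individual comparison maps, each an equivalence by non-degeneracy of $\omega_{X}$ and $\omega_{Y}$; hence $(X \times Y, \omega_{X} \boxplus \omega_{Y})$ is an $n$-symplectic derived Artin stack. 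The unit for the symmetric monoidal structure is the basepoint $* \to \mathcal{A}^{2}_{\txt{cl}}[n]$ picking out the zero form on $\Spec k$, which is trivially non-degenerate since both $\mathbb{T}_{\Spec k}$ and $\mathbb{L}_{\Spec k}[n]$ vanish.

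Next I would check closure under tensor product of morphisms: given Lagrangian correspondences $(X, \omega_{X}) \xfrom{f} (L, \lambda) \xto{g} (Y, \omega_{Y})$ and $(X', \omega_{X'}) \xfrom{f'} (L', \lambda') \xto{g'} (Y', \omega_{Y'})$, their tensor product is the span $(X \times X') \xfrom{f \times f'} L \times L' \xto{g \times g'} (Y \times Y')$ equipped with the induced path from $0$ to $(f \times f')^{*}(\omega_{X} \boxplus \omega_{X'}) - (g \times g')^{*}(\omega_{Y} \boxplus \omega_{Y'})$ obtained by adding the two original paths. The induced diagram
\[
\begin{tikzcd}
\mathbb{T}_{L \times L'} \arrow{r} \arrow{d} & (f \times f')^{*}\mathbb{T}_{X \times X'} \arrow{d} \\
(g \times g')^{*}\mathbb{T}_{Y \times Y'} \arrow{r} & \mathbb{L}_{L \times L'}[n]
\end{tikzcd}
\]
decomposes under the splittings of the tangent and cotangent complexes as the direct sum of the two original squares, each Cartesian by assumption; the direct sum of Cartesian squares in a stable \icat{} is Cartesian, so the tensor product is again a Lagrangian correspondence.

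Having verified closure under $\otimes$ on objects and 1-morphisms together with inclusion of the unit, the symmetric monoidal structure on $\Span_{1}(\dSt; \mathcal{A}^{2}_{\txt{cl}}[n])$ restricts uniquely to a symmetric monoidal structure on the subcategory $\Lag^{n}_{(\infty,1)}$: formally, using the description of a symmetric monoidal \icat{} as a coCartesian fibration $\mathcal{C}^{\otimes} \to N(\mathrm{Fin}_{*})$, one pulls back along the full subcategory inclusion to obtain a sub-coCartesian fibration, and the closure conditions above are precisely what is needed for the restricted fibration to remain coCartesian. The main obstacle is really the Lagrangian condition on tensor products of spans; once the direct-sum decomposition of the comparison square is identified, however, the stability of $\Qcoh(L \times L')$ makes the verification immediate.
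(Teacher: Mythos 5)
Your proposal is correct and follows essentially the same route as the paper: one checks that the unit $* \to \mathcal{A}^{2}_{\txt{cl}}[n]$ (the zero form) is symplectic, that the product $X \times Y$ with the sum of the symplectic forms is again symplectic, and that the product of two Lagrangian correspondences is Lagrangian because the comparison square decomposes as a direct sum of the two Cartesian squares of quasi-coherent sheaves, which is again Cartesian. Your additional remarks on the tangent-complex splitting and on restricting the coCartesian fibration over finite pointed sets only spell out details the paper leaves implicit.
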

\begin{proof}
  To show that $\Lag_{(\infty,1)}^{n}$ inherits a symmetric monoidal
  structure, it suffices to prove that it contains the unit of the
  symmetric monoidal structure on $\Span_{1}(\dSt;
  \mathcal{A}^{2}_{\txt{cl}}[n])$, and that its objects and morphisms
  are closed under this. The unit is the map $* \to
  \mathcal{A}^{2}_{\txt{cl}}[n]$ corresponding to 0, which is
  obviously symplectic. If $X$ and $Y$ are $n$-symplectic derived
  Artin stacks, then their tensor product is the Cartesian product $X
  \times Y$ equipped with the sum of the symplectic forms on $X$ and
  $Y$, which is again symplectic. Finally, the tensor product of two
  Lagrangian correspondences is again their Cartesian product, which
  is Lagrangian with respect to the sum symplectic structures (since
  we just get the direct sum of the two Cartesian squares of
  quasi-coherent sheaves, which is again Cartesian).
\end{proof}

\begin{propn}
  With respect to the induced symmetric monoidal structure, all
  $n$-symplectic derived Artin stacks are dualizable in $\Lag_{(\infty,1)}^{n}$.
\end{propn}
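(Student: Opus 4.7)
The strategy is to exhibit, for each $n$-symplectic derived Artin stack $(X,\omega)$, explicit evaluation and coevaluation spans that already lie in $\Lag^n_{(\infty,1)}$; the triangle identities will then be inherited from the ambient symmetric monoidal $(\infty,1)$-category $\Span_1(\dSt; \mathcal{A}^2_{\txt{cl}}[n])$, which has duals by the preceding corollary applied to the grouplike $E_\infty$-algebra $\mathcal{A}^2_{\txt{cl}}[n]$.

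The dual of $(X,\omega)$ should be $\bar X := (X,-\omega)$, which is itself $n$-symplectic and so defines an object of $\Lag^n_{(\infty,1)}$. Following Example~\ref{ex:gplike}, the evaluation in the ambient category is the span $X \times \bar X \xleftarrow{\Delta} X \to *$, equipped with the isotropic structure furnished by the canonical null-homotopy of $\Delta^{*}(\omega \boxplus (-\omega)) = \omega - \omega \simeq 0$ in $\mathcal{A}^2_{\txt{cl}}[n](X)$ that comes from the grouplike $E_\infty$-structure; the coevaluation is the symmetric span $* \leftarrow X \xrightarrow{\Delta} \bar X \times X$.

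The key technical step is to verify that these spans are Lagrangian correspondences. For the evaluation, the induced square of quasicoherent sheaves on $X$ is
\[
\begin{tikzcd}
\mathbb{T}_X \ar[r, "\Delta_{*}"] \ar[d] & \mathbb{T}_X \oplus \mathbb{T}_X \ar[d, "{(\omega^{\sharp}{,}\,-\omega^{\sharp})}"] \\
0 \ar[r] & \mathbb{L}_X[n].
\end{tikzcd}
\]
The composite $\mathbb{T}_X \to \mathbb{L}_X[n]$ vanishes by the chosen isotropic structure, and the fibre of the right-hand vertical map is precisely the diagonal copy of $\mathbb{T}_X$: its points are pairs $(v_1, v_2)$ with $\omega^{\sharp}(v_1 - v_2) = 0$, which forces $v_1 = v_2$ since $\omega^{\sharp}$ is an equivalence by non-degeneracy of $\omega$. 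Hence the square is Cartesian, so the evaluation is a Lagrangian correspondence, and the coevaluation is handled symmetrically.

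Finally, the two triangle identities involve composites of $\mathrm{ev}$ and $\mathrm{coev}$ spans; by Proposition~\ref{propn:lagcomp} these composites remain Lagrangian correspondences. Since the triangle identities hold in $\Span_1(\dSt; \mathcal{A}^2_{\txt{cl}}[n])$---where $X$ is already dualizable---and the mapping $\infty$-groupoids of $\Lag^n_{(\infty,1)}$ are by definition full sub-$\infty$-groupoids (unions of components) of the corresponding mapping spaces in the ambient category, these identities remain valid in $\Lag^n_{(\infty,1)}$. The principal obstacle is therefore the tangent-complex calculation above, which becomes routine once the isotropic structure on the evaluation span has been unwound.
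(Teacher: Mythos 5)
Your proposal is correct and follows essentially the same route as the paper: the dual is $\bar{X}=(X,-\omega)$, the evaluation and coevaluation are the diagonal spans inherited from the ambient duality data of Example~\ref{ex:gplike}, and the Lagrangian condition is checked by the same tangent-complex computation, where your ``points'' argument should be read as saying that the right-hand vertical map factors as the difference map $\mathbb{T}_{X}\oplus\mathbb{T}_{X}\to\mathbb{T}_{X}$ followed by the equivalence $\omega^{\sharp}\colon \mathbb{T}_{X}\to\mathbb{L}_{X}[n]$, so its fibre is the diagonal copy of $\mathbb{T}_{X}$ and the square is Cartesian, exactly as in the paper's reduction. Your explicit remark that the triangle identities transfer to $\Lag^{n}_{(\infty,1)}$ because its mapping spaces are unions of components of those of $\Span_{1}(\dSt;\mathcal{A}^{2}_{\txt{cl}}[n])$ is a correct elaboration of a point the paper leaves implicit.
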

\begin{proof}
  Let $(X, \omega)$ be an $n$-symplectic derived Artin stack. We must
  show that the dual of $X$ is also an $n$-symplectic derived Artin
  stack, and the evaluation and coevaluation maps, as described in the
  proof of Proposition~\ref{propn:locsysoneadj}, are Lagrangian
  correspondences. By Example~\ref{ex:gplike} the dual of $X$ is
  $\bar{X}$, meaning $X$ equipped with the negative $-\omega$ of its
  symplectic form. This is again symplectic, as the morphism
  $\mathbb{T}_{X} \to \mathbb{L}_{X}[n]$ induced by $-\omega$ is
  simply the negative of that induced by $\omega$, and so is also an
  equivalence. The coevaluation map is given by the span $* \from X
  \xto{\Delta} \bar{X} \times X$, where $\Delta$ is the diagonal and
  $\bar{X} \times X$ is equipped with the sum symplectic structure
  $(-\omega, \omega)$. The induced diagram of quasi-coherent sheaves
  on $X$ is \nolabelcsquare{\mathbb{T}_{X}}{\mathbb{T}_{X} \oplus
    \mathbb{T}_X}{0}{\mathbb{L}_{X}[n],} where the top horizontal map
  is $(-\id, \id)$. This is Cartesian \IFF{} the square
  \nolabelcsquare{\mathbb{T}_{X}}{\mathbb{T}_{X}}{\mathbb{T}_{X}}{\mathbb{L}_{X}[n]}
  is Cartesian, where the top horizontal and left vertical maps are
  both the identity, but this is true since $X$ is symplectic as this
  means the other two maps, which are also identical, are
  equivalences. Thus this span is a Lagrangian
  correspondence. The evaluation map $\bar{X} \times X
  \xfrom{\Delta} X \to *$ is likewise a Lagrangian correspondence by a
  similar argument.
\end{proof}

\begin{cor}
  Every $n$-sympectic derived Artin stack $X$ determines a framed
  1-dimensional TQFT \[\mathcal{Z}_{X} \colon \txt{Bord}_{1}^{\txt{fr}}
  \to \Lag_{(\infty,1)}^{n}.\]
\end{cor}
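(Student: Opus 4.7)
The plan is to deduce this immediately from the Cobordism Hypothesis of Lurie (\cite{LurieCob}*{Theorem 2.4.6}), in essentially the same way as the analogous corollaries in \S\ref{sec:dual} and \S\ref{sec:duallocsys}. The Cobordism Hypothesis for framed extended TQFTs asserts that, for any symmetric monoidal $(\infty,n)$-category $\mathcal{D}$, the space of symmetric monoidal functors $\txt{Bord}_{n}^{\txt{fr}} \to \mathcal{D}$ is naturally equivalent to the underlying $\infty$-groupoid of the full subcategory of $\mathcal{D}$ spanned by the fully dualizable objects. So the task reduces to verifying two points: that $\Lag^{n}_{(\infty,1)}$ is a symmetric monoidal $(\infty,1)$-category, and that the given $n$-symplectic derived Artin stack $X$ is a fully dualizable object.

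The first point is exactly the content of the preceding lemma, which established that the symmetric monoidal structure on $\Span_{1}(\dSt; \mathcal{A}^{2}_{\txt{cl}}[n])$ restricts to one on the subcategory $\Lag^{n}_{(\infty,1)}$. For the second point, I would observe that since $n = 1$, the notion of full dualizability specializes to ordinary dualizability in the underlying symmetric monoidal $(\infty,1)$-category; there are no higher morphisms for which to demand adjoints (cf. the discussion in \S\ref{sec:adj} and Definition~\ref{defn:hasadjsegspace}). This is precisely what the preceding proposition provides: every $n$-symplectic derived Artin stack $X$ is dualizable in $\Lag^{n}_{(\infty,1)}$, with dual $\bar{X}$ and with evaluation and coevaluation given by the diagonal Lagrangian correspondences.

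Putting these together, the object $X \in \Lag^{n}_{(\infty,1)}$ determines a point of the space of fully dualizable objects, and the Cobordism Hypothesis then produces a unique (up to contractible choice) symmetric monoidal functor $\mathcal{Z}_{X} \colon \txt{Bord}_{1}^{\txt{fr}} \to \Lag^{n}_{(\infty,1)}$ sending the positively framed point to $X$. There is no real obstacle here; the entire content of the corollary lies in the preceding work identifying $\Lag^{n}_{(\infty,1)}$ as a symmetric monoidal subcategory of $\Span_{1}(\dSt; \mathcal{A}^{2}_{\txt{cl}}[n])$ with dualizable objects, and this final step is just the formal invocation of \cite{LurieCob}*{Theorem 2.4.6}.
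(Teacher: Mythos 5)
Your proposal is correct and matches the paper's (implicit) argument: the corollary is a formal consequence of the preceding lemma (symmetric monoidal structure on $\Lag^{n}_{(\infty,1)}$) and proposition (dualizability of every $n$-symplectic derived Artin stack), combined with the Cobordism Hypothesis in the form of \cite{LurieCob}*{Theorem 2.4.6}, with full dualizability reducing to ordinary dualizability since the target is an $(\infty,1)$-category. No further verification is needed.
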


\begin{bibdiv}
\begin{biblist}
\bib{AtiyahTQFT}{article}{
  author={Atiyah, Michael},
  title={Topological quantum field theories},
  journal={Inst. Hautes \'Etudes Sci. Publ. Math.},
  number={68},
  date={1988},
  pages={175--186 (1989)},
}

\bib{BaezDolanTQFT}{article}{
  author={Baez, John C.},
  author={Dolan, James},
  title={Higher-dimensional algebra and topological quantum field theory},
  journal={J. Math. Phys.},
  volume={36},
  date={1995},
  number={11},
  pages={6073--6105},
}

\bib{BarwickThesis}{book}{
  author={Barwick, Clark},
  title={$(\infty ,n)$-{C}at as a closed model category},
  note={Thesis (Ph.D.)--University of Pennsylvania},
  date={2005},
}

\bib{BarwickQ}{article}{
  author={Barwick, Clark},
  title={On the $Q$-construction for exact $\infty $-categories},
  eprint={arXiv:1301.4725},
  year={2013},
}

\bib{BarwickOpCat}{article}{
  author={Barwick, Clark},
  title={From operator categories to topological operads},
  eprint={arXiv:1302.5756},
  year={2013},
}

\bib{BarwickMackey}{article}{
  author={Barwick, Clark},
  title={Spectral {M}ackey functors and equivariant algebraic $K$-theory ({I})},
  journal={Adv. Math.},
  volume={304},
  date={2017},
  pages={646--727},
  eprint={arXiv:1404.0108},
  year={2014},
}

\bib{BarwickMackey2}{article}{
  author={Barwick, Clark},
  author={Glasman, Saul},
  author={Shah, Jay},
  title={Spectral {M}ackey functors and equivariant algebraic {K}-theory ({II})},
  eprint={arXiv:1505.03098},
  year={2016},
}

\bib{BarwickSchommerPriesUnicity}{article}{
  author={Barwick, Clark},
  author={Schommer-Pries, Christopher},
  title={On the unicity of the homotopy theory of higher categories},
  eprint={arXiv:1112.0040},
  year={2011},
}

\bib{BoardmanVogt}{book}{
  author={Boardman, J. M.},
  author={Vogt, R. M.},
  title={Homotopy invariant algebraic structures on topological spaces},
  series={Lecture Notes in Mathematics, Vol. 347},
  publisher={Springer-Verlag, Berlin-New York},
  date={1973},
  pages={x+257},
  review={\MR {0420609}},
}

\bib{CalaqueTFT}{article}{
  author={Calaque, Damien},
  title={Lagrangian structures on mapping stacks and semi-classical {T}{F}{T}s},
  conference={ title={Stacks and categories in geometry, topology, and algebra}, },
  book={ series={Contemp. Math.}, volume={643}, publisher={Amer. Math. Soc., Providence, RI}, },
  date={2015},
  pages={1--23},
  eprint={arXiv:1306.3235},
}

\bib{CalaqueScheimbauerCob}{article}{
  author={Calaque, Damien},
  author={Scheimbauer, Claudia},
  title={A note on the $(\infty ,n)$-category of cobordisms},
  date={2015},
  eprint={arXiv:1509.08906},
}

\bib{DuggerSpivakMap}{article}{
  author={Dugger, Daniel},
  author={Spivak, David I.},
  title={Mapping spaces in quasi-categories},
  journal={Algebr. Geom. Topol.},
  volume={11},
  date={2011},
  number={1},
  pages={263--325},
}

\bib{DyckerhoffKapranovTwoSeg}{article}{
  author={Dyckerhoff, Tobias},
  author={Kapranov, Mikhail},
  title={Higher {S}egal spaces {I}},
  date={2012},
  eprint={arXiv:1212.3563},
}

\bib{FreedHigherAlg}{article}{
  author={Freed, Daniel S.},
  title={Higher algebraic structures and quantization},
  journal={Comm. Math. Phys.},
  volume={159},
  date={1994},
  number={2},
  pages={343--398},
}

\bib{FreedHopkinsLurieTeleman}{article}{
  author={Freed, Daniel S.},
  author={Hopkins, Michael J.},
  author={Lurie, Jacob},
  author={Teleman, Constantin},
  title={Topological quantum field theories from compact Lie groups},
  conference={ title={A celebration of the mathematical legacy of Raoul Bott}, },
  book={ series={CRM Proc. Lecture Notes}, volume={50}, publisher={Amer. Math. Soc., Providence, RI}, },
  date={2010},
  pages={367--403},
}

\bib{enr}{article}{
  author={Gepner, David},
  author={Haugseng, Rune},
  title={Enriched $\infty $-categories via non-symmetric $\infty $-operads},
  journal={Adv. Math.},
  volume={279},
  pages={575--716},
  eprint={arXiv:1312.3178},
  date={2015},
}

\bib{freepres}{article}{
  author={Gepner, David},
  author={Haugseng, Rune},
  author={Nikolaus, Thomas},
  title={Lax colimits and free fibrations in $\infty $-categories},
  eprint={arXiv:1501.02161},
  journal={Doc. Math.},
  volume={22},
  date={2017},
  pages={1225--1266},
}

\bib{GrandisCospans}{article}{
  author={Grandis, Marco},
  title={Higher cospans and weak cubical categories ({C}ospans in algebraic topology, {I})},
  journal={Theory Appl. Categ.},
  volume={18},
  date={2007},
  pages={No. 12, 321--347},
}

\bib{GrothInftyCourse}{article}{
  author={Groth, Moritz},
  title={A short course on $\infty $-categories},
  date={2015},
  eprint={arXiv:1007.2925},
}

\bib{SGA1}{book}{
  author={Grothendieck, Alexander},
  title={Rev\^etements \'etales et groupe fondamental},
  series={S\'eminaire de G\'eom\'etrie Alg\'ebrique},
  volume={1960/61},
  publisher={Institut des Hautes \'Etudes Scientifiques, Paris},
  date={1963},
}

\bib{enrcomp}{article}{
  author={Haugseng, Rune},
  title={Rectifying enriched $\infty $-categories},
  journal={Algebr. Geom. Topol.},
  volume={15},
  issue={4},
  pages={1931--1982},
  eprint={arXiv:1312.3178},
  date={2015},
}

\bib{enrbimod}{article}{
  author={Haugseng, Rune},
  title={Bimodules and natural transformations for enriched $\infty $-categories},
  eprint={arXiv:1506.07341},
  journal={Homology Homotopy Appl.},
  volume={18},
  date={2016},
  pages={71--98},
}

\bib{nmorita}{article}{
  author={Haugseng, Rune},
  title={The higher {M}orita category of $E_{n}$-algebras},
  date={2017},
  eprint={arXiv:1412.8459},
  journal={Geom. Topol.},
  volume={21},
  issue={3},
  pages={1631--1730},
}

\bib{HoffnungSpans}{article}{
  author={Hoffnung, Alexander E.},
  title={Spans in 2-categories: a monoidal tricategory},
  date={2013},
  eprint={arXiv:1112.0560},
}

\bib{JohnsonFreydScheimbauerLax}{article}{
  author={Johnson-Freyd, Theo},
  author={Scheimbauer, Claudia},
  title={(Op)lax natural transformations, twisted quantum field theories, and ``even higher'' Morita categories},
  journal={Adv. Math.},
  volume={307},
  date={2017},
  pages={147--223},
  eprint={arXiv:1502.06526},
}

\bib{JoyalTierney}{article}{
  author={Joyal, Andr{\'e}},
  author={Tierney, Myles},
  title={Quasi-categories vs Segal spaces},
  conference={ title={Categories in algebra, geometry and mathematical physics}, },
  book={ series={Contemp. Math.}, volume={431}, publisher={Amer. Math. Soc.}, place={Providence, RI}, },
  date={2007},
  pages={277--326},
  eprint={arXiv:math/0607820},
}

\bib{JoyalUABNotes}{article}{
  author={Joyal, Andr\'{e}},
  title={The theory of quasi-categories and its applications},
  date={2008},
  book={ title={Advanced course on simplicial methods in higher categories}, series={CRM Quaderns}, volume={45-2}, },
  eprint={http://mat.uab.cat/~kock/crm/hocat/advanced-course/Quadern45-2.pdf},
}

\bib{LawrenceTQFT}{article}{
  author={Lawrence, R. J.},
  title={Triangulations, categories and extended topological field theories},
  conference={ title={Quantum topology}, },
  book={ series={Ser. Knots Everything}, volume={3}, publisher={World Sci. Publ., River Edge, NJ}, },
  date={1993},
  pages={191--208},
}

\bib{LiBlandSpan}{article}{
  title={The stack of higher internal categories and stacks of iterated spans},
  author={Li-Bland, David},
  date={2015},
  eprint={arXiv:1506.08870},
}

\bib{HTT}{book}{
  author={Lurie, Jacob},
  title={Higher Topos Theory},
  series={Annals of Mathematics Studies},
  publisher={Princeton University Press},
  address={Princeton, NJ},
  date={2009},
  volume={170},
  note={Available at \url {http://math.harvard.edu/~lurie/papers/highertopoi.pdf}},
}

\bib{LurieGoodwillie}{article}{
  author={Lurie, Jacob},
  title={($\infty $,2)-Categories and the {G}oodwillie Calculus {I}},
  date={2009},
  eprint={http://math.harvard.edu/~lurie/papers/GoodwillieI.pdf},
}

\bib{LurieCob}{article}{
  author={Lurie, Jacob},
  title={On the classification of topological field theories},
  conference={ title={Current developments in mathematics, 2008}, },
  book={ publisher={Int. Press, Somerville, MA}, },
  date={2009},
  pages={129--280},
  eprint={http://math.harvard.edu/~lurie/papers/cobordism.pdf},
}

\bib{HA}{book}{
  author={Lurie, Jacob},
  title={Higher Algebra},
  date={2017},
  note={Available at \url {http://math.harvard.edu/~lurie/}.},
}

\bib{MazelGeeCart}{article}{
  author={Mazel-Gee, Aaron},
  title={A user's guide to co/cartesian fibrations},
  eprint={arXiv:1510.02402},
  date={2015},
}

\bib{MortonDouble}{article}{
  author={Morton, Jeffrey Colin},
  title={Double bicategories and double cospans},
  journal={J. Homotopy Relat. Struct.},
  volume={4},
  date={2009},
  number={1},
  pages={389--428},
  eprint={arXiv:math/0611930},
}

\bib{MortonTVSpsGpds}{article}{
  author={Morton, Jeffrey Colin},
  title={Two-vector spaces and groupoids},
  journal={Appl. Categ. Structures},
  volume={19},
  date={2011},
  number={4},
  pages={659--707},
}

\bib{MortonTwistTQFT}{article}{
  author={Morton, Jeffrey Colin},
  title={Cohomological Twisting of 2-Linearization and Extended TQFT},
  journal={J. Homotopy Relat. Struct.},
  volume={10},
  date={2015},
  number={2},
  pages={127--187},
  eprint={arXiv:1003.5603},
}

\bib{NuitenMaTh}{article}{
  author={Nuiten, Joost},
  title={Cohomological quantization of local prequantum boundary field theory},
  date={2013},
  eprint={http://ncatlab.org/schreiber/show/master+thesis+Nuiten},
}

\bib{PTVVShiftedSympl}{article}{
  author={Pantev, Tony},
  author={To{\"e}n, Bertrand},
  author={Vaqui{\'e}, Michel},
  author={Vezzosi, Gabriele},
  title={Shifted symplectic structures},
  journal={Publ. Math. Inst. Hautes \'Etudes Sci.},
  volume={117},
  date={2013},
  pages={271--328},
}

\bib{RezkCSS}{article}{
  author={Rezk, Charles},
  title={A model for the homotopy theory of homotopy theory},
  journal={Trans. Amer. Math. Soc.},
  volume={353},
  date={2001},
  number={3},
  pages={973--1007 (electronic)},
}

\bib{RezkThetaN}{article}{
  author={Rezk, Charles},
  title={A Cartesian presentation of weak $n$-categories},
  journal={Geom. Topol.},
  volume={14},
  date={2010},
  number={1},
  pages={521--571},
}

\bib{RezkNotes}{article}{
  author={Rezk, Charles},
  title={Stuff about quasicategories},
  date={2017},
  eprint={http://www.math.uiuc.edu/~rezk/595-fal16/quasicats.pdf},
}

\bib{RiehlVerityAdj}{article}{
  author={Riehl, Emily},
  author={Verity, Dominic},
  title={Homotopy coherent adjunctions and the formal theory of monads},
  journal={Adv. Math.},
  volume={286},
  date={2016},
  pages={802--888},
  eprint={arXiv:1310.8279},
}

\bib{SchommerPries2TQFT}{article}{
  author={Schommer-Pries, Christopher J.},
  title={The Classification of Two-Dimensional Extended Topological Field Theories},
  date={2014},
  eprint={arXiv:1112.1000},
}

\bib{SchreiberDiffCohlgyITopos}{article}{
  author={Schreiber, Urs},
  title={Differential cohomology in a cohesive $\infty $-topos},
  date={2013},
  eprint={arXiv:1310.7930},
}

\bib{SchreiberQuantLinHT}{article}{
  author={Schreiber, Urs},
  title={Quantization via linear homotopy types},
  date={2014},
  eprint={arXiv:1402.7041},
}

\bib{SchreiberCFTviaCohHT}{article}{
  author={Schreiber, Urs},
  title={Classical field theory via cohesive homotopy types},
  date={2014},
  eprint={http://ncatlab.org/schreiber/show/Classical+field+theory+via+Cohesive+homotopy+types},
}

\bib{StayCompactClosed}{article}{
  author={Stay, Michael},
  title={Compact closed bicategories},
  journal={Theory Appl. Categ.},
  volume={31},
  date={2016},
  pages={755--798},
  eprint={arXiv:1301.1053},
}

\bib{HAG2}{article}{
  author={To{\"e}n, Bertrand},
  author={Vezzosi, Gabriele},
  title={Homotopical algebraic geometry {I}{I}: geometric stacks and applications},
  journal={Mem. Amer. Math. Soc.},
  volume={193},
  date={2008},
  number={902},
  eprint={arXiv:math/0404373},
}

\bib{WeinsteinSymplGeom}{article}{
  author={Weinstein, Alan},
  title={Symplectic geometry},
  journal={Bull. Amer. Math. Soc. (N.S.)},
  volume={5},
  date={1981},
  number={1},
  pages={1--13},
}

\bib{WeinsteinSymplCat}{article}{
  author={Weinstein, Alan},
  title={The symplectic ``category''},
  conference={ title={Differential geometric methods in mathematical physics (Clausthal, 1980)}, },
  book={ series={Lecture Notes in Math.}, volume={905}, publisher={Springer, Berlin-New York}, },
  date={1982},
  pages={45--51},
}
\end{biblist}
\end{bibdiv}

\end{document}